\newtheorem{theorem}{Theorem}[section]
\newtheorem{lemma}[theorem]{Lemma}
\newtheorem{corollary}[theorem]{Corollary}
\newtheorem{proposition}[theorem]{Proposition}
\theoremstyle{definition}
\newtheorem{remark}[theorem]{Remark}
\newtheorem{definition}[theorem]{Definition}
\renewcommand{\appendix}{\par
\setcounter{section}{0}%
\setcounter{subsection}{0}%
\setcounter{subsubsection}{0}%
\gdef\thesection{\@Alph\c@section}%
\gdef\thesubsection{\@Alph\c@section.\@arabic\c@subsection}%
\gdef\theHsection{\@Alph\c@section.}%
\gdef\theHsubsection{\@Alph\c@section.\@arabic\c@subsection}%
\csname appendixmore\endcsname
}
\numberwithin{equation}{section}
\begin{document}

\arraycolsep=1pt

\title{\bf\Large
Sharp Brezis--Seeger--Van Schaftingen--Yung Formulae
for Higher-Order Gradients in Ball Banach Function Spaces
\footnotetext{\hspace{-0.35cm} 2020
\emph{Mathematics Subject Classification}. Primary 46E35; Secondary
26D10, 35A23, 42B25, 42B35.
\endgraf \emph{Key words and phrases}.
Brezis--Seeger--Van Schaftingen--Yung formula, ball Banach function
space, higher-order difference,
fractional Gagliardo--Nirenberg-type inequality.
\endgraf
This project is partially
supported by the National Key Research and Development Program of China
(Grant No. 2020YFA0712900), the National Natural Science Foundation of
China (Grant Nos. 12431006, 12371093, and 124B2004), the Fundamental Research Funds
for the Central Universities (Grant No. 2233300008).
}}
\author{Pingxu Hu, Yinqin Li\footnote{Corresponding
author,
E-mail: \texttt{yinqli@mail.bnu.edu.cn} /{\color{red}\today}/Final version.},
\ Dachun Yang, Wen Yuan and Yangyang Zhang
}
\date{}
\maketitle

\vspace{-0.8cm}

\begin{center}
\begin{minipage}{13cm}
{\small {\bf Abstract}\quad
Let $X$ be a ball Banach function space on $\mathbb{R}^n$,
$k\in\mathbb{N}$,
$h\in\mathbb{R}^n$, and $\Delta^k_h$ denote the $k${\rm th} order
difference.
In this article, under some mild extra assumptions about $X$,
the authors prove that, for both parameters $q$ and $\gamma$ in
\emph{sharp}
ranges which are related to $X$ and
for any locally integrable function $f$ on ${\mathbb{R}^n}$
satisfying $|\nabla^k f|\in X$,
$$
\sup_{\lambda\in(0,\infty)}\lambda
\left\|\left[\int_{\{h\in\mathbb{R}^n:\
|\Delta_h^k f(\cdot)|>\lambda|h|^{k+\frac{\gamma}{q}}\}}
\left|h\right|^{\gamma-n}\,dh\right]^\frac{1}{q}\right\|_X
\sim
\left\|\,\left|\nabla^k f\right|\,\right\|_{X}
$$
with the positive equivalence constants independent of $f$.
As applications, the authors establish the
Brezis--Seeger--Van Schaftingen--Yung (for short, BSVY)
characterization of higher-order homogeneous ball Banach Sobolev spaces and
higher-order fractional Gagliardo--Nirenberg and
Sobolev type inequalities in critical cases.
All these results are of quite wide generality
and can be applied
to various specific function spaces; moreover,
even when $X:= L^{q}$,
these results when $k=1$ coincide with the best known results
and when $k\ge 2$ are completely new.
The first novelty is
 to establish a sparse characterization of dyadic cubes
in level sets related to the higher-order local
approximation, which, together with
the well-known Whitney inequality in approximation theory,
further induces a higher-order
weighted variant of the remarkable inequality obtained
by A. Cohen, W. Dahmen, I. Daubechies, and R. DeVore;
the second novelty is to combine
this weighted inequality neatly with a variant
higher-order Poincar\'e inequality
to establish the desired upper estimate of BSVY formulae in weighted Lebesgue spaces.
}
\end{minipage}
\end{center}

\vspace{0.2cm}

\tableofcontents

\vspace{0.2cm}

\section{Introduction}

Throughout this article, we work in $\mathbb{R}^n$
and, unless necessary, we will not explicitly mention 
this underlying space.
Recall that, for any given $s\in (0,1)$ and $p\in [1,\infty)$,
the \emph{homogeneous fractional
Sobolev space} $\dot{W}^{s,p}$
is
defined to be the set of all $f\in \mathscr{M}$
having the following finite \emph{Gagliardo semi-norm}
\begin{align}\label{eq-GSN}
\|f\|_{\dot{W}^{s,p}}
:=\left[\int_{\mathbb{R}^n}
\int_{\mathbb{R}^n}\frac{|f(x+h)-f(x)|^p}{|h|^{n+sp}}\,dh\,dx\right]^{\frac{1}{p}};
\end{align}
here and thereafter,
$\mathscr{M}$ denotes the set of all measurable
functions $f$
on $\mathbb{R}^n$.
A well-known defect of the Gagliardo semi-norm is that it can not
recover the first-order homogeneous Sobolev semi-norm,
that is,
if $p\in [1,\infty)$ and $f$ is a measurable function on
$\mathbb{R}^n$ such that
\begin{align}\label{e-fcc}
\left[\int_{\mathbb{R}^n}
\int_{\mathbb{R}^n}\frac{|f(x+h)-f(x)|^p}{|h|^{n+p}}\,dh\,dx\right]^{\frac{1}{p}}
<\infty,
\end{align}
then $f$ is a constant almost everywhere (see \cite[Proposition
2]{b2002}).
Brezis et al.\ \cite{bsvy24}
repaired the above defect by replacing the $L^p$ norm
with the weak $L^p$ quasi-norm
and
established a one-parameter equivalent
formula of norms of first-order Sobolev spaces
via the size of level sets of suitable
difference quotients,
which is now called the Brezis--Seeger--Van Schaftingen--Yung
(for short, BSVY) formula.
Using this formula, Brezis et al.
gave new characterizations of first-order
homogeneous Sobolev spaces in \cite{bsvy24} and
established critical fractional
Gagliardo--Nirenberg and Sobolev type inequalities in
\cite{bsvy22,bvy21}.
For more developments of BSVY formulae and their applications,
we refer to
\cite{dgpyyz24,dlyyz2022,dm22,dm23,dssvy,dt.arXiv,dt23,f22,f24,lxy24,m24,p22}.
Very recently, Zhu et al.\ \cite{ZYY23}
and Li et al.\ \cite{lyyzz24}
extended the BSVY formula to
a framework of function spaces, called the ball
Banach function space (see Definition \ref{def-X}).
Recall that the concept of ball
(quasi-)Banach function spaces was
introduced by Sawano et al.\ \cite{shyy2017}
to unify the study of several different important function spaces
(see Remark \ref{rm-bqbf}).
Due to its wide generality, ball
(quasi-)Banach function spaces have recently attracted a lot of
attention
and yielded many applications; we refer to
\cite{hcy2021,is2017,tyyz2021,wyy2020,wyyz2021,zwyy2021}
for the boundedness of operators and to
\cite{lyh2320,yhyy2022-1,
yhyy2022-2,yyy2020,zhyy2022} for the real-variable
theory of function spaces.

In the article, we always let $\mathbb{N}:=\{1,2,\ldots\}$.
For any given $k\in \mathbb{N}$, $s\in (0,k)\setminus \mathbb{N}$,
and $p\in [1,\infty)$, the \emph{higher-order homogeneous
fractional Sobolev space} $\dot{W}^{s,p}$
is defined to be the set of all $f\in \mathscr{M}$
such that
\begin{align*}
\|f\|_{\dot{W}^{s,p}}
:=
\left[\int_{\mathbb{R}^n}
\int_{\mathbb{R}^n}
\frac{|\Delta_h^k
f(x)|^p}{|h|^{n+sp}}\,dh\,dx\right]^{\frac{1}{p}}<\infty
\end{align*}
(see \cite[Section 3.5.3]{Tre92}
and \cite[Proposition 2.4]{bm2018});
here and thereafter,
for any given $k\in \mathbb{N}$ and $h\in \mathbb{R}^n$,
the $k${\rm th} order
difference $\Delta^k_h f$ of
$f\in \mathscr{M}$
is defined by setting
\begin{align}\label{eq-sho-hd}
\Delta_{h}^k
f(\cdot):=\sum_{j=0}^{k}(-1)^{k-j}\binom{k}{j}f(\cdot+jh),
\end{align}
where $\binom{k}{j}:=\frac{k!}{j!(k-j)!}$.
In the higher-order case,
we also have a defect similar to \eqref{e-fcc}, that is,
for any given
$k\in \mathbb{N}$ and $p\in [1,\infty)$,
if $f\in \mathscr{M}$
satisfying
\begin{align*}
\left[\int_{\mathbb{R}^n}
\int_{\mathbb{R}^n}
\frac{|\Delta_h^k f(x)|^p}{|h|^{n+kp}}\,dh\,dx\right]^{\frac{1}{p}}
<
\infty,
\end{align*}
then $f$
coincides with a polynomial of degree
at most $k-1$ almost everywhere (see \cite[Proposition 1.3]{FKR15}).
Motivated by the aforementioned works (mainly \cite{bsvy24}), in this article,
we repair this defect by replacing the strong type norm
with the corresponding weak quasi-norm
and establish the higher-order BSVY formulae in ball
Banach function spaces with parameters
in \emph{sharp} ranges (see Theorem \ref{thm-main} and Remark \ref{re42}).
As applications, we establish the
BSVY characterization of the higher-order homogeneous ball Banach
Sobolev space (see Theorem \ref{t-chaWkX})
and higher-order fractional Gagliardo--Nirenberg and
Sobolev type inequalities in critical cases (see Theorem \ref{thm-fsGN}
and Remark \ref{rem-fix}).
All these results are of quite wide generality
and can be applied
to various specific function spaces (see Section \ref{ssec-fract-s}
for details).
The first novelty of these results is
to establish a sparse characterization (Lemma \ref{lem-Qx})
of dyadic cubes in level sets related to the higher-order
local approximation, which, together with
the well-known Whitney inequality \eqref{eq-de1}
in approximation theory, further induces a higher-order
weighted variant (Theorem \ref{esti-omega})
of the remarkable inequality obtained
by Cohen et al. \cite[Theorems 3.1 and 4.1]{cddd03};
the second novelty is to combine
this weighted inequality neatly with a variant higher-order Poincar\'e inequality
(Lemma \ref{Poin})
to establish the desired upper estimate of BSVY formulae
in weighted Lebesgue spaces
(Theorem \ref{thm-up-ap}).
We should also mention that some ideas from
the proof of Frank \cite[Theorem 1.1]{f22}
help us  to circumvent the unknown
Bourgain--Brezis--Mironescu formula in
higher-order ball Banach Sobolev spaces.

Throughout this article,
let $L^1_{\rm loc}$
denote the set of all locally integrable functions $f$ on
${\mathbb{R}^n}$.
For any multi-index
$\alpha=(\alpha_1, \dots, \alpha_n)\in \mathbb{Z}_{+}^n$ and
for any $x=(x_1,\dots,x_n)\in {\mathbb{R}^n}$ and $f\in L^1_{\rm
loc}$,
we denote by $x^{\alpha}:={x_1}^{\alpha_1}\cdots{x_n}^{\alpha_n}$
the monomial of degree $|\alpha|:=\sum_{i=1}^{n}\alpha_i$ and by $\partial^{\alpha}f \in L^1_{\rm loc}$
the \emph{$\alpha${\rm th} weak
partial derivative} of $f$, that is,
for any $\phi \in C_{\rm c}^{\infty}$,
\begin{align*}
\int_{{\mathbb{R}^n} }f(x) \partial^{\alpha}\phi(x)\,dx
=
(-1)^{|\alpha|}
\int_{{\mathbb{R}^n} }\partial^{\alpha}f(x) \phi(x)\,dx.
\end{align*}
In what follows,
for any $p\in [1,\infty)$
and $q\in (0,\infty)$, let
\begin{align}\label{eq-GAMMA}
\Gamma_{p,q}:=
\begin{cases}
(-\infty,-q)\cup (0,\infty)& \text{if } p=1,\\
\mathbb{R}\setminus \{0\}& \text{if } p\in (1,\infty).
\end{cases}
\end{align}
For any $\lambda\in (0,\infty)$, $b\in \mathbb{R}$, $k\in \mathbb{N}$,
and $f\in \mathscr{M}$, let
\begin{align}\label{e-EE}
E_{\lambda,b,k}[f]
:=\left\{(x,h)\in {\mathbb{R}^n} \times
\left[{\mathbb{R}^n}\setminus\{{\bf 0}\}\right]:\
\left|\Delta_{h}^{k}f (x)\right|>\lambda |h|^{k+b}\right\}.
\end{align}
The main result of this article
is the following BSVY formula with
higher-order derivatives in ball Banach function spaces,
where all the symbols are precisely defined in
Subsection \ref{ssub-pre}.

\begin{theorem}\label{thm-main}
Let $X$ be a ball Banach function space and $k\in\mathbb{N}$.
\begin{enumerate}[{\rm (I)}]
\item Assume that there exists some $p\in [1,\infty)$
such that $X^{\frac{1}{p}}$ is a ball Banach function space
and the Hardy--Littlewood maximal operator
$\mathcal{M}$ is bounded on $(X^{\frac{1}{p}})'$.
Let $q\in(0,\infty)$ satisfy
$n(\frac1p-\frac1q)<k$ and $\gamma\in\Gamma_{p,q}$.
Then the following two statements hold:
\begin{enumerate}[{\rm (i)}]
\item for any
$f\in \dot{W}^{k,X}$,
\begin{align}\label{eq-main-01}
	\sup_{\lambda\in (0,\infty)}\lambda\left\|\left[
	\int_{{\mathbb{R}^n}}{\bf
		1}_{E_{\lambda,\frac{\gamma}{q},k}[f]}(\cdot,h)
	|h|^{\gamma-n}\, dh\right]^{\frac{1}{q}}\right\|_X
	\sim
	\left\|\,\left|\nabla^k f\right|\,\right\|_{X},
\end{align}
where the positive equivalence constants are independent of $f$ and,
for any $\lambda\in (0,\infty)$,
$E_{\lambda,\frac{\gamma}{q},k}[f]$ is the same as in
\eqref{e-EE} with $b:= \frac{\gamma}{q}$;
here and thereafter,
$\nabla^k f:=\{\partial^{\alpha}f:\ \alpha \in \mathbb{Z}_{+}^n ,\
|\alpha|=k\}$
and
\begin{align}\label{eq-Nk}
	\left|\nabla^k f\right|
	:= \left[\sum_{\alpha\in \mathbb{Z}_{+}^n,\,
		|\alpha|=k}\left|\partial^{\alpha}
	f\right|^2\right]^{\frac{1}{2}},
\end{align}
where $\partial^{\alpha}f$ is the $\alpha${\rm th} weak
partial derivative of $f$;

\item assume further that $X$ has an absolutely continuous norm,
then, for any $f\in \dot{W}^{k,X}$,
\begin{align}\label{eq-main-02}
	&\lim_{\lambda\to L_\gamma}\lambda\left\|\left[
	\int_{{\mathbb{R}^n}}{\bf
		1}_{E_{\lambda,\frac{\gamma}{q},k}[f]}(\cdot,h)
	|h|^{\gamma-n}\, dh\right]^{\frac{1}{q}}\right\|_X\notag\\
	&\quad=
	|\gamma|^{-\frac{1}{q}}\left\|\left[ \int_{\mathbb{S}^{n-1}}
	\left|\sum_{\alpha\in\mathbb{Z}_{+}^n,\,|\alpha|=k}
	\partial^{\alpha}f(\cdot)\xi^{\alpha}\right|^q\,d\mathcal{H}^{n-1}(\xi)
	\right]^{\frac{1}{q}}\right\|_X
	\sim
	\left\|\,\left|\nabla^k f\right|\,
	\right\|_X,
\end{align}
where the positive equivalence constants depend only on $n$, $k$,
and $q$;
here and thereafter,
\begin{align}\label{e-Lgamma}
	\lambda \to L_\gamma
\end{align}
means that $\lambda \to \infty$ whenever $\gamma\in (0,\infty)$
or that $\lambda\to 0^{+}$ whenever $\gamma\in (-\infty,0)$.
\end{enumerate}

\item Assume that the Hardy--Littlewood maximal
operator $\mathcal{M}$
is endpoint bounded on $X'$, the centered
ball average operators are uniformly bounded
on $X$, and $X$ has an absolutely continuous norm.
Let $q\in(0,\infty)$ satisfy
$n(1-\frac1q)<k$ and $\gamma\in\Gamma_{1,q}$.
Then both \eqref{eq-main-01} and \eqref{eq-main-02} hold
for any
$f\in \dot{W}^{k,X}$.
\end{enumerate}
\end{theorem}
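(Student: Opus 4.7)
The plan is to split \eqref{eq-main-01} into matching upper and lower bounds, to carry the hard direction out first in weighted Lebesgue spaces where the tools advertised in the introduction apply directly, to transfer the resulting weighted inequality to $X$ by extrapolation (the hypotheses in parts (I) and (II) are tailored to the two different extrapolation schemes involved), and finally to obtain the limit identity \eqref{eq-main-02} from a pointwise computation together with the absolute continuity of the norm on $X$.

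The main analytic content is the upper estimate
\[
\sup_{\lambda\in(0,\infty)}\lambda\left\|\left[\int_{\mathbb{R}^n}\mathbf{1}_{E_{\lambda,\gamma/q,k}[f]}(\cdot,h)\,|h|^{\gamma-n}\,dh\right]^{\frac{1}{q}}\right\|_X
\lesssim \left\|\,|\nabla^k f|\,\right\|_X.
\]
I would first prove this in every weighted Lebesgue space $L^p_\omega$ with $\omega$ in a suitable Muckenhoupt class; this is Theorem \ref{thm-up-ap}, whose proof assembles the sparse characterisation of dyadic cubes in level sets (Lemma \ref{lem-Qx}), the Whitney inequality \eqref{eq-de1}, the higher-order weighted variant of the Cohen--Dahmen--Daubechies--DeVore inequality (Theorem \ref{esti-omega}), and the variant higher-order Poincar\'e inequality (Lemma \ref{Poin}). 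In part (I), the boundedness of $\mathcal{M}$ on $(X^{1/p})'$ then triggers the extrapolation theorem for ball Banach function spaces associated with the exponent $p$, yielding the bound on $X$; in part (II) the transfer is the endpoint $p=1$ version, made possible by the weak-type boundedness of $\mathcal{M}$ on $X'$ together with the uniform boundedness of the centered ball averages on $X$, in the spirit of Frank's treatment in \cite{f22} that bypasses the still missing higher-order Bourgain--Brezis--Mironescu formula.

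The lower bound and the limit identity are both extracted from a single pointwise limit. At any Lebesgue point $x$ of $\nabla^k f$, the Taylor expansion of $f$ combined with the combinatorial identity $\sum_{j=0}^k(-1)^{k-j}\binom{k}{j}j^{|\beta|}=\delta_{|\beta|,k}\,k!$ gives
\[
\Delta_h^k f(x)=k!\sum_{|\alpha|=k}\frac{h^\alpha}{\alpha!}\,\partial^\alpha f(x)+o(|h|^k)\qquad(|h|\to 0),
\]
and passing to polar coordinates $h=r\xi$ shows that
\[
\lambda\left[\int_{\mathbb{R}^n}\mathbf{1}_{E_{\lambda,\gamma/q,k}[f]}(x,h)\,|h|^{\gamma-n}\,dh\right]^{\frac{1}{q}}
\to |\gamma|^{-\frac{1}{q}}\left[\int_{\mathbb{S}^{n-1}}\left|\sum_{|\alpha|=k}\partial^\alpha f(x)\,\xi^\alpha\right|^q d\mathcal{H}^{n-1}(\xi)\right]^{\frac{1}{q}}
\]
as $\lambda\to L_\gamma$, the right-hand side being comparable to $|\nabla^k f(x)|$ with constants depending only on $n$, $k$, $q$. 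Fatou's lemma (valid on every ball Banach function space) promotes this pointwise inequality into the lower bound in \eqref{eq-main-01}, while dominated convergence on $X$ (legitimate once the norm is absolutely continuous, using a $\lambda$-uniform dominant of the form $C\,\mathcal{M}(|\nabla^k f|)$ that drops out of the upper-bound proof) delivers \eqref{eq-main-02}.

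The real work is concentrated in the weighted upper estimate, and specifically in the higher-order weighted Cohen--Dahmen--Daubechies--DeVore inequality (Theorem \ref{esti-omega}). The sparse characterisation Lemma \ref{lem-Qx} is the genuinely new tool: it must cover the level sets $E_{\lambda,\gamma/q,k}[f]$ by a sparse family of dyadic cubes on which the local higher-order polynomial approximation is already controlled, after which Whitney's inequality and the higher-order Poincar\'e variant must be combined with weighted dyadic maximal bounds in a way that survives an arbitrary Muckenhoupt weight. Once that weighted inequality is secured, the extrapolation to $X$, the pointwise lower bound, and the dominated convergence step for the limit identity are largely routine.
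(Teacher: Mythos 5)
Your architecture for \eqref{eq-main-01} matches the paper's: the upper bound is proved first in $L^p_\upsilon$ with $\upsilon\in A_1$ (Theorem \ref{thm-up-ap}, built from Lemma \ref{lem-Qx}, the Whitney inequality, Theorem \ref{esti-omega}, and Lemma \ref{Poin}) and then transferred to $X$ by the Rubio de Francia iteration (Lemmas \ref{lem-rxgmm} and \ref{lem-rxp}); the lower bound comes from the pointwise limit $\lim_{r\to0^+}|\Delta^k_{r\xi}f(x)|/r^k=|\sum_{|\alpha|=k}\partial^\alpha f(x)\xi^\alpha|$ together with the Fatou property and the density theorem (Theorem \ref{thm-e}). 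That part of your proposal is sound.

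The gap is in the $\limsup$ half of \eqref{eq-main-02}. You propose to upgrade the pointwise convergence to norm convergence by dominated convergence, using ``a $\lambda$-uniform dominant of the form $C\,\mathcal{M}(|\nabla^k f|)$ that drops out of the upper-bound proof.'' No such pointwise dominant drops out: the upper-bound proof is a \emph{norm} inequality obtained by summing over sparse families of dyadic cubes in $\mathcal{D}^\alpha_{\lambda,\beta,k,\ell}[f]$; it never produces a pointwise majorant of $\lambda[\int\mathbf{1}_{E_{\lambda,\gamma/q,k}[f]}(x,h)|h|^{\gamma-n}dh]^{1/q}$. Indeed the natural pointwise bounds on $\Delta^k_hf(x)$ involve $\mathcal{M}(|\nabla^kf|)$ evaluated at the shifted points $x+ih$, $i=1,\dots,k$, which cannot be absorbed into a function of $x$ alone; and in part (II) (the endpoint $p=1$) $\mathcal{M}(|\nabla^kf|)$ need not even belong to $X$, so the dominant would be unusable there. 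The paper instead proves the $\limsup$ bound directly (Proposition \ref{pro-limsup}), following Frank: for $f$ smooth with $|\nabla^kf|\in C_{\rm c}$ one must separately kill the contribution of $x\in B(\mathbf{0},(k+1)R)^\complement$, where $\Delta^k_hf(x)$ is nonzero even though $x$ is far from $\operatorname{supp}(|\nabla^kf|)$ (only $\nabla^kf$, not $f$, is compactly supported); this tail analysis is exactly where $n(\frac1p-\frac1q)<k$ is used and where the cases $\gamma\le -kq$ and $\gamma\in(-kq,0)$ must be treated differently. Your sketch does not address this region at all.

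A second, related omission: your pointwise Taylor expansion $\Delta^k_hf(x)=k!\sum_{|\alpha|=k}\frac{h^\alpha}{\alpha!}\partial^\alpha f(x)+o(|h|^k)$ is asserted at Lebesgue points of a general $f\in\dot W^{k,X}$, but for a Sobolev function this expansion holds only in an averaged ($L^1$) sense, not for every direction $h$, so the indicator $\mathbf{1}_{E_{\lambda,\gamma/q,k}[f]}(x,h)$ need not converge pointwise. The correct route (and the paper's) is to prove both $\liminf$ and $\limsup$ for $f\in C^\infty$ with $|\nabla^kf|\in C_{\rm c}$ (or $C^k_{\rm c}$), and then pass to general $f$ using Theorem \ref{thm-e}, Lemma \ref{lem-thet}, and the already-established upper estimate in \eqref{eq-main-01} to control $E_{(1-\delta)\lambda,\gamma/q,k}[f-f_m]$. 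Without these two steps your proof of \eqref{eq-main-02} does not close.
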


\begin{remark}\label{re42}
\begin{enumerate}[{\rm (i)}]
\item
It is clear that $L^p$ with $p\in [1,\infty)$ satisfies
all the assumptions of
Theorem \ref{thm-main}. In the case where $X:=L^p$,
Theorem \ref{thm-main}(i) with $k=1$, $p\in [1,\infty)$, and $q=p$
coincides with the BSVY formulae established in \cite[Theorems 1.3 and
1.4]{bsvy24},
Theorem \ref{thm-main}(ii) with
$k=1$, $p\in [1,\infty)$, and $q=p$ coincides with
the limiting formula
of Brezis et al. \cite[Theorem 1.1]{bsvy24}, and
Theorem \ref{thm-main}
with $k\in\mathbb{N}\cap[2,\infty)$ and $p\in [1,\infty)$
is completely new.

\item	When $k=1$, Theorem \ref{thm-main}(i) coincides with
\cite[Theorem 1.4]{lyyzz24}
and Theorem \ref{thm-main}(ii) improves \cite[Theorem 4.3]{lyyzz24}
via removing some extra assumptions
in \cite[Theorem 4.3]{lyyzz24}.
For instance, let $k=1$, $n\in \mathbb{N}\cap [2,\infty)$,
$X:=L^{p}$ with $p\in [n,\infty)$, and $\gamma\in
(-\infty,0)$.
In this case, Theorem \ref{thm-main}(ii)
widens the range of $q\in (0,\frac{n-\gamma}{n}p)$
in \cite[Theorem 4.3]{lyyzz24}
into $q\in (0,\infty)$.
Obviously, Theorem \ref{thm-main}(ii) even when $k=1$
is better than \cite[Theorem 4.3]{lyyzz24}.
Moreover,
when $k\in\mathbb{N}\cap[2,\infty)$,
Theorem \ref{thm-main}
is completely new.

\item
By Proposition \ref{pro-sharp}, we find that the assumption
$n(\frac{1}{p}-\frac{1}{q})<k$ in
Theorem \ref{thm-main} is \emph{sharp}.
As pointed out in \cite[Remark 4.4(iv)]{lyyzz24}, the assumption
$\gamma\in\Gamma_{p,q}$
in Theorem \ref{thm-main} is also \emph{sharp}.
\end{enumerate}
\end{remark}

Applying Theorem \ref{thm-main}, we obtain the
following BSVY
characterization of $\dot{W}^{k,X}$.

\begin{theorem}\label{t-chaWkX}
Let $X$ be a ball Banach function space
satisfying that there exists
$p\in(1,\infty)$
such that $X^{\frac1p}$ is also a ball Banach function space
and the Hardy--Littlewood maximal operator $\mathcal{M}$
is bounded on $(X^{\frac1p})'$,
$k\in \mathbb{N}$, $q\in (1,\infty)$ satisfy
$n(\frac{1}{p}-\frac{1}{q})<k$,
and $\gamma\in \mathbb{R}\setminus\{0\}$.
Assume that both $X$ and $X'$ have absolutely continuous
norms.
Then $f\in \dot{W}^{k,X}$ if and only if
$f\in L^{1}_{\rm loc}$ and
\begin{align}\label{e-c-wkx}
\sup_{\lambda\in (0,\infty)}\lambda\left\|\left[
\int_{{\mathbb{R}^n}}{\bf
1}_{E_{\lambda,\frac{\gamma}{q},k}[f]}(\cdot,h)
|h|^{\gamma-n}\, dh\right]^{\frac{1}{q}}\right\|_X<\infty,
\end{align}
where, for any $\lambda\in (0,\infty)$,
$E_{\lambda,\frac{\gamma}{q},k}[f]$ is the same as in
\eqref{e-EE} with $b:= \frac{\gamma}{q}$;
moreover,
\eqref{eq-main-01} holds for any $f\in L^{1}_{\rm loc}$.
\end{theorem}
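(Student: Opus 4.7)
The forward implication follows at once from Theorem~\ref{thm-main}(I)(i): the present hypotheses on $X$, together with $p\in(1,\infty)$, $q\in(1,\infty)$ satisfying $n(\frac{1}{p}-\frac{1}{q})<k$, and $\gamma\in\mathbb{R}\setminus\{0\}=\Gamma_{p,q}$, match those required there, so \eqref{eq-main-01} holds and supplies both the finiteness in \eqref{e-c-wkx} and the concluding \emph{moreover} clause whenever $f\in\dot{W}^{k,X}$. The genuine content is the converse: assuming $f\in L^1_{\rm loc}$ obeys \eqref{e-c-wkx}, I must produce a weak derivative $\partial^\alpha f\in X$ for every multi-index $\alpha$ with $|\alpha|=k$, and then invoke Theorem~\ref{thm-main}(I)(i) once more to conclude \eqref{eq-main-01}.

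The plan is to mollify $f$, apply Theorem~\ref{thm-main} to the smooth approximants, and pass to a weak-$\ast$ limit. Fix $\varphi\in C_{\rm c}^{\infty}$ with $\varphi\ge 0$ and $\int\varphi=1$, set $\varphi_\epsilon(\cdot):=\epsilon^{-n}\varphi(\cdot/\epsilon)$, and let $f_\epsilon:=f*\varphi_\epsilon$; then $f_\epsilon\in C^\infty$ and $\Delta_h^k f_\epsilon=(\Delta_h^k f)*\varphi_\epsilon$. The first key step is to establish a transfer inequality
$$
\sup_{\lambda\in(0,\infty)}\lambda\left\|\left[\int_{\mathbb{R}^n}{\bf 1}_{E_{\lambda,\frac{\gamma}{q},k}[f_\epsilon]}(\cdot,h)|h|^{\gamma-n}\,dh\right]^{\frac{1}{q}}\right\|_X
\le C\sup_{\lambda\in(0,\infty)}\lambda\left\|\left[\int_{\mathbb{R}^n}{\bf 1}_{E_{\lambda,\frac{\gamma}{q},k}[f]}(\cdot,h)|h|^{\gamma-n}\,dh\right]^{\frac{1}{q}}\right\|_X
$$
with $C$ independent of $\epsilon$. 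I would derive this by dualizing the inner weak-$L^q(|h|^{\gamma-n}\,dh)$ quantity, using the pointwise bound $|\Delta_h^k f_\epsilon(x)|\le\int|\Delta_h^k f(x-y)|\,\varphi_\epsilon(y)\,dy$, and noting that convolution with $\varphi_\epsilon$ is controlled pointwise by the Hardy--Littlewood maximal operator $\mathcal{M}$, whose boundedness on $X$ is a standard consequence of its boundedness on $(X^{1/p})'$ with $p\in(1,\infty)$ via duality. Applying the upper estimate in Theorem~\ref{thm-main}(I)(i) to each smooth $f_\epsilon$---whose a priori membership in $\dot{W}^{k,X}$ is routine from the $C^\infty$ regularity combined with the transfer inequality applied through a cutoff exhaustion---then gives $\bigl\|\,|\nabla^k f_\epsilon|\,\bigr\|_X\le C$ uniformly in $\epsilon$.

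Because $X'$ has an absolutely continuous norm, $X$ enjoys the Fatou property and is naturally identified with the Banach dual of $X'$; in particular, closed balls of $X$ are weakly-$\ast$ sequentially compact. Extract a subsequence $\epsilon_j\downarrow 0$ along which $\partial^\alpha f_{\epsilon_j}$ converges weakly-$\ast$ in $X$ to some $g_\alpha$ for every $|\alpha|=k$. Since $f_\epsilon\to f$ in $L^1_{\rm loc}$, testing this weak-$\ast$ convergence against $\phi\in C_{\rm c}^{\infty}$ and integrating by parts identifies $g_\alpha$ with the weak partial derivative $\partial^\alpha f$. Lower semicontinuity of $\|\cdot\|_X$ under weak-$\ast$ convergence then yields $\bigl\|\,|\nabla^k f|\,\bigr\|_X\le\liminf_j\bigl\|\,|\nabla^k f_{\epsilon_j}|\,\bigr\|_X<\infty$, so $f\in\dot{W}^{k,X}$ and Theorem~\ref{thm-main}(I)(i) yields \eqref{eq-main-01}.

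The principal obstacle is the transfer inequality in the second paragraph: a weak-$L^q$-type functional sits \emph{inside} a ball Banach function norm, and the standard convolution estimate must commute with both the weak-$L^q$ structure and the outer $X$-norm. The resolution I anticipate is a linearization realizing the inner quasi-norm as a supremum over admissible measurable subsets of $\{h\in\mathbb{R}^n\}$, combined with the pointwise maximal-function control of convolution, so that the $\mathcal{M}$-boundedness hypothesis on $X$ can close the estimate uniformly in $\epsilon$ and in the auxiliary test sets.
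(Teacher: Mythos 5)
Your overall architecture (mollify, prove a uniform BSVY bound for $\varphi_\epsilon\ast f$, deduce a uniform bound on $\|\,|\nabla^k(\varphi_\epsilon\ast f)|\,\|_X$, pass to a weak-$\ast$ limit and identify it with the weak derivative) is exactly the paper's, and the forward implication and the limiting/identification step are fine. The gap is in the transfer inequality, which is the heart of the matter (Proposition \ref{p-young} in the paper). Your proposed proof of it --- linearize the inner $L^{q,\infty}(|h|^{\gamma-n}\,dh)$ quasi-norm as a supremum over test sets $E$ in the $h$-variable, pull the convolution in $x$ through, and dominate by $\mathcal{M}$ --- only controls the left-hand side by
\[
\left\|\,\sup_{\lambda\in(0,\infty)}\lambda\left[\int_{\mathbb{R}^n}{\bf 1}_{E_{\lambda,\frac{\gamma}{q},k}[f]}(\cdot,h)|h|^{\gamma-n}\,dh\right]^{\frac{1}{q}}\right\|_X,
\]
that is, by the iterated norm with the supremum over $\lambda$ taken \emph{inside} the $X$-norm. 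The hypothesis \eqref{e-c-wkx} only controls the quantity with the supremum \emph{outside} the norm, which is in general strictly smaller (think of $|h|^{-(k+\gamma/q)}|\Delta_h^k f(x)|$ concentrated at $x$-dependent heights), so the estimate does not close. This is precisely why the paper does not argue pointwise in $x$: it first proves strong-type bounds for the convolution operator $\mathcal{A}_\varphi$ on the mixed spaces $T^{r_ip,r_iq}_{\gamma,\upsilon}$ at two exponent pairs (Lemma \ref{lem-ss}, via the partial maximal operator, weighted duality, and Riesz--Thorin), then invokes a Marcinkiewicz-type interpolation theorem in these spaces (Lemma \ref{lem-sw-ww}) to convert them into the weak-to-weak bound with the supremum outside, and finally extrapolates from $L^p_\upsilon$ with $\upsilon\in A_1$ to $X$. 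Some interpolation step of this kind is unavoidable, and your sketch does not supply it.

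A second, smaller issue: you cannot invoke Theorem \ref{thm-main}(I)(i) for $f_\epsilon$ to obtain $\|\,|\nabla^k f_\epsilon|\,\|_X\lesssim$ (BSVY quantity) until you already know $f_\epsilon\in\dot{W}^{k,X}$, and $C^\infty$ regularity alone does not give $|\nabla^k f_\epsilon|\in X$; the ``cutoff exhaustion'' you gesture at is where the work lies. The paper resolves this with Proposition \ref{pro-linf}(ii), a localized $\liminf$ lower bound on bounded open sets $\Omega$ that is valid for any $C^k$ function whose $k$th-order derivatives are locally Lipschitz, with no membership in $\dot{W}^{k,X}$ assumed; combining it with Lemma \ref{pro-lim} and the monotone convergence property in Definition \ref{def-X}(iii) as $\Omega\uparrow\mathbb{R}^n$ yields the uniform bound. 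You should make this ingredient explicit rather than cite the global equivalence for $f_\epsilon$.
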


\begin{remark}
When $X:=L^p$,
Theorem \ref{t-chaWkX} with $k=1$, $p\in (1,\infty)$,
and $q=p$ coincides with \cite[Theorem 1.3]{bsvy24},
that is,
the BSVY characterization of
the Sobolev space $\dot{W}^{1,p}$.
Other cases of Theorem \ref{t-chaWkX} are
new.
\end{remark}

As applications of Theorem \ref{thm-main}, we establish higher-order
fractional Gagliardo--Nirenberg and Sobolev type inequalities.
In what follows, for any $f\in \mathscr{M}$, let
$\nabla^0 f:= f$.

\begin{theorem}\label{thm-fsGN}
Let $X$ be a ball Banach function space.
Assume that there exists some $p\in [1,\infty)$
such that $X^{\frac{1}{p}}$ is a ball Banach function space
and the Hardy--Littlewood maximal operator
$\mathcal{M}$ is bounded on $(X^{\frac{1}{p}})'$, or that $p=1$
and all the assumptions of Theorem \ref{thm-main}(II) hold.
Let $k\in {\mathbb{N}}$ and $\gamma\in \Gamma_{p,1}$.
\begin{enumerate}[{\rm (i)}]
\item
Let $s\in (0,1)$,
${q_{0}} \in [1,\infty]$,
and
$q\in [1,{q_{0}}]$
satisfy $\frac{1}{q}=\frac{1-s}{{q_{0}}}+s$.
If ${q_{0}}\in [1,\infty)$ and $X^{q_0}$
is a ball Banach function space,
then,
for any
$f\in \dot{W}^{k,X}$,
\begin{align}\label{eq-in-kks}
\sup_{\lambda\in (0,\infty)}\lambda\left\|\left[
\int_{{\mathbb{R}^n}}{\bf
1}_{E_{\lambda,\frac{\gamma}{q}+s-1,k}[f]}(\cdot,h)
|h|^{\gamma-n}\, dh\right]^{\frac{1}{q}}\right\|_{X^q}
\lesssim
\left\|\,\left|\nabla^{k-1} f\right|\,\right\|^{1-s}_{X^{q_{0}}}
\left\|\,\left|\nabla^k f\right|\,\right\|^s_X;
\end{align}
if ${q_{0}}=\infty$, then,
for any
$f\in \dot{W}^{k,X}$,
\begin{align}\label{eq-in-kks1}
\sup_{\lambda\in (0,\infty)}\lambda\left\|\left[
\int_{{\mathbb{R}^n}}{\bf
1}_{E_{\lambda,\frac{\gamma}{q}+s-1,k}[f]}(\cdot,h)
|h|^{\gamma-n}\, dh\right]^{\frac{1}{q}}\right\|_{X^q}
\lesssim
\left\|\,\left|\nabla^{k-1}f\right|\,\right\|^{1-s}_{L^\infty}
\left\|\,\left|\nabla^k f\right|\,\right\|^s_X,
\end{align}
where the implicit positive constants are independent of $f$ and,
for any $\lambda\in (0,\infty)$,
the set
$E_{\lambda,\frac{\gamma}{q}+s-1,k}[f]$ is the same as in
\eqref{e-EE} with $b:= \frac{\gamma}{q}+s-1$.

\item Let $\eta\in (0,1)$
and
$0\leq s_0<s<1<q<q_0<\infty$ satisfy
\begin{align}\label{eq-app-ss}
s=(1-\eta)s_0+\eta
\ \ \text{and}\ \
\frac{1}{q}=\frac{1-\eta}{q_0}+\eta.
\end{align}
Then, for any $f\in\dot{W}^{k,X}$,
\begin{align}\label{eq-app-dd}
&\sup_{\lambda\in(0,\infty)}\lambda
\left\|\int_{\mathbb{R}^n}
\mathbf{1}_{E_{\lambda,\frac{\gamma}{q}+s-1,k}[f]}(\cdot,h)
\left|h\right|^{\gamma-n}\,dh\right\|_{X}^{\frac{1}{q}}\notag\\
&\quad
\lesssim
\sup_{\lambda\in(0,\infty)}\lambda
\left\|\int_{\mathbb{R}^n}
\mathbf{1}_{E_{\lambda,\frac{\gamma}{q_0}+s_0-1,k}[f]}(\cdot,h)
\left|h\right|^{\gamma-n}\,dh\right\|_{X}^{\frac{1-\eta}{q_0}}
\left\|\,\left|\nabla^k f\right|\,\right\|_X^\eta,
\end{align}
where the implicit positive constant is independent of $f$ and,
for any $\lambda\in (0,\infty)$,
the set $E_{\lambda,\frac{\gamma}{q_0}+s_0 -1,k}[f]$ is the same as in
\eqref{e-EE} with $b:= \frac{\gamma}{q_0}+s_0 -1$.
\end{enumerate}
\end{theorem}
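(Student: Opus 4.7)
The plan for both parts is to base the proof on two pointwise higher-order difference bounds together with Theorem \ref{thm-main} applied at the endpoint $q = 1$. Specifically, I will exploit
\[
|\Delta_h^k f(x)| \lesssim |h|^k\,\mathcal{M}(|\nabla^k f|)(x)
\quad\text{and}\quad
|\Delta_h^k f(x)| \lesssim |h|^{k-1}\,B(x),
\]
where $B(x) := \mathcal{M}(|\nabla^{k-1} f|)(x)$ when $q_0 \in [1,\infty)$ and $B \equiv \|\nabla^{k-1} f\|_{L^\infty}$ when $q_0 = \infty$. Both bounds follow from Taylor's formula with integral remainder combined with the observation that $\Delta_h^k$ annihilates polynomials of degree below $k$; for the second, I would expand to order $k-2$ so the remainder involves $\nabla^{k-1}f$, and then replace the line-segment averages of $|\nabla^{k-1}f|$ that appear by a ball average via a mild inflation of the integration region together with the boundedness of $\mathcal{M}$.

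For part (i), I would combine these bounds with the geometric mean $\min(a,b) \leq a^s b^{1-s}$ to obtain $|\Delta_h^k f(x)| \lesssim |h|^{k-1+s} A(x)^s B(x)^{1-s}$ with $A := \mathcal{M}(|\nabla^k f|)$. Consequently the slice $\{h : (x,h)\in E_{\lambda,\gamma/q+s-1,k}[f]\}$ is contained in $\{h : |h|^{-\gamma/q} > \lambda / (CA(x)^s B(x)^{1-s})\}$, which is either a ball or the complement of a ball according to the sign of $\gamma$. A direct polar-coordinate computation shows that its $|h|^{\gamma-n}$-measure equals $C_n|\gamma|^{-1} R^q$ with $R := CA(x)^s B(x)^{1-s}/\lambda$ in both cases. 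Taking $1/q$-th roots pointwise, passing to the $X^q$-norm, multiplying by $\lambda$, and invoking the generalized H\"older inequality $\|A^s B^{1-s}\|_{X^q} \leq \|A\|_X^s \|B\|_{X^{q_0}}^{1-s}$ (valid since $\tfrac{1}{q} = s + \tfrac{1-s}{q_0}$), together with the boundedness of $\mathcal{M}$ on $X$ and on $X^{q_0}$ supplied by the hypothesis on $(X^{1/p})'$, would then yield \eqref{eq-in-kks} and, with $B \equiv \|\nabla^{k-1}f\|_{L^\infty}$, the endpoint \eqref{eq-in-kks1}.

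For part (ii), the crucial observation is the level-set inclusion
\[
E_{\lambda,\gamma/q+s-1,k}[f]\ \subseteq\ E_{\mu,\gamma/q_0+s_0-1,k}[f]\ \cup\ E_{\nu,\gamma,k}[f]
\qquad\text{whenever } \lambda = \mu^{1-\eta}\nu^{\eta},
\]
which follows from the interpolation identity $k+\tfrac{\gamma}{q}+s-1 = (1-\eta)\bigl(k+\tfrac{\gamma}{q_0}+s_0-1\bigr)+\eta(k+\gamma)$ together with $\min(a,b) \leq a^{1-\eta}b^\eta$. Integrating against $|h|^{\gamma-n}\,dh$ and taking $X$-norms yields $\|G_\lambda\|_X \leq \|G_\mu^{(0)}\|_X + \|G_\nu^{(1)}\|_X \leq S_0/\mu + C\|\nabla^k f\|_X/\nu$, where $S_0$ abbreviates the first factor on the right of \eqref{eq-app-dd} and the estimate on $\|G_\nu^{(1)}\|_X$ is exactly Theorem \ref{thm-main} applied at $q = 1$ (which is legal because $\gamma \in \Gamma_{p,1}$). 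Setting $r := \mu/\nu$ converts this into $\lambda\|G_\lambda\|_X \leq r^{-\eta}S_0 + Cr^{1-\eta}\|\nabla^k f\|_X$, and an elementary minimization over $r$ gives $\lambda\|G_\lambda\|_X \lesssim S_0^{1-\eta}\|\nabla^k f\|_X^\eta$ uniformly in $\lambda$; taking the supremum over $\lambda$ and rearranging the exponents in the form displayed in \eqref{eq-app-dd} completes the argument.

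The main obstacle will be the pointwise bound $|\Delta_h^k f(x)| \lesssim |h|^{k-1}\mathcal{M}(|\nabla^{k-1}f|)(x)$ for $k \geq 2$: Taylor's remainder at order $k-2$ produces an integral of $|\nabla^{k-1}f|$ along a line segment rather than a ball average, and this is the precise point at which the argument for $k = 1$ in the prior literature does not carry over verbatim. A secondary technicality is the generalized H\"older inequality across the three convexifications $X$, $X^{q_0}$, and $X^q$, which must be justified via the associate-space duality characteristic of ball Banach function spaces; this is standard for concrete $L^p$ but requires care in the abstract setting.
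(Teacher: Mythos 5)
Your part (ii) is essentially the paper's argument: the exponent identity \eqref{eq-app-ss} yields exactly the level-set inclusion you state, one term is absorbed into the supremum on the right-hand side of \eqref{eq-app-dd}, the other is controlled by the upper BSVY estimate at $q=1$ (legitimate since $\gamma\in\Gamma_{p,1}$), and one optimizes over the splitting parameter. Apart from some loose exponent bookkeeping (you need $\mu\|G^{(0)}_\mu\|_X^{1/q_0}\le S_0$, not $\|G^{(0)}_\mu\|_X\le S_0/\mu$), this part is correct.

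Part (i), however, has a genuine gap. Your two pointwise bounds
$|\Delta_h^k f(x)|\lesssim|h|^{k}\mathcal{M}(|\nabla^{k}f|)(x)$ and
$|\Delta_h^k f(x)|\lesssim|h|^{k-1}\mathcal{M}(|\nabla^{k-1}f|)(x)$,
with the maximal function evaluated only at $x$, are false for $n\ge2$ even when $k=1$: taking $|\nabla f|$ to be (an approximation of) the indicator of a $\delta$-tube around the segment $[x,x+h]$ makes the left-hand side of order $|h|$ while $\mathcal{M}(|\nabla f|)(x)\approx(\delta/|h|)^{n-1}\to0$. The correct Haj\l asz-type bound carries maximal functions at all the points $x+jh$, $j=0,\dots,k$, and the terms with $j\ge1$ turn the $h$-integral over the level set into a nonlocal quantity in which the maximal function is evaluated at a point depending on $h$; controlling its $X^q$-norm is precisely the translation problem the paper resolves by first passing to $A_1$-weighted Lebesgue spaces and invoking Lemma \ref{lem-apwight}(ii) (compare the sets $E^{(i)}$ in Proposition \ref{pro-E0}). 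Your proposed fix (``mild inflation of the integration region'') does not repair a pointwise inequality that fails. Moreover, even granting the pointwise bounds, your scheme needs $\mathcal{M}$ bounded on $X$ and on $X^{q_0}$, which fails exactly in the endpoint case $p=1$, $X=L^1$ --- the case the theorem explicitly covers and which drives the critical Gagliardo--Nirenberg application in Remark \ref{rem-fix}(ii). The paper avoids all of this: it proves (i) by the same multiplicative splitting of the difference quotient as in (ii) [see \eqref{e-ESS}], and then bounds the resulting term involving the exponent $k-1+\gamma/q_0$ by $\|\nabla^{k-1}f\|$ via the two-parameter weighted upper estimate, Theorem \ref{thm-up-ap} with $\ell:=k-1$, whose proof rests on the Whitney inequality and the weighted Cohen--Dahmen--Daubechies--DeVore estimate (Theorem \ref{esti-omega}). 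That estimate is the essential input your maximal-function substitute cannot supply.
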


\begin{remark}\label{rem-fix}
\begin{enumerate}[{\rm (i)}]
\item Theorem \ref{thm-fsGN} when $k=1$ coincides with
\cite[Corollaries 4.5 and 4.6]{lyyzz24}
and, in other cases, Theorem \ref{thm-fsGN} is new.

\item
Let $k\in \mathbb{N}$, $q\in (1,\infty)$,
and $s\in (0,1-\frac{1}{q})$. We recall that the higher-order
Gagliardo--Nirenberg type inequality is that
there exists a positive constant $C$ depending only on $k,q$,
and $s$ such that,
for any $f\in C_{\rm c}^{\infty}$,
\begin{align*}
\left\|
\frac{\Delta^k_h f(x)}{|h|^{k-1+\frac{1}{q}+\frac{n}{q}+s}}
\right\|_{L^{q}(\mathbb{R}^n\times\mathbb{R}^n)}
&\le C
\left[\left\|\,\left|\nabla^{k-1}f\right|\,\right\|
_{L^\infty}+\sup_{|h|\neq 0}\frac{|\,|\nabla^k
f(x+h)|-|\nabla^k f(x)|\,|}{|h|^{s}}\right]^{1-{\frac{1}{q}}}\\
&\quad\times
\left[\left\|\,\left|\nabla^k f\right|\,\right\|_{L^1}
+\sum_{\alpha\in\mathbb{Z}_+^n,\,|\alpha|=k}\left\|\partial^\alpha f\right\|_{\dot{W}^{s,1}}\right]^{\frac{1}{q}},
\end{align*}
where $\|\cdot\|_{\dot{W}^{s,1}}$ is the same as in
\eqref{eq-GSN} (see \cite[Theorem 1 (A)]{bm2018}).
In the critical case $s=0$,
applying \cite[Theorem 1 (B)]{bm2018},
we conclude that there
does \emph{not} exist a positive constant $C$ such that,
for any $f\in C_{\rm c}^{\infty}$,
\begin{align}\label{rem-fixe1}
\left\|
\frac{\Delta^k_h f(x)}{|h|^{k-1+\frac{1}{q}+\frac{n}{q}}}
\right\|_{L^{q}(\mathbb{R}^n\times\mathbb{R}^n)}
\le C
\left\|\,\left|\nabla^{k-1}f\right|\,\right\|^{1-{\frac{1}{q}}}_{L^\infty}
\left\|\,\left|\nabla^k f\right|\,\right\|^{\frac{1}{q}}_{L^1}.
\end{align}
Fortunately,
by \eqref{eq-in-kks1} with $X:=L^{1}$, $\gamma:=n$, and
$s:=\frac{1}{q}$,
we find that \eqref{rem-fixe1} holds
if we replace $\|\cdot\|_{L^q(\mathbb{R}^n\times\mathbb{R}^n)}$
by
$\|\cdot\|_{L^{q,\infty}(\mathbb{R}^n\times\mathbb{R}^n)}$.
This fixes the above ``defect'' of the Gagliardo--Nirenberg type
inequality in this critical case $\sigma=0$.
\end{enumerate}
\end{remark}

All these results are of quite wide applications and can be applied
to various specific function spaces,
including (mixed-norm, or variable, or weighted) Lebesgue,
(Bourgain--)Morrey-type,
Lorentz, and Orlicz (or Orlicz-slice) spaces (see Section
\ref{ssec-fract-s}).

We emphasize that the core difficulty in proving the BSVY formula
\cite{bsvy24} lies in the critical case $p=1$ for the upper
estimate. In that proof, leveraging the rotation
invariance of $L^p$, the problem can be
reduced to the one-dimensional case where the proof
employs the Vitali covering theorem and a stopping time
argument (see \cite[Proposition 2.3]{bsvy24} for more details).
However, since these methods crucially rely on the
single-direction property of $\mathbb{R}$
and the rotation invariance of $L^1$,
the approach used in \cite{bsvy24} cannot be extended to ball
Banach function spaces in higher dimensions where
the function spaces under consideration lack rotation invariance.

To overcome this difficulty,
we first establish a higher-order weighted variant
of Cohen et al. \cite[Theorems 3.1 and 4.1]{cddd03}
with respect to the higher-order local
approximation (see Theorem
\ref{esti-omega}),
which is of independent interest and may be useful
in tackling other related problems.
Recall that, for $k=1$,
Cohen et al. \cite[Theorems 3.1 and 4.1]{cddd03} applied
the coarea formula, the isoperimetric inequality, and
a nuanced classification of the dyadic
cubes of $\mathbb{R}^n$ to establish a quite remarkable
inequality related to both level sets of renormalized
averaged moduli of continuity and gradients,
which have proved very useful in
\cite{BN11,bm2018,dt23,DHT20,dssvy,lyyzz24,zyyy24}.
In this article, for $k\in \mathbb{N}\cap [2,\infty)$,
we utilize the well-known Whitney inequality in approximation
to obtain a delicate relation between the higher-order local
approximation and the first-order
renormalized averaged modulus of continuity.
This approach allows us to reduce the desired higher-order estimate
to the first-order one and then establish the
the desired inequality.
Subsequently, by first establishing a sparse
characterization about dyadic cubes
in the level set of the higher-order local approximation
(Lemma \ref{lem-Qx}) and
a variant higher-order Poincar\'e inequality
(Lemma \ref{Poin}) and
employing an argument originating from
the pigeonhole principle,
we show that the higher-order difference has a pointwise
upper bound determined by
the higher-order local approximation [see \eqref{pointe2},
\eqref{eq-claim-02}, and \eqref{eq-dff}].
Finally, we apply the above higher-order variant
of Cohen et al. (Theorem \ref{esti-omega})
to derive
the higher-order upper estimate of the BSVY formula
in weighted Lebesgue spaces (see Theorem \ref{thm-up-ap})
and then utilize a standard method
of extrapolation to obtain the desired upper estimate
in \eqref{eq-main-01}. Moreover, we show the above two weighted
estimates are
\emph{sharp}
and use them to characterize Muckenhoupt weights when
$n=1$ (see Corollary \ref{cor-ap}).
However, extending Corollary \ref{cor-ap} to the case
$n\in\mathbb{N}\cap[2,\infty)$
is still unknown (see Remark \ref{rem-ques}).

On the other hand,
recall that the proof of the BSVY characterization of
first-order Sobolev spaces
in \cite[Theorem 1.3]{bsvy24} essentially depends on another
famous formula established by
Bourgain et al. \cite{BBM01},
which is called the BBM formula now and
is still
unknown in higher-order ball Banach Sobolev spaces.
To overcome this deficiency, we
borrow some ideas from the proof of
Frank \cite[Theorem 1.1]{f22}.
To be precise, we
first establish
a Young inequality of convolutions in ball Banach
function spaces (see Proposition \ref{p-young})
via a Marcinkiewicz-type interpolation theorem
for mixed-norm Lebesgue spaces (see Lemma \ref{lem-sw-ww})
and then borrow some ideas from the
proof of \cite[Theorem 1.1]{f22}
on the characterization of Sobolev
spaces, whose advantage lies in that it depends only on
the Banach--Alaoglu theorem but independent of the BBM formula.

The organization of the remainder of this article is as follows.

In Section \ref{sec-hohbbs}, we give some necessary
preliminaries and prove the density properties
of the higher-order homogeneous ball Banach Sobolev space.
Precisely, in Subsection \ref{ssub-pre},
we recall the ball (quasi-)Banach function
space $X$ (see Definition \ref{def-X}) and
the Muckenhoupt weight class $A_p$.
We then introduce the higher-order homogeneous ball Banach Sobolev
space
$\dot{W}^{k,X}$ and give some preliminary properties on $X$,
which are frequently used throughout this article.
In Subsection \ref{ssub-density}, by the Poincar\'{e} inequality
on ball Banach function spaces (see Lemma \ref{lem-poin}), we prove
the density properties of ``good" functions in
 $\dot{W}^{k,X}$ (see Theorem \ref{thm-e}).

The main purpose of Section \ref{sec-AP-es}
is to obtain a higher-order weighed extension of
the inequality \cite[Theorems 3.1 and 4.1]{cddd03}
of Cohen et al.\ as well as the
upper estimate of the higher-order BSVY formula in
weighted Lebesgue spaces (see Theorems \ref{esti-omega} and
\ref{thm-up-ap}).
Furthermore, we prove the above two weighted estimates are
sharp and use them to characterize Muckenhoupt weights when
$n=1$ (see Corollary \ref{cor-ap}).

In Section \ref{sec-Formulae},
via using the density properties given in Section
\ref{sec-hohbbs}, the weighed upper estimate given
in Section \ref{sec-AP-es}, and the method of extrapolation,
we give the proof of Theorem \ref{thm-main}.
Then the main target of Subsection \ref{ss-cha}
is to show the BSVY characterization of
higher-order ball Banach Sobolev spaces (see Theorem \ref{t-chaWkX}).
In Subsection \ref{ssec-fract}, as an application of the
higher-order BSVY formula, we
obtain higher-order fractional Gagliardo--Nirenberg and Sobolev
type
inequalities (see Theorem \ref{thm-fsGN}).

Section \ref{ssec-fract-s} is devoted to applying
the above main theorems
to various specific examples of ball Banach function spaces, including
Lebesgue, weighted Lebesgue, (Bourgain--)Morrey-type, mixed-norm
Lebesgue,
variable Lebesgue, Lorentz, Orlicz, and Orlicz-slice spaces.

Finally, we make some conventions on notation.
We always let
$\mathbb{Z}_+:=\mathbb{N}\cup\{0\}$.
If $E$ is a subset of ${\mathbb{R}^n}$, we denote by
${\bf 1}_E$ its \emph{characteristic function},
by $E^{\complement}$ the set ${\mathbb{R}^n} \setminus E$,
by $|E|$ its \emph{Lebesgue measure},
and by $\mathcal{H}^{n-1}(E)$ its $(n-1)$-dimensional \emph{Hausdorff
measure}.
For any set $E,F\subset \mathbb{R}^n$, let
\begin{align*}
E-F:=\left\{x-y:\ x\in E\ \text{ and}\ y\in F\right\}.
\end{align*}
Moreover, we use ${\bf 0}$ to denote the origin of ${\mathbb{R}^n}$
and $\mathbb{S}^{n-1}$ the unit sphere of ${\mathbb{R}^n}$.
The symbol $\mathcal{Q}$ denotes the set
of all cubes with edges parallel to the coordinate axes.
For any $x\in {\mathbb{R}^n}$ and $r\in (0,\infty)$, let
$B(x,r):=\{y\in {\mathbb{R}^n}:\ |x-y|<r\}$ and
$B_r:= B({\bf 0},r)$.
For any $\lambda\in (0,\infty)$ and any ball $B:=B(x_B, r_B)$
in ${\mathbb{R}^n} $ with both center $x_B \in {\mathbb{R}^n}$
and the radius $r_B\in (0,\infty)$,
let $\lambda B:= B(x_B, \lambda r_B)$;
for any cube $Q\in \mathcal{Q}$, $\lambda Q$ means a cube
with the same center as $Q$ and $\lambda$ times the edge length of
$Q$.
Throughout this article, let
$\mathcal{P}_{s}$ denote the set of
all polynomials of degree not greater than $s\in \mathbb{Z}_{+}$ on
$\mathbb{R}^n$.
For any $f\in \mathscr{M}$, its support
$\mathrm{supp\,} (f)$
is defined by setting
$\mathrm{supp\,} (f):=
\{x\in{\mathbb{R}^n}:\ f(x)\neq 0\}$.
Let
$C_{\rm c}$ denote
the set of all
continuous functions with compact support,
and denote $C^{\infty}$
[resp. $C_{\rm c}^{\infty}$] the set of all infinitely
differentiable
functions on $\mathbb{R}^n$ (resp. with compact support).
For any $p\in (0,\infty]$ and any measurable
subset $\Omega\subset\mathbb{R}^n$, we use
$L^p_{\rm loc}(\Omega)$ to denote the set of all locally
$p$-integrable
functions on $\Omega$.
For any $f\in L^{1}_{\rm loc}$ and $E\subset
{\mathbb{R}^n}$
with $|E|<\infty$,
let
\begin{align*}
f(E):=\int_{E}f(x)\, dx
\ \ \text{and}\ \
f_E:=\fint_Ef(x)\,dx
:=\frac{1}{|E|}\int_Ef(x)\,dx.
\end{align*}
For any $\alpha=(\alpha_1,\ldots,\alpha_n),\beta
=(\beta_1,\ldots,\beta_n)\in \mathbb{Z}_{+}^n$, $\alpha\le\beta$
means that, for any $i\in \mathbb{N}\cap [1,n]$, $\alpha_i \le
\beta_i$.
Moreover, when $n=1$, for any $k\in \mathbb{N}$,
we denote simply by $f^{(k)}$ the
$k${\rm th} order derivative of $f$ on $\mathbb{R}$.
For any index $q\in [1,\infty]$, we denote by $q'$ its
\emph{conjugate index}, that is, $\frac{1}{q}+\frac{1}{q'}=1$.
We denote by $C$ a \emph{positive constant} which is independent of
the main parameters involved, but may vary from line to line.
We use $C_{(\alpha,\dots)}$
to denote a positive constant depending on the indicated parameters
$\alpha,\dots$.
The symbol $f\lesssim g$ means $f\leq C g$ and, if
$f\lesssim g\lesssim f$, then we write $f\sim g$.
If $f\leq C g$ and $g=h$ or $g\leq h$, we
then write $f\lesssim g=h$ or $f\lesssim g \leq h$.
In addition, $\varepsilon\to 0^+$ means that
there exists $c_0\in(0,\infty)$ such that
$\varepsilon\in(0,c_0)$ and $\varepsilon\to0$.
Finally, when we prove a theorem (and the like),
in its proof we always use the same symbols as in the statement itself
of that theorem (and the like).

\section{Higher-Order Homogeneous Ball Banach Sobolev
Spaces}\label{sec-hohbbs}

In this section, we give some preliminaries of this article
(Subsection \ref{ssub-pre}) and
the density properties of higher-order homogeneous ball Banach Sobolev
spaces (Subsection \ref{ssub-density}).

\subsection{Preliminaries}\label{ssub-pre}

We begin with the concept of ball quasi-Banach function
spaces introduced in \cite{shyy2017}.

\begin{definition}\label{def-X}
A quasi-Banach space $X\subset \mathscr{M}$, equipped
with a quasi-norm
$\|\cdot\|_X$ which makes sense for all functions in
$\mathscr{M}$,
is called a \emph{ball quasi-Banach function space}
(for short, {\rm BQBF} space)
if $X$ satisfies that
\begin{enumerate}[{\rm (i)}]
\item for any $f\in \mathscr{M}$, if $\|f\|_X =0$,
then $f=0$ almost
everywhere;
\item if $f,g\in\mathscr{M}$ satisfy that $|g|\leq
|f|$
almost everywhere, then $\|g\|_X \leq \|f\|_X$;
\item if a sequence
$\{f_m\}_{m\in{\mathbb{N}}}$ in $\mathscr{M}$
satisfies that
$0\leq f_m \uparrow f$ almost everywhere as $m\to\infty$,
then $\|f_m\|_X \uparrow \|f\|_X$ as $m\to\infty$;
\item for any ball $B:=B(x,r)$ with both $x\in{\mathbb{R}^n}$
and $r\in (0,\infty)$, ${\bf 1}_{B}\in X$.
\end{enumerate}
Moreover, a {\rm BQBF} space $X$ is called a \emph{ball Banach function
space} (for short, {\rm BBF} space) if $X$ satisfies the following
extra conditions:
\begin{enumerate}
\item[${\rm (v)}$] for any $f,g\in X$,
$\|f+g\|_X\leq \|f\|_X +\|g\|_X;$
\item[${\rm (vi)}$]
for any ball $B$, there exists a positive constant $C_{(B)}$,
depending on $B$,
such that, for any $f\in X$,
\begin{align*}
\int_{B}|f(x)|\,dx \leq C_{(B)}\|f\|_{X}.
\end{align*}
\end{enumerate}
\end{definition}

\begin{remark}\label{rm-bqbf}
\begin{enumerate}
\item[{\rm(i)}] Observe that,
in Definition \ref{def-X}(iv),
if we replace any ball $B$
by any bounded measurable set
$E$, we obtain an equivalent formulation
of {\rm BQBF} spaces.
\item[\textup{(ii)}]
Let $X$ be a {\rm BQBF} space.
Then, by the definition, we can easily conclude
that, for any $f\in\mathscr{M}$,
$\|f\|_X=0$ if and only if $f=0$ almost everywhere
(see also \cite[Proposition 1.2.16]{lyh2320}).
\item[{\rm(iii)}]Applying both (ii) and (iii)
of Definition \ref{def-X}, we find that any {\rm BQBF} space $X$
has the Fatou property, that is,
for any $\{f_k\}_{k\in\mathbb{N}}\subset X$,
\begin{align*}
\left\|\liminf_{k\to\infty}\left|f_k\right|\right\|_X
\leq\liminf_{k\to\infty}\left\|f_k\right\|_X
\end{align*}
(see also \cite[Lemma 2.4]{wyy23}).
\item[{\rm(iv)}] From \cite[Proposition 1.2.36]{lyh2320}
(see also \cite[Theorem 2]{dfmn2021}),
we infer that every {\rm BQBF} space
is complete.
\item[{\rm(v)}] Recall that a quasi-Banach space
$X\subset\mathscr{M}$
is called a \emph{quasi-Banach function space}
if it is a {\rm BQBF} space
and it satisfies Definition \ref{def-X}(iv)
with ball therein replaced by any
measurable set of \emph{finite measure}.
Moreover, a \emph{Banach function space}
is a quasi-Banach function space satisfying
(v) and (vi) of Definition \ref{def-X}
with ball therein replaced by any measurable set
of \emph{finite measure}, which was originally
introduced in
\cite[Chapter 1, Definitions 1.1 and 1.3]{bs1988}.
It is easy to show that every quasi-Banach
function space (resp. Banach function space)
is a ball quasi-Banach
function space (resp. ball Banach function space),
and the converse is not necessary to be true.
Several examples about ball (quasi-)Banach function spaces
are given in Section \ref{ssec-fract-s} below.

\item[${\rm (vi)}$]
In Definition \ref{def-X},
if we replace (iv)
by the following \emph{saturation property}:
\begin{enumerate}
\item[\rm(a)]
for any measurable set $E\subset\mathbb{R}^n$
of positive measure, there exists a measurable set $F\subset E$
of positive measure satisfying that $\mathbf{1}_F\in X$,
\end{enumerate}
then we obtain the definition of quasi-Banach function spaces
in Lorist and Nieraeth
\cite{ln23}.
Moreover, by \cite[Proposition 2.5]{zyy2023bbm0}
(see also \cite[Proposition 4.22]{n23}),
we find that, if the
quasi-normed vector space $X$ satisfies
the extra assumption that
the Hardy--Littlewood maximal operator is weakly bounded on
$X$ or its convexification,
then the definition of quasi-Banach function spaces in \cite{ln23}
coincides with
the definition of ball quasi-Banach function spaces. Thus,
under this extra assumption,
working with ball quasi-Banach function
spaces in the sense of Definition \ref{def-X}
or quasi-Banach function spaces in
the sense of \cite{ln23} would yield
exactly the same results.
\end{enumerate}
\end{remark}

The following definition of the $p$-convexification of
a {\rm BQBF} space can be
found in \cite[Definition 2.6]{shyy2017}.

\begin{definition}
Let $X$ be a {\rm BQBF} space and $p\in (0,\infty)$. The
\emph{ $p$-convexification}
$X^p$ of $X$ is defined by setting
$X^p:=\{f\in \mathscr{M}:\ |f|^p \in X\}$
and is equipped with the quasi-norm
$\left\|f\right\|_{X^p}:=\left\|\,|f|^p\right\|_{X}^{1/p}$
for any $f\in X^p$.
\end{definition}

The following concept of the associate space of a
{\rm BBF} space can be found in
\cite[p.\,9]{shyy2017}; see \cite[Chapter 1, Section 2]{bs1988}
for more details.

\begin{definition}\label{def-X'}
For any {\rm BBF} space $X$,
the \emph{associate space} (also called the
\emph{K\"{o}the dual}) $X'$ is defined by setting
\begin{align*}
X':=\left\{f\in \mathscr{M}:\ \|f\|_{X'}:=
\sup_{\{g\in X:\ \|g\|_X =1\}}\|fg\|_{L^1
}<\infty\right\},
\end{align*}
where $\|\cdot\|_{X'}$ is called the \emph{associate norm}
of $\|\cdot\|_{X}$.
\end{definition}

\begin{remark}\label{rem-x-a}
Let $X$ be a {\rm BBF} space.
\begin{enumerate}[{\rm (i)}]
\item From \cite[Proposition 2.3]{shyy2017}, we infer that the
associate space $X'$
is also a {\rm BBF} space.

\item Using \cite[Theorem 2.4]{bs1988}, we find that, if $f\in X$
and $g\in X'$, then $fg$ is integrable and
\begin{align*}
\int_{{\mathbb{R}^n}}\left|f(x)g(x)\right|\,dx\leq
\|f\|_{X}\|g\|_{X'}.
\end{align*}

\item From \cite[Lemma 2.6]{zwyy2021}, it follows that
$X$
coincides with its second associate space $X''$.
In other words, a function $f\in X$ if and only if
$f\in X''$ and, in that case,
$\|f\|_X = \|f\|_{X''}.$
\end{enumerate}
\end{remark}

In what follows, for any $f\in L^{1}_{\rm loc}$, its
\emph{Hardy--Littlewood maximal function}
$\mathcal{M}(f)$ is defined by setting, for any $x\in {\mathbb{R}^n}$,
\begin{align*}
\mathcal{M}(f)(x):=\sup_{B\ni x}\fint_{B}|f(y)|\,dy,
\end{align*}
where the supremum is taken over all balls
$B\subset \mathbb{R}^n$ containing $x$. We denote by
$\|{\mathcal M}\|_{X \to Y}$
the norm of $\mathcal{M}$ from a {\rm BQBF} space $X$ to another {\rm BQBF} space
$Y$.

As pointed out in \cite[Remark 4.11(ii)]{dlyyz-bvy},
for some specific examples of $X$ such as $X: =
L^{r(\cdot)}$
with $\widetilde{r}_- = 1$ (see Subsection \ref{ssec-vari-lp}
for the precise definitions of both $L^{r(\cdot)}$
and $\widetilde{r}_-$), in the endpoint case $p=1$ of Theorem
\ref{thm-main}
it is still unknown whether or not $\mathcal{M}$ is bounded on $X'$.
Therefore, we need the following
definition in this article, which
was introduced in \cite[Definition
2.14]{pyyz2023}.

\begin{definition}\label{ass-endpoint}
Let $X$ be a {\rm BBF} space. The Hardy--Littlewood maximal
operator $\mathcal{M}$
is said to be \emph{endpoint bounded} on $X'$
if there exists a sequence
$\{\theta_m\}_{m\in\mathbb{N}}\subset(0,1)$
satisfying $\lim_{m\to\infty}\theta_m=1$ such that,
for any $m\in\mathbb{N}$,
$X^\frac{1}{\theta_m}$ is a {\rm BBF} space,
$\mathcal{M}$
is bounded on $(X^\frac{1}{\theta_m})'$, and
\begin{align*}
\lim_{m\to\infty}\left\|\mathcal{M}\right\|_{(X^\frac{1}{\theta_m})'
\to(X^\frac{1}{\theta_m})'}<\infty.
\end{align*}
\end{definition}

Recall that, for any given $r\in(0,\infty)$,
the \emph{centered ball average operator}
$\mathcal{B}_r$ is defined by setting,
for any $f\in L_{{\mathrm{loc}}}^1$ and
$x\in\mathbb{R}^n$,
\begin{align*}
\mathcal{B}_r(f)(x):=\frac{1}{|B(x,r)|}\int_{B(x,r)}\left|f(y)\right|\,dy.
\end{align*}

The following key lemma can be found in \cite[Lemma 3.11]{dgpyyz24}
and \cite[Lemma 3.7 and Remark 3.8]{zyy2023bbm0},
which is frequently used in this article.

\begin{lemma}\label{rem-weak}
If $X$ is a {\rm BBF} space such that there exists $p\in[1,\infty)$
satisfying that $X^{\frac1p}$ is also a {\rm BBF} space
and $\mathcal{M}$ is bounded on $(X^{\frac1p})'$,
then the centered ball average operators
$\{\mathcal{B}_r\}_{r\in (0,\infty)}$
are uniformly bounded on $X$, that is, there exists
a positive constant $C$, independent of $r$, such that, for any $f\in
X$,
$
\|\mathcal{B}_r (f)\|_X
\leq
C\| f\|_X .$
Furthermore, if $p\in(1,\infty)$, then $\mathcal{M}$
is bounded on $X$.
\end{lemma}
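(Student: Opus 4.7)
The plan is to reduce the uniform boundedness of $\{\mathcal{B}_r\}_{r\in(0,\infty)}$ on $X$ to the assumed boundedness of $\mathcal{M}$ on $(X^{1/p})'$ via Jensen's inequality combined with a duality argument that exploits the self-adjointness of the ball average operator. First I would observe that, since $p\in[1,\infty)$, Jensen's inequality gives, for any $x\in\mathbb{R}^n$, $|\mathcal{B}_r(f)(x)|^p\le \mathcal{B}_r(|f|^p)(x)$, so by the definition of $p$-convexification,
$$
\|\mathcal{B}_r(f)\|_X
=\left\|\,|\mathcal{B}_r(f)|^p\,\right\|_{X^{1/p}}^{1/p}
\le \left\|\mathcal{B}_r(|f|^p)\right\|_{X^{1/p}}^{1/p}.
$$
Writing $Y:=X^{1/p}$, which is a BBF space by assumption, it suffices to prove that $\mathcal{B}_r$ is uniformly bounded on $Y$.

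For the bound on $Y$, I would use that $Y$ coincides with its second associate space (Remark \ref{rem-x-a}(iii)) to write, for any nonnegative $h\in Y$,
$$
\|\mathcal{B}_r(h)\|_Y
=\sup_{g\in Y',\,\|g\|_{Y'}\le 1,\,g\ge 0}
\int_{\mathbb{R}^n} \mathcal{B}_r(h)(x)\,g(x)\,dx.
$$
The key observation is that the kernel of $\mathcal{B}_r$ is symmetric, since $\mathbf{1}_{B(x,r)}(y)=\mathbf{1}_{B(y,r)}(x)$ and $|B(x,r)|=|B(y,r)|$; hence Fubini's theorem yields
$$
\int_{\mathbb{R}^n}\mathcal{B}_r(h)(x)\,g(x)\,dx
=\int_{\mathbb{R}^n} h(y)\,\mathcal{B}_r(g)(y)\,dy.
$$
Combining the pointwise domination $\mathcal{B}_r(g)\le \mathcal{M}(g)$ with the H\"older-type inequality from Remark \ref{rem-x-a}(ii) and the hypothesis that $\mathcal{M}$ is bounded on $Y'=(X^{1/p})'$, I then get
$$
\int_{\mathbb{R}^n} h(y)\,\mathcal{B}_r(g)(y)\,dy
\le \|h\|_Y\,\|\mathcal{M}(g)\|_{Y'}
\le \|\mathcal{M}\|_{Y'\to Y'}\,\|h\|_Y\,\|g\|_{Y'}.
$$
Feeding this back into the Jensen reduction produces the uniform bound $\|\mathcal{B}_r(f)\|_X\le \|\mathcal{M}\|_{Y'\to Y'}^{1/p}\|f\|_X$ with the constant independent of $r$.

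For the \emph{furthermore} part, when $p\in(1,\infty)$, the sublinearity (rather than linearity) of $\mathcal{M}$ prevents a direct replication of the duality argument above. Here the natural route is a Rubio-de-Francia-type iteration: starting from $\mathcal{M}$ bounded on $(X^{1/p})'$, construct a Rubio-de-Francia operator that dominates any nonnegative element of $(X^{1/p})'$ by an $A_1$-weight with comparable $(X^{1/p})'$-norm, and then transfer the classical Muckenhoupt weighted $L^p$ boundedness of $\mathcal{M}$ to $X$ using this majorant together with the duality $X=(X')'$. The main obstacle is precisely this transfer step in the furthermore part; once it is in hand, the uniform boundedness of $\mathcal{B}_r$ on $X$ in fact follows from the stronger assertion that $\mathcal{M}$ is bounded on $X$, since $\mathcal{B}_r(f)\le \mathcal{M}(f)$ pointwise.
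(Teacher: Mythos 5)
Your argument for the main assertion is correct and is essentially the standard one: the paper itself does not prove this lemma but cites it to \cite[Lemma 3.11]{dgpyyz24} and \cite[Lemma 3.7 and Remark 3.8]{zyy2023bbm0}, whose proofs run exactly along your lines (Jensen's inequality to pass to $Y:=X^{1/p}$, the identity $Y=Y''$ from Remark \ref{rem-x-a}(iii), the symmetry of the kernel of $\mathcal{B}_r$, the pointwise bound $\mathcal{B}_r(g)\le\mathcal{M}(g)$, and the assumed boundedness of $\mathcal{M}$ on $Y'$). Your constant $\|\mathcal{M}\|_{Y'\to Y'}^{1/p}$ is indeed independent of $r$, which is the whole point.

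The only thing you have not actually finished is the \emph{furthermore} part, which you correctly route through a Rubio de Francia iteration but then label an ``obstacle.'' It is not one: the paper already supplies every ingredient. By Lemma \ref{lem-rxp},
\begin{align*}
\|\mathcal{M}f\|_X
\le \sup_{\|g\|_{(X^{1/p})'}\le 1}
\left[\int_{\mathbb{R}^n}\left|\mathcal{M}f(x)\right|^p R_{(X^{1/p})'}g(x)\,dx\right]^{\frac1p},
\end{align*}
and by Lemma \ref{lem-rxgmm} the majorant $\upsilon:=R_{(X^{1/p})'}g$ lies in $A_1\subset A_p$ with $[\upsilon]_{A_p}\le[\upsilon]_{A_1}\le 2\|\mathcal{M}\|_{(X^{1/p})'\to(X^{1/p})'}$, so for $p\in(1,\infty)$ Lemma \ref{lem-apwight}(vi) gives $\int|\mathcal{M}f|^p\upsilon\le C\int|f|^p\upsilon$ with $C$ depending only on $n$, $p$, and $\|\mathcal{M}\|_{(X^{1/p})'\to(X^{1/p})'}$; the H\"older inequality of Remark \ref{rem-x-a}(ii) together with $\|\upsilon\|_{(X^{1/p})'}\le 2$ then bounds the right-hand side by $2C\|f\|_X^p$. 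This closes the argument, and, as you note, it then subsumes the first part since $\mathcal{B}_r(f)\le\mathcal{M}(f)$ pointwise. With this two-line completion your proof is complete and coincides with the one in the cited sources.
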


The following concept can be found in \cite[Definition 3.1]{bs1988}
or
\cite[Definition 3.2]{wyy2020}.

\begin{definition}\label{ass-abs-norm}
A BQBF space $X$ is said to have an \emph{absolutely continuous norm}
if, for any $f\in X$ and any sequence $\{E_{j}\}_{j\in{\mathbb{N}}}$
of measurable sets satisfying that ${\bf 1}_{E_j}\to 0$ almost
everywhere as $j\to \infty$,
then $\|f{\bf 1}_{E_j}\|_X \to 0$ as $j\to \infty$.
\end{definition}

\begin{remark}\label{r-dual}
Let $X$ be a {\rm BBF} space having an absolutely continuous norm.
Then, by \cite[Lemma 4.7]{zyyy24} and
\cite[Remark 2.8(iii)]{zyy2023bbm0},
we conclude that $X$ is separable and
$X'$ coincides with $X^*$. Here and thereafter,
we denote the \emph{dual space} of $X$ by $X^*$.
\end{remark}

We next recall the concepts of $A_p $-weights and
weighted function spaces as follows
(see, for instance, \cite{g2014}).

\begin{definition}\label{def-Ap}
\begin{enumerate}[{\rm (i)}]
\item Let $p\in[1,\infty)$.
An \emph{$A_p $-weight} $\upsilon$ is
a nonnegative locally integrable function
on ${\mathbb{R}^n}$ such that,
when $p\in(1,\infty)$,
\begin{align*}
[\upsilon]_{A_p }:=
\sup_{Q\subset {\mathbb{R}^n}}
\left[\frac{1}{|Q|}\int_{Q}\upsilon(x)\,dx\right]
\left\{\frac{1}{|Q|}\int_{Q}[\upsilon(x)]^{1-p'}\,dx\right\}^{p-1}
<
\infty
\end{align*}
and, when $p=1$,
\begin{align*}
[\upsilon]_{A_1}:=\sup_{Q\subset {\mathbb{R}^n}}
\left[\frac{1}{|Q|}
\int_{Q}\upsilon(x)\,dx\right]
\left\|\upsilon^{-1}\right\|_{L^\infty (Q)}
<
\infty,
\end{align*}
where
the suprema are
taken over all cubes $Q\subset {\mathbb{R}^n}$.

\item Let $p\in (0,\infty]$ and $\upsilon$ be a nonnegative locally
integrable function on ${\mathbb{R}^n}$.
The \emph{weighted Lebesgue space}
$L^{p}_{\upsilon}$
is defined to be the set of all
$f\in \mathscr{M}$
such that
\begin{align*}
\|f\|_{L^p_{\upsilon}}:=
\left[\int_{{\mathbb{R}^n}}|f(x)|^p \upsilon(x)\,
dx\right]^{\frac{1}{p}}
<
\infty.
\end{align*}
\end{enumerate}
\end{definition}

Now, we present the following definition
of ball Banach Sobolev
spaces, which was originally introduced in
\cite[Definition 2.4]{dlyyz-bvy} when $k=1$.

\begin{definition}\label{def-WKX}
Let $X$ be a {\rm BBF} space and $k\in {\mathbb{N}}$. The
\emph{$k${\rm th} order homogeneous ball Banach Sobolev space}
$\dot{W}^{k,X}$
is defined to be the set of all $f\in L^{1}_{\rm loc}$
such that $|\nabla^k f|\in X$, which is equipped
with the \emph{semi-norm}
\begin{align*}
\|f\|_{\dot{W}^{k,X}}:=\left\|\nabla^k f\right\|_{X}:=
\left\|\,\left|\nabla^k f\right|\,\right\|_{X},
\end{align*}
where $|\nabla^k f|$ is the same as in \eqref{eq-Nk}.
Moreover, if $X:=L^p_{\upsilon}$
with $p\in [1,\infty]$ and $\upsilon\in L^1_{\rm loc}$
being nonnegative, then we denote $\dot{W}^{k,X}$
simply by $\dot{W}^{k,p}_{\upsilon}$,
which is called the \emph{$k${\rm th} order homogeneous weighted
Sobolev space}.
\end{definition}

For any given $k\in {\mathbb{N}}$ and any given open set $U\subset
{\mathbb{R}^n}$, the
\emph{Sobolev space} $W^{k,1}(U)$ is defined to be the set of
all integrable functions $f$ on $U$ such that
\begin{align*}
\|f\|_{W^{k,1}(U)}:=\|f\|_{L^1(U)}+
\left\|\,\left|\nabla^k f\right|\,\right\|_{L^1(U)}<\infty.
\end{align*}
Furthermore, we denote by $W^{k,1}_{\rm loc}$
the set of all $f\in L^{1}_{\rm loc}$ such that, for any
bounded open set $U\subset {\mathbb{R}^n}$,
$f\in W^{k,1}(U)$ .

\begin{proposition}\label{pro-wklo}
Let $k\in {\mathbb{N}}$ and $X$ be a {\rm BBF} space. Then
$\dot{W}^{k,X}\subset W^{k,1}_{\rm
loc}$.
\end{proposition}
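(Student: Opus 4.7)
The plan is to unpack the definitions and read off the desired conclusion directly from the defining axioms of a {\rm BBF} space. Fix $f\in\dot{W}^{k,X}$ and an arbitrary bounded open set $U\subset\mathbb{R}^n$. I would choose a ball $B\subset\mathbb{R}^n$ with $U\subset B$ and then bound separately the two summands in $\|f\|_{W^{k,1}(U)}=\|f\|_{L^1(U)}+\|\,|\nabla^k f|\,\|_{L^1(U)}$.

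For the first summand, Definition \ref{def-WKX} already builds in $f\in L^{1}_{\rm loc}$, so $\|f\|_{L^1(U)}\leq\|f\|_{L^1(B)}<\infty$ is immediate.

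For the second summand, I would invoke Definition \ref{def-X}(vi): since $X$ is a {\rm BBF} space and $B$ is a ball, there exists a constant $C_{(B)}\in(0,\infty)$, depending only on $B$, such that $\int_B |g(x)|\,dx\leq C_{(B)}\|g\|_X$ for every $g\in X$. Because $|\nabla^k f|\in X$ by the definition of $\dot{W}^{k,X}$, applying this estimate with $g:=|\nabla^k f|$ yields
\begin{align*}
\left\|\,\left|\nabla^k f\right|\,\right\|_{L^1(U)}
\leq\int_B\left|\nabla^k f(x)\right|\,dx
\leq C_{(B)}\left\|\,\left|\nabla^k f\right|\,\right\|_X<\infty.
\end{align*}
Combining the two bounds gives $f\in W^{k,1}(U)$, and, since $U$ was arbitrary, $f\in W^{k,1}_{\rm loc}$.

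There is no real obstacle here: the proposition is essentially a reformulation of the local integrability built into axiom (vi) of Definition \ref{def-X}, together with the tautological fact that membership in $\dot{W}^{k,X}$ forces $f$ itself to be locally integrable. The only point that merits a line of comment is that the target space $W^{k,1}(U)$ as defined in the excerpt involves only the top-order gradient $|\nabla^k f|$ and $f$ itself, so no control on intermediate derivatives is needed, and the proof remains a direct two-term verification.
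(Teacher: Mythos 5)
Your two displayed estimates reproduce exactly the first step of the paper's argument: $f\in L^1_{\rm loc}$ is built into Definition \ref{def-WKX}, and Definition \ref{def-X}(vi) applied to $|\nabla^k f|\in X$ gives $|\nabla^k f|\in L^1_{\rm loc}$. The problem lies in your closing remark that ``no control on intermediate derivatives is needed.'' The substantive content of Proposition \ref{pro-wklo} --- the reason it is stated as a proposition at all --- is precisely that membership in $\dot W^{k,X}$ only postulates the existence (and $X$-membership) of the weak derivatives of order exactly $k$, whereas membership in $W^{k,1}_{\rm loc}$, in the sense the paper actually uses downstream (see Lemma \ref{lem-poin}, where $\nabla^j(f-P)$ with $j\le k-1$ appears, or Theorem \ref{thm-fsGN}, which involves $|\nabla^{k-1}f|$), requires all weak derivatives $\partial^\alpha f$ with $|\alpha|\le k$ to exist and be locally integrable. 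That $f\in L^1_{\rm loc}$ together with $|\nabla^k f|\in L^1_{\rm loc}$ forces the existence and local integrability of the intermediate derivatives is a genuine theorem, not a tautology; the paper obtains it by invoking the theorem in Maz'ya's book, Sect.\ 1.1.2. Your proof verifies only the two terms of the displayed (semi-)norm, i.e.\ a strictly weaker statement resting on a hyper-literal reading of that display; to close the argument you would need to add the Maz'ya-type step (or an equivalent mollification/interpolation argument producing the intermediate derivatives).
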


\begin{proof}
Let $f\in \dot{W}^{k,X}$.
Then $|\nabla^k f|\in X$. By
this and
Definition \ref{def-X}(vi),
we find that $|\nabla^k f|\in L^1_{\rm loc}$.
This, combined with \cite[Sect. 1.1.2, Theorem]{ma2011},
further implies that,
for any $\alpha\in \mathbb{Z}_{+}^n$ with
$0\leq |\alpha|\leq k-1 $,
$\partial^{\alpha}f\in L^1_{\rm loc}$.
Thus, $f\in W^{k,1}_{\rm loc}$.
This finishes the proof of Proposition \ref{pro-wklo}.
\end{proof}

For Muckenhoupt $A_p $-weights, we have the
following basic properties,
which are frequently used in this article; see, for instance,
\cite[Lemma 3.15]{dgpyyz24}, \cite[Section 7.1]{shyy2017}, \cite[(7.3)
and (7.5)]{D2001},
\cite[Proposition 7.1.5 and Theorem 7.1.9]{g2014}, and \cite[Theorem
2.7.4]{dhhr2011}.

\begin{lemma}\label{lem-apwight}
Let $p\in [1,\infty)$ and $\upsilon\in A_p $. Then the
following
statements hold:
\begin{enumerate}[{\rm (i)}]
\item $L^p_{\upsilon}$
is a {\rm BBF} space having an absolutely continuous norm
and the centered ball average operators
are uniformly bounded on $L^p_{\upsilon}$.

\item For any cubes $Q,S\subset {\mathbb{R}^n}$ with $Q\subset S$,
$
\upsilon(S)\leq [\upsilon]_{A_p }
({|S|}/{|Q|})^p
\upsilon(Q).
$

\item \begin{align*}
[\upsilon]_{A_p}=\sup_{Q\subset {\mathbb{R}^n} }
\sup_{\|f{\bf 1}_{Q}\|_{L^p_{\upsilon}}
\in (0,\infty)}
\frac{[\frac{1}{Q}\int_{Q}|f(x)|\,dx]^p}{\frac{1}{\upsilon(Q)}\int_{Q}|f(x)|^p\upsilon(x)\,dx},
\end{align*}
where the first supremum is taken over all cubes $Q\subset
{\mathbb{R}^n}$
and the second supremum is taken
over all $f\in L^{1}_{\rm loc}$ such that
$\|f{\bf 1}_{Q}\|_{L^p_{\upsilon}}
\in (0,\infty)$.

\item $\upsilon\in A_{q}$ for any $q\in [p,\infty)$
and, moreover,
$[\upsilon]_{A_q }\leq [\upsilon]_{A_p
}.$
\item If $p\in (1,\infty)$ and $\mu:= \upsilon^{1-p'}$, then $\mu \in
A_{p'}$,
$[\mu]_{A_{p'}}^{p-1}=[\upsilon]_{A_{p}}$, and
$[L_{\upsilon}^p ]'=L_{\mu
}^{p'}$,
where $[L_{\upsilon}^p ]'$ denotes the associate space
of
$L_{\upsilon}^p $.

\item If $p\in (1,\infty)$, then $\mathcal{M}$
is bounded on $L_{\upsilon}^p $ and, moreover, there
exists a positive constant $C$,
independent of $\upsilon$, such that
$\|{\mathcal M}\|_{L_{\upsilon}^p  \to
L_{\upsilon}^p }
\leq C[\upsilon]^{p'-1}_{A_p }$.
\end{enumerate}
\end{lemma}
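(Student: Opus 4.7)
The plan is to verify each of the six items either by direct manipulation of the defining $A_p$ condition or by invoking classical facts from the theory of Muckenhoupt weights, since all six assertions are essentially standard and explicit proofs are already available in the references cited in the statement.

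For (i), I would check the axioms of Definition \ref{def-X} for $L^p_\upsilon$: items (i)--(v) are inherited from the unweighted $L^p$ theory once one observes that an $A_p$-weight is positive almost everywhere and locally integrable, while (vi) uses H\"older's inequality together with the local integrability of $\upsilon^{1-p'}$ (for $p>1$) or the local boundedness of $\upsilon^{-1}$ (for $p=1$). The absolute continuity of the norm then follows from the dominated convergence theorem applied to $\upsilon|f|^p$. The uniform boundedness of the centered ball averages $\mathcal{B}_r$ on $L^p_\upsilon$ follows from the pointwise estimate $\mathcal{B}_r(f)\leq \mathcal{M}(f)$ combined with item (vi) when $p>1$, and, when $p=1$, from the $A_1$ inequality $\mathcal{M}(\upsilon)\leq [\upsilon]_{A_1}\upsilon$ almost everywhere via a standard duality argument.

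For (ii), I would apply the defining $A_p$ inequality on $S$ together with H\"older's inequality $|Q|\leq [\upsilon(S)]^{1/p}[\upsilon^{1-p'}(S)]^{1/p'}$, and rearrange to produce the claimed doubling with exponent $p$. For (iii), testing the $A_p$ functional against $f:=\upsilon^{1-p'}\mathbf{1}_Q$ recovers one direction, while H\"older's inequality on the numerator of the displayed ratio provides the reverse. For (iv), with $q\geq p$, Jensen's inequality applied to $\upsilon^{1-q'}$ (using that $\frac{q-1}{p-1}\geq 1$) yields $[\upsilon]_{A_q}\leq [\upsilon]_{A_p}$. For (v), the identity $(1-p')(1-p)=1$ swaps the roles of $\upsilon$ and $\mu:=\upsilon^{1-p'}$ in the defining conditions of $A_p$ and $A_{p'}$, giving $[\mu]_{A_{p'}}^{p-1}=[\upsilon]_{A_p}$; the associate-space identification $[L^p_\upsilon]'=L^{p'}_\mu$ then follows from H\"older's inequality together with the standard converse obtained by testing against a suitably normalized power of $f$.

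For (vi), I would simply invoke the classical Muckenhoupt theorem for the qualitative bound and Buckley's sharp estimate $\|\mathcal{M}\|_{L^p_\upsilon\to L^p_\upsilon}\lesssim [\upsilon]_{A_p}^{p'-1}$ for the quantitative one. The main ``obstacle'', in what is otherwise a standard compilation, is purely organizational: aligning the normalizations of $[\upsilon]_{A_p}$ used across \cite{dgpyyz24,shyy2017,D2001,g2014,dhhr2011} with the convention fixed in Definition \ref{def-Ap}. Once those conventions are matched, I would refer the reader to the cited sources for the remaining details rather than reproduce them in full here.
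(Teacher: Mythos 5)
Your proposal is correct and takes essentially the same route as the paper, which in fact supplies no proof of this lemma at all but simply points to \cite[Lemma 3.15]{dgpyyz24}, \cite[Section 7.1]{shyy2017}, \cite[(7.3) and (7.5)]{D2001}, \cite[Proposition 7.1.5 and Theorem 7.1.9]{g2014}, and \cite[Theorem 2.7.4]{dhhr2011}; the item-by-item sketches you give are exactly the standard arguments found there. Two minor points: in (ii) the H\"older step should read $|Q|\le[\upsilon(Q)]^{\frac1p}[\upsilon^{1-p'}(S)]^{\frac1{p'}}$ (with $\upsilon(Q)$, not $\upsilon(S)$, in the first factor, so that $\upsilon(Q)$ survives on the right-hand side), and your arguments for (ii)--(iv) as written use $\upsilon^{1-p'}$ and hence implicitly assume $p>1$, the case $p=1$ requiring the usual replacement of $[\upsilon^{1-p'}(Q)]^{p-1}$ by $\|\upsilon^{-1}\|_{L^\infty(Q)}$.
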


At the end of this subsection, we show that the ``drawback"
of the strong-type norm.

\begin{proposition}\label{pro-defect}
Let $X$ be a {\rm BQBF} space, $k\in \mathbb{N}$, $s,q\in (0,\infty)$, and
$f\in \mathscr{M}$.
Assume that
$X^{\frac{1}{q}}$ is a {\rm BBF} space.
If $s\min\{1,q\}\in [k,\infty)$ and
\begin{align}\label{e-sfp}
\left\|\left[\int_{\mathbb{R}^n}
\frac{|\Delta^k_h f(\cdot)|^q}{|h|^{n+sq}}\,dh\right]^{\frac{1}{q}}\right\|_X
<\infty,
\end{align}
then $f$
coincides almost everywhere with a polynomial of degree at most $k-1$.
\end{proposition}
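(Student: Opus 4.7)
The plan is to localize the hypothesis using that $X^{1/q}$ is a ball Banach function space and then to invoke (a local form of) the rigidity result in \cite[Proposition 1.3]{FKR15}. First, I set
$$
g(x):=\left[\int_{\mathbb{R}^n}\frac{|\Delta_h^k f(x)|^q}{|h|^{n+sq}}\,dh\right]^{\frac{1}{q}},\qquad x\in\mathbb{R}^n,
$$
so the assumption \eqref{e-sfp} reads $g\in X$. By the very definition of the $(1/q)$-convexification, $g^q\in X^{1/q}$ with $\|g^q\|_{X^{1/q}}=\|g\|_X^q<\infty$, and therefore property (vi) of Definition \ref{def-X} applied to the ball Banach function space $X^{1/q}$ yields, for every ball $B\subset\mathbb{R}^n$,
$$
\int_B\int_{\mathbb{R}^n}\frac{|\Delta_h^k f(x)|^q}{|h|^{n+sq}}\,dh\,dx\le C_{(B)}\|g\|_X^q<\infty.
$$

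Next, I exploit the arithmetic assumption $s\min\{1,q\}\ge k$ to lower the exponent of $|h|$ to the classical rigidity threshold near $h=0$. When $q\ge 1$, we have $s\ge k$, and on $\{|h|\le 1\}$ the monotonicity $|h|^{-n-sq}\ge|h|^{-n-kq}$ combined with the preceding display gives $\int_B\int_{|h|\le 1}|\Delta_h^k f(x)|^q|h|^{-n-kq}\,dh\,dx<\infty$. When $q\in(0,1)$, we have $sq\ge k$, and the same monotonicity produces $\int_B\int_{|h|\le 1}|\Delta_h^k f(x)|^q|h|^{-n-k}\,dh\,dx<\infty$. In either regime, the finiteness near $h=0$ on each ball $B$ reproduces a local version of the Gagliardo-type condition in \cite[Proposition 1.3]{FKR15}. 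Since the proof there is essentially local in nature, it forces $f$ to coincide almost everywhere on $B$ with a polynomial $P_B$ of degree at most $k-1$. Any two polynomials of degree at most $k-1$ that agree on a set of positive measure must coincide identically, so the $P_B$ obtained on overlapping balls agree, and exhausting $\mathbb{R}^n$ by an increasing family of balls produces a single polynomial $P$ of degree at most $k-1$ with $f=P$ almost everywhere.

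The main obstacle is justifying the rigidity of \cite[Proposition 1.3]{FKR15} in the quasi-Banach range $q\in(0,1)$, where the reduced exponent is $n+k$ rather than the $n+kq$ addressed there. I plan to handle this by a truncation argument: on $\{(x,h):|\Delta_h^k f(x)|\le 1\}$ one has $|\Delta_h^k f(x)|\le|\Delta_h^k f(x)|^q$, so the $L^1$ integral with weight $|h|^{-n-k}$ is controlled by the $L^q$ integral already obtained; on the complementary set, a layer-cake decomposition combined with the fact that $k\le k/q$ for $q\in(0,1)$ (so that the weight $|h|^{-n-k}$ is even more singular than $|h|^{-n-kq}$ near $h=0$) reduces the problem to the $p=1$ instance of \cite[Proposition 1.3]{FKR15}, after which the gluing argument above yields the global polynomial conclusion.
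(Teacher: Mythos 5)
Your localization step is fine (in fact, using Definition \ref{def-X}(vi) for $X^{\frac1q}$ is a slightly more direct route to the local finiteness
$\int_B\int_{\mathbb{R}^n}|\Delta^k_hf(x)|^q|h|^{-n-sq}\,dh\,dx<\infty$
than the duality argument via $(X^{\frac1q})''$ that the paper uses), and the gluing of the polynomials $P_B$ at the end is unobjectionable. The problem is the case $q\in(0,1)$, which you yourself flag as the main obstacle: the truncation argument does not close. On the set $\{|\Delta^k_hf|\le 1\}$ the inequality $|\Delta^k_hf|\le|\Delta^k_hf|^q$ indeed controls the $L^1$ quantity, but on the complementary set $\{|\Delta^k_hf|>1\}$ the inequality goes the wrong way, and the layer-cake route fails quantitatively: writing $d\mu:=|h|^{-n-k}\,dh\,dx$ on $B\times B_1$, Chebyshev only gives $\mu(\{|\Delta^k_hf|>\lambda\})\lesssim\lambda^{-q}$, so the bound for $\int_{\{|\Delta^k_hf|>1\}}|\Delta^k_hf|\,d\mu$ involves $\int_1^\infty\lambda^{-q}\,d\lambda$, which diverges precisely because $q<1$. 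An $L^q$ quasi-norm with $q<1$ simply cannot control an $L^1$ norm where the function is large, so the promised reduction to the $p=1$ instance of \cite[Proposition 1.3]{FKR15} is not available. A secondary, lesser issue is that even for $q\ge1$ you invoke an unproven ``local version'' of \cite[Proposition 1.3]{FKR15}; this localization is true, but it is essentially the content one has to prove, not something one can cite.

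The paper avoids both issues by never leaving the $L^q$ scale. It decomposes the $h$-integral into dyadic annuli and uses the step-doubling identity $\Delta_{2^jh}f(x)=\sum_{i=0}^{2^j-1}\Delta_hf(x+ih)$ (iterated $k$ times, with the discrete H\"older inequality when $q\ge1$ and the Jensen-type inequality $|\sum a_i|^q\le\sum|a_i|^q$ when $q<1$) to compare $\int_{\frac r2\le|h|<r}\int|\Delta^k_hf|^q$ with $2^{j(n+kq)}$ (resp. $2^{j(n+k)}$) times the corresponding integral over the annulus $2^{-(j+1)}r\le|h|<2^{-j}r$; the hypothesis $s\min\{1,q\}\ge k$ is exactly what makes the resulting series $\sum_j2^{j(s-k)q}$ (resp. $\sum_j2^{j(sq-k)}$) of identical positive terms summable only if each term vanishes. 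This forces $\Delta^k_hf=0$ almost everywhere, and the polynomial conclusion follows from \cite[Lemma 3.6]{dssvy}. If you want to salvage your outline, replace the truncation step for $q<1$ by this Jensen-based doubling argument; as written, the proposal has a genuine gap there.
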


\begin{remark}
Proposition \ref{pro-defect} when $k=1$
coincides with \cite[Theorem 4.3]{dlyyz-bvy}.
Proposition \ref{pro-defect} when $X:=L^q$ and $s=k$
coincides with \cite[Proposition 1.3]{FKR15}.
The other cases of Proposition \ref{pro-defect} are new.
\end{remark}

\begin{proof}[Proof of Proposition \ref{pro-defect}]
By the assumption that $X^{\frac{1}{q}}$ is a {\rm BBF} space, Definition
\ref{def-X'}, Remark \ref{rem-x-a}(iii), and \eqref{e-sfp}, we have
\begin{align}\label{eq-defect}
\sup_{\|g\|_{(X^{\frac{1}{q}})'}=1}
\int_{\mathbb{R}^n}\int_{\mathbb{R}^n}\frac{|\Delta^k_h
f(x)|^q}{|h|^{n+sq}}\,dh g(x)\,dx
&=	\left\|\int_{\mathbb{R}^n}\frac{|\Delta^k_h
f(\cdot)|^q}{|h|^{n+sq}}\,dh\right\|_{(X^{\frac{1}{q}})''}\notag\\
&=\left\|\int_{\mathbb{R}^n}\frac{|\Delta^k_h
f(\cdot)|^q}{|h|^{n+sq}}\,dh\right\|_{X^{\frac{1}{q}}}
<
\infty.
\end{align}
For any $N\in (0,\infty)$, let $g:= {\bf 1}_{B({\bf 0},kN)}/
\|{\bf 1}_{B({\bf 0},kN)}\|_{(X^{\frac{1}{q}})'}$.
Then, from this and \eqref{eq-defect},
it follows that, for any $N\in (0,\infty)$ and $r\in (0,N)$,
\begin{align}\label{eq-ss}
&\sum_{j=0}^{\infty}
2^{j(n+sq)}r^{-(n+sq)}
\int_{2^{-(j+1)}r\le |h|<2^{-j}r}
\int_{|x|<kN}\left|\Delta_h^k f(x)\right|^q\,dx \,dh\notag\\
&\quad \le
\int_{|x|<kN}
\int_{\mathbb{R}^n}\frac{|\Delta_h^k f(x)|^q}{|h|^{n+sq}}\,dh\,dx
< \infty.
\end{align}
We next consider the following two cases on $q$ and $s$.

\emph{Case 1)} $q\in [1,\infty)$ and $s\in [k,\infty)$.
In this case, note that, for any $j\in \mathbb{Z}_{+}$ and $x,h\in \mathbb{R}^n$,
\begin{align*}
\Delta_{2^j h}f(x)=\sum_{i=0}^{2^{j}-1}\Delta_h f(x+ih)
\ \ \text{and}\ \
\Delta_{2^j h}\Delta_{h}=
\Delta_{h}\Delta_{2^j h}.
\end{align*}
From this and the discrete H\"{o}lder inequality,
we deduce that, for any $j\in \mathbb{Z}_{+}$, $N\in (0,\infty)$, and
$r\in (0,N)$,
\begin{align*}
&\int_{2^{-(j+1)}r\le |h|<2^{-j}r}
\int_{|x|<kN-kr}\left|\Delta_{2^j h}^k f(x)\right|^q\,dx \,dh\\
&\quad=
\int_{2^{-(j+1)}r\le |h|<2^{-j}r}
\int_{|x|<kN-kr}\left|\sum_{i=0}^{2^{j}-1}\Delta_h\left(\Delta_{2^j
h}^{k-1} f\right)(x+ih)\right|^q\,dx \,dh\\
&\quad\le
2^{j(q-1)}\sum_{i=0}^{2^{j}-1}\int_{2^{-(j+1)}r\le |h|<2^{-j}r}
\int_{|x|<kN-kr}\left|\Delta_h\left(\Delta_{2^j h}^{k-1}
f\right)(x+ih)\right|^q\,dx \,dh\\
&\quad\le
2^{j(q-1)}\sum_{i=0}^{2^{j}-1}\int_{2^{-(j+1)}r\le |h|<2^{-j}r}
\int_{|x|<kN-kr+i2^{-j}r}\left|\Delta_h\left(\Delta_{2^j h}^{k-1}
f\right)(x)\right|^q\,dx \,dh\\
&\quad\lesssim
2^{jq}\int_{2^{-(j+1)}r\le |h|<2^{-j}r}
\int_{|x|<kN-(k-1)r}\left|\Delta_{2^j h}^{k-1}\left(
\Delta_hf\right)(x)\right|^q\,dx \,dh.
\end{align*}
Using this $k$ times and a change of variables,
we find that, for any $j\in \mathbb{Z}_{+}$, $N\in (0,\infty)$, and
$r\in (0,N)$,
\begin{align*}
&\int_{\frac{r}{2}\le |h|<r}
\int_{|x|<kN-kr}\left|\Delta_{ h}^k f(x)\right|^q\,dx \,dh\\
&\quad = 2^{jn}\int_{2^{-(j+1)}r\le |h|<2^{-j}r}
\int_{|x|<kN-kr}\left|\Delta_{ 2^j h}^k f(x)\right|^q\,dx \,dh\\
&\quad\lesssim 2^{j(n+kq)}
\int_{2^{-(j+1)}r\le |h|<2^{-j}r}
\int_{|x|<kN}\left|\Delta_{ h}^{k}f(x)\right|^q\,dx \,dh,
\end{align*}
which, together with \eqref{eq-ss}, further implies that
\begin{align*}
\sum_{j=0}^{\infty}
2^{j(s-k)q}
\int_{\frac{r}{2}\le |h|<r}
\int_{|x|<kN}\left|\Delta_h^k f(x)\right|^q\,dx \,dh<\infty.
\end{align*}
By this, $s\in [k,\infty)$, and the arbitrariness
of both $N$ and $r$, we obtain
\begin{align*}
\int_{\mathbb{R}^n}
\int_{\mathbb{R}^n}\left|\Delta_h^k f(x)\right|^q\,dx \,dh=0,
\end{align*}
which, combined with \cite[Lemma 3.6]{dssvy}, further implies that $f$
coincides almost everywhere with a polynomial of degree at most $k-1$.

\emph{Case 2)} $q\in (0,1)$ and $sq\in [k,\infty) $.
In this case, repeating the proof of
Case 1) with the discrete H\"{o}lder inequality replaced by the Jensen
inequality, we obtain the desired result.
This finishes the proof of Proposition \ref{pro-defect}.
\end{proof}

\subsection{Density Properties of Higher-Order Homogeneous Ball Banach
Sobolev Spaces}\label{ssub-density}
The main target of this subsection is to obtain the following density
properties of $\dot{W}^{k,X}$.
\begin{theorem}\label{thm-e}
Let $k\in \mathbb{N}$ and $X$ be a {\rm BBF} space
having an absolutely continuous norm such that
the centered ball average operators are uniformly
bounded on $X$.
\begin{enumerate}[{\rm (i)}]
\item For any $f\in \dot{W}^{k,X}$, there exists a
sequence
$\{f_m\}_{m\in {\mathbb{N}}}$ in $C^{\infty}$
with $|\nabla^k f_m|\in C_{\rm c}$
for any $m\in \mathbb{N}$
such that, for any $R\in (0,\infty)$,
\begin{align}\label{eq-extension}
\lim_{m\to\infty}\|f-f_m\|_{\dot{W}^{k,X}}=0
\ \ \text{and}\ \
\lim_{m\to\infty}\|(f-f_m){\bf 1}_{B({\bf 0},R)}\|_{X}=0.
\end{align}
\item If the Hardy--Littlewood maximal operator $\mathcal{M}$ is
bounded on $X$ or $n\ge 2$, then,
for any $f\in \dot{W}^{k,X}$, there exists a sequence
$\{f_m\}_{m\in {\mathbb{N}}}$ in $C_{\rm c}^{\infty}$
such that
\begin{align*}
\lim_{m\to\infty}\|f-f_m\|_{\dot{W}^{k,X}}=0
\ \ \text{and}\ \
\lim_{m\to\infty}\|(f-f_m){\bf 1}_{B({\bf 0},R)}\|_{X}=0.
\end{align*}
\end{enumerate}
\end{theorem}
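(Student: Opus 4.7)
The plan is to use a mollification-plus-truncation strategy, exploiting the uniform boundedness of the centered ball averages and the absolute continuity of the norm of $X$.

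\textbf{Mollification.} Fix a standard radial nonnegative mollifier $\phi \in C_{\rm c}^\infty$ with $\int \phi = 1$, and set $\phi_\varepsilon := \varepsilon^{-n}\phi(\cdot/\varepsilon)$. By Proposition \ref{pro-wklo}, any $f \in \dot{W}^{k,X}$ lies in $W^{k,1}_{\rm loc}$, so $g_\varepsilon := f * \phi_\varepsilon \in C^\infty$ with $\partial^\alpha g_\varepsilon = (\partial^\alpha f) * \phi_\varepsilon$ for every $|\alpha| \leq k$. The pointwise domination $|h * \phi_\varepsilon| \lesssim \mathcal{B}_\varepsilon(|h|)$, together with the uniform boundedness of $\{\mathcal{B}_r\}_{r>0}$ on $X$, gives $\|\nabla^k g_\varepsilon\|_X \lesssim \|\nabla^k f\|_X$ uniformly in $\varepsilon$. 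Combining this uniform bound with the absolute continuity of the norm via a standard density argument yields $\|\nabla^k(g_\varepsilon - f)\|_X \to 0$ and $\|(g_\varepsilon - f)\mathbf{1}_{B({\bf 0},R)}\|_X \to 0$ as $\varepsilon \to 0^+$ for every $R \in (0,\infty)$.

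\textbf{Truncation for (i).} Since we only need $|\nabla^k f_m| \in C_{\rm c}$ rather than $f_m \in C_{\rm c}$, we interpolate $g_\varepsilon$ with a polynomial of degree $<k$ outside a large ball. Choose $\psi_R \in C_{\rm c}^\infty$ with $\psi_R \equiv 1$ on $B({\bf 0},R)$, $\mathrm{supp}\,\psi_R \subset B({\bf 0},2R)$, and $\|\partial^\alpha \psi_R\|_{L^\infty} \lesssim R^{-|\alpha|}$. Applying the higher-order Poincar\'e inequality (Lemma \ref{lem-poin}) on $B({\bf 0},2R)$, select a polynomial $Q_{R,\varepsilon} \in \mathcal{P}_{k-1}$ such that
$$
\bigl\|\nabla^j(g_\varepsilon - Q_{R,\varepsilon})\,\mathbf{1}_{B({\bf 0},2R)}\bigr\|_X \lesssim R^{k-j}\bigl\|\nabla^k g_\varepsilon\,\mathbf{1}_{B({\bf 0},2R)}\bigr\|_X, \quad 0 \leq j \leq k-1,
$$
and define $f_{R,\varepsilon} := \psi_R g_\varepsilon + (1-\psi_R)Q_{R,\varepsilon}$. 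Then $f_{R,\varepsilon} \in C^\infty$, $|\nabla^k f_{R,\varepsilon}|$ is supported in $B({\bf 0},2R)$, and $\nabla^k(g_\varepsilon - f_{R,\varepsilon}) = -\nabla^k[(1-\psi_R)(g_\varepsilon - Q_{R,\varepsilon})]$ is supported on the annulus $B({\bf 0},2R) \setminus B({\bf 0},R)$. Expanding via the Leibniz rule and applying the Poincar\'e estimates term by term, the powers of $R$ exactly cancel, leaving
$$
\bigl\|\nabla^k(g_\varepsilon - f_{R,\varepsilon})\bigr\|_X \lesssim \bigl\|\nabla^k g_\varepsilon\,\mathbf{1}_{B({\bf 0},2R) \setminus B({\bf 0},R)}\bigr\|_X,
$$
which vanishes as $R \to \infty$ by the absolute continuity of the norm. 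Since $f_{R,\varepsilon} \equiv g_\varepsilon$ on $B({\bf 0},R)$, the local-$X$ convergence is automatic, and a diagonal selection $(R_m,\varepsilon_m)$ concludes (i).

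\textbf{Truncation for (ii).} Here $f_m \in C_{\rm c}^\infty$ is required, so the polynomial tail must be dropped: take $f_{R,\varepsilon} := \psi_R g_\varepsilon$. The error $\nabla^k((1-\psi_R)g_\varepsilon)$ now contains cross terms $\partial^\alpha \psi_R \cdot \partial^\beta g_\varepsilon$ with $|\alpha| \geq 1$, $|\alpha|+|\beta|=k$, $|\beta|<k$, which require control of $\nabla^\beta g_\varepsilon$ in $X$ on the annulus---control unavailable from the semi-norm alone. The two alternative hypotheses provide exactly this control: when $\mathcal{M}$ is bounded on $X$, a Riesz-potential representation combined with Hardy-type inequalities yields $\nabla^j g_\varepsilon \in X$ (modulo polynomials of degree $<k-j$) with norm controlled by $\|\nabla^k g_\varepsilon\|_X$ for each $j<k$; when $n \geq 2$, iterated Gagliardo--Nirenberg--Sobolev-type embeddings on {\rm BBF} spaces play the same role. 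In either situation, modifying $g_\varepsilon$ by a suitable polynomial (which leaves $\nabla^k g_\varepsilon$ unchanged) brings all lower-order derivatives into $X$, and the absolute continuity of the norm kills the annular error as $R \to \infty$. The main obstacle is precisely this embedding step---its genuine necessity in the excluded case ($n=1$ with $\mathcal{M}$ unbounded on $X$, e.g., $X=L^1$) being exemplified by a nonzero constant $c \in \dot{W}^{1,L^1}(\mathbb{R})$, which has vanishing semi-norm yet cannot be approximated by $C_{\rm c}^\infty$ functions simultaneously in the semi-norm and locally in $L^1$, since the cutoff of $c$ necessarily produces a derivative of unit $L^1$ mass on each boundary region.
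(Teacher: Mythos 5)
Your overall architecture (mollify, then cut off against a degree-$(k-1)$ polynomial chosen by a Poincar\'e inequality) matches the paper's, but there is a genuine gap in the truncation step of (i). You choose $Q_{R,\varepsilon}$ by applying Lemma \ref{lem-poin} on the \emph{ball} $B({\bf 0},2R)$, so your estimate reads $\|\nabla^j(g_\varepsilon-Q_{R,\varepsilon})\,{\bf 1}_{B({\bf 0},2R)}\|_X\lesssim R^{k-j}\|\nabla^k g_\varepsilon\,{\bf 1}_{B({\bf 0},2R)}\|_X$. When you expand $\nabla^k[(1-\psi_R)(g_\varepsilon-Q_{R,\varepsilon})]$ by the Leibniz rule, the cross terms $\partial^\alpha\psi_R\cdot\partial^\beta(g_\varepsilon-Q_{R,\varepsilon})$ with $|\alpha|\ge1$ are then bounded by $R^{-|\alpha|}\cdot R^{k-|\beta|}\|\nabla^k g_\varepsilon\,{\bf 1}_{B({\bf 0},2R)}\|_X=\|\nabla^k g_\varepsilon\,{\bf 1}_{B({\bf 0},2R)}\|_X$, which tends to $\|\nabla^k g_\varepsilon\|_X$, not to $0$; your claimed bound by $\|\nabla^k g_\varepsilon\,{\bf 1}_{B({\bf 0},2R)\setminus B({\bf 0},R)}\|_X$ does not follow. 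The paper avoids this by taking the Poincar\'e polynomial adapted to the \emph{annulus} $\{R<|x|<2R\}$ itself, so that the right-hand side of the Poincar\'e inequality is already localized to the annulus and absolute continuity of the norm kills it. But the annulus version of Lemma \ref{lem-poin} requires $n\ge2$ (connectedness), so $n=1$ must be treated separately — the paper does so via iterated antiderivatives $\mathcal{A}(g)(x)=\int_0^x g$, replacing $f$ by $\mathcal{A}^k(f^{(k)}\eta_m)$ plus the Taylor data at $0$, which makes $f_m^{(k)}=f^{(k)}\eta_m$ exactly. Your proof of (i) silently covers all $n\ge1$ with a single argument, which is a signal that something is off.

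For (ii) the gap is of a different kind: the two mechanisms you invoke (a Riesz-potential representation with Hardy-type inequalities when $\mathcal{M}$ is bounded on $X$, and ``iterated Gagliardo--Nirenberg--Sobolev-type embeddings on BBF spaces'' when $n\ge2$) are asserted, not proved, and the second is not available in this generality. The paper's actual route is more elementary: for $n\ge2$ one simply takes $f_m:=(f-P_m)\eta_m$ with $P_m$ the annulus Poincar\'e polynomial — the same computation as in (i), no embedding needed, since $\partial^\alpha P_m=0$ for $|\alpha|=k$; for $n=1$ with $\mathcal{M}$ bounded one sets $f_m:=\varphi(\cdot/m)\,\mathcal{A}^k(f^{(k)})$ and uses the pointwise domination $|x|^{-\ell}\mathcal{A}^{\ell}(|f^{(k)}|)(x)\lesssim\mathcal{M}^{\ell}(f^{(k)})(x)$ on the annulus, which is the precise ``Hardy-type'' ingredient you gesture at. Your closing remark on the failure for $n=1$, $X=L^1$ is correct and consistent with the Haj{\l}asz--Ka{\l}amajska result the paper cites.
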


\begin{remark}
\begin{enumerate}[{\rm (i)}]
\item Theorem \ref{thm-e}(i) when $k=1$ coincides with
\cite[Theorem 2.6]{dlyyz-bvy}.
\item Let $k\in \mathbb{N}$ and $p\in [1,\infty)$. In \cite[Theorem
4]{hk95},
Haj\l asz and
Ka\l amajska
proved that $C_{\rm c}^{\infty}$
is dense in $\dot{W}^{k,p}$
if and only if either $p>1$ or $n\ge 2$.
Clearly,
Theorem \ref{thm-e}(ii) when $X:=L^p$ coincides with
\cite[Theorem 4]{hk95}.
\end{enumerate}
\end{remark}

To show Theorem \ref{thm-e}, we require the following Poincar\'{e}
inequality on {\rm BBF} spaces.
\begin{lemma}\label{lem-poin}
Let $k\in{\mathbb{N}}$, $R\in (0,\infty)$,
and
$X$ be a {\rm BBF} space satisfying that the centered ball average
operators are uniformly bounded on $X$.
Assume that $\Omega$ is a ball with the radius $R$ or a cube
with the edge length $R$ or that $n\ge 2$ and
$\Omega:=\{x\in \mathbb{R}^n:\ R< |x|< 2R\}$ is an
annulus.
Then there exists a positive constant
$C$, independent of $\Omega$,
such that,
for any $f\in \dot{W}^{k,X}$,
there exists a polynomial $P\in \mathcal{P}_{k-1}$
such that, for any 	$j\in\mathbb{Z}_+ \cap [0,k-1]$,
\begin{align*}
\left\|\nabla^j(f-P){\bf 1}_{\Omega}\right\|_X
\le CR^{k-j}
\left\|\nabla^k f {\bf 1}_{\Omega}\right\|_X.
\end{align*}
\end{lemma}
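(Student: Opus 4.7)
The plan is to derive a pointwise bound for $|\nabla^j(f-P)|$ in terms of a Riesz-potential-type integral of $|\nabla^k f|\mathbf{1}_\Omega$ and then to sum up dyadic pieces of this kernel using the uniform boundedness of $\{\mathcal{B}_r\}_{r\in(0,\infty)}$ on $X$. First, for each admissible shape I would choose a distinguished ball $B_0\subset\Omega$ of radius comparable to $R$: for a ball or cube, a suitably inscribed concentric ball works; for an annulus $\{R<|x|<2R\}$ with $n\ge 2$, one may take, e.g., $B_0:=B((\tfrac{3R}{2})e_1,\tfrac{R}{4})$, where $e_1$ is the first coordinate unit vector. All three shapes are, uniformly in $R$ (by scaling and translation), bounded John domains; in the annulus case the hypothesis $n\ge 2$ ensures connectedness and allows a covering by $O(1)$ star-shaped sectors chained to $B_0$.

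Next, letting $P:=T_{B_0}^{k-1}f\in\mathcal{P}_{k-1}$ denote the averaged Taylor polynomial of order $k-1$ of $f$ over $B_0$, the classical higher-order Sobolev integral representation (see, e.g., Brenner--Scott or Maz\'ya), combined with the above chaining argument, yields for any $j\in\mathbb{Z}_+\cap[0,k-1]$ and almost every $x\in\Omega$ the pointwise estimate
\begin{align*}
|\nabla^j(f-P)(x)|\le C\int_\Omega\frac{|\nabla^k f(y)|}{|x-y|^{n-(k-j)}}\,dy,
\end{align*}
where $C$ depends only on $n$ and $k$ (thanks to the scale- and translation-invariance of the John constants). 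Note $\operatorname{diam}(\Omega)\le C_0 R$ for some $C_0=C_0(n)$.

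Then I would decompose the integration region into dyadic shells $A_l:=\{y\in\Omega:2^{-l-1}C_0 R\le|x-y|<2^{-l}C_0 R\}$, $l\in\mathbb{Z}_+$. On $A_l$, $|x-y|^{-(n-(k-j))}\lesssim (2^{-l}R)^{k-j}|B(x,2^{-l}C_0 R)|^{-1}$, whence
\begin{align*}
|\nabla^j(f-P)(x)|\mathbf{1}_\Omega(x)\lesssim R^{k-j}\sum_{l=0}^\infty 2^{-l(k-j)}\,\mathcal{B}_{2^{-l}C_0 R}\bigl(|\nabla^k f|\mathbf{1}_\Omega\bigr)(x).
\end{align*}
Taking the $X$-norm on both sides, using the triangle inequality (as $X$ is a {\rm BBF} space), and invoking the hypothesis that $\|\mathcal{B}_r\|_{X\to X}$ is uniformly bounded, the geometric series $\sum_{l\ge 0}2^{-l(k-j)}$ converges since $k-j\ge 1$, which delivers
\begin{align*}
\left\|\nabla^j(f-P)\mathbf{1}_\Omega\right\|_X\lesssim R^{k-j}\left\|\,|\nabla^k f|\mathbf{1}_\Omega\right\|_X,
\end{align*}
with a constant depending only on $n$, $k$, and $\|\mathcal{B}_r\|_{X\to X}$, hence independent of $\Omega$.

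The main obstacle I anticipate is verifying that the higher-order integral representation holds with constants independent of $R$ and of the placement of $\Omega$, particularly in the annulus case: since an annulus is not star-shaped with respect to any single ball, one must either invoke the general John-domain version of the averaged Taylor representation or explicitly cover $\Omega$ by a bounded number of star-shaped sectors (which requires $n\ge 2$, explaining the hypothesis) and patch together the local representations along a Whitney chain linking each sector to $B_0$. Once this uniform representation is in hand, the remaining dyadic-shell estimate is routine and uses only the assumption on centered ball averages; notably, no boundedness of $\mathcal{M}$ nor absolute continuity of the norm is required.
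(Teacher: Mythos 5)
Your proposal is correct and follows essentially the same route as the paper, which simply cites the proof of Maz'ya's Poincaré inequality (\cite[Section~1.1.11, Lemma]{ma2011}) and instructs the reader to replace the $L^p$-norm by $\|\cdot\|_X$: that proof also proceeds via an integral (Sobolev/averaged-Taylor) representation with a Riesz-potential kernel, a dyadic-shell decomposition, and domination by centered ball averages. Your identification of the annulus case as requiring $n\ge 2$ for the uniform John-domain (or union-of-star-shaped-pieces) representation matches the paper's hypothesis, and the scale/translation invariance argument for $R$-independence of the constants is the right way to close that gap.
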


\begin{proof}
Repeating
the proof of \cite[Section 1.1.11, Lemma]{ma2011} with $\|\cdot\|_{L^p}$
and $\mathcal{B}_i$
therein replaced, respectively, by $\|\cdot\|_X$ and the ball $B$ with
the radius $R$ here, we conclude the desired
conclusion of Lemma \ref{lem-poin}.
\end{proof}

Repeating
the proof of \cite[Proposition 2.15]{dlyyz-bvy}
with $\|\cdot\|_{{\dot W}^{1,X}}$ replaced by
$\|\cdot\|_{{\dot W}^{k,X}}$,
we obtain the following lemma; we omit the details here.

\begin{lemma}\label{lem-mod}
Let $k$ and $X$ be the same as in Theorem \ref{thm-e}.
Then, for any $f\in \dot{W}^{k,X}$, there exists a
sequence
$\{f_m\}_{m\in{\mathbb{N}}}\subset C^{\infty}\cap
\dot{W}^{k,X}$
such that, for any $R\in (0,\infty)$,
\begin{align*}
\lim_{m\to\infty}\|f-f_m\|_{\dot{W}^{k,X}}=0
\ \ \text{and}\ \
\lim_{m\to\infty}\|(f-f_m){\bf 1}_{B({\bf 0},R)}\|_{X}=0.
\end{align*}
\end{lemma}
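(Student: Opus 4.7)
The plan is standard mollification tailored to the {\rm BBF} setting. Fix $\rho\in C_{\rm c}^\infty$ nonnegative with $\int\rho\,dx=1$ and $\mathrm{supp}(\rho)\subset B({\bf 0},1)$; set $\rho_m(x):=m^n\rho(mx)$ and $f_m:=f\ast\rho_m$ for each $m\in\mathbb{N}$. Then $f_m\in C^\infty$, and since $f\in W^{k,1}_{\rm loc}$ by Proposition \ref{pro-wklo}, we have $\partial^\alpha f_m=(\partial^\alpha f)\ast\rho_m$ for every $|\alpha|=k$. Combining the pointwise bound $|g\ast\rho_m|\lesssim \mathcal{B}_{1/m}(|g|)$ (with constant depending only on $\rho$) with the uniform boundedness of the centered ball averages on $X$ yields $\|\,|\nabla^k f_m|\,\|_X\lesssim\|\,|\nabla^k f|\,\|_X$, so $f_m\in\dot W^{k,X}$.

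For the convergence $\|f-f_m\|_{\dot W^{k,X}}\to 0$, I would reduce to showing, for each fixed $|\alpha|=k$, that $\|g-g\ast\rho_m\|_X\to 0$ where $g:=\partial^\alpha f\in X$ (by the lattice property of $X$, since $|\partial^\alpha f|\le|\nabla^k f|$). The absolute continuity of the norm of $X$, together with $\|{\bf 1}_B\|_X<\infty$ for every ball $B$, yields the density of $C_{\rm c}$ in $X$. Given $\varepsilon>0$, pick $g_\varepsilon\in C_{\rm c}$ with $\|g-g_\varepsilon\|_X<\varepsilon$; the decomposition
\begin{align*}
g-g\ast\rho_m=(g-g_\varepsilon)+(g_\varepsilon-g_\varepsilon\ast\rho_m)+(g_\varepsilon-g)\ast\rho_m
\end{align*}
controls the first and third summands by $\lesssim\varepsilon$ (via the pointwise ball-average bound and the uniform boundedness of $\mathcal{B}_r$ on $X$), while the middle summand is supported in a fixed compact set on which it converges uniformly to $0$, so its $X$-norm vanishes by the finiteness of $\|{\bf 1}_K\|_X$. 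Sending $\varepsilon\to 0^+$ closes this step.

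The main obstacle is the local convergence $\|(f-f_m){\bf 1}_{B({\bf 0},R)}\|_X\to 0$, because $f$ itself need not belong to $X$ globally. To handle it, I would invoke Lemma \ref{lem-poin} on the enlarged ball $B({\bf 0},R+2)$: it produces $P\in\mathcal{P}_{k-1}$ with $(f-P){\bf 1}_{B({\bf 0},R+2)}\in X$. Choose $\chi\in C_{\rm c}^\infty$ with $\chi\equiv 1$ on $B({\bf 0},R+1)$ and $\mathrm{supp}(\chi)\subset B({\bf 0},R+2)$; then $\widetilde f:=\chi(f-P)\in X$ has compact support, so the previous paragraph gives $\|\widetilde f-\widetilde f\ast\rho_m\|_X\to 0$. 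For all sufficiently large $m$, on $B({\bf 0},R)$ one has $\widetilde f=f-P$ and $\widetilde f\ast\rho_m=f_m-P\ast\rho_m$, whence
\begin{align*}
(f-f_m){\bf 1}_{B({\bf 0},R)}=(\widetilde f-\widetilde f\ast\rho_m){\bf 1}_{B({\bf 0},R)}+(P\ast\rho_m-P){\bf 1}_{B({\bf 0},R)}.
\end{align*}
Since $P$ is a polynomial of degree at most $k-1$, expanding $P(x-y)$ in $y$ and using $\int\rho_m(y)\,dy=1$ together with the scaling of the higher moments of $\rho_m$ gives $\|P\ast\rho_m-P\|_{L^\infty(B({\bf 0},R))}=O(1/m)$; multiplying by ${\bf 1}_{B({\bf 0},R)}\in X$ then makes this term vanish in $X$, completing the plan.
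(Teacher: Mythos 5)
Your proof is correct and is essentially the argument the paper itself invokes: the paper proves this lemma by simply citing the first-order mollification proof of \cite[Proposition 2.15]{dlyyz-bvy}, and your mollification scheme --- uniform boundedness of $\mathcal{B}_{1/m}$ for the global bound, density of $C_{\rm c}$ in $X$ (from the absolutely continuous norm) for convergence of $\nabla^k f_m$, and the Poincar\'e inequality of Lemma \ref{lem-poin} to localize $f$ modulo a polynomial for the $B({\bf 0},R)$ convergence --- is exactly that argument adapted to order $k$. The only blemish is the harmless sign slip in the final decomposition, where the last term should read $(P-P\ast\rho_m)\mathbf{1}_{B({\bf 0},R)}$; this does not affect the norm estimate.
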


In what follows,
we denote by $\mathcal{A}^k$ the $k$-fold iteration of an operator
$\mathcal{A}$.
Next, we give the proof of
Theorem \ref{thm-e}.

\begin{proof}[Proof of Theorem \ref{thm-e}]
By Lemma \ref{lem-mod}, we may assume
$f\in C^{\infty}\cap \dot{W}^{k,X}$.
Choose $\varphi\in C_{\rm c}^{\infty}$
such that $0\leq \varphi\leq 1$, $\mathrm{supp\,}(\varphi)\subset
B({\bf 0},2)$,
and $\varphi\equiv 1$ in $B({\bf 0},1)$.
For any $m\in{\mathbb{N}}$ and $x\in{\mathbb{R}^n}$, let
$\eta_{m}(x):=\varphi(x/m)$
and
$\Omega_m:=\{x\in \mathbb{R}^n: m<|x|<2m\}$.
To prove (i), we consider the following
two cases on $n$.

\emph{Case 1)} $n=1$.
In this case,
for any $g\in C^{\infty}(\mathbb{R})$ and
$x\in \mathbb{R}$, let
\begin{align}\label{eq-def-a}
\mathcal{A}(g)(x):=\int_{0}^{x}g(t)\,dt.
\end{align}
For any $m\in \mathbb{N}$ and $i\in \mathbb{N}\cap [1,k]$, let
\begin{align*}
T_{1,m} (f):=\mathcal{A}\left(f^{(k)}\eta_m\right)
+f^{(k-1)}(0),\
T_{i,m} (f):=\mathcal{A}\left(T_{i-1,m} (f)\right)+f^{(k-i)}(0),
\end{align*}
and $f_m:=T_{k,m}(f)$, where $f^{(0)}:=f$.
Notice that, for any $m\in \mathbb{N}$, $f_m^{(k)}=f^{(k)}\eta_m$.
From this and the assumption that $X$ has an absolutely
continuous norm, we deduce that
\begin{align*}
\lim_{m\to \infty}\left\|f^{(k)}_m-f^{(k)}\right\|_X
=
\lim_{m\to \infty}\left\|f^{(k)}\eta_m-f^{(k)}\right\|_X
=0.
\end{align*}
Also observe that, for any $m\in \mathbb{N}\cap [R,\infty)$ and $x\in
B({\bf 0},R)$,
$f_m (x)=f(x)$.
Hence,
\begin{align*}
\lim_{m\to \infty}\left\|(f-f_m){\bf 1}_{B({\bf 0},R)}\right\|_X
=
0.
\end{align*}
This finishes the proof of (i) in this case.

\emph{Case 2)} $n\ge 2$.
In this case, by Lemma \ref{lem-poin},
we find that
there exist
polynomials $\{P_m\}_{m\in \mathbb{N}}\subset
\mathcal{P}_{k-1}$ such that, for any $m\in \mathbb{N}$
and $j\in \mathbb{Z}_+$,
\begin{align}\label{eq-den01}
\left\|\nabla^j(f-P_m){\bf 1}_{\Omega_{m}}\right\|_X
\lesssim
m^{k-j}
\left\|\,\left|\nabla^k f\right| {\bf 1}_{\Omega_m}\right\|_X.
\end{align}
Then, for any $m\in \mathbb{N}$, let
$
f_m:= (f-P_m)\eta_m +P_m.
$

Next, we show that $f_m \to f$ in $\dot{W}^{k,X}$ as
$m\to \infty$.
To do this, we fix $\alpha\in \mathbb{Z}_{+}^n$ with $|\alpha|=k$.
Using the Newton--Leibniz formula, we obtain, for any
$m\in{\mathbb{N}}$,
\begin{align}\label{eq-newton}
\partial^{\alpha}(f_m -f)
=
\sum_{\beta\in \mathbb{Z}_{+}^n,\, \beta\leq
\alpha}\binom{\alpha}{\beta}
\partial^{\beta}\left(f-P_m\right)\partial^{\alpha-\beta}(\eta_{m}-1).
\end{align}
When $\beta=\alpha$, via the assumption that $X$ has an absolutely
continuous norm, we find that
\begin{align}\label{eq-den02}
\left\|\partial^{\beta}\left(f-P_m\right)\partial^{\alpha-\beta}(\eta_{m}-1)\right\|_X
=
\left\|\left(\partial^{\alpha}f\right)(\eta_{m}-1)\right\|_X \to 0
\end{align}
as $m\to \infty$.
On the other hand, when $\beta\in \mathbb{Z}_{+}^n$ with both
$\beta\leq \alpha$ and $\beta\neq \alpha$,
notice that, for any $m\in{\mathbb{N}}$,
$$\mathrm{supp\,}\left(\partial^{\alpha-\beta}(\eta_{m}-1)\right)\subset
\Omega_{m}$$
and, for any $x\in \mathbb{R}^n$,
\begin{align*}
\left|\partial^{\alpha-\beta}(\eta_{m}-1)(x)\right|
=
m^{|\beta|-k}\left|\partial^{\alpha-\beta}\varphi\left(\frac{x}{m}\right)\right|
\lesssim
m^{|\beta|-k},
\end{align*}
which, together with \eqref{eq-den01} and the assumption
that $X$ has an absolutely continuous norm again,
further imply that
\begin{align*}
\left\|\partial^{\beta}\left(f-P_m\right)\partial^{\alpha-\beta}(\eta_{m}-1)\right\|_{X}
&\lesssim
m^{|\beta|-k}
\left\|\partial^{\beta}\left(f-P_m\right){\bf
1}_{\Omega_m}\right\|_{X}\\
&\lesssim
\left\|\,\left|\nabla^k f\right|{\bf 1}_{\Omega_{m}}\right\|_X
\le
\left\|\,\left|\nabla^k f\right|{\bf 1}_{[B({\bf
0},m)]^{\complement}}\right\|_X
\to 0
\end{align*}
as $m\to \infty$.
From this, \eqref{eq-newton}, and \eqref{eq-den02},
it follows that
\begin{align}\label{eq-den05}
\lim_{m\to \infty}\|f-f_m\|_{\dot{W}^{k,X}}
\lesssim
\sum_{\alpha\in \mathbb{Z}_{+}^n, \,|\alpha|=k}
\lim_{m\to \infty}\left\|\partial^{\alpha}(f_m -f)\right\|_X=0.
\end{align}

Finally, observe that,
for any $m\in {\mathbb{N}}\cap [R,\infty)$ and $x\in B({\bf 0},R)$,
$\eta_{m}(x)=1$ and hence
\begin{align*}
f_m (x)=\left[f(x)-P_m (x)\right]\eta_m (x) +P_m (x)=f(x).
\end{align*}
It follows that
$\lim_{m\to \infty}\|(f-f_m){\bf 1}_{B({\bf 0},R)}\|_X=0,$
which, combined with \eqref{eq-den05}, further implies (i) in this
case and hence completes the proof of (i).

It remains to show (ii).
For this purpose,
we also consider the following
two cases.

\emph{Case I)} $n=1$
and the Hardy--Littlewood maximal operator $\mathcal{M}$ is bounded on
$X$. In this case,
let
$g_k :=\mathcal{A}^k(f^{(k)})$
and
$f_m:=\varphi(\frac{\cdot}{m})g_k$,
where $\mathcal{A}$ is the same as in \eqref{eq-def-a}. Clearly, for
any $m\in \mathbb{N}$, $f_{m}\in C_{\rm c}^{\infty}(\mathbb{R})$.

Next, we claim that
\begin{align}\label{eq-c-den}
f_m^{(k)}\to f^{(k)}
\end{align}
in $X$ as $m\to \infty$. Indeed, via the Newton--Leibniz formula, we
find that, for any $m\in \mathbb{N}$,
\begin{align*}
f_m^{(k)}	& =
\sum_{\ell=0}^{k}
\binom{k}{\ell}m^{-\ell}\varphi^{(\ell)}\left(\frac{\cdot}{m}\right)g^{(k-\ell)}_{k}(\cdot)=:\sum_{\ell=0}^{k}F_{\ell,m}.
\end{align*}
Notice that, for any $1\le\ell \le k $, $m\in\mathbb{N}$, and $x\in
\mathbb{R}$,
\begin{align*}
\left|F_{\ell,m}(x)\right|
&\lesssim \left||x|^{-\ell}g_{k}^{(k-\ell)}(x)\right|{\bf
1}_{\Omega_m}(x)
\leq
|x|^{-\ell}\mathcal{A}^{\ell}\left(\left|f^{(k)}\right|\right)(x){\bf
1}_{\Omega_m}(x)\\
&\lesssim
\mathcal{M}^{\ell}\left(f^{(k)}\right)(x){\bf 1}_{\Omega_m}(x),
\end{align*}
which, together with the boundedness of $\mathcal{M}$ and the assumption
that $X$ has an absolutely continuous norm, further implies
that $F_{\ell,m}\to 0$
in $X$ as $m\to \infty$.
On the other hand, it is easy to find that $F_{0,m}\to f^{(k)}$ in $X$
as $m\to \infty$. Therefore, the above claim \eqref{eq-c-den} holds.
Using
\eqref{eq-c-den}, we immediately obtain (ii) in this case.

\emph{Case II)} $n\geq 2$. In this case, repeating the proof
of Case 2) with	$f_m$ replaced by $(f-P_m)\eta_m$, we can find that
(ii) holds. This finishes the proof of (ii) and hence
Theorem \ref{thm-e}.
\end{proof}

\section{Upper Estimates in Weighted Sobolev Spaces}\label{sec-AP-es}

In this section, we first establish a
higher-order weighted variant of the inequality
of Cohen et al. \cite[Theorems 3.1 and 4.1]{cddd03}.
Applying this, we further obtain the
upper estimate of \eqref{eq-main-01}, which plays a
key role in the proof of all the main results
in Section \ref{sec-Formulae}. Finally,
we show the above two weighted estimates are sharp
and use them to characterize Muckenhoupt weights when
$n=1$.

To state these results, we first recall some basic concepts.
For any $f\in L^{1}_{\rm loc}$,
$s\in \mathbb{Z}_{+}$, and ball $B\subset \mathbb{R}^n$,
let
$P^{(s)}_{B}(f)$ denote the unique \emph{minimizing polynomial} in
$\mathcal{P}_{s}$ such that, for any
$\alpha\in\mathbb{Z}_{+}^n$ with $|\alpha|\leq s$,
\begin{align*}
\int_{B}\left[f(x)-P^{(s)}_{B}(f)(x)\right]x^{\alpha}\,dx=0;
\end{align*}
for any cube $Q\subset \mathbb{R}^n$, $P^{(s)}_{Q}(f)$ is defined
in a similar way.
For any $\alpha\in\{0,\frac{1}{3},\frac{2}{3}\}^n$, the \emph{shifted
dyadic grid} $\mathcal{D}^\alpha$ is defined by setting
\begin{align}\label{dyadic_grid}
\mathcal{D}^\alpha:=\left\{2^j\left[m+[0,1)^n+(-1)^j\alpha\right]:\
j\in\mathbb{Z},\ m\in\mathbb{Z}^n\right\}.
\end{align}
Let $k,\ell\in \mathbb{N}$ with $\ell\le k$
and let
$\alpha\in\{0,\frac13,\frac23\}^n$.
For any $f\in L^1_{\rm loc}$ and
$Q\in\mathcal{Q}$, the \emph{local approximation}
$E_k(f,Q)$ of $f$ of order $k$ on $Q$ is defined by setting
\begin{align*}
E_k(f,Q):=
\left\|f-P^{(k-1)}_{Q}(f)\right\|_{L^1(Q)}
\end{align*}
and, for any
$\beta\in \mathbb{R}$ and $\lambda\in(0,\infty)$,
let
\begin{align}\label{df-ab}
\mathcal{D}^{\alpha}_{\lambda,\beta,k,\ell}[f]
:=\left\{Q\in\mathcal{D}^{\alpha}:\
E_k(f,Q)>\lambda|Q|^{\beta+\frac{\ell}{n}}\right\};
\end{align}
in particular, when $\ell=k$,
let
$\mathcal{D}^{\alpha}_{\lambda,\beta,k}[f]:=\mathcal{D}^{\alpha}_{\lambda,\beta,k,k}[f]$.
Then we have the following estimate.

\begin{theorem}\label{esti-omega}
Let $k,\ell\in \mathbb{N}$ with $\ell\le k$, $p\in [1,\infty)$, and
$\beta\in (-\infty,1-\frac{1}{n})\cup (1,\infty)$ when $p=1$ or
$\beta\in \mathbb{R}\setminus\{1\}$ when $p\in (1,\infty)$.
Then there exist a positive constant $C$
and an increasing continuous function $\varphi$ on $[0,\infty)$
such that,
for any $\alpha\in\{0,\frac13,\frac23\}^n$,
$\upsilon\in A_p$,
and $f\in\dot{W}^{\ell,p}_{\upsilon}$,
\begin{align}\label{eq-est-m1}
\sup_{\lambda\in(0,\infty)}
\lambda^p \sum_{Q\in\mathcal{D}^{\alpha}_{\lambda,\beta,k,\ell}[f]}
|Q|^{p(\beta-1)}\upsilon(Q)
\le
C \varphi\left([\upsilon]_{A_p}\right)
\int_{\mathbb{R}^n}\left|\nabla^{\ell}f(x)\right|^p
\upsilon(x)
\,dx.
\end{align}
\end{theorem}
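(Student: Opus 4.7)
My plan is to prove the inequality in three stages: a Whitney-type reduction to a first-order averaged-gradient condition; a sparse extraction of cubes from each level set; and a weighted Chebyshev bound exploiting $A_p$ theory and the Hardy--Littlewood maximal inequality. For the first stage, for every cube $Q\subset\mathbb{R}^n$ and every $\ell\in\mathbb{N}\cap[1,k]$, the inclusion $\mathcal{P}_{\ell-1}\subset\mathcal{P}_{k-1}$ combined with the higher-order Poincar\'e inequality (cf.\ Lemma \ref{lem-poin}) yields
\begin{align*}
E_k(f,Q)\le\inf_{P\in\mathcal{P}_{k-1}}\|f-P\|_{L^1(Q)}\lesssim|Q|^{\ell/n}\int_Q\left|\nabla^\ell f(y)\right|\,dy,
\end{align*}
so that, writing $g:=|\nabla^\ell f|$, the membership $Q\in\mathcal{D}^{\alpha}_{\lambda,\beta,k,\ell}[f]$ forces $\int_Q g>c\lambda|Q|^{\beta}$ for some $c\in(0,\infty)$ depending only on $n,k,\ell$. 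The theorem thus reduces to proving
\begin{align*}
\sup_{\lambda\in(0,\infty)}\lambda^p\sum_{Q\in\mathcal{F}^{\alpha}_{\lambda}}|Q|^{p(\beta-1)}\upsilon(Q)\le C\varphi\left([\upsilon]_{A_p}\right)\int_{\mathbb{R}^n}g(x)^p\upsilon(x)\,dx,
\end{align*}
where $\mathcal{F}^{\alpha}_{\lambda}:=\{Q\in\mathcal{D}^{\alpha}:\ \int_Q g>c\lambda|Q|^{\beta}\}$; this is a weighted first-order analogue of the inequality of Cohen, Dahmen, Daubechies, and DeVore.

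Next, for each fixed $\lambda\in(0,\infty)$, I would invoke Lemma \ref{lem-Qx} to extract from $\mathcal{F}^{\alpha}_{\lambda}$ a sparse subfamily $\{\widetilde{Q}_j\}_{j}$ together with pairwise disjoint sets $E_j\subset\widetilde{Q}_j$ satisfying $|E_j|\ge\eta|\widetilde{Q}_j|$ for some fixed $\eta\in(0,1)$, and to verify the reduction
\begin{align*}
\sum_{Q\in\mathcal{F}^{\alpha}_{\lambda}}|Q|^{p(\beta-1)}\upsilon(Q)\le C_{n,p,\beta}\sum_{j}|\widetilde{Q}_j|^{p(\beta-1)}\upsilon(\widetilde{Q}_j).
\end{align*}
The passage from all cubes in $\mathcal{F}^{\alpha}_{\lambda}$ to the sparse subfamily relies on bounding the number of dyadic descendants at each scale inside every maximal cube via the defining inequality $\int_Q g>c\lambda|Q|^{\beta}$; the resulting geometric series over scales converges precisely under $\beta\ne 1$ when $p\in(1,\infty)$ and under $\beta\notin[1-\frac{1}{n},1]$ when $p=1$, which is the origin of the hypotheses on $\beta$.

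For the sparse family, the defining inequality gives, for every $x\in E_j$, $|\widetilde{Q}_j|^{\beta-1}\le c^{-1}\lambda^{-1}\fint_{\widetilde{Q}_j}g\le c^{-1}\lambda^{-1}\mathcal{M}(g)(x)$. The $A_\infty$ property inherited from $\upsilon\in A_p$ combined with $|E_j|\ge\eta|\widetilde{Q}_j|$ yields $\upsilon(\widetilde{Q}_j)\le C_{\eta}([\upsilon]_{A_p})\upsilon(E_j)$, hence by pairwise disjointness of $\{E_j\}_j$,
\begin{align*}
\sum_{j}|\widetilde{Q}_j|^{p(\beta-1)}\upsilon(\widetilde{Q}_j)\lesssim c^{-p}\lambda^{-p}\int_{\bigcup_jE_j}\left[\mathcal{M}(g)(x)\right]^p\upsilon(x)\,dx.
\end{align*}
For $p\in(1,\infty)$, the weighted strong-type $(p,p)$ maximal inequality of Lemma \ref{lem-apwight}(vi), with constant comparable to a power of $[\upsilon]_{A_p}$, completes the estimate. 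For $p=1$ with $\upsilon\in A_1$, the analogous step uses the $A_1$-Chebyshev estimate $\fint_{\widetilde{Q}_j}g\cdot\upsilon(\widetilde{Q}_j)\lesssim[\upsilon]_{A_1}\int_{\widetilde{Q}_j}g\,\upsilon$ and exploits the disjointness of the maximal sparse cubes together with the descendant-controlling geometric sum from the previous stage.

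The main obstacle is the sparse extraction in the second stage: cubes in $\mathcal{F}^{\alpha}_{\lambda}$ do not a priori form a nested collection amenable to a simple stopping-time argument, and the naive dyadic sums $\sum|Q|^{p(\beta-1)}\upsilon(Q)$ over descendants of each maximal cube generically diverge when $\beta$ is close to $1$. Carefully exploiting the defining inequality to control the \emph{cardinality} of descendants at each scale, rather than merely summing over all dyadic children, is what closes the estimate, and it is also exactly where the endpoint threshold $\beta=1-\frac{1}{n}$ in the case $p=1$ becomes critical, corresponding to the borderline of the first-order Sobolev embedding.
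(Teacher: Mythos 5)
Your plan does work where the paper's ``Case 1'' works, namely for $p\in(1,\infty)$ with $\beta\ne 1$, and for $p=1$ with $\beta\in(-\infty,0)\cup(1,\infty)$: there the Poincar\'e reduction (the paper's \eqref{eq-bb00}), the pointwise comparison of Lemma \ref{lem-Qx}, and the weighted maximal inequality (resp.\ a disjointness argument with the $A_1$ condition) close the estimate. The genuine gap is the remaining, critical case $p=1$, $n\ge 2$, $\beta\in[0,1-\frac1n)$. Your Stage 1 replaces the membership condition $E_k(f,Q)>\lambda|Q|^{\beta+\ell/n}$ by the averaged-gradient condition $\int_Q g>c\lambda|Q|^{\beta}$ with $g:=|\nabla^{\ell}f|$ and then asserts the level-set bound for the family $\mathcal{F}^{\alpha}_{\lambda}$ of \emph{all} such cubes. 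That reduced inequality is false for a general nonnegative $g\in L^1$. For $\beta=0$ it fails trivially: every sufficiently large dyadic cube containing $\operatorname{supp}g$ satisfies $\int_Q g>c\lambda$, and each contributes $|Q|^{-1}\upsilon(Q)=1$ when $\upsilon\equiv1$, so the sum is infinite. For $\beta\in(0,1-\frac1n)$, take a Cantor-type $g$ with $\|g\|_{L^1}=1$ which, for each generation $m=0,1,\dots,M$, places mass $2^{-m}$ on each of $2^{m}$ nested, pairwise disjoint (essentially dyadic) cubes of volume $\delta_m^n:=(2^{-m-1}\lambda^{-1})^{1/\beta}$; the nesting is possible since $\delta_{m+1}^n/\delta_m^n=2^{-1/\beta}<\tfrac12$. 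Every such cube satisfies $\int_Q g=2^{-m}>\lambda\delta_m^{n\beta}=2^{-m-1}$, and with $\upsilon\equiv1$ each generation contributes $2^{m}\delta_m^{n\beta}=\lambda^{-1}/2$ to $\sum_{Q}|Q|^{\beta-1}\upsilon(Q)$, so $\lambda\sum_{Q\in\mathcal{F}^{\alpha}_{\lambda}}|Q|^{\beta-1}\upsilon(Q)\ge(M+1)/2$ while $\|g\|_{L^1_{\upsilon}}=1$. This also shows that your counting mechanism cannot be repaired: the per-scale cardinality bound obtained from disjointness and the defining inequality is exactly saturated at every generation, so there is no convergent geometric series over scales, and the threshold $\beta=1-\frac1n$ does not arise from such a count.

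What rescues the true statement is that $g$ is not arbitrary: it is the modulus of a gradient, and the critical case must exploit this, which is precisely why the paper's Case 2 does \emph{not} pass to averaged gradients. Instead it fixes $\sigma\in(\beta,1-\frac1n)$, splits $\mathcal{D}^{\alpha}_{1,\beta,k,\ell}[f]$ into good and bad cubes as in \cite[Lemma 4.3]{cddd03}, uses the Whitney inequality \eqref{eq-de1} to dominate $E_k(f,Q)$ by the first-order renormalized averaged moduli of continuity $\omega_Q(\partial^{\zeta}f)$ with $|\zeta|=\ell-1$, and then invokes the first-order weighted argument of \cite[Proposition 2.3]{lyyzz24}, which ultimately rests on the coarea formula and the isoperimetric inequality applied to $\partial^{\zeta}f$; the exponent $\frac{n}{n-1}$ there is the true source of the restriction $\beta<1-\frac1n$ when $p=1$. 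A secondary, more cosmetic point: Lemma \ref{lem-Qx} does not produce a sparse family with pairwise disjoint major subsets $E_j\subset\widetilde{Q}_j$; it yields, for each $x$, a single extremal cube $Q_x$ and the pointwise comparison $\sum_{Q}|Q|^{p(\beta-1)}{\bf 1}_Q(x)\sim|Q_x|^{p(\beta-1)}$, which is the form actually used in the noncritical cases.
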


\begin{remark}
In Theorem \ref{esti-omega}, if
$k=\ell=1$, then Theorem \ref{esti-omega} exactly coincides with
\cite[Proposition 2.3]{lyyzz24}; in particular,
if further assume $p=1$,
$\upsilon\equiv1$, and $\alpha=(0,\ldots,0)$,
then Theorem \ref{esti-omega} exactly
coincides with Cohen et al.
\cite[Theorems 3.1 and 4.1]{cddd03}.
\end{remark}

In what follows,
for any $\lambda\in (0,\infty)$,
$b\in \mathbb{R}$, $k,\ell\in {\mathbb{N}}$ with $\ell\le k$,
and $f\in \mathscr{M}$,
let
\begin{align}\label{eq-E}
E_{\lambda,b,k,\ell}[f]
:=\left\{(x,h)\in {\mathbb{R}^n} \times
\left({\mathbb{R}^n}\setminus\{{\bf 0}\}\right):\
\left|\Delta_{h}^{k}f (x)\right|>\lambda |h|^{b+\ell}\right\}.
\end{align}

Now, we give the upper estimate of higher-order BSVY formula
in weighted Sobolev spaces as follows.
\begin{theorem}\label{thm-up-ap}
Let $\upsilon\in A_1$, $k,\ell\in \mathbb{N}$ with $\ell\le k$, $p\in[1,\infty)$, $q\in
(0,\infty)$ satisfy $n(\frac{1}{p}-\frac{1}{q})<\ell$,
$\Gamma_{p,q}$ be the same as in \eqref{eq-GAMMA}, and
$\gamma\in\Gamma_{p,q}$.
Then there exist a positive constant $C$,
independent of $\upsilon$,
and an increasing continous function $\psi$ on $[0,\infty)$
such that, for any $f\in \dot{W}^{\ell,p}_{\upsilon}$,
\begin{align}\label{e-up-ap}
&\sup_{\lambda\in(0,\infty)}
\lambda^p \int_{\mathbb{R}^n}
\left[\int_{\mathbb{R}^n}{\bf 1}_{E_{\lambda,
\frac{\gamma}{q},k,\ell}[f]}(x,h)|h|^{\gamma-n}\,dh
\right]^{\frac{p}{q}}\upsilon(x)\,dx\notag\\
&\quad\le
C\psi\left([\upsilon]_{A_1}\right)
\int_{\mathbb{R}^n}\left|\nabla^{\ell}f(x)\right|^p
\upsilon(x)\,dx,
\end{align}
where
$E_{\lambda,
\frac{\gamma}{q},k,\ell}[f]$ is the same as in \eqref{eq-E} with
$b$ repalced by $\frac{\gamma}{q}$.
\end{theorem}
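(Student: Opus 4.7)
The plan is to reduce the weighted BSVY inequality to the dyadic weighted estimate established in Theorem \ref{esti-omega}, via a pointwise control of higher-order differences by higher-order local approximations on dyadic cubes, as outlined in the introduction. This replaces the rotation-invariance/one-dimensional reduction used in \cite{bsvy24}, which is unavailable in the weighted setting.

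First, combining the sparse characterization (Lemma \ref{lem-Qx}), the Whitney inequality, and the variant higher-order Poincar\'e inequality (Lemma \ref{Poin}) through a pigeonhole-type argument, I would establish the pointwise bound: for a.e. $(x,h)\in\mathbb{R}^n\times(\mathbb{R}^n\setminus\{\mathbf{0}\})$,
\begin{align*}
\left|\Delta_h^k f(x)\right|
\lesssim \sum_{\alpha\in\{0,1/3,2/3\}^n}
\frac{E_k(f,Q^\alpha(x,h))}{|Q^\alpha(x,h)|},
\end{align*}
where $Q^\alpha(x,h)\in\mathcal{D}^\alpha$ is a cube of side comparable to $|h|$ whose dilate contains both $x$ and $x+kh$. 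This is the pointwise estimate foreshadowed in the introduction and is the technical heart of the argument.

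Next, I would carry out a dyadic-shell decomposition of the $h$-integral. For each $j\in\mathbb{Z}$, the annulus $\{h:2^{j-1}<|h|\le 2^j\}$ is matched with cubes of side $\sim 2^j$. The level-set condition $|\Delta_h^k f(x)|>\lambda|h|^{\ell+\gamma/q}$, fed into the pointwise bound together with the correspondence $|h|\sim\mathrm{side}(Q)$, forces $x$ to lie in a cube $Q\in\mathcal{D}^\alpha$ with $E_k(f,Q)\gtrsim\lambda|Q|^{\beta+\ell/n}$, where
\begin{align*}
\beta:=1+\frac{k-\ell}{n}+\frac{\gamma}{nq};
\end{align*}
equivalently, $Q\in\mathcal{D}^\alpha_{c\lambda,\beta,k,\ell}[f]$. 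A direct check shows that $\gamma\in\Gamma_{p,q}$ is exactly what is needed to place $\beta$ in the admissible range of Theorem \ref{esti-omega} ($\beta\ne 1$ when $p>1$, and $\beta\in(-\infty,1-1/n)\cup(1,\infty)$ when $p=1$).

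I would then assemble the pieces to bound the left-hand side of \eqref{e-up-ap}. The mixed-norm structure $\int[\int\cdots]^{p/q}\upsilon\,dx$ is handled by splitting into cases: when $p\le q$, the inner $p/q$-power is subadditive across dyadic scales; when $p>q$, Minkowski's integral inequality applies, with the hypothesis $n(\tfrac{1}{p}-\tfrac{1}{q})<\ell$ ensuring that the resulting geometric sum over dyadic scales is finite (this scaling constraint is essentially what makes the dyadic shells summable after the norm rearrangement). In either case, Fubini then swaps the $x$-integration with the sum over cubes, replacing $\int_Q\upsilon(x)\,dx$ by $\upsilon(Q)$, and Theorem \ref{esti-omega} applied separately to each of the $3^n$ shifted grids $\mathcal{D}^\alpha$ absorbs $\lambda^p$ and produces the right-hand side of \eqref{e-up-ap}. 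The constant is $\lesssim\varphi([\upsilon]_{A_p})$; from $A_1\subset A_p$ and $[\upsilon]_{A_p}\le[\upsilon]_{A_1}$ (Lemma \ref{lem-apwight}(iv)) this is absorbed into a function $\psi([\upsilon]_{A_1})$.

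The main obstacle is the pointwise bound in the first step. Because $k$ may strictly exceed $\ell$, one cannot directly express $\Delta_h^k f$ through $\nabla^\ell f$; instead, the Whitney inequality must be coupled with a judicious selection (for each pair $(x,h)$) of a dyadic cube across several shifted grids and with the Poincar\'e-type inequality of Lemma \ref{Poin}, so that the pigeonhole principle yields a cube whose local approximation genuinely controls $\Delta_h^k f(x)$. A secondary subtlety is the endpoint $p=1$ with $\gamma<0$: the strict exclusion $\gamma<-q$ hard-wired into $\Gamma_{1,q}$ is exactly what pushes $\beta$ below $1-1/n$, matching the two-sided admissibility clause in Theorem \ref{esti-omega} and revealing why that clause had to be stated in precisely that form.
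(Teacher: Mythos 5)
The strategy---reducing to Theorem~\ref{esti-omega} by controlling $\Delta_h^k f(x)$ through local approximations on shifted dyadic grids, then decomposing dyadically and applying the weighted cube estimate---is indeed the one the paper follows. But your central step, the single-scale pointwise bound $|\Delta_h^k f(x)|\lesssim\sum_{\alpha}E_k(f,Q^\alpha(x,h))/|Q^\alpha(x,h)|$ with $Q^\alpha(x,h)$ of side $\sim|h|$, is false. Writing $\Delta_h^k f(x)=\Delta_h^k\bigl(f-P\bigr)(x)$ for a degree-$(k-1)$ polynomial $P$ reduces matters to bounding the \emph{pointwise} values $|(f-P)(x+ih)|$, $0\le i\le k$; but $E_k(f,Q)/|Q|$ is only the $L^1$-average of $f-P^{(k-1)}_Q(f)$ over $Q$, and a pointwise value can be arbitrarily larger than an $L^1$-average over a cube of that size. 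What is actually true (Lemma~\ref{Poin}) is the multi-scale telescoping bound $|(f-P^{(k-1)}_B(f))(x)|\lesssim\sum_{j\ge0}\fint_{2^{-j}B}|f-P^{(k-1)}_{2^{-j}B}(f)|$. The paper then pigeonholes (see~\eqref{pointe2}--\eqref{eq-claim-02}): for each $(x,h)$ in the level set it selects a scale $j_{x,h}$, possibly very large so the selected cube has side $\ll|h|$, at which the $j$th term carries at least a $2^{-j\varepsilon}$ fraction of the left side; that cube then lands in $\mathcal{D}^\alpha_{\lambda(j),\beta,k,\ell}[f]$ with a \emph{$j$-dependent} threshold $\lambda(j)\sim\lambda\,2^{j(\ell+n(\beta-1)-\varepsilon)}$, and the subsequent geometric sum over $j$ converges precisely because one may choose $\varepsilon<\ell-n(\tfrac1p-\tfrac1q)$. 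Your sketch collapses this selection to the single scale $\sim|h|$ and thereby misses the compensating factor; it also misplaces where the hypothesis $n(\tfrac1p-\tfrac1q)<\ell$ genuinely enters.

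Two further slips. First, your $\beta:=1+\frac{k-\ell}{n}+\frac{\gamma}{nq}$ is not what your own ansatz produces: substituting $E_k(f,Q)/|Q|\gtrsim\lambda|h|^{\ell+\gamma/q}$ with $|Q|\sim|h|^n$ into $E_k(f,Q)>\lambda|Q|^{\beta+\ell/n}$ forces $\beta=1+\frac{\gamma}{qn}$, independent of $k-\ell$ (this is the paper's choice). With your value of $\beta$, the condition $\gamma\in\Gamma_{p,q}$ translates into the admissibility range of Theorem~\ref{esti-omega} only when $k=\ell$; with the correct $\beta$ the translation is exact for all $\ell\le k$. Second, the evaluation points $x+ih$, $1\le i\le k$, lie at distance up to $k|h|$ from $x$, so the small dyadic cube selected around $x+ih$ must be dilated by a factor $2^{j_{x,h}}$ to cover $x$; the resulting comparison $\upsilon(2^jQ)\le[\upsilon]_{A_1}2^{jn}\upsilon(Q)$ is precisely where the $A_1$ hypothesis is spent (Lemma~\ref{lem-apwight}(ii), used in Proposition~\ref{pro-E0}(ii) and (iv)). Your phrase about ``a dilate containing both $x$ and $x+kh$'' silently absorbs this step, but it is not free: it is the place the weighted bound really uses $\upsilon\in A_1$ rather than just local integrability.
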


\begin{remark}
\begin{enumerate}[{\rm (i)}]
\item In Theorem \ref{thm-up-ap}, if $k=\ell=1$, then, in this case,
Theorem \ref{thm-up-ap} coincides with
\cite[Proposition 3.10]{lyyzz24} with
$X$ therein replaced by $L^p_\upsilon$ here,
where $p\in[1,\infty)$ and $\upsilon\in A_1$;
in particular, if further assume $p=q\in[1,\infty)$ and
$\upsilon\equiv1$,
then Theorem \ref{thm-up-ap} in this case exactly
coincides with the upper estimates
\cite[(1-13) and (1-15)]{bsvy24}
of the BSVY formula.
Moreover, Theorem \ref{thm-up-ap}
when $k\in\mathbb{N}\cap[2,\infty)$ is completely new.

\item In Theorem \ref{thm-up-ap},
the assumption $n(\frac{1}{p}-\frac{1}{q})<\ell$ is sharp, which can be
deduced from Proposition \ref{pro-sharp}.
\end{enumerate}

\end{remark}

In particular, when $n=1$ and $k=\ell$, the following
theorem implies that the condition $\upsilon\in A_p (\mathbb{R})$
in Theorems \ref{esti-omega} and \ref{thm-up-ap}
is sharp.

\begin{corollary}\label{cor-ap}
Let $\upsilon \in L^{1}_{\rm loc}(\mathbb{R})$ be nonnegative, $k\in
\mathbb{N}$,
$p\in [1,\infty)$, and $q\in (0,\infty)$ satisfy $1-\frac{1}{q}<k$.
Then the following three statements are mutually equivalent.
\begin{enumerate}[{\rm (i)}]
\item $\upsilon\in A_p (\mathbb{R})$.
\item
There exist $\gamma\in \Gamma_{p,q}$ and $C\in (0,\infty)$ such that,
for any $f\in \dot{W}^{k,p}_{\upsilon} (\mathbb{R})$,
\begin{align}\label{eq-cor-ap}
&\sup_{\lambda\in(0,\infty)}
\lambda^p \int_{\mathbb{R}}
\left[\int_{\mathbb{R}}{\bf 1}_{E_{\lambda,
\frac{\gamma}{q},k}[f]}(x,h)|h|^{\gamma-1}\,dh
\right]^{\frac{p}{q}}\upsilon(x)\,dx
\le C \int_{\mathbb{R}^n}\left|\nabla^{k}f(x)\right|^p
\upsilon(x)\,dx,
\end{align}
where $\Gamma_{p,q}$ and
$E_{\lambda,\frac{\gamma}{q},k}[f]$ are the same as, respectively,
in \eqref{eq-GAMMA} and \eqref{e-EE}.
\item There exist
$\beta\in (-\infty, 0)\cup(1,\infty)$
when $p=1$ or $\beta\in \mathbb{R}\setminus\{1\}$
when $p\in (1,\infty)$ and $C\in (0,\infty)$
such that,
for any $\alpha\in \{0,\frac{1}{3},\frac{2}{3}\}$ and
$f\in \dot{W}^{k,p}_{\upsilon} (\mathbb{R})$,
\begin{align}\label{eq-n1-ap}
\sup_{\lambda\in(0,\infty)}
\lambda^p \sum_{Q\in\mathcal{D}^{\alpha}_{\lambda,\beta,k}[f]}
|Q|^{p(\beta-1)}\upsilon(Q)\le
C
\int_{\mathbb{R}^n}\left|\nabla^{k}f(x)\right|^p
\upsilon(x)\,dx.
\end{align}
\end{enumerate}
\end{corollary}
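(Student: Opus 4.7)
The plan is to close the loop through (i) $\Rightarrow$ (iii), (i) $\Rightarrow$ (ii), and a common testing-function argument for both (iii) $\Rightarrow$ (i) and (ii) $\Rightarrow$ (i).

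For (i) $\Rightarrow$ (iii), I apply Theorem \ref{esti-omega} directly with $n=1$ and $\ell=k$; the range $(-\infty, 1-1/n)\cup(1,\infty)$ for $\beta$ when $p=1$ reduces to $(-\infty,0)\cup(1,\infty)$ in dimension one, exactly matching (iii). For (i) $\Rightarrow$ (ii), I split on $p$. When $p=1$ and $\upsilon\in A_1$, Theorem \ref{thm-up-ap} with $n=1$, $\ell=k$, and any $\gamma\in(-\infty,-q)\cup(0,\infty)$ yields \eqref{eq-cor-ap} at once. When $p\in(1,\infty)$ and $\upsilon\in A_p$, I set $X:=L^p_\upsilon$ and invoke Theorem \ref{thm-main}(I) with its internal parameter taken to be $1$: by Lemma \ref{lem-apwight}(i), $X$ is a {\rm BBF} space; by Lemma \ref{lem-apwight}(v), $X'=L^{p'}_{\upsilon^{1-p'}}$ with $\upsilon^{1-p'}\in A_{p'}$; and by Lemma \ref{lem-apwight}(vi), $\mathcal{M}$ is bounded on $X'$. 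The hypothesis $n(1-1/q)<k$ reduces to the assumed $1-1/q<k$, and we may pick any $\gamma>0$ so that $\gamma\in\Gamma_{1,q}\subset\Gamma_{p,q}$. The upper half of \eqref{eq-main-01} then supplies \eqref{eq-cor-ap}.

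For the sharpness directions, fix a bounded interval $I\subset\mathbb{R}$ and a nonnegative $g\in L^p_\upsilon(\mathbb{R})$ supported in $I$, and set
$$
F(x):=\int_{-\infty}^x \frac{(x-t)^{k-1}_+}{(k-1)!}\,g(t)\,dt,
$$
so that $F^{(k)}=g$ and $\|\,|\nabla^k F|\,\|_{L^p_\upsilon}=\|g\|_{L^p_\upsilon}$. For (iii) $\Rightarrow$ (i), select a dyadic $Q\in\mathcal{D}^\alpha$ comparable to and containing a slightly enlarged copy of $I$, and extract $E_k(F,Q)\gtrsim |Q|^{k-1}\int_I g$ by testing $F-P_Q^{(k-1)}(F)$ against a degree-$k$ polynomial on $Q$ orthogonal to $\mathcal{P}_{k-1}$: a $k$-fold integration by parts converts the pairing into $\int_Q g(x)R(x)\,dx$, with $R$ a computable degree-$k$ polynomial bounded below by a multiple of $|Q|^k$ on a subset of $I$ of comparable measure. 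Substituting $\lambda\sim|Q|^{-\beta-1}\int_I g$ into \eqref{eq-n1-ap} yields
$$
\left(\frac{1}{|Q|}\int_Q g\right)^p \upsilon(Q)\lesssim |Q|^p \int_Q g^p\,\upsilon,
$$
which by Lemma \ref{lem-apwight}(iii) and the arbitrariness of $Q$ and $g$ forces $\upsilon\in A_p(\mathbb{R})$. For (ii) $\Rightarrow$ (i), the representation
$$
\Delta^k_h F(x)=\int_0^h\cdots\int_0^h g(x+t_1+\cdots+t_k)\,dt_1\cdots dt_k
$$
gives, for $h\sim|I|$ and $x$ in an $|I|$-region placed so that the stencil $[x,x+kh]$ straddles $I$, the lower bound $|\Delta^k_h F(x)|\gtrsim h^{k-1}\int_I g$; plugging this into the LHS of \eqref{eq-cor-ap} with $\lambda$ just below $|I|^{-1-\gamma/q}\int_I g$ extracts, after the $\gamma$-exponents cleanly cancel, the analogous $A_p$-type inequality on a translate of $I$.

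The main obstacle will be the quantitative lower bound for $E_k(F,Q)$ and $|\Delta^k_h F(x)|$ in the face of the degree-$(k-1)$ polynomial ambiguity; I plan to handle this by the Legendre-polynomial testing sketched above combined with careful geometric placement of $Q$ (resp.\ the $x$-interval) relative to $I$. A secondary technical point is that the borderline exclusions $\gamma=0$ (and $\beta\in[0,1]$ when $p=1$) are \emph{essential}: they ensure that the pairing $\lambda^p|h|^{\gamma-1}$ (resp.\ $\lambda^p|Q|^{p(\beta-1)}$), when integrated over the level set, produces a scale-invariant $A_p$-type quantity rather than a degenerate one.
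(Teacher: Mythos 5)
Your forward directions are fine: (i) $\Rightarrow$ (iii) is exactly the paper's application of Theorem \ref{esti-omega} with $\ell=k$, and your route to (i) $\Rightarrow$ (ii) for $p\in(1,\infty)$ via Theorem \ref{thm-main}(I) with internal exponent $1$ (using Lemma \ref{lem-apwight}(v),(vi) to verify the hypotheses for $X=L^p_\upsilon$) is a legitimate shortcut; the paper instead re-runs the proofs of Proposition \ref{pro-E0}(ii) and Theorem \ref{thm-up-ap} with $A_1$ replaced by $A_p(\mathbb{R})$. The real issues are in the reverse directions.

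First, for (iii) $\Rightarrow$ (i) your duality argument needs the $k$-fold antiderivative $S$ of the orthogonal test polynomial $R$ to be single-signed and bounded below by $|Q|^k$ \emph{on the entire support of $g$}, not merely ``on a subset of $I$ of comparable measure'': since $g\geq0$ is arbitrary, it may concentrate exactly where $S$ is small or changes sign, and then $|\int_Q gS|$ does not control $|Q|^k\int_I g$. This can be repaired (for the Legendre choice $S$ is a constant multiple of $((x-a)(b-x))^k$, single-signed and nondegenerate away from $\partial Q$, so one must first enlarge $I$ before invoking Lemma \ref{lem-dy-cub}(ii) to guarantee $\mathrm{dist}(I,\partial Q)\gtrsim|Q|$), but as written the justification is insufficient, and your exponent bookkeeping is also off: the correct bound is $E_k(F,Q)\gtrsim|Q|^{k}\int_I g$, giving $\lambda\sim|Q|^{-\beta}\int_I g$; your displayed conclusion $(\frac{1}{|Q|}\int_Q g)^p\upsilon(Q)\lesssim|Q|^p\int_Q g^p\upsilon$ is not scale-invariant and does not imply $A_p$. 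The paper avoids all of this with Lemma \ref{lem-lo}, which converts hypothesis (iii) into a bound on the averaged functional $\int\!\!\int_{(x-r,x+r)^2}|\Delta^k_{y,z}f|\,dy\,dz$, after which the lower bound comes from the \emph{nonnegative} B-spline kernel $M_k$ in the representation of $\Delta^k_{y,z}f$ — positivity of the kernel is what makes the estimate uniform in $g\geq0$.

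Second, in both reverse directions your test produces an inequality in which $\upsilon$ is evaluated on a set \emph{disjoint from} the interval $I$ carrying $g$ (you say ``on a translate of $I$''), whereas Lemma \ref{lem-apwight}(iii) requires the average and the weighted integral over the \emph{same} interval. Closing this gap is a nontrivial step: the paper first uses the dilation and translation invariance of \eqref{eq-cor-ap} and \eqref{eq-n1-ap} to reduce everything to the fixed configuration $I_0=[4,8]$, $I_1=[1,2]$, $I_2=[16,32]$, obtains $\upsilon(I_1\cup I_2)(\int_{I_0}g)^p\lesssim\int_{I_3}g^p\upsilon$, and then applies the same inequality at another scale with $g=\mathbf{1}_{I_2}$ to prove $\upsilon(I_0)\lesssim\upsilon(I_1\cup I_2)$; a mollification argument is also needed to pass from smooth $g$ to general nonnegative $g\in L^1_{\rm loc}$. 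None of these steps appears in your plan, and without them the argument does not yield $\upsilon\in A_p(\mathbb{R})$.
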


\begin{remark}\label{rem-ques}
Corollary \ref{cor-ap} when $k=1$
has been established in \cite[Theorems 1.1 and 1.3]{lyyzz24}
and, when $k\in\mathbb{N}\cap[2,\infty)$,
Corollary \ref{cor-ap} is completely new.
Recall that \cite[Theorems 1.1 and 1.3]{lyyzz24} also includes
the characterization
of $A_p$ with $p\in[1,\infty)$ and $n\in\mathbb{N}\cap[2,\infty)$.
However,
it seems that there exists no appropriate alternative
of $\mathcal{A}^k$ when $n\in\mathbb{N}\cap[2,\infty)$,
where $\mathcal{A}$ is the same as in \eqref{eq-aaaa}.
Thus,
it is still unknown how to generalize
Corollary \ref{cor-ap} to the case $n\in\mathbb{N}\cap[2,\infty)$.
\end{remark}

We provide the detailed proofs of
Theorems \ref{esti-omega} and \ref{thm-up-ap},
as well as Corollary \ref{cor-ap}, respectively, in Subsections \ref{ssub-3.1}, \ref{ssub-3.2}, and \ref{ssub-3.3}.

\subsection{Proof of Theorem \ref{esti-omega}}\label{ssub-3.1}

To prove Theorem \ref{esti-omega},
we need the following technique lemma about minimizing polynomials
and local approximations, which can
be easily deduced from the definition of minimizing polynomials;
see \cite[p.\,83]{TG80} and \cite[Lemma 4.1]{Lu95} for more details.

\begin{lemma}\label{uni-p}
Let $\Omega$ be a ball or a cube in $\mathbb{R}^n$ and
$s\in\mathbb{Z}_{+}$.
\begin{enumerate}[{\rm (i)}]
\item For any $P\in \mathcal{P}_{s}$,
$
P^{(s)}_{\Omega}(P)=P.
$
\item There exists a constant ${C_{(s)}}\in [1,{\infty })$,
depending only on $s$, such that, for any $f\in L^{1}_{\rm
loc}$ and $x\in \Omega$,
\begin{align*}
\left| P_{\Omega}^{(s)}(f)(x)\right| \leq
C_{(s)}\fint_{\Omega}|f(y)|\,dy.
\end{align*}
\item For any given $p\in
[1,\infty)$ and any $f\in L^{1}_{\rm loc}$,
\begin{align}\label{uni-sim}
\left\|f-P^{(s)}_{\Omega}(f)\right\|_{L^p(\Omega)}
\sim
\inf_{P\in \mathcal{P}_s}
\left\|f-P\right\|_{L^p(\Omega)},
\end{align}
where the implicit positive constants depend only on $n$ and $s$.

\item Let $k,\ell\in \mathbb{N}$ with
$\ell\le k$. Then there exists a positive constant
$C_{(n,k,\ell)}$, depending only on $n$, $k$,
and $\ell$, such that, for any $f\in L^{1}_{\rm loc}$,
\begin{align}\label{eq-Ekl}
E_{k}(f,\Omega)\le C_{(n,k,\ell)}E_{\ell}(f,\Omega).
\end{align}
\end{enumerate}
\end{lemma}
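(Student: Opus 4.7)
The plan is to handle the four parts in order, with (iii) and (iv) following quickly once (i) and (ii) are in place.

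For (i), the defining orthogonality condition $\int_\Omega [f-P^{(s)}_\Omega(f)](y)\, y^\alpha\,dy = 0$ for all $\alpha\in\mathbb{Z}_+^n$ with $|\alpha|\le s$ is a linear system for the coefficients of $P^{(s)}_\Omega(f)$ whose matrix is the Gram matrix of the monomials $\{y^\alpha\}_{|\alpha|\le s}$ in $L^2(\Omega)$; this matrix is positive definite, hence invertible, so the minimizing polynomial is unique. If $f=P\in\mathcal{P}_s$, then the choice $P^{(s)}_\Omega(P):=P$ trivially satisfies the orthogonality, and uniqueness yields $P^{(s)}_\Omega(P)=P$.

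For (ii), by a translation and dilation, it suffices to establish the estimate on a fixed reference cube or ball $\Omega_0$, since the orthogonality conditions are affine-covariant and both sides of the bound transform compatibly under $\Omega \mapsto x_0+r\Omega_0$. On $\Omega_0$, solving the (fixed, finite-dimensional) linear system expressing the coefficients of $P^{(s)}_{\Omega_0}(f)$ in terms of the moments $\int_{\Omega_0} f(y)\, y^\alpha\,dy$ bounds each coefficient by a constant multiple of $\|f\|_{L^1(\Omega_0)}$; since $\mathcal{P}_s|_{\Omega_0}$ is finite-dimensional, its $L^\infty(\Omega_0)$-norm is comparable to the coefficient norm, yielding the pointwise bound in terms of $\fint_\Omega |f|\,dy$ after undoing the scaling.

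For (iii), the direction $\|f-P^{(s)}_\Omega(f)\|_{L^p(\Omega)} \ge \inf_{P\in\mathcal{P}_s}\|f-P\|_{L^p(\Omega)}$ is trivial. For the reverse, pick a near-minimizer $P^\ast\in\mathcal{P}_s$, use (i) to write $P^\ast-P^{(s)}_\Omega(f) = P^{(s)}_\Omega(P^\ast-f)$, and estimate via (ii) together with H\"older's inequality
$$
\left\|P^{(s)}_\Omega(P^\ast-f)\right\|_{L^p(\Omega)} \le |\Omega|^{1/p}\left\|P^{(s)}_\Omega(P^\ast-f)\right\|_{L^\infty(\Omega)} \lesssim |\Omega|^{1/p}\fint_\Omega \left|P^\ast-f\right|\,dy \lesssim \left\|P^\ast-f\right\|_{L^p(\Omega)}.
$$
The triangle inequality then gives the desired upper bound. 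For (iv), apply (iii) with $p=1$ to both $E_k(f,\Omega)$ and $E_\ell(f,\Omega)$; since $\ell\le k$ entails $\mathcal{P}_{\ell-1}\subset\mathcal{P}_{k-1}$, the infimum over the larger set is no larger, whence
$$
E_k(f,\Omega) \sim \inf_{P\in\mathcal{P}_{k-1}}\|f-P\|_{L^1(\Omega)} \le \inf_{P\in\mathcal{P}_{\ell-1}}\|f-P\|_{L^1(\Omega)} \sim E_\ell(f,\Omega).
$$
The only mildly technical point is the scaling reduction in (ii), needed so that the constant depends only on $s$ (with the ambient $n$ fixed throughout the paper); everything else is routine finite-dimensional linear algebra together with H\"older and triangle inequalities.
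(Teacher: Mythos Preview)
Your proof is correct and follows the standard route. The paper does not actually prove this lemma: it states that the result ``can be easily deduced from the definition of minimizing polynomials'' and refers to \cite[p.\,83]{TG80} and \cite[Lemma 4.1]{Lu95} for details. Your argument supplies precisely those details --- uniqueness from the Gram system, the scaling reduction for (ii), and the near-minimizer/triangle-inequality trick for (iii) --- and your derivation of (iv) from (iii) via the containment $\mathcal{P}_{\ell-1}\subset\mathcal{P}_{k-1}$ is the natural one.
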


Also, the following exquisite geometrical properties
of shifted dyadic grids on Euclidean spaces play
key roles in this section (see, for instance, \cite[p.\,479]{mtt02}).
\begin{lemma}\label{lem-dy-cub}
For any $\alpha\in\{0,\frac{1}{3},\frac{2}{3}\}^n$,	let
$\mathcal{D}^\alpha$ be the same as in \eqref{dyadic_grid}.
\begin{enumerate}
\item[\textup{(i)}]
For any $Q,P\in\mathcal{D}^\alpha$ with
$\alpha\in\{0,\frac{1}{3},\frac{2}{3}\}^n$,
$Q\cap P\in\{\emptyset,Q,P\}$.
\item[\textup{(ii)}]
There exists a positive constant $C_{(n)}$,
depending only on $n$, such that,
for any ball $B\subset\mathbb{R}^n$, there exist
$\alpha\in\{0,\frac{1}{3},\frac{2}{3}\}^n$ and
$Q\in\mathcal{D}^\alpha$
such that $B\subset Q\subset C_{(n)}B$.
\end{enumerate}
\end{lemma}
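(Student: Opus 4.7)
The plan is to prove (i) by establishing dyadic nestedness within a single shifted grid $\mathcal{D}^\alpha$, and to prove (ii) via the classical one-third trick, producing a containing cube of comparable size in one of the three shifted grids.

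For (i), the key step is adjacent-scale nestedness: every cube $Q\in\mathcal{D}^\alpha$ of side length $2^j$ is contained in a unique cube of $\mathcal{D}^\alpha$ of side length $2^{j+1}$. Writing $Q=2^j(m+[0,1)^n+(-1)^j\alpha)$ with $m\in\mathbb{Z}^n$, a direct coordinate-wise computation identifies the unique $m'\in\mathbb{Z}^n$ for which $Q\subset 2^{j+1}(m'+[0,1)^n+(-1)^{j+1}\alpha)$. The alternating sign $(-1)^j$ in the definition of $\mathcal{D}^\alpha$ is precisely what makes this identity hold uniformly in $j$; a quick sanity check in dimension one with $\alpha=1/3$ confirms that without the alternation the scale-$2^{j+1}$ tiling would fail to contain the scale-$2^j$ tiling. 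Iterating yields a unique ancestor at each coarser scale, and since cubes at a fixed scale form a tiling of $\mathbb{R}^n$, for any two $Q,P\in\mathcal{D}^\alpha$ at scales $2^j\le 2^{j+\ell}$ the intersection $Q\cap P$ is either $Q$ (when $P$ is the ancestor of $Q$) or empty, giving $Q\cap P\in\{\emptyset,Q,P\}$.

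For (ii), given $B=B(x,r)$, choose $j$ to be the smallest integer with $2^j\ge 6r$, so $2^j<12r$. For each coordinate $i$, among the three shifted tilings of $\mathbb{R}$ by intervals of length $2^j$ with offsets $0,\ 2^j/3,\ 2\cdot 2^j/3$ modulo $2^j$, a pigeonhole argument produces an $\alpha_i\in\{0,\tfrac{1}{3},\tfrac{2}{3}\}$ placing $x_i$ at distance at least $2^j/6\ge r$ from both endpoints of its containing interval. The alternating sign $(-1)^j$ is immaterial here because $\{-\alpha:\alpha\in\{0,\tfrac{1}{3},\tfrac{2}{3}\}\}\equiv\{0,\tfrac{1}{3},\tfrac{2}{3}\}\pmod{1}$. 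Assembling these coordinate choices into $\alpha=(\alpha_1,\dots,\alpha_n)$ produces a cube $Q\in\mathcal{D}^\alpha$ of side length $2^j$ containing $B(x,r)$, and $2^j\le 12r$ forces $Q\subset C_{(n)}B$ for a dimensional constant $C_{(n)}\le 12\sqrt{n}$.

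The main subtlety is correctly tracking the alternating factor $(-1)^j$ in the definition of $\mathcal{D}^\alpha$. For (i) it is \emph{essential}, ensuring adjacent-scale tilings actually nest; for (ii) it is \emph{irrelevant}, since $\{(-1)^j\alpha\}$ and $\{\alpha\}$ span the same offsets modulo $1$. Once this observation is in hand, both statements reduce to short, standard dyadic calculations, and no deeper machinery is needed.
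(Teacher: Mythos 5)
Your proof is correct. The paper itself states this lemma without proof, citing Muscalu--Tao--Thiele \cite[p.\,479]{mtt02}, and your argument is exactly the standard one: for (i), the adjacent-scale nesting computation does go through because $3\alpha_i\in\mathbb{Z}$ for each coordinate, so the interval of admissible (doubled) translation parameters $[m_i-1+3(-1)^j\alpha_i,\,m_i+3(-1)^j\alpha_i]$ has integer endpoints and therefore contains exactly one even integer — this is the arithmetic fact your ``direct coordinate-wise computation'' rests on, and it is also why the alternating sign $(-1)^j$ is essential, as you correctly observe; for (ii), the coordinate-wise one-third pigeonhole with $2^j\in[6r,12r)$ gives the containing cube with $C_{(n)}=12\sqrt{n}$. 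No gaps.
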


In what follows,
for any $\alpha\in\{0,\frac{1}{3},\frac{2}{3}\}^n$,
$\beta\in \mathbb{R}$, $\lambda\in (0,\infty)$, $k,\ell\in \mathbb{N}$
with $\ell\le k$, and $f\in L^{1}_{\rm loc}$, let
\begin{align*}
{\Omega}^{\alpha}_{\lambda,\beta,k,\ell}[f]:=
\bigcup_{Q\in \mathcal{D}^{\alpha}_{\lambda,\beta,k.\ell}[f]} Q.
\end{align*}
Applying the higher-order Poincar\'{e} inequality,
we obtain the following key lemma,
which essentially characterizes the sparseness
of cubes in the level set
$\mathcal{D}^{\alpha}_{\lambda,\beta,k,\ell}[f]$.
Moreover, this lemma is
of independent interest and is frequently used in this article
(see \cite[Lemma 2.5]{lyyzz24} and \cite[Lemma 2.7]{zyyy24} for its more applications).

\begin{lemma}\label{lem-Qx}
Let $\alpha\in\{0,\frac{1}{3},\frac{2}{3}\}^n$,
$k,\ell\in \mathbb{N}$ with $\ell\le k$, $p,\lambda\in
(0,\infty)$,
$\beta\in \mathbb{R}\setminus \{1\}$, and $f\in
C^{\infty}$ with $|\nabla^{\ell}f|\in C_{\rm
c}$.
Then, for any $x\in {\Omega}^{\alpha}_{\lambda,\beta,k,\ell}[f]$,
there exists a unique cube $Q_x \in
\mathcal{D}^{\alpha}_{\lambda,\beta,k,\ell}[f]$ containing $x$
such that
\begin{align}\label{Qx-es}
\sum_{Q\in \mathcal{D}^{\alpha}_{\lambda,\beta,k,\ell}[f]}
|Q|^{p(\beta-1)}{\bf 1}_{Q}(x)
\sim
|Q_x|^{p(\beta-1)},
\end{align}
where the positive equivalence constants depend only on $n,p$, and
$\beta$.
\end{lemma}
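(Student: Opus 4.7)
My plan is to exploit the perfect nesting of dyadic cubes (Lemma~\ref{lem-dy-cub}(i)) to reduce the estimate to a geometric-series bound, choosing $Q_x$ as the largest (resp.\ smallest) cube in $\mathcal{D}^{\alpha}_{\lambda,\beta,k,\ell}[f]$ containing $x$ when $\beta>1$ (resp.\ $\beta<1$).

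For any $x\in\Omega^{\alpha}_{\lambda,\beta,k,\ell}[f]$, the cubes of $\mathcal{D}^\alpha$ through $x$ form a totally ordered nested chain $\{Q^x_j\}_{j\in\mathbb{Z}}$ with $|Q^x_j|=c_\alpha 2^{jn}$. Set $S_x:=\{j\in\mathbb{Z}:Q^x_j\in\mathcal{D}^{\alpha}_{\lambda,\beta,k,\ell}[f]\}$, which is nonempty by hypothesis. I need to show that $S_x$ is bounded above when $\beta>1$ and bounded below when $\beta<1$, so that $j^*:=\max S_x$ (resp.\ $\min S_x$) exists and $Q_x:=Q^x_{j^*}$ is well-defined; its uniqueness is immediate from the total nesting.

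For the upper bound on $S_x$ when $\beta>1$, I would combine Lemma~\ref{uni-p}(iv) with the $L^1$-Poincar\'e inequality (Lemma~\ref{lem-poin} with $X:=L^1$ and $k$ replaced by $\ell$) and Lemma~\ref{uni-p}(iii) to obtain
\begin{align*}
E_k(f,Q)\le C_{(n,k,\ell)}E_\ell(f,Q)\lesssim|Q|^{\ell/n}\left\|\,\left|\nabla^\ell f\right|\,\right\|_{L^1(Q)}\le|Q|^{\ell/n}\left\|\,\left|\nabla^\ell f\right|\,\right\|_{L^1}.
\end{align*}
Membership $Q\in\mathcal{D}^{\alpha}_{\lambda,\beta,k,\ell}[f]$ then forces $|Q|^{\beta}\lesssim\lambda^{-1}\|\,|\nabla^\ell f|\,\|_{L^1}<\infty$, which bounds $|Q|$ from above since $\beta>0$. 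For the lower bound on $S_x$ when $\beta<1$, note that $f\in C^\infty$ together with $|\nabla^\ell f|\in C_{\mathrm c}$ forces $f$ to coincide with a polynomial of degree $<\ell$ outside a compact set, so every partial derivative of order $k$ lies in $C_{\mathrm c}^\infty$ and in particular $M:=\|\,|\nabla^k f|\,\|_{L^\infty}<\infty$. Taking $T_Q\in\mathcal{P}_{k-1}$ to be the Taylor polynomial of $f$ at the center of $Q$, Taylor's theorem yields $|f(y)-T_Q(y)|\lesssim|Q|^{k/n}M$ for $y\in Q$, so by Lemma~\ref{uni-p}(iii),
\begin{align*}
E_k(f,Q)\sim\inf_{P\in\mathcal{P}_{k-1}}\|f-P\|_{L^1(Q)}\le\|f-T_Q\|_{L^1(Q)}\lesssim|Q|^{1+k/n}M.
\end{align*}
Membership $Q\in\mathcal{D}^{\alpha}_{\lambda,\beta,k,\ell}[f]$ then gives $|Q|^{1+(k-\ell)/n-\beta}\gtrsim\lambda/M$; since $\ell\le k$ and $\beta<1$ together make $1+(k-\ell)/n-\beta>0$, this bounds $|Q|$ from below.

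To conclude, since $|Q^x_j|^{p(\beta-1)}$ is geometric in $j$ with ratio $2^{np(\beta-1)}$, the $j=j^*$ term is the largest (indeed, $j^*=\max S_x$ pairs with $p(\beta-1)>0$ and $j^*=\min S_x$ with $p(\beta-1)<0$), and the remaining terms decay geometrically with ratio $2^{-np|\beta-1|}\in(0,1)$; hence
\begin{align*}
|Q_x|^{p(\beta-1)}\le\sum_{j\in S_x}\left|Q^x_j\right|^{p(\beta-1)}\le\left(1-2^{-np|\beta-1|}\right)^{-1}|Q_x|^{p(\beta-1)},
\end{align*}
which yields \eqref{Qx-es}. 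The main obstacle I expect lies in the $\beta<1$ case: bounding $S_x$ from below requires $|\nabla^k f|$ (not merely $|\nabla^\ell f|$) to be bounded and the strict positivity of $1+(k-\ell)/n-\beta$, both of which hinge on the joint hypotheses $f\in C^\infty$, $|\nabla^\ell f|\in C_{\mathrm c}$, $\ell\le k$, and $\beta\ne 1$, and must therefore be invoked with care.
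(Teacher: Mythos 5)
Your proof is correct and follows essentially the same route as the paper: both arguments use the level-set condition together with an upper bound on $E_k(f,Q)$ to obtain a uniform lower (for $\beta<1$) or upper (for $\beta>1$) bound on the size of cubes in $\mathcal{D}^{\alpha}_{\lambda,\beta,k,\ell}[f]$, take $Q_x$ to be the extremal such cube containing $x$, and sum a convergent geometric series over the nested dyadic chain. The only difference is cosmetic: the paper derives both size bounds from the single inequality $\lambda|Q|^{\beta}<|Q|\,\|\nabla^{\ell}f\|_{L^{\infty}}$ (obtained via Lemma \ref{uni-p}(iv) and the Poincar\'e inequality), whereas you use an $L^1$ bound for $\beta>1$ and a Taylor-polynomial bound involving $\|\,|\nabla^{k}f|\,\|_{L^{\infty}}$ for $\beta<1$ --- both are valid under the stated hypotheses.
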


\begin{proof}
From \eqref{df-ab}, \eqref{eq-Ekl}, Lemma \ref{lem-poin} with
$X:= L^{1}$, \eqref{uni-sim},
and the assumption $f\in C^{\infty}$ with
$|\nabla^{\ell}f|\in C_{\rm c}$, we infer that, for any
$Q\in\mathcal{D}^{\alpha}_{\lambda,\beta,k,\ell}[f]$,
\begin{align}\label{eq-bb00}
\lambda |Q|^{\beta}
&<
|Q|^{-\frac{\ell}{n}}E_{k}(f,Q)
\lesssim|Q|^{-\frac{\ell}{n}}E_{\ell}(f,Q)
\lesssim \int_{Q}\left|\nabla^{\ell} f(z)\right|\,dz\notag\\
&\le|Q|\left\|\nabla^\ell
f\right\|_{L^\infty}<\infty.
\end{align}
Now, we fix $x\in {\Omega}^{\alpha}_{\lambda,\beta,k,\ell}[f]$ and
consider the following two cases on $\beta\in \mathbb{R}\setminus
\{1\}$.

\emph{Case 1)} $\beta\in (-\infty,1)$. In this case, applying
\eqref{eq-bb00}, we find that
\begin{align*}
\inf_{Q\in\mathcal{D}^{\alpha}_{\lambda,\beta,k,\ell}[f]}|Q|
\gtrsim
\left[\lambda^{-1}\left\|\nabla^\ell
f\right\|_{L^\infty}\right]^{\frac{1}{\beta-1}}
>0.
\end{align*}
Therefore, from Lemma \ref{lem-dy-cub}(i), we deduce that there exists
a unique cube $Q_x \in \mathcal{D}^{\alpha}_{\lambda,\beta,k,\ell}[f]$
that is
minimum with respect to the set inclusion such that $x\in Q_x$;
moreover, for any $j\in \mathbb{Z}_{+}$,
\begin{align*}
\sharp\left\{Q\in\mathcal{D}^{\alpha}_{\lambda,\beta,k,\ell}[f]:\
Q\supset Q_x,\ |Q|=2^{jn}|Q_x| \right\}
\le 1,
\end{align*}
where $\sharp E$ denotes the \emph{cardinality} of the set $E$.
Using these and the assumption $\beta\in (-\infty,1)$
again, we conclude that
\begin{align}\label{Qx02}
\sum_{Q\in \mathcal{D}^{\alpha}_{\lambda,\beta,k,\ell}[f]}
|Q|^{p(\beta-1)}{\bf 1}_{Q}(x)
&=
\sum_{j\in\mathbb{Z}_{+}}\sum_{\{Q\in\mathcal{D}^{\alpha}_{\lambda,\beta,k,\ell}[f]:\
Q\supset Q_x,\ |Q|=2^{jn}|Q_x| \}}|Q|^{p(\beta-1)}\notag\\
&\leq
|Q_x|^{p(\beta-1)}\sum_{j\in \mathbb{Z}_{+}}2^{jnp(\beta-1)}
\sim |Q_x|^{p(\beta-1)}.
\end{align}
On the other hand, since $Q_x \in \mathcal{D}^{\alpha}_{\lambda,\beta,k,\ell}[f]$,
it follows that
\begin{align*}
|Q_x|^{p(\beta-1)}
\lesssim
\sum_{Q\in \mathcal{D}^{\alpha}_{\lambda,\beta,k,\ell}[f]}
|Q|^{p(\beta-1)}{\bf 1}_{Q}(x).
\end{align*}
This, combined with \eqref{Qx02}, further implies that \eqref{Qx-es}
holds in this case.

\emph{Case 2)} $\beta\in (1,\infty)$. In this case, by
\eqref{eq-bb00}, one has
\begin{align*}
\sup_{Q\in\mathcal{D}^{\alpha}_{\lambda,\beta,k,\ell}[f]}
|Q|
\lesssim
\left[\lambda^{-1} \left\|\nabla^k
f\right\|_{L^\infty}\right]^\frac{1}{\beta-1}<\infty.
\end{align*}
Thus, from Lemma \ref{lem-dy-cub}(i), it follows that there exists a
unique cube $Q_x \in \mathcal{D}^{\alpha}_{\lambda,\beta,k,\ell}[f]$
that is
maximum with respect to the set inclusion such that $x\in Q_x$;
furthermore, for any $j\in \mathbb{Z}_{+}$,
\begin{align*}
\sharp\left\{Q\in\mathcal{D}^{\alpha}_{\lambda,\beta,k,\ell}[f]:\
x\in Q\subset Q_x,\ |Q|=2^{-jn}|Q_x| \right\}\le 1.
\end{align*}
Applying these and
the assumption $\beta\in (1,\infty)$, we
find that
\begin{align*}
\sum_{Q\in \mathcal{D}^{\alpha}_{\lambda,\beta,k,\ell}[f]}
|Q|^{p(\beta-1)}{\bf 1}_{Q}(x)
&=
\sum_{j\in\mathbb{Z}_{+}}\sum_{\{Q\in\mathcal{D}^{\alpha}_{\lambda,\beta,k,\ell}[f]:\
Q\subset Q_x,\ |Q|=2^{-jn}|Q_x| \}}|Q|^{p(\beta-1)}\\
&\leq
|Q_x|^{p(\beta-1)}\sum_{j\in \mathbb{Z}_{+}}2^{-jnp(\beta-1)}
\sim |Q_x|^{p(\beta-1)}.
\end{align*}
From this and $Q_x \in \mathcal{D}^{\alpha}_{\lambda,\beta,k,\ell}[f]$,
we deduce that \eqref{Qx-es}
holds also in this case.
This finishes the proof of
Lemma \ref{lem-Qx}.
\end{proof}

Recall that, for any $Q\in\mathcal{Q}$
and $f\in L^1_{\rm loc}$,
the \emph{renormalized averaged modulus of continuity},
$\omega_Q(f)$, of $f$ is defined by setting
\begin{align*}
\omega_Q(f):=|Q|^{-1-\frac1n}
\int_{Q}\int_{Q}|f(x)-f(y)|\,dx\,dy.
\end{align*}
Via the above preparations, we now show Theorem \ref{esti-omega}.

\begin{proof}[Proof of Theorem \ref{esti-omega}]
Fix $\alpha\in\{0,\frac13,\frac23\}^n$
and $\upsilon\in A_p$.
From
Theorem \ref{thm-e}(i) and Lemma \ref{lem-apwight}(i),
we deduce that the set $\{f\in C^{\infty}(\mathbb{R}):\ |\nabla^\ell
f|\in C_{\rm c}(\mathbb{R})\}$ is
dense in $\dot{W}^{\ell,p}_{\upsilon} (\mathbb{R})$.
Applying this and
a density argument similar to that used in
the proof of \cite[(4.22)]{dlyyz-bvy}, we find that, to prove the
present theorem, it suffices to show that
\eqref{eq-est-m1} holds for any
$f\in C^{\infty}$ with $|\nabla^{\ell}f|\in C_{\rm
c}$.

Now, let $f\in C^{\infty}$ with $|\nabla^{\ell}f|\in
C_{\rm c}$.
Without loss of generality,
we may assume that $\lambda=1$; otherwise, we replace $f$ by $\frac{f}{\lambda} $
for any $\lambda\in(0,\infty)$.
To prove \eqref{eq-est-m1} for this $f$,
we consider the following four cases on $p$, $n$, and $\beta$.

\emph{Case 1)} both $p\in (1,\infty)$ and $\beta\in
\mathbb{R}\setminus \{1\}$
or both $p=1$ and $\beta\in (-\infty,0)\cup (1,\infty)$.
In this case, repeating the proof of \cite[Proposition 2.3]{lyyzz24}
with Lemma 2.5 and (2.3) therein replaced, respectively, by Lemma
\ref{lem-Qx} and \eqref{eq-bb00} here,
we obtain the desired result.

\emph{Case 2)}
$p=1$, $n\in \mathbb{N}\cap [2,\infty)$, and
$\beta\in [0,1-\frac{1}{n})$. In this case,
we fix $\sigma\in(\beta,1-\frac1n)$ and let
\begin{align*}
\mathcal{G}_{\sigma}:&=
\left\{Q\in\mathcal{D}^{\alpha}_{1,\beta,k,\ell}[f]:\
\text{for any collection}\ \mathcal{P}\subset
\mathcal{D}^{\alpha}_{1,\beta,k,\ell}[f]\right.\\
&\quad\text{of pairwise disjoint cubes strictly contained in}\
Q,\\
&\quad\left.\sum_{P\in\mathcal{P}}|P|^{\sigma-1}\upsilon(P)
\le |Q|^{\sigma-1}\upsilon(Q)\right\}
\end{align*}
and $\mathcal{B}_\sigma:=
\mathcal{D}^{\alpha}_{1,\beta,k,\ell}[f]\setminus\mathcal{G}_\sigma$.
Then, repeating the proof of \cite[Lemma 4.3]{cddd03}
with $|I|^\gamma$ therein replaced by $|Q|^{\beta-1}\upsilon(Q)$ here,
we obtain
\begin{align*}
\sum_{Q\in\mathcal{B}_\sigma}|Q|^{\beta-1}\upsilon(Q)
\lesssim\sum_{Q\in\mathcal{G}_\sigma}|Q|^{\beta-1}
\upsilon(Q),
\end{align*}
where the implicit positive constant depends only on
$n$, $\beta$, and $\sigma$.
From the Whitney inequality (see \cite[Theorem 1.15]{b09})
and \cite[p.\,46, (7.13)]{dl93},
we deduce that
\begin{align}\label{eq-de1}
E_k(f,Q)
\lesssim
\sup_{|h|\le \frac{\ell(Q)}{k}}\left\|\Delta_h^k
f\right\|_{L^1(Q(k,h))}
\lesssim
|Q|^{\frac{k-1}{n}}\sum_{\zeta\in \mathbb{Z}_{+}^n,\,|\zeta|=k-1}
\sup_{|h|\le \ell(Q)}\left\|\Delta_h
\left(\partial^{\zeta}f\right)\right\|_{L^1(Q(1,h))},
\end{align}
where $\ell(Q)$ denotes the edge length of $Q$ and,
for any $j\in\mathbb{N}$ and $h\in \mathbb{R}^n$,
$$Q(j,h):=\{x\in Q: x+jh\in Q\}.$$
Fix $\zeta\in \mathbb{Z}_{+}^n$ and $|\zeta|=k-1$.
Then, for any $h\in\mathbb{R}^n$ with
$|h|\le \ell(Q)$ and for any $\xi\in Q$ and $x\in Q(1,h)$, by the definition of
$Q(1,h)$,
we obtain $x,x+h\in Q$,
which, together with a change of variables, further implies that
\begin{align*}
\left\|	\Delta_h \left(\partial^{\zeta}f\right)\right\|_{L^1(Q(1,h))}
&=
\int_{Q(1,h)}\left|\partial^{\zeta}f(x+h)-\partial^{\zeta}f(x)\right|\,dx\\
&\le
\int_{Q(1,h)}\left|\partial^{\zeta}f(\xi)-\partial^{\zeta}f(x)\right|\,dx
+\int_{Q(1,h)}\left|\partial^{\zeta}f(\xi)-\partial^{\zeta}f(x+h)\right|\,dx\\
&\le
2\int_{Q}\left|\partial^{\zeta}f(\xi)-\partial^{\zeta}f(x)\right|\,dx.
\end{align*}
Integrating with respect to $\xi\in Q$,
we obtain
\begin{align*}
\sup_{|h|\le \ell(Q)}\left\|	\Delta_h
\left(\partial^{\zeta}f\right)\right\|_{L^1(Q(1,h))}\lesssim
|Q|^{\frac{1}{n}}\omega_Q\left(\partial^{\zeta}f\right).
\end{align*}
This, combined with $\mathcal{G}_{\sigma}
\subset\mathcal{D}^{\alpha}_{1,\beta,k,\ell}[f]$,
the definition of $\mathcal{D}^{\alpha}_{1,\beta,k,\ell}[f]$,
\eqref{eq-Ekl},
and \eqref{eq-de1},
further implies that
\begin{align}\label{eq-0027}
\sum_{Q\in\mathcal{D}^{\alpha}_{1,\beta,k,\ell}[f]}
|Q|^{\beta-1}\upsilon(Q)
&\lesssim
\sum_{Q\in\mathcal{G}_\sigma}|Q|^{\beta-1}
\upsilon(Q)
\le
\sum_{Q\in\mathcal{G}_\sigma}|Q|^{-\frac{\ell}{n}}E_{k}(f,Q)
\frac{\upsilon(Q)}{|Q|}\notag\\
&\lesssim
\sum_{Q\in\mathcal{G}_\sigma}|Q|^{-\frac{\ell}{n}}E_{\ell}(f,Q)
\frac{\upsilon(Q)}{|Q|}
\lesssim
\sum_{\zeta\in \mathbb{Z}_{+}^n,|\zeta|=\ell-1}
\sum_{Q\in\mathcal{G}_\sigma}\omega_Q\left(\partial^{\zeta}f\right)
\frac{\upsilon(Q)}{|Q|}.
\end{align}
Repeating the proof of Case 3) of
\cite[Proposition 2.3]{lyyzz24} with $f$ therein replaced by
$\partial^{\zeta}f$ here, we conclude that
\begin{align*}
\sum_{Q\in\mathcal{G}_\sigma}\omega_Q\left(\partial^{\zeta}f\right)
\frac{\upsilon(Q)}{|Q|}	
&\lesssim
[\upsilon]_{A_1}
\int_{\mathbb{R}^n}\left|\nabla \left(\partial^\zeta f\right)
(x)\right|\upsilon(x)\,dx,
\end{align*}
which, together with \eqref{eq-0027}, further implies
\eqref{eq-est-m1} with $\varphi(t):=t$ for any $t\in [0,\infty)$ in
this
case.
This finishes the proof of Theorem \ref{esti-omega}.
\end{proof}

\subsection{Proof of Theorem \ref{thm-up-ap}}\label{ssub-3.2}

In this subsection, we are devoted to
proving Theorem \ref{thm-up-ap} and the sharpness of the assumption
$n(\frac{1}{p}-\frac{1}{q})<k$ in Theorems \ref{thm-up-ap},
\ref{thm-main}, and \ref{t-chaWkX}.
We first establish the following
\emph{variant higher-order Poincar\'e inequality}.

\begin{lemma}\label{Poin}
Let $k\in \mathbb{N}$ and $f\in L^1_{\rm loc}$.
Then there exist a positive constant $C_{(n,k)}$, depending only on
$n$ and $k$, such that,
for almost every $x\in\mathbb{R}^n$ and for any $r\in(0,\infty)$
and any ball $B_1\subset B:=B(x,r)\subset 3B_1$,
\begin{align}\label{Poine0}
\left|f(x)-P^{(k-1)}_{B_1}(f)(x)\right|
\le C_{(n,k)}\sum_{j\in\mathbb{Z}_+}\fint_{2^{-j}B}
\left|f(y)-P^{(k-1)}_{2^{-j}B}(f)(x)\right|\,dy.
\end{align}
\end{lemma}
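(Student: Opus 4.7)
My plan is to reduce the inequality to a telescoping argument over a shrinking dyadic chain of balls centered at $x$, combined with the Lebesgue differentiation theorem. To avoid a notation clash, write $\widetilde{B} := B_1$ for the ball appearing in the hypothesis and $\widetilde{P} := P_{\widetilde{B}}^{(k-1)}(f)$. For $j \in \mathbb{Z}_{+}$, set $B_j := 2^{-j}B = B(x, 2^{-j}r)$, $P_j := P_{B_j}^{(k-1)}(f)$, and abbreviate the $j$-th summand on the right-hand side of \eqref{Poine0} as $T_j := \fint_{B_j}|f(y) - P_j(x)|\,dy$. The triangle inequality yields
\[
\bigl|f(x)-\widetilde{P}(x)\bigr|
\le \bigl|f(x)-P_J(x)\bigr|
+ \sum_{j=0}^{J-1}\bigl|P_j(x)-P_{j+1}(x)\bigr|
+ \bigl|P_0(x)-\widetilde{P}(x)\bigr|,
\]
and I aim to estimate each of these three pieces by $C\sum_{j\in\mathbb{Z}_{+}}T_j$ and then let $J\to\infty$.

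The engine is two consequences of Lemma \ref{uni-p}. Since the projection $P_\Omega^{(k-1)}$ is linear and fixes constants (Lemma \ref{uni-p}(i)), one has $P_\Omega^{(k-1)}(f-c) = P_\Omega^{(k-1)}(f) - c$ for every constant $c$. Feeding this into Lemma \ref{uni-p}(ii) with $c = P_j(x)$ and $\Omega = B_j$ produces the uniform pointwise bound $\sup_{y\in B_j}|P_j(y) - P_j(x)| \le C\,T_j$, and with $c = f(x)$ and $\Omega = B_J$ it produces $|P_J(x) - f(x)| \le C\fint_{B_J}|f(z) - f(x)|\,dz$, which vanishes as $J \to \infty$ at every Lebesgue point $x$ of $f$, disposing of the boundary term. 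For the telescoping sum, Lemma \ref{uni-p}(ii) applied to $P_j - P_{j+1}\in\mathcal{P}_{k-1}$ on $B_{j+1}\ni x$ gives $|P_j(x) - P_{j+1}(x)| \le C\fint_{B_{j+1}}|P_j - P_{j+1}|\,dy$; splitting through $f$ and applying Lemma \ref{uni-p}(iii) (with competitor constants $P_j(x)$ on $B_j$ and $P_{j+1}(x)$ on $B_{j+1}$), at the cost of at most the volume ratio $|B_j|/|B_{j+1}| = 2^n$ when enlarging $\fint_{B_{j+1}}$ to $\fint_{B_j}$, yields $|P_j(x) - P_{j+1}(x)| \le C(T_j + T_{j+1})$, which telescopes to a bound by $C\sum_{j\in\mathbb{Z}_{+}}T_j$.

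The main obstacle is the last piece, $|P_0(x) - \widetilde{P}(x)|$, because the only link between $\widetilde{B}$ and $x$ offered by the hypothesis is $x \in B \subset 3\widetilde{B}$, so $x$ need not lie in $\widetilde{B}$ itself. My plan is to leverage the standard polynomial extension inequality $\|P\|_{L^\infty(3\widetilde{B})} \le C_{(n,k)}\|P\|_{L^\infty(\widetilde{B})}$ for $P\in\mathcal{P}_{k-1}$, a direct consequence of scaling and finite-dimensionality; combined with Lemma \ref{uni-p}(ii) on $\widetilde{B}$, this yields
\[
|P_0(x)-\widetilde{P}(x)|
\le \|P_0-\widetilde{P}\|_{L^\infty(3\widetilde{B})}
\le C\fint_{\widetilde{B}}|f-P_0|\,dy + C\fint_{\widetilde{B}}|f-\widetilde{P}|\,dy.
\]
Each half is then controlled by $C\,T_0$ using Lemma \ref{uni-p}(iii) with the competitor constant $P_0(x)$ together with the inclusion $\widetilde{B} \subset B$ and the volume ratio $|B|/|\widetilde{B}| \le 3^n$. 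Assembling the three estimates and sending $J \to \infty$ delivers \eqref{Poine0} at every Lebesgue point of $f$, hence for a.e.\ $x$, uniformly in the free parameters $r$ and $\widetilde{B}$.
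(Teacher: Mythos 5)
Your proof is correct and follows essentially the same route as the paper's: a telescoping expansion of $f(x)-P^{(k-1)}_{B_1}(f)(x)$ along the dyadic chain $\{2^{-j}B\}_{j\in\mathbb{Z}_+}$, with each increment controlled by the near-best approximation properties in Lemma \ref{uni-p} and the constant-competitor step of Lemma \ref{uni-p}(iii) used to match the evaluation at $x$ in \eqref{Poine0}. The two places where you deviate are both sound and, if anything, slightly more careful than the paper's own argument: you prove $P^{(k-1)}_{2^{-J}B}(f)(x)\to f(x)$ at Lebesgue points directly from the Lebesgue differentiation theorem (where the paper cites DeVore--Sharpley), and you compare $P^{(k-1)}_{B}(f)$ with $P^{(k-1)}_{B_1}(f)$ through the $L^\infty$ norm equivalence for polynomials on $B_1$ versus $3B_1$, which cleanly covers the borderline situation $x\in\overline{B_1}\setminus B_1$ that the paper's direct application of Lemma \ref{uni-p}(ii) at the point $x$ with $\Omega=B_1$ tacitly glosses over.
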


\begin{proof}
From $f\in L^1_{\rm loc}$
and
\cite[Lemma 4.1]{DS84} (see also \cite[Lemma 6.14]{ns2012}),
we infer that,
for almost every $x\in {\mathbb{R}^n}$
and for any ball $B:=B(x,r)$ with $r\in (0,\infty)$,
\begin{align*}
f(x)=\lim_{j\to \infty}P^{(k-1)}_{2^{-j}B}(f)(x).
\end{align*}
This, combined with both (i) and (ii) of Lemma \ref{uni-p},
further implies that, for almost every $x\in \mathbb{R}^n$
and for any ball $B:=B(x,r)$ with $r\in(0,\infty)$,
\begin{align}\label{Poine1}
\left|f(x)-P^{(k-1)}_B(f)(x)\right|
&=\lim_{j\to\infty}\left|P^{(k-1)}_{2^{-j}B}(f)(x)-P^{(k-1)}_B(f)(x)\right|\notag\\
&
\le\sum_{j\in\mathbb{Z}_+}\left|
P^{(k-1)}_{2^{-j-1}B}(f)(x)
-P^{(k-1)}_{2^{-j}B}(f)(x)\right|\notag\\
&=
\sum_{j\in\mathbb{Z}_+}\left|
P^{(k-1)}_{2^{-j-1}B}\left(f-P^{(k-1)}_{2^{-j}B}(f)\right)(x)\right|\notag\\
&\lesssim
\sum_{j\in\mathbb{Z}_+}\fint_{2^{-j}B}
\left|f(y)-P^{(k-1)}_{2^{-j}B}(f)(y)\right|\,dy.
\end{align}
In addition, for almost every $x\in \mathbb{R}^n$
and for any
$B:=B(x,r)$ with $r\in(0,\infty)$ and
$B_1$ satisfying $B_1\subset B\subset 3B_1$,
using both (i) and (ii) of Lemma \ref{uni-p} again,
we obtain
\begin{align*}
\left|P^{(k-1)}_B(f)(x)-P^{(k-1)}_{B_1}(f)(x)\right|
&=
\left|P^{(k-1)}_{B_1}\left(f-P^{(k-1)}_{B}(f)\right)(x)\right|\notag\\
&\lesssim
\fint_{B_1}\left|f(y)-P^{(k-1)}_{B}(f)(y)\right|\,dy\notag\\
&\sim \fint_{B}\left|f(y)-P^{(k-1)}_{B}(f)(y)\right|\,dy.
\end{align*}
Applying this and \eqref{Poine1},
we further find that \eqref{Poine0} holds, which
completes the proof of Lemma \ref{Poine0}.
\end{proof}

In what follows, for any $k,\ell\in \mathbb{N}$ with $\ell\le k$,
$i\in \mathbb{Z}_{+}\cap[0,k]$, $\beta\in \mathbb{R}$,
$\lambda\in (0,\infty)$,
and
$f\in L^1_{\rm loc}$,
let
\begin{align}\label{eq-Ei}
E^{(i)}_{\lambda,n(\beta-1),k,\ell}[f]:=\left\{(x,h)\in {\mathbb{R}^n}
\times({\mathbb{R}^n}\setminus\{ {\bf 0}\}):\
\left|\left(f-P^{(k-1)}_{B_{x,h,k}}(f)\right)(x+ih)\right|\geq
\lambda|h|^{n(\beta-1)+\ell}\right\},
\end{align}
where $B_{x,h,k}:=B(x+\frac{kh}{2},k|h|)$.

\begin{proposition}\label{pro-E0}
Let $k,\ell\in \mathbb{N}$ with $\ell\le k$, $p\in [1,\infty)$,
$q,\varepsilon,\lambda\in (0,\infty)$, $\beta\in
\mathbb{R}\setminus\{1\}$, and $\upsilon \in L^{1}_{\rm
loc}$ be nonnegative.
Then the following four statements hold.
\begin{enumerate}[{\rm (i)}]
\item If $q\in [p,\infty)$,
then there exists a positive constant $C_1$ such that, for any $f\in
L^{1}_{\rm loc}$,
\begin{align*}
&\int_{\mathbb{R}^n}\left[\int_{\mathbb{R}^n}{\bf
1}_{E^{(0)}_{\lambda,n(\beta-1),k,\ell}[f]}(x,h)|h|^{qn(\beta-1)-n}\,dh\right]^{\frac{p}{q}}\upsilon(x)\,dx\\
&\quad\le
C_1\sum_{j\in\mathbb{Z_{+}}}2^{jnp(\beta-1)}\sum_{\alpha\in\{0,\frac{1}{3},\frac{2}{3}\}^n}\sum_{Q\in
\mathcal{D}^{\alpha}_{\lambda(j),\beta,k,\ell}[f]}|Q|^{p(\beta-1)}\upsilon(Q).
\end{align*}
\item If $q\in [p,\infty)$ and $\upsilon\in A_1$, then
there exists a positive constant $C_2$ such that,
for any $i\in \mathbb{N}\cap [1,k]$ and
$f\in C^{\infty}$ with $|\nabla^{\ell}f|\in C_{\rm
c}$,
\begin{align*}
&\int_{\mathbb{R}^n}\left[\int_{\mathbb{R}^n}{\bf
1}_{E^{(i)}_{\lambda,n(\beta-1),k,\ell}[f]}(x,h)|h|^{qn(\beta-1)-n}\,dh\right]^{\frac{p}{q}}\upsilon(x)\,dx\\
&\quad\le
C_2[\upsilon]_{A_1}\sum_{j\in\mathbb{Z_{+}}}2^{jnp(\beta-1+\frac{1}{p}-\frac{1}{q})}\sum_{\alpha\in\{0,\frac{1}{3},\frac{2}{3}\}^n}\sum_{Q\in
\mathcal{D}^{\alpha}_{\lambda(j),\beta,k,\ell}[f]}|Q|^{p(\beta-1)}\upsilon(Q);
\end{align*}
\item If $q\in (0,p)$, then there exists a positive constant
$C_3$ such that, for any $f\in C^{\infty}$ with
$|\nabla^{\ell}f|\in C_{\rm c}$,
\begin{align*}
&\left\{\int_{\mathbb{R}^n}\left[\int_{\mathbb{R}^n}{\bf
1}_{E^{(0)}_{\lambda,n(\beta-1),k,\ell}[f]}(x,h)|h|^{qn(\beta-1)-n}\,dh\right]^{\frac{p}{q}}\upsilon(x)\,dx\right\}^{\frac{q}{p}}\\
&\quad\le
C_3
\sum_{j\in\mathbb{Z_{+}}}2^{jnq(\beta-1)}\sum_{\alpha\in\{0,\frac{1}{3},\frac{2}{3}\}^n}
\left[\sum_{Q\in
\mathcal{D}^{\alpha}_{\lambda(j),\beta,k,\ell}[f]}|Q|^{p(\beta-1)}\upsilon(Q)\right]^{\frac{q}{p}}.
\end{align*}
\item If $q\in (0,p)$ and $\upsilon\in A_p$, then there
exists a positive constant
$C_4$ such that, for any $f\in C^{\infty}$ with
$|\nabla^{\ell}f|\in C_{\rm c}$,
\begin{align*}
&\left\{\int_{\mathbb{R}^n}\left[\int_{\mathbb{R}^n}{\bf
1}_{E^{(i)}_{\lambda,n(\beta-1),k,\ell}[f]}(x,h)|h|^{qn(\beta-1)-n}\,dh\right]^{\frac{p}{q}}\upsilon(x)\,dx\right\}^{\frac{q}{p}}\\
&\quad\le
C_4 [\upsilon]^{\frac{q}{p}}_{A_p}
\sum_{j\in\mathbb{Z_{+}}}2^{jnq(\beta-1)}\sum_{\alpha\in\{0,\frac{1}{3},\frac{2}{3}\}^n}
\left[\sum_{Q\in
\mathcal{D}^{\alpha}_{\lambda(j),\beta,k,\ell}[f]}|Q|^{p(\beta-1)}\upsilon(Q)\right]^{\frac{q}{p}},
\end{align*}
\end{enumerate}	
where $C_1, C_2,C_3$, and $C_4$ depend only on $n,k,\ell,\beta$, and
$q$, and, for any $j\in \mathbb{Z_+}$,
$$\lambda(j):=c(n,k,\ell,\beta,\varepsilon)\lambda
2^{j[\ell+n(\beta-1)-\varepsilon]}$$ with a positive constant
$c(n,k,\ell,\beta,\varepsilon)$ depending only on $n,k,\ell,\beta,$
and $\varepsilon$.
\end{proposition}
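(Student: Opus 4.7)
The plan is to extract, for every $(x, h) \in E^{(i)}_{\lambda, n(\beta-1), k, \ell}[f]$, a shifted dyadic cube belonging to some $\mathcal{D}^{\alpha}_{\lambda(m), \beta, k, \ell}[f]$, thereby reducing each of the four cases to a sum over such cubes. Starting from Lemma \ref{Poin} applied at $y := x + ih$ with $B_1 := B_{x, h, k}$ and $B' := B(y, 2k|h|)$ (one verifies $B_1 \subset B' \subset 3B_1$ using $|y - (x + kh/2)| = |i - k/2|\,|h| \leq (k/2)|h|$), one obtains
\begin{align*}
\bigl|\bigl(f - P^{(k-1)}_{B_{x, h, k}}(f)\bigr)(y)\bigr|
\lesssim \sum_{m \in \mathbb{Z}_+} \fint_{2^{-m}B'} \bigl|f(z) - P^{(k-1)}_{2^{-m}B'}(f)(z)\bigr|\,dz.
\end{align*}
For each $m$, Lemma \ref{lem-dy-cub}(ii) supplies a shifted dyadic cube $Q_{m, x, h} \in \mathcal{D}^{\alpha}$ with $2^{-m}B' \subset Q_{m, x, h} \subset C_{(n)}\,2^{-m}B'$; invoking Lemma \ref{uni-p}(iii) to swap minimizing polynomials converts each average into $E_k(f, Q_{m, x, h})/|Q_{m, x, h}|$. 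A standard pigeonhole (based on $\sum_{m \in \mathbb{Z}_+} 2^{-m\varepsilon} < \infty$) then selects $m_0 = m_0(x, h)$ with $E_k(f, Q_{m_0, x, h}) \gtrsim 2^{m_0[\ell + n(\beta - 1) - \varepsilon]} \lambda |Q_{m_0, x, h}|^{\beta + \ell/n}$, that is, $Q_{m_0, x, h} \in \mathcal{D}^{\alpha}_{\lambda(m_0), \beta, k, \ell}[f]$ for exactly the prescribed $\lambda(m_0)$. Since $y \in Q_{m_0, x, h}$ and $\ell(Q_{m_0, x, h}) \sim 2^{-m_0}|h|$, this yields the master pointwise bound
\begin{align*}
\mathbf{1}_{E^{(i)}_{\lambda, n(\beta-1), k, \ell}[f]}(x, h)
\leq \sum_{m \in \mathbb{Z}_+} \sum_{\alpha} \sum_{Q \in \mathcal{D}^{\alpha}_{\lambda(m), \beta, k, \ell}[f]}
\mathbf{1}_{Q - ih}(x)\, \mathbf{1}_{\{|h| \sim 2^m \ell(Q)\}}(h).
\end{align*}

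Setting $I_{m, Q}(x) := \int_{\{|h| \sim 2^m \ell(Q)\}} \mathbf{1}_{Q - ih}(x)\,|h|^{qn(\beta - 1) - n}\,dh$, a direct computation yields $I_{0, m, Q}(x) = \mathbf{1}_Q(x) \cdot 2^{mqn(\beta - 1)}|Q|^{q(\beta - 1)}$ when $i = 0$; for $i \geq 1$, the constraint $x + ih \in Q$ restricts $h$ to a translated cube of side $\ell(Q)/i$, producing $I_{m, Q}(x) \lesssim 2^{m(qn(\beta - 1) - n)}|Q|^{q(\beta - 1)}i^{-n}\mathbf{1}_{P_{m, Q}}(x)$, with $P_{m, Q}$ a blow-up of $Q$ of side $\sim i 2^m \ell(Q)$. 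For cases (i) and (ii) (where $p \leq q$) I would apply the subadditivity $(\sum a_j)^{p/q} \leq \sum a_j^{p/q}$ and integrate term-by-term; for (ii), the doubling estimate $\upsilon(P_{m, Q}) \lesssim (i 2^m)^n [\upsilon]_{A_1} \upsilon(Q)$ from Lemma \ref{lem-apwight}(ii) produces exactly the advertised exponent $np(\beta - 1 + \frac{1}{p} - \frac{1}{q})$ of $2^m$. For (iii) (where $p > q$, $i = 0$) I would invoke the triangle inequality in $L^{p/q}(\upsilon)$ to move $\sum_{m, \alpha}$ outside, and then use Lemma \ref{lem-Qx} twice---once with exponent $q$ to collapse the inner sum to $|Q_x^{\alpha}|^{q(\beta - 1)}$, and once with exponent $p$ to re-expand $|Q_x^{\alpha}|^{p(\beta - 1)}$ back into a weighted sum over cubes.

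The main obstacle is part (iv), where $i \geq 1$, $p > q$, and $\upsilon \in A_p$: Lemma \ref{lem-Qx} cannot be applied directly to $\sum_Q |Q|^{q(\beta - 1)} \mathbf{1}_{P_{m, Q}}$ because the enlarged cubes $P_{m, Q}$ are no longer sparse, and a naive Minkowski across $Q$ combined with a term-by-term doubling introduces a spurious factor $2^{m n (q - 1)}$ that ruins the target exponent $nq(\beta - 1)$. My plan to overcome this is to bound $\mathbf{1}_{P_{m, Q}}(x) \lesssim (i 2^m)^{nr} [\mathcal{M}(\mathbf{1}_Q)(x)]^r$ for a parameter $r \in (0, 1)$ chosen small enough that $\upsilon \in A_p \subset A_{p/(qr)}$ (so $\mathcal{M}$ is bounded on $L^{p/(qr)}(\upsilon)$), then apply a Fefferman--Stein type weighted vector-valued maximal inequality together with Minkowski in $L^{p/q}(\upsilon)$, and finally use the $A_p$ doubling bound $\upsilon(P_{m, Q}) \lesssim (i 2^m)^{np} [\upsilon]_{A_p} \upsilon(Q)$ at the last stage to absorb the residual dilation factor into $[\upsilon]_{A_p}^{q/p}$, thereby recovering the claimed exponent $nq(\beta - 1)$.
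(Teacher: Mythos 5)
Your reduction to the sparse families $\mathcal{D}^{\alpha}_{\lambda(m),\beta,k,\ell}[f]$ --- Lemma \ref{Poin} applied at $y=x+ih$, the pigeonhole over dyadic scales based on $\sum_{m}2^{-m\varepsilon}<\infty$, the selection of a shifted dyadic cube via Lemma \ref{lem-dy-cub}, and the resulting master bound on the $h$-integral --- is exactly the paper's argument, and your treatment of (i), (ii), and (iii) (subadditivity of $t\mapsto t^{p/q}$ when $p\le q$, $A_1$ doubling for the enlarged cubes in (ii), Minkowski over $m$ and $\alpha$ plus the two-fold use of Lemma \ref{lem-Qx} in (iii)) coincides with the paper's proof.

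Part (iv), however, has a genuine gap. The vector-valued inequality you invoke would have to be a Fefferman--Stein inequality with \emph{inner} exponent $r\in(0,1)$: writing $|Q|^{q(\beta-1)}[\mathcal{M}\mathbf{1}_Q]^r=[\mathcal{M}(|Q|^{q(\beta-1)/r}\mathbf{1}_Q)]^r$, you need $\|(\sum_Q[\mathcal{M}g_Q]^r)^{1/r}\|_{L^{rp/q}(\upsilon)}\lesssim\|(\sum_Q |g_Q|^r)^{1/r}\|_{L^{rp/q}(\upsilon)}$, and the Fefferman--Stein inequality fails for inner exponent $\le 1$. There is also an exponent mismatch: controlling $\|[\mathcal{M}\mathbf{1}_Q]^r\|_{L^{p/q}(\upsilon)}$ requires $\mathcal{M}$ to be bounded on $L^{rp/q}(\upsilon)$, and $A_p\subset A_{rp/q}$ forces $r\ge q$, which is incompatible with $r<1$ once $q\ge1$ (and using the self-improvement $A_p\subset A_{p-\epsilon}$ would make $C_4$ depend on $\upsilon$ beyond $[\upsilon]_{A_p}$). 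Finally, if you fall back on Minkowski across $Q$ in $L^{p/q}(\upsilon)$, you land on $\sum_Q|Q|^{q(\beta-1)}\upsilon(Q)^{q/p}$, which, since $q/p<1$, dominates but is not dominated by the target $[\sum_Q|Q|^{p(\beta-1)}\upsilon(Q)]^{q/p}$, so the conclusion would be strictly weaker than stated and no longer controlled by Theorem \ref{esti-omega}. The paper's resolution is different: after Minkowski over $m$ and $\alpha$ only, it performs the change of variables $x=2^{j}y$, which turns the dilated cubes $2^{j}Q$ back into translates of the original sparse cubes; it then applies the pointwise collapse of Lemma \ref{lem-Qx} (exactly as in your part (iii)) to convert $[\sum_Q|Q|^{q(\beta-1)}\mathbf{1}_Q]^{p/q}$ into $\sum_Q|Q|^{p(\beta-1)}\mathbf{1}_Q$ \emph{before} integrating, undoes the change of variables to arrive at $\sum_Q|Q|^{p(\beta-1)}\upsilon(2^jQ)$, and only at the very last stage invokes the $A_p$ doubling $\upsilon(2^jQ)\le[\upsilon]_{A_p}2^{jnp}\upsilon(Q)$ from Lemma \ref{lem-apwight}(ii).
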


\begin{proof}
Let $f\in L^1_{\rm loc}$
and fix $i\in \mathbb{Z}_{+}\cap [0,k]$.	We first estimate
\begin{align*}
\int_{\mathbb{R}^n}{\bf
1}_{E^{(i)}_{\lambda,n(\beta-1),k,\ell}[f]}(\cdot,h)|h|^{qn(\beta-1)-n}\,dh.
\end{align*}
To this end, for almost every $(x,h)\in
E^{(i)}_{\lambda,n(\beta-1),k,\ell}[f]$,
using Lemma \ref{Poin} with $B_1:=B_{x,h,k}:=B(x+\frac{kh}{2},k|h|)$
and
$B:=B(x+ih,2k|h|)$, we find that
\begin{align}\label{pointe2}
\lambda
|h|^{\ell+n(\beta-1)}
&<\left|f(x+ih)-P^{(k-1)}_{B_{x,h,k}}(f)(x+ih)\right|\notag\\
&\le C_{(n,k)}\sum_{j\in\mathbb{Z}_+}
\fint_{B(x+ih,2^{-j+1}k|h|)}
\left|f(z)-P^{(k-1)}_{B(x+ih,2^{-j+1}k|h|)}(f)(z)\right|\,dz,
\end{align}
where $C_{(n,k)}$ is the same constant as in Lemma \ref{Poin}.
In what follows, fix $\varepsilon\in(0,\infty)$
and let $c_0:=\frac{1-2^{-\varepsilon}}{ C_{(n,k)}}$.
We now claim that, for almost every $(x,h)\in
E^{(i)}_{\lambda,n(\beta-1),k,\ell}[f]$,
there exists $j_{x,h}\in\mathbb{Z}_+$ such that
\begin{align}\label{eq-claim-02}
c_0\lambda 2^{-j_{x,h}\varepsilon}|h|^{\ell+n(\beta-1)}
< \fint_{B(x+ih,2^{-j_{x,h}+1}k|h|)}
\left|f(z)-P^{(k-1)}_{B(x+ih,2^{-j_{x,h}+1}k|h|)}(f)(z)\right|\,dz.
\end{align}
Otherwise, it holds that
\begin{align*}
\frac{\lambda}{ C_{(n,k)}}|h|^{\ell+n(\beta-1)}
&=c_0\lambda|h|^{\ell+n(\beta-1)}
\sum_{j\in\mathbb{Z}_+}2^{-j\varepsilon}\\
&\ge
\sum_{j\in\mathbb{Z}_+}\fint_{B(x+ih,2^{-j+1}k|h|)}
\left|f(z)-P^{(k-1)}_{B(x+ih,2^{-j+1}k|h|)}(f)(z)\right|\,dz,
\end{align*}
which contradicts \eqref{pointe2}.
In addition, from Lemma \ref{lem-dy-cub},
we infer that there exists a positive constant $\widetilde{C}_{(n)}$,
depending only on $n$,
such that, for any $x,h\in\mathbb{R}^n$,
there exist $\alpha_{x,h}\in \{0,\frac13,\frac23\}^n$
and $Q_{x,h}\in \mathcal{D}^{\alpha_{x,h}}$ satisfying that
\begin{align*}
B\left(x,2^{-j_{x,h}+1}k|h|\right)\subset Q_{x,h}\subset
B\left(x,2^{-j_{x,h}+1}\widetilde{C}_{(n)}k|h|\right).
\end{align*}
Applying this, \eqref{eq-claim-02}, and \eqref{uni-sim},
we conclude that, for almost every $(x,h)\in
E^{(i)}_{\lambda,n(\beta-1),k,\ell}[f]$,
\begin{align}\label{eq-key}
&\lambda 2^{j_{x,h}[\ell+n(\beta-1)-\varepsilon]}
\left|Q_{x,h}\right|^{\frac{1}{n}(\ell+n(\beta-1))}\notag\\
&\quad=
\lambda 2^{-j_{x,h}\varepsilon}
\left|2^{j_{x,h}}Q_{x,h}\right|^{\frac{1}{n}[\ell+n(\beta-1)]}
\sim
\lambda 2^{-j_{x,h}\varepsilon}|h|^{\ell+n(\beta-1)}\notag\\
&\quad\lesssim\fint_{B(x+ih,2^{-j_{x,h}+1}k|h|)}
\left|f(z)-P^{(k-1)}_{B(x+ih,2^{-j_{x,h}+1}k|h|)}(f)(z)\right|\,dz\notag\\
&\quad\lesssim\inf_{P\in \mathcal{P}_{k-1}}
\fint_{B(x+ih,2^{-j_{x,h}+1}k|h|)}
\left|f(z)-P(z)\right|\,dz\notag\\
&\quad \sim
\inf_{P\in \mathcal{P}_{k-1}}
\fint_{Q_{x,h}}
\left|f(z)-P(z)\right|\,dz
\le
\fint_{Q_{x,h}}
\left|f(z)-P^{(k-1)}_{Q_{x,h}}(z)\right|\,dz,
\end{align}
which further implies that there exists a positive constant
$c_{(n,k,\ell,\beta,\varepsilon)}$,
depending only on $n$, $k$, $\ell$, $\beta$, and $\varepsilon$,
such that $Q_{x,h}\in
\mathcal{D}^{\alpha_{x,h}}_{\lambda(j_{x,h}),\beta,k,\ell}[f]$,
where $j_{x,h}\in \mathbb{Z}_{+}$
depends only on $x,h$,
$\lambda(j_{x,h}):=c(n,k,\beta,\ell,\varepsilon)\lambda
2^{j_{x,h}[\ell+n(\beta-1)-\varepsilon]}$,
and $\mathcal{D}^{\alpha_{x,h}}_{\lambda(j_{x,h}),\beta,k,\ell}[f]$ is
defined as in
\eqref{df-ab} with $\alpha$ and $\lambda$ replaced,
respectively, by $\alpha_{x,h}$ and $\lambda(j_{x,h})$.

If $i=0$,
then, for almost every $(x,h)\in
E^{(0)}_{\lambda,n(\beta-1),k,\ell}[f]$,
\begin{align*}
(x,h)\in\left[2^{-j_{x,h}}B(x,2k|h|)\right]
\times \left[B(x,2k|h|)-\{x\}\right]\subset
Q_{x,h}\times \left[2^{j_{x,h}}Q_{x,h}-\{x\}\right]
\end{align*}
and
\begin{align*}
|h|\sim\left|2^{j_{x,h}}Q_{x,h}\right|^{\frac1n}.
\end{align*}
By these, a change of variables, and \eqref{eq-key}, we find that, for
almost every $x\in \mathbb{R}^n$,
\begin{align}\label{pointe6}
&\int_{\mathbb{R}^n}{\bf
1}_{E^{(0)}_{\lambda,n(\beta-1),k,\ell}[f]}(x,h)|h|^{qn(\beta-1)-n}\,dh\notag\\
&\quad\lesssim\sum_{j\in\mathbb{Z}_+}
\sum_{\alpha\in\{0,\frac13,\frac23\}^n}
\sum_{Q\in\mathcal{D}^{\alpha}_{\lambda(j),\beta,k,\ell}[f]}
\int_{2^jQ}\left|2^jQ\right|^{q(\beta-1)-1}\,dy{\bf 1}_{Q}(x)\notag\\
&\quad=\sum_{j\in\mathbb{Z}_+}
\sum_{\alpha\in\{0,\frac13,\frac23\}^n}
\sum_{Q\in\mathcal{D}^{\alpha}_{\lambda(j),\beta,k,\ell}[f]}
\left|2^jQ\right|^{q(\beta-1)}{\bf 1}_{Q}(x).
\end{align}

If $i\in \mathbb{N}\cap [1,k]$,
then, for almost every $(x,h)\in
E^{(i)}_{\lambda,n(\beta-1),k,\ell}[f]$,
\begin{align*}
(x,h)\in B(x+ih,2k|h|)
\times i^{-1}\left[2^{-j_{x,h}}B(x+ih,2k|h|)-\{x\}\right]
\subset
2^{j_{x,h}}Q_{x,h}\times \left[Q_{x,h}-\{x\}\right]
\end{align*}
and $|h|\sim|2^{j_{x,h}}Q_{x,h}|^{\frac1n}$.
Using these, a change of variables, and \eqref{eq-key}, we obtain,
for almost every $x\in \mathbb{R}^n$,
\begin{align}\label{eq-ii3}
&\int_{\mathbb{R}^n}{\bf
1}_{E^{(i)}_{\lambda,n(\beta-1),k,\ell}[f]}(x,h)|h|^{qn(\beta-1)-n}\,dh\notag\\
&\quad\lesssim\sum_{j\in\mathbb{Z}_+}
\sum_{\alpha\in\{0,\frac13,\frac23\}^n}
\sum_{Q\in\mathcal{D}^{\alpha}_{\lambda(j),\beta,k,\ell}[f]}
\int_{Q}\left|2^j Q\right|^{q(\beta-1)-1}\,dy{\bf 1}_{2^jQ}(x)\notag\\
&\quad=\sum_{j\in\mathbb{Z}_+}
\sum_{\alpha\in\{0,\frac13,\frac23\}^n}
\sum_{Q\in\mathcal{D}^{\alpha}_{\lambda(j),\beta,k,\ell}[f]}
\left|2^jQ\right|^{q(\beta-1)-1}|Q|{\bf 1}_{2^j Q}(x).
\end{align}

Now, we show (i). Assume that $q\in [p,\infty)$.
From \eqref{pointe6} and the Tonelli theorem,
we deduce that
\begin{align*}
&\int_{\mathbb{R}^n}\left[\int_{\mathbb{R}^n}{\bf
1}_{E^{(0)}_{\lambda,n(\beta-1),k,\ell}[f]}(x,h)|h|^{qn(\beta-1)-n}\,dh\right]^{\frac{p}{q}}\upsilon(x)\,dx\\
&\quad\lesssim
\int_{\mathbb{R}^n}\left\{\sum_{j\in\mathbb{Z}_+}
\sum_{\alpha\in\{0,\frac13,\frac23\}^n}
\sum_{Q\in\mathcal{D}^{\alpha}_{\lambda(j),\beta,k,\ell}[f]}
\left|2^jQ\right|^{q(\beta-1)}{\bf
1}_{Q}(x)\right\}^{\frac{p}{q}}\upsilon(x)\,dx\\
&\quad
\le
\sum_{j\in\mathbb{Z_{+}}}2^{jnp(\beta-1)}\sum_{\alpha\in\{0,\frac{1}{3},\frac{2}{3}\}^n}\sum_{Q\in
\mathcal{D}^{\alpha}_{\lambda(j),\beta,k,\ell}[f]}|Q|^{p(\beta-1)}\upsilon(Q).
\end{align*}
This proves (i).

Next, we show (ii). Assume that $q\in [p,\infty)$ and $\upsilon \in
A_1$ and fix $i\in \mathbb{N}\cap [1,k]$. Using these,
\eqref{eq-ii3}, the Tonelli theorem, the assumption $\upsilon \in
A_1$, and
Lemma \ref{lem-apwight}(ii),
we find that
\begin{align*}
&
\int_{\mathbb{R}^n}\left[\int_{\mathbb{R}^n}{\bf
1}_{E^{(i)}_{\lambda,n(\beta-1),k,\ell}[f]}(x,h)|h|^{qn(\beta-1)-n}\,dh\right]^{\frac{p}{q}}\upsilon(x)\,dx\\
&\quad\lesssim
\int_{\mathbb{R}^n}\left\{\sum_{j\in\mathbb{Z}_+}
\sum_{\alpha\in\{0,\frac13,\frac23\}^n}
\sum_{Q\in\mathcal{D}^{\alpha}_{\lambda(j),\beta,k,\ell}[f]}
\left|2^jQ\right|^{q(\beta-1)-1}|Q|{\bf 1}_{2^j
Q}(x)\right\}^{\frac{p}{q}}\upsilon(x)\,dx\\
&\quad\le
\sum_{j\in\mathbb{Z}_+}
\sum_{\alpha\in\{0,\frac13,\frac23\}^n}
\sum_{Q\in\mathcal{D}^{\alpha}_{\lambda(j),\beta,k,\ell}[f]}
\left|2^jQ\right|^{p(\beta-1)-\frac{p}{q}}|Q|^{\frac{p}{q}}\upsilon\left(2^j
Q\right)\\
&\quad
\le
[\upsilon]_{A_1}\sum_{j\in\mathbb{Z_{+}}}2^{jnp(\beta-1+\frac{1}{p}-\frac{1}{q})}\sum_{\alpha\in\{0,\frac{1}{3},\frac{2}{3}\}^n}\sum_{Q\in
\mathcal{D}^{\alpha}_{\lambda(j),\beta,k,\ell}[f]}|Q|^{p(\beta-1)}\upsilon(Q).
\end{align*}
This proves (ii).

Next, we show (iii). Assume that $q\in (0,p)$.
By this, \eqref{pointe6}, and the Minkowski inequality, we obtain
\begin{align}\label{eq-0056}
&\left\{\int_{\mathbb{R}^n}\left[\int_{\mathbb{R}^n}{\bf
1}_{E^{(0)}_{\lambda,n(\beta-1),k,\ell}
[f]}(x,h)|h|^{qn(\beta-1)-n}\,dh\right]^{\frac{p}{q}}\upsilon(x)\,dx\right\}^\frac{q}{p}\notag\\
&\quad \lesssim
\sum_{j\in\mathbb{Z}_+}
\sum_{\alpha\in\{0,\frac13,\frac23\}^n}
\left\{\int_{\mathbb{R}^n}\left[\sum_{Q\in\mathcal{D}^{\alpha}_{\lambda(j),\beta,k,\ell}[f]}
\left|2^jQ\right|^{q(\beta-1)}{\bf
1}_{Q}(x)\right]^{\frac{p}{q}}\upsilon(x)\,dx\right\}^{\frac{q}{p}}.
\end{align}
From this and Lemma \ref{lem-Qx} with $p:=q$ and
$\lambda:=\lambda(j)$,
we deduce that, for any $\alpha\in \{0,\frac{1}{3},\frac{2}{3}\}^n$,
$j\in \mathbb{Z}_{+}$,
and $x\in \mathbb{R}^n$,
\begin{align}\label{e-QD}
\left[\sum_{Q\in\mathcal{D}^{\alpha}_{\lambda(j),\beta,k,\ell}[f]}
\left|2^jQ\right|^{q(\beta-1)}{\bf 1}_{Q}(x)\right]^{\frac{p}{q}}
&\lesssim
2^{jnp(\beta-1)}|Q_x|^{p(\beta-1)}\notag\\
&\le2^{jnp(\beta-1)}
\sum_{Q\in\mathcal{D}^{\alpha}_{\lambda(j),\beta,k,\ell}[f]}
\left|Q\right|^{p(\beta-1)}{\bf 1}_{Q}(x),
\end{align}
where $Q_x$ is the same as in Lemma \ref{lem-Qx}.
This, together with \eqref{eq-0056}, further implies that (iii) holds.

Finally, we show (iv). Assume that $q\in (0,p)$ and $\upsilon\in
A_p$ and fix $i\in \mathbb{N}\cap [1,k]$.
From this, \eqref{eq-ii3}, the Minkowski inequality,
a change of variables, \eqref{e-QD}, and
Lemma \ref{lem-apwight}(ii), it follows that
\begin{align*}
&\left\{\int_{\mathbb{R}^n}\left[\int_{\mathbb{R}^n}{\bf
1}_{E^{(i)}_{\lambda,n(\beta-1),k,\ell}[f]}(x,h)|h|^{qn(\beta-1)-n}\,dh\right]^{\frac{p}{q}}\upsilon(x)\,dx\right\}^\frac{q}{p}\\
& \quad \lesssim
\sum_{j\in\mathbb{Z}_+}
\sum_{\alpha\in\{0,\frac13,\frac23\}^n}
\left\{\int_{\mathbb{R}^n}\left[\sum_{Q\in\mathcal{D}^{\alpha}_{\lambda(j),\beta,k,\ell}[f]}
\left|2^jQ\right|^{q(\beta-1)-1}|Q|{\bf
1}_{2^jQ}(x)\right]^{\frac{p}{q}}\upsilon(x)\,dx\right\}^{\frac{q}{p}}\notag\\
&\quad=
\sum_{j\in\mathbb{Z}_+}2^{jn[q(\beta-1)-1]}
\sum_{\alpha\in\{0,\frac13,\frac23\}^n}
\left\{\int_{\mathbb{R}^n}\left[\sum_{Q\in\mathcal{D}^{\alpha}_{\lambda(j),\beta,k,\ell}[f]}
\left|Q\right|^{q(\beta-1)}{\bf
1}_{Q}(y)\right]^{\frac{p}{q}}\upsilon\left(2^{j}y\right)2^{jn}\,dy\right\}^{\frac{q}{p}}\notag\\
&\quad\lesssim
\sum_{j\in\mathbb{Z}_+}2^{jn[q(\beta-1)-1]}
\sum_{\alpha\in\{0,\frac13,\frac23\}^n}
\left\{\sum_{Q\in\mathcal{D}^{\alpha}_{\lambda(j),\beta,k,\ell}[f]}
\left|Q\right|^{p(\beta-1)}\upsilon\left(2^{j}Q\right)\right\}^{\frac{q}{p}}\\
&\quad\lesssim [\upsilon]^{\frac{q}{p}}_{A_p}
\sum_{j\in\mathbb{Z_{+}}}2^{jnq(\beta-1)}\sum_{\alpha\in\{0,\frac{1}{3},\frac{2}{3}\}^n}
\left[\sum_{Q\in
\mathcal{D}^{\alpha}_{\lambda(j),\beta,k,\ell}[f]}|Q|^{p(\beta-1)}\upsilon(Q)\right]^{\frac{q}{p}}\notag.
\end{align*}
This finishes the proof of
(iv)
and hence
Proposition \ref{pro-E0}.
\end{proof}

Next, we are ready to show Theorem \ref{thm-up-ap}.

\begin{proof}[Proof of Theorem \ref{thm-up-ap}]
Similarly to the density argument used in
the proof of Theorem \ref{esti-omega},
to prove the present theorem, it suffices to show
that
\eqref{e-up-ap} holds for any
$f\in C^{\infty}$ with $|\nabla^{\ell}f|\in C_{\rm
c}$.
Let $f\in C^{\infty}$ with $|\nabla^{\ell}f|\in C_{\rm
c}$ and $\beta:=1+\frac{\gamma}{qn}$ and choose
$\varepsilon\in (0,\ell-n(\frac{1}{p}-\frac{1}{q}))$. We only
consider the case $q\in [p,\infty)$ because the case $q\in (0,p)$ is
quite similar and hence we omit the details here.
Notice that, if $P\in \mathcal{P}_{k-1}$,
then $\Delta^k_h P(x)=0$ for any $x,h\in{\mathbb{R}^n}$.
Using this and \eqref{eq-sho-hd}, we find that,
for any $x,h\in {\mathbb{R}^n}$,
\begin{align}\label{eq-dff}
\left|\Delta_{h}^k f(x)\right|=
\left|\Delta_{h}^k \left(f-P^{(k-1)}_{B_{x,h,k}}(f)\right)(x)\right|
\leq\sum_{i=0}^{k}2^k
\left|\left(f-P^{(k-1)}_{B_{x,h,k}}(f)\right)(x+ih)\right|.
\end{align}
Using this, \eqref{eq-E}, \eqref{eq-Ei},
and $\beta=1+\frac{\gamma}{qn}$, we obtain
\begin{align}\label{e-k-I}
&\int_{\mathbb{R}^n}
\left[\int_{\mathbb{R}^n}{\bf 1}_{E_{\lambda,
\frac{\gamma}{q},k,\ell}[f]}(x,h)|h|^{\gamma-n}\,dh
\right]^{\frac{p}{q}}\upsilon(x)\,dx\notag\\
& \quad \lesssim
\sum_{i=0}^{k}\int_{\mathbb{R}^n}
\left[\int_{\mathbb{R}^n}{\bf 1}_{E^{(i)}_{2^{-k}\lambda,
n(\beta-1),k,\ell}[f]}(x,h)|h|^{qn(\beta-1)-n}\,dh
\right]^{\frac{p}{q}}\upsilon(x)\,dx =:
\sum_{i=0}^{k} {\rm I}_{i},
\end{align}
where $E^{(i)}_{2^{-k}\lambda,
n(\beta-1),k,\ell}[f]$ is the same as in \eqref{eq-Ei} with
$\lambda$ replaced by $2^{-k}\lambda$.

We first deal with ${\rm I}_{0}$.
From Proposition \ref{pro-E0}(i), Theorem \ref{esti-omega}
with the fact that $\varphi$ is increasing on $[0,\infty)$, the
definition of $\lambda (j)$, Lemma \ref{lem-apwight}(iv),
and the assumption $\varepsilon\in (0,\ell)$,
we infer that
\begin{align}\label{es-I0}
{\rm I}_0 &\lesssim
\sum_{j\in\mathbb{Z_{+}}}2^{jnp(\beta-1)}\sum_{\alpha\in\{0,\frac{1}{3},\frac{2}{3}\}^n}\sum_{Q\in
\mathcal{D}^{\alpha}_{2^{-k}\lambda(j),\beta,k,\ell}[f]}|Q|^{p(\beta-1)}\upsilon(Q)\notag\\
&\lesssim
\varphi\left([\upsilon]_{A_p}\right)\lambda^{-p}
\sum_{j\in\mathbb{Z_{+}}}2^{jnp(\beta-1)}2^{-jp[\ell+n(\beta-1)-\varepsilon]}
\|f\|^p_{\dot{W}^{\ell,p}_{\upsilon}}\notag\\
&\leq
\varphi\left([\upsilon]_{A_1}\right)\lambda^{-p}
\sum_{j\in\mathbb{Z_{+}}}2^{-jp(\ell-\varepsilon)}
\|f\|^p_{\dot{W}^{\ell,p}_{\upsilon}}\sim
\varphi\left([\upsilon]_{A_1}\right)\lambda^{-p}
\|f\|^p_{\dot{W}^{\ell,p}_{\upsilon}}.
\end{align}
This then finishes the estimation of ${\rm I}_0$.

Next, we fix $i\in \mathbb{Z}_+ \cap [1,k]$ and estimate ${\rm I}_i$.
By Proposition \ref{pro-E0}(ii), Theorem \ref{esti-omega}
with the fact that $\varphi$ is increasing on $[0,\infty)$, the
definition of $\lambda (j)$,
Lemma \ref{lem-apwight}(iv),
and the assumption $\varepsilon\in
(0,\ell-n(\frac{1}{p}-\frac{1}{q}))$,
\begin{align*}
{\rm I}_i
&\lesssim [\upsilon]_{A_1}
\sum_{j\in\mathbb{Z_{+}}}2^{jnp(\beta-1+\frac{1}{p}-\frac{1}{q})}\sum_{\alpha\in\{0,\frac{1}{3},\frac{2}{3}\}^n}\sum_{Q\in
\mathcal{D}^{\alpha}_{2^{-k}\lambda(j),\beta,k,\ell}[f]}|Q|^{p(\beta-1)}\upsilon(Q)\\
&\lesssim
\frac{[\upsilon]_{A_1}
\varphi([\upsilon]_{A_p})}{\lambda^{p}}
\sum_{j\in\mathbb{Z_{+}}}2^{jnp(\beta-1+\frac{1}{p}-\frac{1}{q})}2^{-jp[\ell+n(\beta-1)-\varepsilon]}
\|f\|^p_{\dot{W}^{\ell,p}_{\upsilon}}\\
&\le
\frac{[\upsilon]_{A_1}
\varphi([\upsilon]_{A_1})}{\lambda^{p}}
\sum_{j\in\mathbb{Z_{+}}}2^{-jp[\ell-n(\frac{1}{p}-\frac{1}{q})-\varepsilon]}
\|f\|^p_{\dot{W}^{\ell,p}_{\upsilon}}\\
&\sim
\frac{[\upsilon]_{A_1}
\varphi([\upsilon]_{A_1})}{\lambda^{p}}
\|f\|^p_{\dot{W}^{\ell,p}_{\upsilon}},
\end{align*}
which completes the estimation of ${\rm I}_i$.
Combining this, \eqref{es-I0}, and \eqref{e-k-I}, we find that
\eqref{e-up-ap}
holds with $\psi(t):=(1+t)\varphi(t)$ for any $t\in [0,\infty)$.
This finishes the proof of Theorem \ref{thm-up-ap}.
\end{proof}

At the end of this subsection, we show that the assumption
$n(\frac{1}{p}-\frac{1}{q})<\ell$ is sharp in Theorems \ref{thm-up-ap},
\ref{thm-main}, and \ref{t-chaWkX}.

\begin{proposition}\label{pro-sharp}
Let $k,\ell\in \mathbb{N}$ with $\ell\le k$,
$p\in [1,\infty)$, $q\in (0,\infty)$
satisfy $n(\frac{1}{p}-\frac{1}{q})\ge \ell$,
and $\gamma= -\ell q$.
Then
there exists $f\in C_{\rm c}^{\infty}$ such that
\begin{align}\label{e-sharp}
\sup_{\lambda\in (0,\infty)}
\lambda^p
\int_{\mathbb{R}^n}\left[
\int_{{\mathbb{R}^n}}{\bf 1}_{E_{\lambda,\frac{\gamma}{q},k,\ell}[f]}(x,h)
|h|^{\gamma-n}\, dh\right]^{\frac{p}{q}}
\,dx
=\infty.
\end{align}
\end{proposition}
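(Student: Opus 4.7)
The plan is to exhibit a single fixed bump for which, no matter how small $\lambda$ is chosen, the spatial integral in \eqref{e-sharp} already diverges. I would fix $\phi\in C_{\rm c}^{\infty}$ with $\phi({\bf 0})=1$ and $\mathrm{supp\,}(\phi)\subset B({\bf 0},R)$, and study the inner function
\begin{align*}
G_\lambda(x):=\int_{\mathbb{R}^n}{\bf 1}_{E_{\lambda,\gamma/q,k,\ell}[\phi]}(x,h)\,|h|^{\gamma-n}\,dh.
\end{align*}
With the special choice $\gamma=-\ell q$, the level-set condition defining $E_{\lambda,\gamma/q,k,\ell}[\phi]$ collapses to the $h$-independent inequality $|\Delta_h^k\phi(x)|>\lambda$ and the weight becomes $|h|^{-\ell q-n}$, so establishing \eqref{e-sharp} reduces to showing that $G_\lambda^{p/q}$ fails to be Lebesgue integrable on $\mathbb{R}^n$ for some fixed small $\lambda$.

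The key step is a pointwise lower bound of the form $G_\lambda(x)\gtrsim |x|^{-\ell q-n}$ for all $\lambda$ below some absolute threshold and all sufficiently large $|x|$. To obtain it, I would evaluate the difference at the special displacement $h=-x/k$: since $x+jh=\frac{k-j}{k}\,x$, once $|x|>2kR$ every point $x+jh$ with $j\in\{0,\dots,k-1\}$ lies outside $\mathrm{supp\,}(\phi)$ and the sum in \eqref{eq-sho-hd} collapses to $(-1)^{0}\binom{k}{k}\phi({\bf 0})=1$. A uniform-in-$x$ Lipschitz estimate
\begin{align*}
\left|\nabla_h\Delta_h^k\phi(x)\right|\le\sum_{j=0}^{k}\binom{k}{j}j\left\|\nabla\phi\right\|_{L^\infty}=:C_{k,\phi}
\end{align*}
then forces $|\Delta_h^k\phi(x)|>\frac{1}{2}$ on the Euclidean ball $B(-x/k,\delta_0)$ with $\delta_0:=(2C_{k,\phi})^{-1}$, which is independent of $x$. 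For $|x|>R_0:=\max\{2kR,2k\delta_0\}$ this ball avoids the origin and satisfies $|h|\sim|x|/k$, so integrating the weight $|h|^{-\ell q-n}$ over it yields $G_\lambda(x)\ge c(n,k,\phi)\,|x|^{-\ell q-n}$ uniformly in $\lambda\in(0,\frac{1}{2}]$.

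Finally, raising to the $(p/q)$-th power and integrating over $\{|x|>R_0\}$ gives
\begin{align*}
\int_{\mathbb{R}^n}G_\lambda(x)^{p/q}\,dx\gtrsim\int_{\{|x|>R_0\}}|x|^{-\ell p-np/q}\,dx,
\end{align*}
and the last integral diverges precisely when $\ell p+np/q\le n$, equivalently $n(\frac{1}{p}-\frac{1}{q})\ge\ell$, which is exactly the standing hypothesis (with a logarithmic divergence in the critical equality case). Hence $\int_{\mathbb{R}^n}G_\lambda(x)^{p/q}\,dx=\infty$ for every $\lambda\in(0,\frac{1}{2}]$, and the supremum in \eqref{e-sharp} is infinite. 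I expect the only subtle ingredient to be this uniform-in-$x$ lower bound on $G_\lambda$, whose crux is the geometric observation that for large $|x|$ only the $j=k$ summand of $\Delta_{-x/k}^{k}\phi(x)$ survives while the higher-order difference stays Lipschitz in $h$; the remaining steps are elementary polar-coordinate estimates.
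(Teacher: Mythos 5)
Your proof is correct and follows essentially the same route as the paper's: both arguments pick a bump equal to $1$ near the origin, take $|x|$ large and $h\approx -x/k$ so that only the $j=k$ term of $\Delta_h^k$ survives and the difference equals $1$, produce a set of admissible $h$ of measure $\gtrsim 1$ at distance $\sim|x|/k$ from the origin, and then observe that $\int_{\{|x|\ {\rm large}\}}|x|^{p(\gamma-n)/q}\,dx$ diverges exactly when $n(\frac1p-\frac1q)\ge\ell$ and $\gamma=-\ell q$. The only cosmetic difference is that the paper uses a plateau function and varies the endpoint $y$ over the plateau, whereas you use $\phi(\mathbf 0)=1$ together with a Lipschitz-in-$h$ stability estimate to get the same ball of admissible displacements.
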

\begin{proof}
For any $f\in \mathscr{M}$
and $x,y\in \mathbb{R}^n$, let
\begin{align}\label{e-sym}
\Delta^k_{x,y}f
:=
\sum_{j=0}^{k}(-1)^{k-j}\binom{k}{j}f\left(\frac{(k-j)x+jy}{k}\right).
\end{align}
By a change of variables, we find that, for any $f\in
\mathscr{M}$,
\begin{align}\label{e-shp-1}
&\sup_{\lambda\in(0,\infty)}
\lambda^p \int_{\mathbb{R}^n}
\left[\int_{\mathbb{R}^n}{\bf 1}_{E_{\lambda,
\frac{\gamma}{q},k,\ell}[f]}(x,h)|h|^{\gamma-n}\,dh
\right]^{\frac{p}{q}}
\,dx\notag\\
&\quad\sim
\sup_{\lambda\in(0,\infty)}
\lambda^p \int_{\mathbb{R}^n}
\left[\int_{\mathbb{R}^n}{\bf 1}_{E^{\star}_{\lambda,
\frac{\gamma}{q},k,\ell}[f]}(x,y)|x-y|^{\gamma-n}
\,dy\right]^{\frac{p}{q}}\,dx,
\end{align}
where
\begin{align*}
E^{\star}_{\lambda,\frac{\gamma}{q},k,\ell}[f]
:= \left\{(x,y)\in \mathbb{R}^n\times \mathbb{R}^n:\
\left|\Delta^k_{x,y}f\right|>\lambda
\left|y-x\right|^{\ell+\frac{\gamma}{q}}\right\}.
\end{align*}
Choose some nonnegative and radially decreasing function
$\eta \in C^{\infty}_{\rm c}$
satisfying both
${\rm supp}\,(\eta)\subset B({\bf 0},1)$
and $\int_{\mathbb{R}^n}\eta(x)\,dx =1$.
Let $\eta_2 := 2^n \eta(2\cdot)$ and $f:= \eta_2 \ast {\bf 1}_{B({\bf
0},1)}$.
Then $f\in C^{\infty}_{\rm c}$ and
${\bf 1}_{B({\bf 0},\frac{1}{2})}\le f\le {\bf 1}_{B({\bf
0},\frac{3}{2})}$.
Notice that, for any $x\in B({\bf 0}, 2k)^{\complement}$ and
$y\in B({\bf 0},\frac{1}{2})$,
$f(y)=1$ and $f(x+\frac{i}{k}(y-x))=0$ for any $i\in
\mathbb{Z}_{+}\cap [0,k-1]$, and hence $\Delta^k_{x,y}f=1$.
Since $\gamma=-\ell q$, it follows that, for any $\lambda\in (0,1)$,
\begin{align*}
\left[B({\bf 0}, 2k)^{\complement}
\times
B\left({\bf 0},\frac{1}{2}\right)\right]
\subset E^{\star}_{\lambda,\frac{\gamma}{q},k,\ell}[f].
\end{align*}
Using this, \eqref{e-shp-1}, and the assumption
$n(\frac{1}{p}-\frac{1}{q})\ge \ell$ and $\gamma=-\ell q$,
we conclude that
\begin{align*}
&\sup_{\lambda\in (0,\infty)}
\lambda^p
\int_{\mathbb{R}^n}\left[
\int_{{\mathbb{R}^n}}{\bf 1}_{E_{\lambda,\frac{\gamma}{q},k,\ell}[f]}(x,h)
|h|^{\gamma-n}\, dh\right]^{\frac{p}{q}}
\,dx\\
& \quad \gtrsim
\sup_{\lambda\in (\frac{1}{2},1)}
\lambda^p \int_{B({\bf 0},2k)^{\complement}}
\left[\int_{B({\bf 0},\frac{1}{2})}|x-y|^{\gamma-n}
\,dy\right]^{\frac{p}{q}}\,dx\\
& \quad \sim
\int_{B({\bf 0},2k)^{\complement}}
|x|^{\frac{p(\gamma-n)}{q}}\,dx
\sim
\int_{2k}^{\infty}r^{-p[\ell-n(\frac{1}{p}-\frac{1}{q})]-1}\,dr=\infty.
\end{align*}	
This implies
\eqref{e-sharp} and hence finishes the proof of Proposition
\ref{pro-sharp}.
\end{proof}

\subsection{Proof of Corollary \ref{cor-ap}}\label{ssub-3.3}

We turn to show Corollary \ref{cor-ap} in this subsection.
We first establish the following auxiliary
estimate, which plays an essential role in the proof of Corollary
\ref{cor-ap}.

\begin{lemma}\label{lem-lo}
Let $k\in \mathbb{N}$, $p\in [1,\infty)$, $q\in (0,\infty)$, $\beta\in
\mathbb{R}\setminus\{1\}$,
and
$\upsilon\in L^{1}_{\rm loc}(\mathbb{R})$
be nonnegative.
Then there exists a positive constant $C$
such that, for any $f\in L^{1}_{\rm loc}(\mathbb{R})$,
\begin{align}\label{e-x-r}
&\sup_{\lambda\in (0,\infty)}\lambda^p \int_{\mathbb{R}}
\left[\int_{0}^{\infty}{\bf 1}_{E^{\spadesuit}_{\lambda,
\beta,k}[f]}(x,r)r^{q(\beta-1)-1}\,dr
\right]^{\frac{p}{q}}\upsilon(x)\,dx\notag\\
&\quad
\le C
\sup_{\lambda\in (0,\infty)} \lambda^p \sum_{\alpha\in
\{0,\frac{1}{3},
\frac{2}{3}\}}\sum_{Q\in\mathcal{D}^{\alpha}_{\lambda,\beta,k}[f]}
|Q|^{p(\beta-1)}\upsilon(Q),
\end{align}
where
\begin{align*}
E^{\spadesuit}_{\lambda,
\beta,k}[f]:
=\left\{(x,r)\in \mathbb{R}\times (0,\infty):\
\int_{x-r}^{x+r}\int_{x-r}^{x+r}
\left|\Delta^k_{y,z}f\right|\,dy\,dz>\lambda r^{k+\beta+1}\right\}.
\end{align*}
\end{lemma}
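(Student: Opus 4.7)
The strategy is to pass from the symmetrized $k$-th order differences defining $E^\spadesuit_{\lambda,\beta,k}[f]$ to the local approximation $E_k(f,\cdot)$ on balls, and then from balls to shifted dyadic cubes via Lemma \ref{lem-dy-cub}(ii). The first step is to establish
\begin{align*}
\int_I\int_I\left|\Delta^k_{y,z}f\right|\,dy\,dz\le C_{(k)}|I|\,E_k(f,I)
\end{align*}
for every bounded interval $I\subset\mathbb{R}$. Because $\Delta^k_{y,z}P\equiv0$ for every $P\in\mathcal{P}_{k-1}$ (see \eqref{e-sym}), the left-hand side is unchanged when $f$ is replaced by $g:=f-P^{(k-1)}_{I}(f)$; after this reduction the triangle inequality decomposes the integrand into $k+1$ summands of the form $|g(((k-j)y+jz)/k)|$, each of which is handled by the change of variables $(y,z)\mapsto(v,u)$ with $u:=((k-j)y+jz)/k\in I$ and either $v:=y$ or $v:=z$ (whose Jacobian is bounded by a constant depending only on $k$), yielding a bound of the form $C_{(k)}|I|\,\|g\|_{L^1(I)}=C_{(k)}|I|\,E_k(f,I)$.

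Applying this with $I:=B(x,r)$ shows that every $(x,r)\in E^\spadesuit_{\lambda,\beta,k}[f]$ satisfies $E_k(f,B(x,r))\gtrsim\lambda r^{k+\beta}$. By Lemma \ref{lem-dy-cub}(ii) with $n=1$, I pick $\alpha(x,r)\in\{0,\frac13,\frac23\}$ and $Q(x,r)\in\mathcal{D}^{\alpha(x,r)}$ with $B(x,r)\subset Q(x,r)$ and $|Q(x,r)|\sim r$. Since $P^{(k-1)}_{Q(x,r)}(f)\in\mathcal{P}_{k-1}$ is a competitor for the best $L^1$ approximation on $B(x,r)$, Lemma \ref{uni-p}(iii) gives
\begin{align*}
E_k(f,B(x,r))\lesssim\left\|f-P^{(k-1)}_{Q(x,r)}(f)\right\|_{L^1(B(x,r))}\le E_k(f,Q(x,r)),
\end{align*}
whence $Q(x,r)\in\mathcal{D}^{\alpha(x,r)}_{c\lambda,\beta,k}[f]$ for a universal constant $c=c(k,\beta)>0$. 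For each fixed $x$ and each admissible cube $Q\ni x$, the set $\{r\in(0,\infty):\ Q(x,r)=Q\}$ is contained in an interval of the form $[c_1|Q|,c_2|Q|]$, so integrating $r^{q(\beta-1)-1}$ over it contributes $\lesssim|Q|^{q(\beta-1)}$, and the pointwise bound
\begin{align*}
\int_{0}^{\infty}\mathbf{1}_{E^\spadesuit_{\lambda,\beta,k}[f]}(x,r)\,r^{q(\beta-1)-1}\,dr
\lesssim\sum_{\alpha\in\{0,\frac13,\frac23\}}\sum_{Q\in\mathcal{D}^{\alpha}_{c\lambda,\beta,k}[f]}|Q|^{q(\beta-1)}\mathbf{1}_Q(x)
\end{align*}
follows.

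It remains to raise this to the $p/q$-th power, integrate against $\upsilon\,dx$, multiply by $\lambda^p$, and take the supremum over $\lambda$. If $p\le q$, then $p/q\le 1$ and the elementary inequality $(\sum_i a_i)^{p/q}\le\sum_i a_i^{p/q}$ converts the bound directly into $\sum_{\alpha,Q}|Q|^{p(\beta-1)}\upsilon(Q)$. If $p>q$, then after a standard density reduction to $f\in C^{\infty}$ with $|\nabla^k f|\in C_{\rm c}$ (the conclusion being trivial when the right-hand side of \eqref{e-x-r} is infinite) I would apply Lemma \ref{lem-Qx} separately on each shifted grid (with exponent $q$ in place of $p$ and $\ell:=k$) to collapse the inner sum to $|Q_x|^{q(\beta-1)}$; raising to the power $p/q$ gives $|Q_x|^{p(\beta-1)}$, which, by a second application of Lemma \ref{lem-Qx} (with exponent $p$), is comparable to $\sum_Q|Q|^{p(\beta-1)}\mathbf{1}_Q(x)$, and integrating against $\upsilon$ yields the desired dyadic sum. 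The factor $(c\lambda)^p=c^p\lambda^p$ is absorbed into $C$ upon taking the supremum.

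The main obstacle is Step 1: the symmetrized differences $\Delta^k_{y,z}f$ evaluate $f$ at the non-equispaced fractional nodes $((k-j)y+jz)/k$, so one must exploit polynomial invariance before bounding each summand by a $k$-dependent change of variables. A secondary difficulty is the case $p>q$, which genuinely needs the sparseness of dyadic cubes encoded by Lemma \ref{lem-Qx} together with the accompanying density reduction.
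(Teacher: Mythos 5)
Your proposal is correct and follows essentially the same route as the paper's proof: the reduction $\int_I\int_I|\Delta^k_{y,z}f|\,dy\,dz\lesssim|I|\,E_k(f,I)$ via polynomial invariance and a change of variables, the passage to shifted dyadic cubes through Lemma \ref{lem-dy-cub}, the pointwise bound on $\int_0^\infty\mathbf{1}_{E^\spadesuit}(x,r)r^{q(\beta-1)-1}\,dr$ by the dyadic sum, and the sparseness collapse via Lemma \ref{lem-Qx}. The only (harmless) variation is in the last step: the paper applies Lemma \ref{lem-Qx} uniformly to replace the inner sum by $|Q_x|^{q(\beta-1)}$ before raising to the power $p/q$, whereas you split into $p\le q$ (handled by subadditivity of $t\mapsto t^{p/q}$, which avoids Lemma \ref{lem-Qx} and any density reduction altogether) and $p>q$ (where your use of Lemma \ref{lem-Qx} matches the paper's).
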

\begin{proof}

Fix
$f\in L^{1}_{\rm loc}(\mathbb{R})$.
By \eqref{e-sym} and a change of variables, we find that, for any open
interval $I\subset \mathbb{R}$,
\begin{align}\label{e-BEk}
\int_{I}\int_I \left|\Delta^k_{x,y}f\right|\,dx\,dy
&=
\int_{I}\int_I
\left|\Delta^k_{x,y}\left[f-P_I^{(k-1)}(f)\right]\right|\,dx\,dy\notag\\
&=
\int_{I}\int_I
\left|\sum_{j=0}^{k}(-1)^{k-j}\binom{k}{j}\left[f-P_I^{(k-1)}(f)\right]\left(\frac{(k-j)x+jy}{k}\right)\right|\,dx\,dy\notag\\
&\lesssim
\int_{I}\int_I \left|\left[f-P_I^{(k-1)}(f)\right](z)\right|\,dz\,dy
=
|I|E_k(f,I).
\end{align}
Moreover,
from Lemma \ref{lem-dy-cub}, we deduce that, for any
$(x,r)\in E^{\spadesuit}_{\lambda,
\beta,k}[f]$, there exist
$C_0 \in (1,\infty)$,
$\alpha_{x,r}\in \{0,\frac{1}{3},
\frac{2}{3}\}$,
and $Q_{x,r}\in \mathcal{D}^{\alpha_{x,r}}$
such that $(x-r,x+r)\subset Q_{x,r}
\subset (x-C_0 r, x+C_0 r)$. This, combined with \eqref{e-BEk} and
\eqref{uni-sim}, further implies that, for any
$(x,r)\in E^{\spadesuit}_{\lambda,\beta,k}[f]$,
\begin{align*}
\lambda|Q_{x,r}|^{\beta+k}
&
\sim
\lambda r^{\beta+k}
\le
r^{-1}\int_{x-r}^{x+r}\int_{x-r}^{x+r}
\left|\Delta^k_{y,z}f\right|\,dy\,dz\\
&\lesssim
E_k\left(f,(x-r,x+r)\right)
\sim
\inf_{P\in \mathcal{P}_{k-1} (\mathbb{R})}
\left\|f-P\right\|_{L^1 ((x-r,x+r))}\\
&\le
\inf_{P\in \mathcal{P}_{k-1} (\mathbb{R})}
\left\|f-P\right\|_{L^1 (Q_{x,r})}
\sim
E_k\left(f,Q_{x,r}\right).
\end{align*}
By this and \eqref{df-ab}, we conclude that,
for any
$(x,r)\in E^{\spadesuit}_{\lambda,\beta,k}[f]$,
there exist a positive constant $C_{(k,\beta)}$,
depending only on both $k$ and $\beta$, $\alpha_{x,r}\in
\{0,\frac{1}{3}, \frac{2}{3}\}$, and $Q_{x,r}\in
\mathcal{D}_{C_{(k,\beta)}\lambda,\beta,k}^{\alpha}[f]$
such that
\begin{align*}
(x,r)\in Q\times \left[\frac{|Q_{x,r}|}{2C_0
},\frac{|Q_{x,r}|}{2}\right].
\end{align*}
From this, it follows that
\begin{align*}
E^{\spadesuit}_{\lambda,\beta,k}[f]\subset
\bigcup_{\alpha\in \{0,\frac{1}{3}, \frac{2}{3}\}}
\bigcup_{Q\in \mathcal{D}_{C_{(k,\beta)}\lambda,\beta,k}^{\alpha}[f]}
\left\{Q\times \left[\frac{|Q|}{2C_0},\frac{|Q|}{2}\right]\right\},
\end{align*}
which further implies that, for any $x\in \mathbb{R}$,
\begin{align}\label{e-apctr01}
\int_{0}^{\infty}{\bf 1}_{E^{\spadesuit}_{\lambda,
\beta,k}[f]}(x,r)r^{q(\beta-1)-1}\,dr
\lesssim
\sum_{\alpha\in \{0,\frac{1}{3}, \frac{2}{3}\}}
\sum_{Q\in \mathcal{D}_{C_{(k,\beta)}\lambda,\beta,k}^{\alpha}[f]}
|Q|^{q(\beta-1)}{\bf 1}_{Q}(x).
\end{align}
Furthermore, using Lemma \ref{lem-Qx}, we find that, for any
$\alpha\in \{0,\frac{1}{3},\frac{2}{3}\}$ and $x\in \bigcup_{Q\in
\mathcal{D}^{\alpha}_{C_{(k,\beta)}\lambda,\beta,k}[f]} Q$,
there exists an open interval $Q_x \in
\mathcal{D}^{\alpha}_{C_{(k,\beta)}\lambda,\beta,k}[f]$
containing $x$ such that
\begin{align*}
\sum_{Q\in \mathcal{D}_{C_{(k,\beta)}\lambda,\beta,k}^{\alpha}[f]}
|Q|^{q(\beta-1)}{\bf 1}_{Q}(x)
\lesssim
|Q_x|^{q(\beta-1)}.
\end{align*}
From this, \eqref{e-apctr01}, and the Tonelli theorem, we deduce that
\begin{align*}
&\int_{\mathbb{R}}
\left[\int_{0}^{\infty}{\bf 1}_{E^{\spadesuit}_{\lambda,
\beta,k}[f]}(x,r)r^{q(\beta-1)-1}\,dr
\right]^{\frac{p}{q}}\upsilon(x)\,dx\\
&\quad \lesssim
\int_{\mathbb{R}}
\left[	\sum_{\alpha\in \{0,\frac{1}{3}, \frac{2}{3}\}}
|Q_x|^{q(\beta-1)}
\right]^{\frac{p}{q}}\upsilon(x)\,dx
\sim
\int_{\mathbb{R}}
\sum_{\alpha\in \{0,\frac{1}{3}, \frac{2}{3}\}}
|Q_x|^{p(\beta-1)}
\upsilon(x)\,dx\\
&\quad
\leq
\int_{\mathbb{R}}
\sum_{\alpha\in \{0,\frac{1}{3}, \frac{2}{3}\}}
\sum_{Q\in \mathcal{D}_{C_{(k,\beta)}\lambda,\beta,k}^{\alpha}[f]}
|Q|^{p(\beta-1)}{\bf 1}_{Q}(x)
\upsilon(x)\,dx\\
&\quad=
\sum_{\alpha\in \{0,\frac{1}{3},
\frac{2}{3}\}}\sum_{Q\in\mathcal{D}^{\alpha}_{C_{(k,\beta)}\lambda,\beta,k}[f]}
|Q|^{p(\beta-1)}\upsilon(Q).
\end{align*}
This further implies that \eqref{e-x-r} holds and hence finishes the
proof of Lemma \ref{lem-lo}.
\end{proof}

\begin{proof}[Proof of Corollary \ref{cor-ap}]
By Theorem \ref{esti-omega} with $\ell:=k$, we immediately obtain
(iii) holds if (i) holds.
Now, we prove that (i) implies (ii).
Assume $\upsilon\in A_p(\mathbb{R})$.
Repeating the proof of both Proposition \ref{pro-E0}(ii) and Theorem
\ref{thm-up-ap} with $A_1$, $\ell$,
and $n(\frac{1}{p}-\frac{1}{q})<\ell$ replaced, respectively, by
$A_p (\mathbb{R})$, $k$, and $1-\frac{1}{q}<k$,
we conclude that (ii) holds.

Next, we prove (ii) implies (i). Assume (ii) hold. Let $I_0:=[4,8]$,
$I_1 := [1,2]$, $I_2:= [16,32]$, and $I_3:=[3,9]$.
From this, \eqref{e-shp-1}, and \eqref{eq-cor-ap}, we deduce that,
for any $f\in \dot{W}^{k,p}_{\upsilon} (\mathbb{R})$,
\begin{align}\label{eq-wE}
\sup_{\lambda\in(0,\infty)}
\lambda^p \int_{\mathbb{R}}
\left[\int_{\mathbb{R}}{\bf 1}_{E^{\star}_{\lambda,
\frac{\gamma}{q},k}[f]}(x,y)|x-y|^{\gamma-1}
\,dy\right]^{\frac{p}{q}}\upsilon(x)\,dx
\lesssim
\|f\|_{\dot{W}^{k,p}_{\upsilon} (\mathbb{R})},
\end{align}
where $E^{\star}_{\lambda,
\frac{\gamma}{q},k}[f]$ and
$\Delta^k_{x,y}f$ are the same as in \eqref{e-sym}.
Observe that inequality \eqref{eq-wE} is both dilation and
translation invariant;
that is, for any $\delta\in (0,\infty)$ and
$x_0\in \mathbb{R}$, both the weights $\upsilon (\delta \cdot)$ and
$\upsilon (\cdot-x_0)$ satisfy \eqref{eq-wE}
with the same implicit positive constant.
This, together with Lemma \ref{lem-apwight}(iii), further implies
that, to show $\upsilon\in A_p (\mathbb{R})$, it suffices to prove
that,
for any nonnegative function $g\in L^{1}_{\rm loc}(\mathbb{R})$,
\begin{align}\label{eq-ggg}
\left[\int_{I_0}g(x)\,dx\right]^p
\lesssim
\frac{1}{\upsilon(I_0)}
\int_{I_0}[g(x)]^p\upsilon(x)\,dx.
\end{align}
To do this, we assume that $ g\in C^{\infty}(\mathbb{R})$ is
nonnegative and choose $\eta\in C^{\infty}(\mathbb{R})$ such that
\begin{align}\label{eq-ETA}
{\bf 1}_{I_0}\le \eta \le {\bf 1}_{I_3}.
\end{align}
For any $h\in L^{1}(\mathbb{R})$ and $x\in \mathbb{R}$, let
\begin{align}\label{eq-aaaa}
\mathcal{A}(h)(x):= \int_{-\infty}^{x}h(t)\,dt
\end{align}
and $f:=\mathcal{A}^k(g\eta)$,
where $\mathcal{A}^k$ denotes the $k$-fold iteration of
$\mathcal{A}$. Clearly, $f\in C^{\infty}(\mathbb{R})$,
$f^{(k)}=g\eta$, and $\mathrm{supp\,}(f^{(k)})\subset I_3$.
From \cite[p.\,336, (4.16)]{bs1988}, we infer that,
for any $x,h\in\mathbb{R}$,
\begin{align*}
\Delta^k_h f(x)=\int_{\mathbb{R}}M_{k}(t)
\sum_{\zeta\in \mathbb{Z}_{+}^n,\,|\zeta|=k}\frac{k!}{\zeta!}
\partial^{\zeta}f(x+th)h^\zeta \,dt,
\end{align*}
where $M_1:= {\bf 1}_{(0,1)}$ and, for any $j\in \mathbb{N}$,
$M_{j+1}:=M_{j}\ast M_1$.
By these and a change of variables, we find that, for any $x\in I_1$
and $y\in I_2$,
\begin{align}\label{e-Dkxy}
\Delta^k_{x,y}f
&=\left({y-x}\right)^k
\int_{\mathbb{R}}M_{k}(t)
f^{(k)}\left(x+t\left[\frac{y-x}{k}\right]\right) \,dt\notag\\
&=\left({y-x}\right)^{k-1}
\int_{\mathbb{R}}M_{k}\left(\frac{k[s-x]}{y-x}\right)
g(s)\eta(s) \,ds\notag\\
&\ge \left({y-x}\right)^{k-1}
\left[\inf_{s\in I_3}M_{k}\left(\frac{k[s-x]}{y-x}\right)\right]
\int_{I_0}
g(s) \,ds.
\end{align}
Observe that, for any $x\in I_1$, $y\in I_2 $,
and $s\in I_3$, $\frac{k}{31}\le \frac{k(s-x)}{y-x}\le \frac{4}{7}k$.
This, combined with the definition of $M_k$,
further implies that $\inf_{x\in I_1,\,y\in I_2,\,s\in
I_3}M_{k}(\frac{k[s-x]}{y-x})>0$.
Therefore, for any $x\in I_1$ and $y\in I_2 $,
\begin{align*}
\Delta^k_{x,y}f&\gtrsim
|x-y|^{k+{\frac{\gamma}{q}}}
\int_{I_0}
g(s) \,ds.
\end{align*}
This, together with the symmetry of $E^{\star}_{\lambda,
\frac{\gamma}{q},k}[f]$, further implies that
\begin{align*}
(I_1\times I_2)\cup (I_2\times I_1)
\subset
E^{\star}_{\lambda_{(k,q,\gamma)},
\frac{\gamma}{q},k}[f],
\end{align*}
where
$
\lambda_{(k,q,\gamma)}
:=
C_{(k,q,\gamma)}\int_{I_0}
g(s) \,ds
$
with a implicit
positive
constant $C_{(k,q,\gamma)}$
depending only on
$k$, $q$, and $\gamma$.
From this, \eqref{eq-wE}, and \eqref{eq-ETA}, it follows that
\begin{align}\label{eq-gI3}
&\upsilon(I_1\cup I_2)
\left[\int_{I_0}g(s) \,ds\right]^p\notag\\
&\quad \lesssim
\lambda_{(k,q,\gamma)}^p \int_{I_1\cup I_2}\upsilon(s)\,ds\notag\\
&\quad\lesssim
\lambda_{(k,q,\gamma)}^p \int_{\mathbb{R}}
\left[\int_{\mathbb{R}}{\bf 1}_{E^{\star}_{ \lambda_{(k,q,\gamma)},
\frac{\gamma}{q},k}[f]}(x,y)|x-y|^{\gamma-1}
\,dy\right]^{\frac{p}{q}}\upsilon(x)\,dx
\notag\\
&\quad\lesssim
\|f\|^p_{\dot{W}^{k,p}_{\upsilon} (\mathbb{R})}
\le
\int_{I_3} \left[g(s)\eta(s)\right]^p\upsilon(s)\,ds
\le
\int_{I_3} \left[g(s)\right]^p\upsilon(s)\,ds.
\end{align}
Next, fix a nonnegative $g\in L^{1}_{\rm loc}$ and
choose a nonnegative $\varphi\in C_{\rm c}^{\infty}(\mathbb{R})$
such that
$\int_{\mathbb{R}}\varphi (x)\,dx=1$.
For any $\varepsilon\in (0,\infty)$ and $x\in \mathbb{R}$, let
\begin{align*}
\varphi_{\varepsilon}(x):=
\frac{1}{\varepsilon}\varphi\left(\frac{x}{\varepsilon} \right)
\ \ \text{and}\ \
g_{\varepsilon}(x):= \left(g {\bf 1}_{I_0}\right)\ast
\varphi_{\varepsilon}(x).
\end{align*}
Therefore, $\{g_{\varepsilon}\}_{\varepsilon} \subset
C^{\infty}(\mathbb{R})$ are nonnegative and hence \eqref{eq-gI3} holds
with
$g:=g_{\varepsilon}$ for any $\varepsilon\in (0,\infty)$.
Using \cite[Corollary 2.9]{D2001}, we conclude that
$\lim_{\varepsilon\to 0^{+}}g_{\varepsilon}=g$ almost everywhere in
$I_0$.
Moreover, applying the Young inequality, we find that, for any
$\varepsilon\in (0,\infty)$,
\begin{align*}
\left\|g_{\varepsilon}\right\|_{L^{\infty}(\mathbb{R})}
\le
\left\|\varphi_{\varepsilon}\right\|_{L^{1}(\mathbb{R})}
\left\|g{\bf 1}_{I_0}\right\|_{L^{\infty}(\mathbb{R})}
=\left\|g{\bf 1}_{I_0}\right\|_{L^{\infty}(\mathbb{R})}.
\end{align*}
From this, the Lebesgue dominated convergence theorem, and
\eqref{eq-gI3} with $g:=g_{\varepsilon}$ for any $\varepsilon\in
(0,1)$, we deduce that
\begin{align}\label{eq-a0}
\upsilon(I_1\cup I_2)\left[\fint_{I_0}g(s)\,ds\right]^p
\sim
\upsilon(I_1 \cup I_2)\left[\int_{I_0}g(s)\,ds\right]^p
\lesssim
\int_{I_0}[g(s)]^p\upsilon(s)\,ds.
\end{align}
Applying this and the fact that \eqref{eq-wE} has the dilation
invariance again, we conclude that, for any
nonnegative $g\in L^{1}_{\rm loc}$,
\begin{align*}
\upsilon(I_0 \cup [64,108])\left[\fint_{I_2}g(s)\,ds\right]^p
\lesssim
\int_{I_2}[g(s)]^p\upsilon(s)\,ds.
\end{align*}
Letting $g:={\bf 1}_{I_2}$, we obtain
\begin{align*}
\upsilon(I_0)\le \upsilon(I_0 \cup [64,108])
\lesssim
\upsilon(I_2)
\le
\upsilon(I_1 \cup I_2).
\end{align*}
This, combined with \eqref{eq-a0}, implies \eqref{eq-ggg} and
hence $\upsilon\in A_p (\mathbb{R})$. Thus, we complete the proof that
(ii) implies (i).

Next, we show that (iii) implies (i).
Assume that (iii) holds.
From Lemma \ref{lem-lo} and \eqref{eq-n1-ap},
we infer that, for any $f\in \dot{W}^{k,p}_{\upsilon} (\mathbb{R})$,
\begin{align}\label{e-ap-n101}
\sup_{\lambda\in (0,\infty)}\lambda^p \int_{\mathbb{R}}
\left[\int_{\mathbb{R}}{\bf 1}_{E^{\spadesuit}_{\lambda,
\beta,k}[f]}(x,r)r^{q(\beta-1)-1}\,dr
\right]^{\frac{p}{q}}\upsilon(x)\,dx
\lesssim
\|f\|^p_{\dot{W}^{k,p}_{\upsilon} (\mathbb{R})}.
\end{align}
Assume that $ g\in C^{\infty}(\mathbb{R})$ is nonnegative and $\eta\in
C^{\infty}(\mathbb{R})$ such that
\eqref{eq-ETA} holds.
Let $\mathcal{A}$ be the same as in \eqref{eq-aaaa},
$f:=\mathcal{A}^k(g\eta)$, $x\in I_0$, and $r\in [28,29]$.
Then $I_1 \cup I_2 \subset B(x,r)$.
Using these, \eqref{e-Dkxy}, and the fact that
$\inf_{x\in I_1,\,y\in I_2,\,s\in I_3}M_{k}(\frac{k[s-x]}{y-x})>0$, we
find that there exists a positive constant $C_{(k,\beta)}$,
depending only on $k$ and $\beta$, such that
\begin{align*}
\int_{x-r}^{x+r}\int_{x-r}^{x+r} \left|\Delta^k_{y,z}f\right|\,dy\,dz
&\ge
\int_{I_2}\int_{I_1} \left|\Delta^k_{y,z}f\right|\,dy\,dz\\
&\ge
\int_{I_2}\int_{I_1}\left(\frac{z-y}{k}\right)^{k-1}
\left[\inf_{s\in I_3}M_{k}\left(\frac{k[s-y]}{z-y}\right)\right]
\,dy\,dz\int_{I_0}
g(s) \,ds\\
&\ge
C_{(k,\beta)}r^{\beta+k+1}\int_{I_0}g(s) \,ds.
\end{align*}
Let $\lambda_{(k,\beta)}:=C_{(k,\beta)}\int_{I_0}g(s) \,ds$.
We then have $I_0 \times [28,29]\subset
E^{\spadesuit}_{\lambda_{(k,\beta)},\beta,k}[f]$.
By this and \eqref{e-ap-n101}, we conclude that
\begin{align*}
\upsilon(I_0)\left[\int_{I_0}g(s) \,ds\right]^p
&\lesssim
\lambda_{(k,\beta)}^p \int_{I_0}
\left[\int_{28}^{29}r^{q(\beta-1)-1}\,dr
\right]^{\frac{p}{q}}\upsilon(x)\,dx\\
&\le
\lambda_{(k,\beta)}^p \int_{\mathbb{R}}
\left[\int_{0}^{\infty}{\bf 1}_{E^{\spadesuit}_{\lambda,
\beta,k}[f]}(x,r)r^{q(\beta-1)-1}\,dr
\right]^{\frac{p}{q}}\upsilon(x)\,dx\\
&\lesssim
\|f\|_{\dot{W}^{k,p}_{\upsilon} (\mathbb{R})}
\le
\int_{I_3} \left[g(s)\eta(s)\right]^p\upsilon(s)\,ds
\le
\int_{I_3} \left[g(s)\right]^p\upsilon(s)\,ds.
\end{align*}
Using this and a slight modification of the proof of (i),
we find that \eqref{eq-ggg} also holds.
Therefore, we obtain $\upsilon \in A_p (\mathbb{R})$.
This finishes the proof of (iii) implies (i) and hence
Corollary \ref{cor-ap}.
\end{proof}

\section{Proofs of Theorems \ref{thm-main}, \ref{t-chaWkX}, and
\ref{thm-fsGN}}\label{sec-Formulae}

We present the proofs of Theorems \ref{thm-main}, \ref{t-chaWkX}, and
\ref{thm-fsGN}, respectively, in Subsection \ref{ssub4-1},
\ref{ss-cha}, and \ref{ssec-fract}.

\subsection{Proof of Theorem \ref{thm-main}}\label{ssub4-1}

To prove Theorem \ref{thm-main}, we need the following
upper estimate in {\rm BBF} spaces, which can be obtained by repeating
the proof of \cite[(4.10)]{dlyyz-bvy}
with $E_f(\lambda,q)$, $|\nabla f|$, and Theorem 3.5 therein replaced,
respectively, by $E_{\lambda,\frac{\gamma}{q},k,\ell}[f]$,
$|\nabla^{\ell} f|$,
and Theorem \ref{thm-up-ap} here; we omit the details.

\begin{proposition}\label{pro-up-g}
Let $X$ be a {\rm BBF} space.
Assume that there exists some $p\in [1,\infty)$
such that $X^{\frac{1}{p}}$ is a ball Banach function space
and the Hardy--Littlewood maximal operator
$\mathcal{M}$ is bounded on $(X^{\frac{1}{p}})'$.
Let $k,\ell\in\mathbb{N}$ with $\ell\le k$,
$q\in(0,\infty)$ satisfy $n(\frac1p-\frac1q)<\ell$,
$\Gamma_{p,q}$ be the same as in \eqref{eq-GAMMA}, and
$\gamma\in\Gamma_{p,q}$.
Then there exists a positive constant $C$
such that, for any $f\in C^{\infty}$ with
$|\nabla^{\ell}f|\in C_{\rm c}$,
\begin{align*}
\sup_{\lambda\in(0,\infty)}\lambda
\left\|\left[\int_{\mathbb{R}^n}
{\bf 1}_{E_{\lambda,\frac{\gamma}{q},k,\ell}[f]}(\cdot,h)
|h|^{\gamma-n}\,dh\right]^{\frac1q}\right\|_{X}
\le C
\left\|\nabla^\ell f\right\|_{X}.
\end{align*}
\end{proposition}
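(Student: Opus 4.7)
The plan is to promote the weighted upper bound of Theorem \ref{thm-up-ap} to the ball Banach function space $X$ by a Rubio de Francia-type extrapolation argument. Fix $\lambda\in(0,\infty)$ and $f\in C^{\infty}$ with $|\nabla^{\ell}f|\in C_{\rm c}$, and set
$$
F_\lambda(x):=\left[\int_{\mathbb{R}^n} \mathbf{1}_{E_{\lambda,\frac{\gamma}{q},k,\ell}[f]}(x,h)\,|h|^{\gamma-n}\,dh\right]^{1/q}.
$$
The goal reduces to proving the $\lambda$-uniform estimate $\lambda\|F_\lambda\|_X\lesssim \|\,|\nabla^{\ell}f|\,\|_X$, which, after taking the supremum over $\lambda$, yields the proposition.

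Using the assumption that $X^{1/p}$ is a BBF space together with Remark \ref{rem-x-a}(iii), I would write
$$
\|F_\lambda\|_X^p=\|F_\lambda^p\|_{X^{1/p}}=\sup_{g}\int_{\mathbb{R}^n}F_\lambda(x)^p\,g(x)\,dx,
$$
where the supremum is taken over nonnegative $g\in (X^{1/p})'$ with $\|g\|_{(X^{1/p})'}=1$. For any such $g$, the hypothesis that $\mathcal{M}$ is bounded on $(X^{1/p})'$ allows me to invoke the Rubio de Francia iteration
$$
Rg:=\sum_{j=0}^{\infty}\bigl(2\|\mathcal{M}\|_{(X^{1/p})'\to(X^{1/p})'}\bigr)^{-j}\mathcal{M}^{j}g,
$$
which satisfies $g\le Rg$ pointwise, $\|Rg\|_{(X^{1/p})'}\le 2$, and $[Rg]_{A_1}\le 2\|\mathcal{M}\|_{(X^{1/p})'\to(X^{1/p})'}$.

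I then apply Theorem \ref{thm-up-ap} with $\upsilon:=Rg\in A_1$ to obtain
$$
\lambda^p\int_{\mathbb{R}^n}F_\lambda(x)^p\,Rg(x)\,dx\le C\,\psi\bigl([Rg]_{A_1}\bigr)\int_{\mathbb{R}^n}\left|\nabla^{\ell}f(x)\right|^p Rg(x)\,dx.
$$
The H\"older-type inequality for associate norms (Remark \ref{rem-x-a}(ii)), combined with $\|\,|\nabla^{\ell}f|^p\,\|_{X^{1/p}}=\|\,|\nabla^{\ell}f|\,\|_X^p$ and $\|Rg\|_{(X^{1/p})'}\le 2$, then bounds the right-hand side by a constant multiple of $\|\,|\nabla^{\ell}f|\,\|_X^p$. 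Using $g\le Rg$ pointwise on the left-hand side of the duality formula and taking the supremum over $g$ delivers the desired estimate, uniformly in $\lambda\in(0,\infty)$.

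The main obstacle is purely the uniform control of constants: I must ensure that both $[Rg]_{A_1}$ and $\|Rg\|_{(X^{1/p})'}$ are bounded independently of $g$ (and of $\lambda$), so that the increasing function $\psi$ appearing in Theorem \ref{thm-up-ap} contributes only a constant depending on $X$, $p$, $k$, $\ell$, $q$, and $\gamma$. A minor caveat is the endpoint case $p=1$, where $(X^{1/p})'=X'$ must itself admit the $\mathcal{M}$-bound; this is exactly the hypothesis, so the iteration argument goes through verbatim. No density step is required here since the statement is already restricted to smooth compactly supported data.
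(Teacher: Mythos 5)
Your proposal is correct and is essentially the paper's own (omitted) argument: the paper refers to the extrapolation proof of \cite[(4.10)]{dlyyz-bvy}, which is precisely the Rubio de Francia iteration of Lemma \ref{lem-rxgmm} combined with the duality formula of Lemma \ref{lem-rxp} applied to $Y:=(X^{1/p})'$, followed by Theorem \ref{thm-up-ap} with $\upsilon:=R_Y g\in A_1$ and the H\"older inequality for associate spaces. The uniform control of $[R_Yg]_{A_1}\le 2\|\mathcal{M}\|_{Y\to Y}$ and $\|R_Yg\|_{Y}\le 2$ that you flag is exactly what Lemma \ref{lem-rxgmm} provides, so no gap remains.
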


Next, we investigate the lower estimate of BSVY formulae.
For this purpose, let $U\subset \mathbb{R}^n$ be an open set.
Recall that a function
$f:\ U\to \mathbb{C}$ is said to be \emph{locally Lipschitz} on $U$
if, for
any compact set $K\subset U$, there exists a positive constant $C_K$,
depending on $K$, such that, for any
$x,y\in K$,
\begin{align*}
\left|f(x)-f(y)\right|\le C_K |x-y|.
\end{align*}
Then we have the following lower estimate.

\begin{proposition}\label{pro-linf}
Let $X$ be a {\rm BBF} space, $k\in \mathbb{N}$,
$q\in (0,\infty)$, $\gamma\in\mathbb{R}\setminus \{0\}$,
and $\lambda \to L_\gamma$ be the same as in \eqref{e-Lgamma}.
\begin{enumerate}[{\rm (i)}]
\item For any
$f\in C^k$ with $|\nabla^k f|\in C_{\rm
c}$,
\begin{align}\label{e-linf-1}
&	\liminf_{\lambda\to L_{\gamma}}\lambda\left\|\left[
\int_{{\mathbb{R}^n}}{\bf
1}_{E_{\lambda,\frac{\gamma}{q},k}[f]}(\cdot,h)
|h|^{\gamma-n}\, dh\right]^{\frac{1}{q}}
\right\|_X \notag\\
&	\quad\geq
|\gamma|^{-\frac{1}{q}}\left\|\left[ \int_{\mathbb{S}^{n-1}}
\left|\sum_{\alpha\in\mathbb{Z}_{+}^n,\,|\alpha|=k}\partial^{\alpha}f(\cdot)\xi^{\alpha}\right|^q\,d\mathcal{H}^{n-1}(\xi)
\right]^{\frac{1}{q}}\right\|_X.
\end{align}

\item For any
bounded open set $\Omega\subset \mathbb{R}^n$
and
$f\in C^k $ satisfying that, for any $\alpha\in
\mathbb{Z}_{+}^n$ with $|\alpha|=k$, $\partial^\alpha f$ is locally
Lipschitz on $\mathbb{R}^n$,
\begin{align}\label{e-linf-U}
&	\liminf_{\lambda\to L_{\gamma}}\lambda\left\|\left[
\int_{{\mathbb{R}^n}}{\bf
1}_{E_{\lambda,\frac{\gamma}{q},k}[f]}(\cdot,h)
|h|^{\gamma-n}\, dh\right]^{\frac{1}{q}}{\bf 1}_{\Omega}\right\|_X \notag\\
&	\quad\geq
|\gamma|^{-\frac{1}{q}}\left\|\left[ \int_{\mathbb{S}^{n-1}}
\left|\sum_{\alpha\in\mathbb{Z}_{+}^n,\,|\alpha|=k}\partial^{\alpha}f(\cdot)\xi^{\alpha}\right|^q\,d\mathcal{H}^{n-1}(\xi)
\right]^{\frac{1}{q}}{\bf 1}_{\Omega}\right\|_X.
\end{align}
\end{enumerate}
\end{proposition}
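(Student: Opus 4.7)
The plan is to reduce everything to a pointwise (in $x$) asymptotic computation and then promote it to the $X$-norm via the Fatou property of the {\rm BBF} space $X$ (Remark \ref{rm-bqbf}(iii)). A Taylor expansion of $\Delta_{h}^{k}f(x)$ for $f\in C^k$, combined with the classical identity $\sum_{j=0}^{k}(-1)^{k-j}\binom{k}{j}j^{m}=k!\,\delta_{m,k}$ for $0\le m\le k$, gives, as $r\to 0^{+}$,
\[
\Delta_{r\xi}^{k}f(x)=r^{k}\,\Phi(x,\xi)+o(r^{k}),
\]
where $\Phi(x,\xi):=\sum_{\alpha\in\mathbb{Z}_{+}^{n},\,|\alpha|=k}\tfrac{k!}{\alpha!}\partial^{\alpha}f(x)\xi^{\alpha}$ agrees, up to the natural multinomial normalization, with the polynomial in $\xi$ on the right-hand side of \eqref{e-linf-1}. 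In case (i) the $o(r^{k})$ remainder is uniform for $x$ in any compact set; in case (ii) the local Lipschitz hypothesis on each $\partial^{\alpha}f$ sharpens it to $O(r^{k+1})$ uniformly for $x\in\Omega$.

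Passing to polar coordinates $h=r\xi$ and rescaling via $s=\lambda^{q/\gamma}r$, a direct change-of-variables yields
\[
\lambda^{q}\int_{0}^{\infty}{\bf 1}_{\{|\Delta_{r\xi}^{k}f(x)|>\lambda r^{k+\gamma/q}\}}r^{\gamma-1}\,dr=\int_{0}^{\infty}{\bf 1}_{\{|\Delta_{s\lambda^{-q/\gamma}\xi}^{k}f(x)|/(s\lambda^{-q/\gamma})^{k}\,>\,s^{\gamma/q}\}}s^{\gamma-1}\,ds.
\]
Because $\lambda^{q/\gamma}\to+\infty$ in \emph{both} regimes $(\gamma>0,\ \lambda\to\infty)$ and $(\gamma<0,\ \lambda\to 0^{+})$, for each fixed $s>0$ one has $s\lambda^{-q/\gamma}\to 0^{+}$, putting us in the Taylor regime, so the integrand converges pointwise a.e.\ in $s$ to ${\bf 1}_{\{|\Phi(x,\xi)|>s^{\gamma/q}\}}$. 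Evaluating
\[
\int_{0}^{\infty}{\bf 1}_{\{|\Phi(x,\xi)|>s^{\gamma/q}\}}s^{\gamma-1}\,ds=|\gamma|^{-1}|\Phi(x,\xi)|^{q}
\]
(the region of integration is $(0,|\Phi|^{q/\gamma})$ when $\gamma>0$ and $(|\Phi|^{q/\gamma},\infty)$ when $\gamma<0$, both of finite $s^{\gamma-1}\,ds$-measure) and invoking Fatou's lemma first in $s$ and then in $\xi\in\mathbb{S}^{n-1}$ delivers the pointwise (in $x$) lower bound
\[
\liminf_{\lambda\to L_{\gamma}}\lambda^{q}\int_{\mathbb{R}^{n}}{\bf 1}_{E_{\lambda,\gamma/q,k}[f]}(x,h)|h|^{\gamma-n}\,dh\;\geq\;|\gamma|^{-1}\int_{\mathbb{S}^{n-1}}|\Phi(x,\xi)|^{q}\,d\mathcal{H}^{n-1}(\xi).
\]

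Taking the $q$-th root (which is compatible with $\liminf$ by the monotonicity and continuity of $t\mapsto t^{1/q}$) and then applying the Fatou property of $X$ along any sequence $\lambda_{m}\to L_{\gamma}$ realizing the $\liminf$ of the $X$-norm on the left of \eqref{e-linf-1} immediately yields (i). The proof of (ii) follows the same blueprint with ${\bf 1}_{\Omega}$ inserted throughout: the bounded-domain restriction combined with the uniform Taylor control provided by local Lipschitz regularity of $\partial^{\alpha}f$ keeps every step intact.

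The main technical obstacle will be the careful handling of the $s$-integral, because $s^{\gamma-1}\,ds$ is not integrable over all of $(0,\infty)$ and so dominated convergence is unavailable; Fatou's lemma is the correct tool precisely because the \emph{limiting} level set has finite $s^{\gamma-1}\,ds$-measure for every $(x,\xi)$, which is exactly what makes the rescaling $s=\lambda^{q/\gamma}r$ critical at the exponent $\gamma/q$. A secondary care arises in case (ii), where $\nabla^{k}f$ need not be compactly supported: one must verify that the large-$r$ contribution does not disrupt the small-$r$ asymptotic, which is handled by the restriction to the bounded $\Omega$ together with the local Lipschitz hypothesis bounding $|\Delta_{h}^{k}f(x)|$ uniformly for $x\in\Omega$.
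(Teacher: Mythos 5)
Your proposal is correct and follows essentially the same route as the paper's proof: identify the pointwise limit of $|\Delta_{r\xi}^k f(x)|/r^k$ as $r\to 0^+$, absorb $\lambda$ into the level set by the rescaling $h\mapsto\lambda^{-q/\gamma}h$ (you perform it radially after passing to polar coordinates, which is equivalent), and then pass to the limit via Fatou's lemma in the inner integral together with the Fatou property of $X$ from Remark \ref{rm-bqbf}(iii); the paper obtains the pointwise limit from the integral representation \eqref{e-hdo} rather than from your Taylor expansion with the identity $\sum_{j=0}^{k}(-1)^{k-j}\binom{k}{j}j^{m}=k!\,\delta_{m,k}$, and it deduces (i) from (ii) by taking $\Omega$ to be a ball containing $\mathrm{supp}(|\nabla^k f|)$, but these are cosmetic differences. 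One caveat: your (correct) computation yields the limit $\sum_{|\alpha|=k}\frac{k!}{\alpha!}\partial^{\alpha}f(x)\xi^{\alpha}$, whereas \eqref{e-linf-1} is stated with $\sum_{|\alpha|=k}\partial^{\alpha}f(x)\xi^{\alpha}$; these differ when $k\ge2$ and $n\ge2$, so you should either carry the multinomial coefficients through to the right-hand side or make explicit that they are being absorbed into the notation --- the same normalization issue is already latent in the paper's own statement of \eqref{e-hdo}.
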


\begin{proof}
We first show (ii).
Let $\Omega\subset \mathbb{R}^n$ be a
bounded open set and
$f\in C^k$ satisfy that, for any $\alpha\in
\mathbb{Z}_{+}^n$ with $|\alpha|=k$, $\partial^\alpha f$ is locally
Lipschitz on $\mathbb{R}^n$.
From \cite[Proposition 1.4.5]{G14}, we infer that,
for any $h\in \mathbb{R}^n$ and for almost every $x\in \mathbb{R}^n$,
\begin{align}\label{e-hdo}
\Delta^{k}_{h}f(x)
=
\int_{[0,1]^k}
\sum_{\alpha\in\mathbb{Z}_{+}^n,\,|\alpha|=k}\partial^{\alpha}f(x+[s_1+\cdots+s_k]h)h^{\alpha}
\,ds_1\cdots \, ds_k.
\end{align}
This, together with a change of variables, further implies that, for
almost every $x\in \Omega$
and for any $h:=r\xi$ with $r\in (0,\infty)$ and $\xi\in
\mathbb{S}^{n-1}$,
\begin{align}\label{e-linf-ke}
\lim_{r\to 0^+}\frac{|\Delta^k_{r\xi} f(x)|}{r^k}
=
\left|\sum_{\alpha\in\mathbb{Z}_{+}^n,\,|\alpha|=k}\partial^{\alpha}f(x)\xi^{\alpha}\right|.
\end{align}
Applying a change of variables again, we obtain, for any
$\lambda\in (0,\infty)$,
\begin{align}\label{e-Ch-l}
&\lambda\left\|\left[
\int_{{\mathbb{R}^n}}{\bf
1}_{E_{\lambda,\frac{\gamma}{q},k}[f]}(\cdot,h)
|h|^{\gamma-n}\, dh\right]^{\frac{1}{q}}{\bf 1}_{\Omega}\right\|_X
=\left\|\left[
\int_{{\mathbb{R}^n}}{\bf
1}_{{\mathcal{E}}_{\lambda,\frac{\gamma}{q},k}[f]}(\cdot,h)
|h|^{\gamma-n}\, dh\right]^{\frac{1}{q}}\right\|_X,
\end{align}
where
\begin{align*}
{\mathcal{E}}_{\lambda,\frac{\gamma}{q},k}[f]
:=
\left\{(x,h)\in \Omega\times (\mathbb{R}^n\setminus\{{\bf 0}\}):\
\lambda^{\frac{qk}{\gamma}}|h|^{-\frac{\gamma}{q}-k}\left|
\Delta_{\lambda^{-\frac{q}{\gamma}}h}^{k}f (x)\right|>1\right\}.
\end{align*}
Notice that $\lambda^{-\frac{q}{\gamma}}\to 0$ as $\lambda\to
L_\gamma$. From this and \eqref{e-linf-ke},
it follows that
\begin{align*}
\liminf_{\lambda\to L_\gamma}
{\bf 1}_{{\mathcal{E}}_{\lambda,\frac{\gamma}{q},k}[f]}
\ge
{\bf 1}_{\{(x,h)\in \Omega\times (\mathbb{R}^n\setminus \{{\bf 0}\}):\
|h|^{-\frac{\gamma}{q}-k}|\sum_{\alpha\in\mathbb{Z}_{+}^n,\,|\alpha|=k}\partial^{\alpha}f(x)h^{\alpha}|>1
\}}.
\end{align*}
This, combined with \eqref{e-Ch-l}, Remark \ref{rm-bqbf}(iii),
and a change of variables, further implies that
\begin{align*}
&	\liminf_{\lambda\to L_{\gamma}}\lambda\left\|\left[
\int_{{\mathbb{R}^n}}{\bf
1}_{E_{\lambda,\frac{\gamma}{q},k}[f]}(\cdot,h)
|h|^{\gamma-n}\, dh\right]^{\frac{1}{q}}{\bf 1}_{\Omega}\right\|_X\\
&\quad\ge
\left\|\left[\int_{{\mathbb{R}^n}}
{\bf 1}_{\{(x,h)\in \Omega\times (\mathbb{R}^n\setminus \{{\bf 0}\}):\
|h|^{-\frac{\gamma}{q}-k}|\sum_{\alpha\in\mathbb{Z}_{+}^n,\,|\alpha|=k}\partial^{\alpha}f(x)h^{\alpha}|>1\}}(\cdot,h)
|h|^{\gamma-n}\, dh\right]^{\frac{1}{q}}\right\|_X\\
&\quad=
\left\|\left[
\int_{\mathbb{S}^{n-1}}
\int_{0}^{\infty}
{\bf 1}_{\{(x,r\xi)\in \Omega\times (\mathbb{R}^n\setminus \{{\bf
0}\}):\
r^{-\frac{\gamma}{q}}|\sum_{\alpha\in\mathbb{Z}_{+}^n,\,|\alpha|=k}\partial^{\alpha}f(x)\xi^{\alpha}|>1\}}(\cdot,r\xi)
r^{\gamma-1}\, dr
\,d\mathcal{H}^{n-1}(\xi)\right]^{\frac{1}{q}}\right\|_X\\
&	\quad=
|\gamma|^{-\frac{1}{q}}
\left\|\left[ \int_{\mathbb{S}^{n-1}}
\left|\sum_{\alpha\in\mathbb{Z}_{+}^n,\,|\alpha|=k}\partial^{\alpha}f(\cdot)\xi^{\alpha}\right|^q\,d\mathcal{H}^{n-1}(\xi)
\right]^{\frac{1}{q}}{\bf 1}_{\Omega}\right\|_X.
\end{align*}
This finishes the proof of (ii).

Next, we prove (i). Let
$f\in C^k$ and
${\rm supp}(|\nabla^k f|)\subset B({\bf 0},R)$
with $R\in (0,\infty)$.
Clearly, for any $\alpha\in \mathbb{Z}_{+}^n$ with $|\alpha|=k$,
$\partial^\alpha f$ is locally Lipschitz on $\mathbb{R}^n$.
Using this and \eqref{e-linf-U} with $\Omega:=B({\bf 0},R)$,
we immediately obtain
\eqref{e-linf-1} and hence (i).
This then finishes the proof of Proposition \ref{pro-linf}.
\end{proof}

Via borrowing some ideas from the proof of
Frank \cite[Lemma 6]{f22},
we establish the following subtle estimate in the limiting
identity of BSVY formulae,
which is the key point to improve
the existing results about BSVY formulae in BBF spaces.

\begin{proposition}\label{pro-limsup}
Let $X$ be a {\rm BBF} space.
Assume that there exists some $p\in [1,\infty)$
such that $X^{\frac{1}{p}}$ is a ball Banach function space
and the Hardy--Littlewood maximal operator
$\mathcal{M}$ is bounded on $(X^{\frac{1}{p}})'$.
Let $k\in {\mathbb{N}}$, $q\in(0,\infty)$,
and $\gamma\in \mathbb{R}\setminus \{0\}$.
Then the following statements hold.
\begin{enumerate}[{\rm (i)}]
\item If both $n=1$ and $\gamma\in
(-\infty,-\frac{q}{p})\cup(0,\infty)$ or both $n\in \mathbb{N} \cap
[2,\infty)$
and $n(\frac{1}{p}-\frac{1}{q})<k$, then, for any
$f\in C^{k}$ with $|\nabla^{k}f|\in C_{\rm
c}$,
\begin{align}\label{eq-limup}
&\limsup_{\lambda\to L_\gamma}\lambda\left\|\left[
\int_{{\mathbb{R}^n}}{\bf
1}_{E_{\lambda,\frac{\gamma}{q},k}[f]}(\cdot,h)
|h|^{\gamma-n}\, dh\right]^{\frac{1}{q}}\right\|_X\notag\\
&\quad\le
|\gamma|^{-\frac{1}{q}}\left\|\left[ \int_{\mathbb{S}^{n-1}}
\left|\sum_{\alpha\in\mathbb{Z}_{+}^n,\,|\alpha|=k}\partial^{\alpha}f(\cdot)\xi^{\alpha}\right|^q\,d\mathcal{H}^{n-1}(\xi)
\right]^{\frac{1}{q}}\right\|_X,
\end{align}
where $\lambda \to L_\gamma$ is the same as in \eqref{e-Lgamma}.
\item If $n(\frac{1}{p}-\frac{1}{q})<k$, then \eqref{eq-limup} holds
for any
$f\in C_{\rm c}^{k}$.
\end{enumerate}
\end{proposition}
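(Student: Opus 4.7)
The plan is to first apply the dilation \eqref{e-Ch-l}, setting $\epsilon := \lambda^{-q/\gamma}$ so that $\epsilon \to 0^+$ as $\lambda \to L_\gamma$; this reduces the desired inequality \eqref{eq-limup} to the estimate
\[
\limsup_{\epsilon\to 0^+}\left\|\left[\int_{\mathbb{R}^n}\mathbf{1}_{\{\epsilon^{-k}|\Delta^k_{\epsilon h}f(\cdot)|>|h|^{k+\gamma/q}\}}(\cdot,h)|h|^{\gamma-n}\,dh\right]^{1/q}\right\|_X \le |\gamma|^{-1/q}\|G(f)\|_X,
\]
where $G(f)(x) := [\int_{\mathbb{S}^{n-1}}|\sum_{|\alpha|=k}\partial^\alpha f(x)\xi^\alpha|^q\,d\mathcal{H}^{n-1}(\xi)]^{1/q}$. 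Using \eqref{e-hdo} I would split $\epsilon^{-k}\Delta^k_{\epsilon h}f(x) = \mathcal{T}(f)(x,h) + R_\epsilon(x,h)$ with principal term $\mathcal{T}(f)(x,h) := \sum_{|\alpha|=k}\partial^\alpha f(x)h^\alpha$ and remainder
\[
R_\epsilon(x,h) := \int_{[0,1]^k}\sum_{|\alpha|=k}\bigl[\partial^\alpha f(x+\epsilon(s_1+\cdots+s_k)h)-\partial^\alpha f(x)\bigr]h^\alpha\,ds;
\]
for any fixed $\delta\in(0,1)$, the elementary inclusion $\{|A+B|>t\}\subset\{|A|>(1-\delta)t\}\cup\{|B|>\delta t\}$ splits the level-set indicator into a ``principal'' and an ``error'' piece.

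The principal piece, after polar substitution $h=r\xi$ and explicit radial integration (exactly as in the proof of Proposition \ref{pro-linf}(ii)), equals $(1-\delta)^{-q}|\gamma|^{-1}\int_{\mathbb{S}^{n-1}}|\sum_{|\alpha|=k}\partial^\alpha f(x)\xi^\alpha|^q\,d\mathcal{H}^{n-1}(\xi)$, so its $1/q$-th power in $X$-norm is exactly $(1-\delta)^{-1}|\gamma|^{-1/q}\|G(f)\|_X$, matching the right-hand side of \eqref{eq-limup} up to the factor $(1-\delta)^{-1}$. It therefore remains to show that the error piece is $o(1)$ in $X$ as $\epsilon \to 0^+$, after which one takes $\limsup_{\epsilon\to 0^+}$ and then $\delta\to 0^+$.

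Under both hypotheses of (i) and (ii), every $\partial^\alpha f$ with $|\alpha|=k$ lies in $C_{\rm c}$, so the joint modulus $\omega(s) := \sum_{|\alpha|=k}\sup_{x,|y|\le s}|\partial^\alpha f(x+y)-\partial^\alpha f(x)|$ satisfies $\omega(s)\to 0$ as $s\to 0^+$; this yields the pointwise bound $|R_\epsilon(x,h)|\le C|h|^k\omega(k\epsilon|h|)$, and $R_\epsilon(x,h)$ vanishes unless $x$ lies within distance $k\epsilon|h|$ of $\mathrm{supp}(|\nabla^k f|)$. For $\gamma>0$ the condition $|R_\epsilon(x,h)|>\delta|h|^{k+\gamma/q}$ confines $|h|$ to a bounded range shrinking with $\epsilon$, confining the $x$-support of the error integrand to a fixed ball $K$; the substitution $r:=k\epsilon|h|$ then produces the uniform bound $C_{\delta,f}\,\omega(c_\epsilon)^q\mathbf{1}_K(x)$ with $c_\epsilon\to 0$, whose $q$-th root vanishes in $X$ because $\mathbf{1}_K\in X$. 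For $\gamma<0$ the $h$-integration extends to infinity and this naive domination breaks down; I would instead apply Chebyshev's inequality to bound the error indicator by $\delta^{-q}[\tilde\omega_\epsilon(x,h)]^q|h|^{-\gamma}$ with $\tilde\omega_\epsilon(x,h):=\sum_{|\alpha|=k}\int_{[0,1]^k}|\partial^\alpha f(x+\epsilon(s_1+\cdots+s_k)h)-\partial^\alpha f(x)|\,ds$, and then invoke the Young-type convolution inequality in Proposition \ref{p-young} to extract a uniform $X$-dominator for the error, following the Frank-type argument of \cite[Theorem 1.1]{f22}; this is where the sharp range hypothesis ($n(\tfrac1p-\tfrac1q)<k$ for $n\ge 2$, or $\gamma<-\tfrac{q}{p}$ in the $n=1$ subcase of part (i)) enters to guarantee the relevant convolution kernel has finite $X'$-acting norm.

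The main obstacle will be precisely this error control in the regime $\gamma<0$: unlike the case $\gamma>0$, the integral in $h$ is global, the naive pointwise modulus bound is not integrable against $|h|^{\gamma-n}$ at infinity, and one must exploit both the convolution structure of $R_\epsilon$ in the variable $h$ (coming from \eqref{e-hdo}) and the sharp range assumption via Proposition \ref{p-young} to produce a dominating function in $X$. This Young-inequality-based route replaces the still unknown higher-order Bourgain--Brezis--Mironescu formula in ball Banach Sobolev spaces, and is the conceptual ingredient borrowed from \cite[Theorem 1.1]{f22}.
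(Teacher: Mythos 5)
Your reduction via the dilation \eqref{e-Ch-l}, the decomposition of $\epsilon^{-k}\Delta^k_{\epsilon h}f$ into the principal term $\mathcal{T}(f)(x,h)=\sum_{|\alpha|=k}\partial^\alpha f(x)h^\alpha$ plus a remainder, the level-set splitting with parameter $\delta$, and the exact polar computation showing that the principal piece contributes $(1-\delta)^{-1}|\gamma|^{-1/q}\|G(f)\|_X$ are all correct and consistent with the paper's argument. Your treatment of the error piece for $\gamma\in(0,\infty)$ (bounded and shrinking $h$-range, fixed compact $x$-support, domination by $\omega(c_\epsilon)^q\mathbf{1}_K$) is also sound.

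The genuine gap is the case $\gamma\in(-\infty,0)$, which is precisely where all the hypotheses of the proposition live. The dangerous contribution there is not a Taylor remainder that is small by continuity: it comes from pairs $(x,h)$ with $x$ far from $\mathrm{supp}(|\nabla^k f|)$ but $x+ih\in\mathrm{supp}(|\nabla^k f|)$ for some $i\in\{1,\dots,k\}$, so that $|h|\gtrsim|x|$ is large and $\Delta^k_hf(x)$ has full size $O(|h|^k)$; your bound $|R_\epsilon(x,h)|\le C|h|^k\omega(k\epsilon|h|)$ gives no smallness there because $\epsilon|h|\gtrsim|x|$ is bounded below. Controlling this far-field piece is exactly where $n(\frac1p-\frac1q)<k$ must enter (and where, for $n=1$, the stronger condition $\gamma<-\frac{q}{p}$ is needed, since the complement of an interval is disconnected and the decomposition $f=g_f+P_f$ used for $\gamma\in(-kq,0)$ fails). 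The paper handles it by reducing to the quantity $\|\,|\cdot|^{(\gamma-n)/q}\mathbf{1}_{B(\mathbf{0},(k+1)R)^{\complement}}\|_X$, estimating it by duality against the Rubio de Francia $A_1$-weight $R_{(X^{1/p})'}g$ of Lemmas \ref{lem-rxgmm} and \ref{lem-rxp} (which produces the geometric series $\sum_j 2^{j[p(\gamma-n)/q+n]}$), and, when $\gamma\in(-kq,0)$ and that series diverges, truncating $|h|$ from above so that only $j\le J_\lambda$ terms survive and $\lambda^p\sum_{j\le J_\lambda}\to0$. Your proposed substitute --- Chebyshev plus the Young inequality of Proposition \ref{p-young} --- does not do this job: Proposition \ref{p-young} compares the BSVY functional of $\varphi\ast f$ with that of $f$ uniformly in $\lambda$ and is the tool for Theorem \ref{t-chaWkX}, not a device that yields a dominating function in $X$ for the far-field set or a mechanism through which the sharp range condition could enter. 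Until this far-field estimate is carried out, the $\gamma<0$ half of \eqref{eq-limup} remains unproved.
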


To show the above proposition,
we need
the following inequality;
see, for instance, \cite[p.\,699]{b2002}.

\begin{lemma}\label{lem-thet}
Let $q\in [1,\infty)$. For any $\theta\in(0,1)$, there exists a
positive
constant $C_{(\theta)}$,
depending only on $\theta$, such that, for any $a,b \in (0,\infty)$,
\begin{align*}
(a+b)^q \leq (1+\theta)a^q+C_{(\theta)}b^q.
\end{align*}
\end{lemma}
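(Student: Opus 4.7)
The plan is to prove Lemma \ref{lem-thet} by a simple dichotomy based on the relative size of $a$ and $b$, reducing the estimate to two elementary cases.

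First, I would choose the threshold ratio. Since the function $\delta\mapsto (1+\delta)^q$ is continuous on $[0,\infty)$ and takes the value $1$ at $\delta=0$, for the given $\theta\in(0,1)$ I would fix
\[
\delta:=(1+\theta)^{\frac{1}{q}}-1\in(0,\infty),
\]
so that $(1+\delta)^q=1+\theta$. Then I would split the pair $(a,b)\in(0,\infty)^2$ into two regimes according to whether $b\le \delta a$ or $b>\delta a$.

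In the first regime $b\le \delta a$, monotonicity of $t\mapsto t^q$ on $(0,\infty)$ gives
\[
(a+b)^q\le (a+\delta a)^q=(1+\delta)^q a^q=(1+\theta)a^q,
\]
so the claimed inequality holds with any nonnegative constant $C_{(\theta)}$. In the second regime $b>\delta a$, I have $a<\delta^{-1}b$, whence
\[
(a+b)^q\le \left(1+\delta^{-1}\right)^q b^q.
\]
Combining the two cases, the desired inequality follows with the explicit choice
\[
C_{(\theta)}:=\left(1+\left[(1+\theta)^{\frac{1}{q}}-1\right]^{-1}\right)^{q},
\]
which depends only on $\theta$ (and on $q$, which is fixed in the statement). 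I would also remark that the boundary cases $a=0$ or $b=0$ hold trivially, so the assumption $a,b\in(0,\infty)$ is used only to make the ratio $b/a$ meaningful in the dichotomy. The main obstacle here is essentially none, since this is a standard elementary inequality; the only subtlety worth noting in the write-up is that the argument does not require $q>1$ and remains valid at $q=1$ (where one may simply take $C_{(\theta)}=1$).
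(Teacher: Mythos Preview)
Your proof is correct: the dichotomy on $b\le\delta a$ versus $b>\delta a$ with $\delta=(1+\theta)^{1/q}-1$ cleanly yields the inequality with the explicit constant $C_{(\theta)}=(1+\delta^{-1})^q$. The paper does not actually prove this lemma but simply cites \cite[p.\,699]{b2002}, so there is no argument to compare against; your self-contained elementary proof is a fine substitute for the external reference.
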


Recall that
the following extrapolation lemma of {\rm BBF} spaces is exactly \cite[Lemma
4.6]{dlyyz-bvy}.

\begin{lemma}\label{lem-rxgmm}
Let $X$ be a {\rm BBF} space.
Assume that the Hardy--Littlewood maximal operator $\mathcal{M}$
is bounded on $X$ with its operator norm denoted by
$\|{\mathcal{M}}\|_{X\to X}$.
For any $g\in X$ and $x\in{\mathbb{R}^n} $, let
\begin{align}\label{eq-rxg}
R_{X}g(x):=\sum_{m=0}^{\infty}\frac{\mathcal{M}^m g (x)}{2^m
\|\mathcal{M}\|_{X\to
X}^m},
\end{align}
where, for any $m\in {\mathbb{N}}$, $\mathcal{M}^m$ is the
$m$-fold iteration
of $\mathcal{M}$ and
$\mathcal{M}^0 g(x):=|g(x)|$. Then, for any $g\in X$,
\begin{enumerate}[{\rm (i)}]
\item for any $x\in {\mathbb{R}^n}$, $|g(x)|\leq R_{X}g(x);$
\item $R_{X}g\in A_1 $ and $[R_{X}g]_{A_1}\leq
2\|\mathcal{M}\|_{X\to X};$
\item $\|R_{X}g\|_X \leq 2\|g\|_X$.
\end{enumerate}
\end{lemma}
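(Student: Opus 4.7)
The three conclusions of Lemma \ref{lem-rxgmm} are the standard output of the Rubio de Francia iteration algorithm adapted to the ball Banach function space setting; my plan is to verify them one by one in the order (i), (iii), (ii), using only the defining properties of BBF spaces listed in Definition \ref{def-X} together with the hypothesis that $\mathcal{M}$ is bounded on $X$.

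First, (i) is immediate from the definition \eqref{eq-rxg}: every summand is nonnegative, and the $m=0$ term is exactly $|g(x)|$ (using the convention $\mathcal{M}^{0}g(x):=|g(x)|$), so $|g(x)|\le R_X g(x)$ pointwise. Next, for (iii), I would run the triangle inequality inside $X$ on the partial sums $S_N:=\sum_{m=0}^{N}\frac{\mathcal{M}^m g}{2^m\|\mathcal{M}\|_{X\to X}^m}$. Since $S_N \uparrow R_X g$ pointwise, property (iii) of Definition \ref{def-X} (the monotone convergence/Fatou property of the BQBF norm) gives $\|R_X g\|_X=\lim_{N\to\infty}\|S_N\|_X$. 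By the BBF triangle inequality and the iterated bound $\|\mathcal{M}^m g\|_X\le \|\mathcal{M}\|_{X\to X}^m\,\|g\|_X$, each $\|S_N\|_X\le \sum_{m=0}^{N}2^{-m}\|g\|_X\le 2\|g\|_X$, and (iii) follows.

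For (ii), the point is to show $\mathcal{M}(R_X g)(x)\le 2\|\mathcal{M}\|_{X\to X}\,R_X g(x)$ almost everywhere, since this gives $R_X g\in A_1$ with the claimed constant. I would fix a ball $B\ni x$ and compute
\begin{align*}
\fint_{B} R_X g(y)\,dy
&=\sum_{m=0}^{\infty}\frac{1}{2^m\|\mathcal{M}\|_{X\to X}^m}\fint_{B}\mathcal{M}^m g(y)\,dy
\le \sum_{m=0}^{\infty}\frac{\mathcal{M}^{m+1}g(x)}{2^m\|\mathcal{M}\|_{X\to X}^m},
\end{align*}
where the interchange of $\fint_B$ and $\sum_m$ is justified by the Tonelli theorem applied to nonnegative summands, and each averaged term is bounded by $\mathcal{M}(\mathcal{M}^m g)(x)=\mathcal{M}^{m+1}g(x)$ by the definition of the maximal function. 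Re-indexing $m\mapsto m-1$ gives
\begin{align*}
\sum_{m=0}^{\infty}\frac{\mathcal{M}^{m+1}g(x)}{2^m\|\mathcal{M}\|_{X\to X}^m}
=2\|\mathcal{M}\|_{X\to X}\sum_{m=1}^{\infty}\frac{\mathcal{M}^{m}g(x)}{2^{m}\|\mathcal{M}\|_{X\to X}^{m}}
\le 2\|\mathcal{M}\|_{X\to X}\,R_X g(x).
\end{align*}
Taking the supremum over balls $B$ containing $x$ yields the pointwise inequality $\mathcal{M}(R_X g)(x)\le 2\|\mathcal{M}\|_{X\to X}\,R_X g(x)$, i.e.\ $R_X g\in A_1$ with $[R_X g]_{A_1}\le 2\|\mathcal{M}\|_{X\to X}$.

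There is no substantive obstacle here; the only technical care needed is to justify the two interchanges of an infinite sum of nonnegative functions with, respectively, the BBF-norm (via Definition \ref{def-X}(iii)) and the averaging integral inside $\mathcal{M}$ (via Tonelli). Both are routine in this nonnegative, monotone setting, and everything else is the textbook Rubio de Francia algorithm.
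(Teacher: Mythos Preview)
Your proof is correct and is precisely the standard Rubio de Francia iteration argument. The paper does not actually prove this lemma; it simply quotes it as \cite[Lemma~4.6]{dlyyz-bvy}, so there is nothing to compare against beyond noting that what you wrote is the expected verification.
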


We also need the following technique
lemma from \cite[Lemma 3.3]{ZYY23}.

\begin{lemma}\label{lem-rxp}
Let $X$ be a {\rm BBF} space.
Assume that there exists some $p\in [1,\infty)$
such that $X^{\frac{1}{p}}$ is a ball Banach function space
and the Hardy--Littlewood maximal operator
$\mathcal{M}$ is bounded on $(X^{\frac{1}{p}})'$.
Then, for any $f\in X$,
\begin{align*}
\|f\|_X
\leq
\sup_{\|g\|_{(X^{\frac{1}{p}})'}\leq
1}\left[\int_{{\mathbb{R}^n}}|f(x)|^p R_{(X^{\frac{1}{p}})'}
g(x)\, dx\right]^{\frac{1}{p}}
\leq
2^{\frac{1}{p}}\|f\|_X,
\end{align*}
where $R_{(X^{\frac{1}{p}})'}
g$ is the same as in \eqref{eq-rxg} with $X$ replaced by
$(X^{\frac{1}{p}})'$.
\end{lemma}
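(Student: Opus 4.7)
The plan is to reduce the assertion, via $p$-convexification and duality, to the definition of the Rubio de Francia iteration operator $R_{(X^{1/p})'}$ provided by Lemma~\ref{lem-rxgmm}. First I note that $(X^{\frac{1}{p}})'$ is a {\rm BBF} space, by Remark~\ref{rem-x-a}(i) applied to the {\rm BBF} space $X^{\frac{1}{p}}$, and that $\mathcal{M}$ is bounded on $(X^{\frac{1}{p}})'$ by hypothesis; hence Lemma~\ref{lem-rxgmm} legitimately applies with $X$ replaced by $(X^{\frac{1}{p}})'$, so $R_{(X^{\frac{1}{p}})'}g$ is well defined for every $g\in(X^{\frac{1}{p}})'$. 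Moreover, by the definition of the $p$-convexification and Remark~\ref{rem-x-a}(iii) (the biassociate identity $(X^{\frac{1}{p}})''=X^{\frac{1}{p}}$), I have
\[
\|f\|_X^p
=\bigl\||f|^p\bigr\|_{X^{\frac{1}{p}}}
=\bigl\||f|^p\bigr\|_{(X^{\frac{1}{p}})''}
=\sup_{\|g\|_{(X^{\frac{1}{p}})'}\le 1}\int_{\mathbb{R}^n}|f(x)|^p g(x)\,dx,
\]
where, by Definition~\ref{def-X}(ii), the supremum may be taken over nonnegative $g$ without loss of generality.

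For the lower inequality, Lemma~\ref{lem-rxgmm}(i) yields the pointwise bound $g(x)\le R_{(X^{\frac{1}{p}})'}g(x)$, and hence
\[
\int_{\mathbb{R}^n}|f(x)|^p g(x)\,dx
\le \int_{\mathbb{R}^n}|f(x)|^p R_{(X^{\frac{1}{p}})'}g(x)\,dx.
\]
Taking the supremum over all admissible nonnegative $g$ and then the $p$-th root produces the first inequality of the statement.

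For the upper inequality, I combine the Hölder-type bound for associate spaces (Remark~\ref{rem-x-a}(ii)) with Lemma~\ref{lem-rxgmm}(iii): for any $g$ with $\|g\|_{(X^{\frac{1}{p}})'}\le1$,
\[
\int_{\mathbb{R}^n}|f|^p R_{(X^{\frac{1}{p}})'}g\,dx
\le \bigl\||f|^p\bigr\|_{X^{\frac{1}{p}}}\,\bigl\|R_{(X^{\frac{1}{p}})'}g\bigr\|_{(X^{\frac{1}{p}})'}
\le \|f\|_X^p\cdot 2\|g\|_{(X^{\frac{1}{p}})'}
\le 2\|f\|_X^p.
\]
Taking the supremum over $g$ and then the $p$-th root yields the constant $2^{\frac{1}{p}}$. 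The argument has no real obstacle: all three ingredients—$p$-convexification, the biassociate identity, and the Rubio de Francia iteration—are already prepared in Subsection~\ref{ssub-pre}, and the proof amounts to a careful bookkeeping of exponents and of the factor $2$ coming from the geometric series that defines $R_{(X^{\frac{1}{p}})'}$ in \eqref{eq-rxg}.
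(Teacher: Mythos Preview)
Your proof is correct and follows the standard Rubio de Francia argument: the paper itself does not give an independent proof but cites \cite[Lemma 3.3]{ZYY23}, and your three steps (biassociate duality for $X^{1/p}$, the pointwise bound $|g|\le R_{(X^{1/p})'}g$, and the norm bound $\|R_{(X^{1/p})'}g\|_{(X^{1/p})'}\le 2\|g\|_{(X^{1/p})'}$) are exactly the ingredients one expects in that reference.
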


\begin{proof}[Proof of Proposition \ref{pro-limsup}]
Let
$f\in C^{k}$ with $|\nabla^{k}f|\in C_{\rm
c}$ and ${\mathop\mathrm{\,supp\,}}(|\nabla^k f|)\subset
B(\textbf{0},R)$ for some $R\in (0,\infty)$.
We now show (i) by considering the following
two cases on $n$ .

\emph{Case 1)} $n=1$. In this case,
repeating the proof of \cite[Theorem 3.25]{ZYY23}
with $E_{\lambda,\frac{\gamma}{q}}[f]$ and $f'$ therein replaced,
respectively, by
$E_{\lambda,\frac{\gamma}{q},k}[f]$ and $f^{(k)}$ here,
we obtain the desired result.

\emph{Case 2)} $n\in \mathbb{N}\cap [2,\infty)$.
In this case, by the assumption
$|\nabla^k f|\in C_{\rm c}$
and \eqref{e-hdo},
we find that there exists a positive constant $B_{(k,f)}$,
depending only on both $k$ and $f$, such that,
for any $x,h\in\mathbb{R}^n$,
\begin{align}\label{eq-Bfx}
\left|\Delta^k_h
f(x)-\sum_{\alpha\in\mathbb{Z}_{+}^n,\,|\alpha|=k}\partial^{\alpha}f(x)h^{\alpha}\right|
\le
B_{(k,f)}|h|^{k+1}.
\end{align}
Using this and a slight modification of
the proof of \cite[Theorem 3.22(i)]{ZYY23},
we obtain the desired result when $\gamma\in (0,\infty)$.
Thus, from now on, we assume $\gamma\in (-\infty,0)$.
For any $x\in\mathbb{R}^n$, $\xi\in \mathbb{S}^{n-1}$,
and $\varepsilon,\lambda\in (0,\infty)$,
let
\begin{equation}\label{eq-Y2.5}
R_\varepsilon (x,\xi,\lambda): =
\min\left\{\varepsilon,\lambda^{-\frac{q}{\gamma}}\left[\left|\sum_{\alpha\in\mathbb{Z}_{+}^n,\,|\alpha|=k}\partial^{\alpha}f(x)\xi^{\alpha}\right|+
B_{(k,f)}\varepsilon\right]^{\frac{q}{\gamma}} \right\}.
\end{equation}
Next, we claim that, for any $\lambda\in (0,\infty)$,
if $(x,r\xi)\in E_{\lambda,\frac{\gamma}{q},k}[f]$ with
$r\in(0,\infty)$ and $\xi\in\mathbb{S}^{n-1}$, then
$
r\in [R_\varepsilon (x,\xi,\lambda),\infty)
$ for any $\varepsilon\in(0,\infty)$.
Indeed, from \eqref{eq-Bfx} and \eqref{e-EE},
we deduce that, for any $\varepsilon,\lambda\in (0,\infty)$,
if $(x,r\xi)\in E_{\lambda,\frac{\gamma}{q},k}[f]\cap \{\mathbb{R}^n
\times B({\bf 0},\varepsilon)\}$ with $r\in(0,\infty)$ and
$\xi\in\mathbb{S}^{n-1}$,
then
$$\lambda < \frac{|\Delta^k_{r\xi}f(x)|}{r^{k+\frac{\gamma}{q}}}
\le
\frac{|\sum_{\alpha\in\mathbb{Z}_{+}^n,\,|\alpha|=k}\partial^{\alpha}f(x)\xi^{\alpha}|+
B_{(k,f)}r}{r^{\frac{\gamma}{q}}};$$
using this and the assumption that $\gamma\in (-\infty,0)$, we find
that
$$\varepsilon> r>
\left\{\lambda^{-1}\left[\left|\sum_{\alpha\in\mathbb{Z}_{+}^n,\,|\alpha|=k}\partial^{\alpha}f(x)\xi^{\alpha}\right|+
B_{(k,f)}r\right] \right\}^{\frac{q}{\gamma}}
>
\left\{\lambda^{-1}\left[\left|\sum_{\alpha\in\mathbb{Z}_{+}^n,\,|\alpha|=k}\partial^{\alpha}f(x)\xi^{\alpha}\right|+
B_{(k,f)}\varepsilon\right] \right\}^{\frac{q}{\gamma}},$$
which, together with \eqref{eq-Y2.5},
further implies that $r\in [R_\varepsilon (x,\xi,\lambda),\infty)$.
On the other hand,
for any $\varepsilon,\lambda\in (0,\infty)$,
if $(x,r\xi)\in E_{\lambda,\frac{\gamma}{q}}[f]
\cap [\mathbb{R}^n \times B({\bf 0},\varepsilon)^{\complement}]$
with $r\in(0,\infty)$ and $\xi\in\mathbb{S}^{n-1}$,
then it is obvious that $r\in [R_\varepsilon (x,\xi,\lambda),\infty)$.
Therefore, the claim holds.
By this claim, a change of variables, Lemma \ref{lem-thet},
and the assumption that $X$ is a {\rm BBF} space, we
conclude that, for any $\eta\in(0,\infty)$,
there exists $C_{(\eta)}\in[1,\infty)$, depending only on $\xi$,
such that, for any $\lambda\in (0,\infty)$,
\begin{align*}
&\lambda \left\|\left[
\int_{{\mathbb{R}^n}}{\bf
1}_{E_{\lambda,\frac{\gamma}{q},k}[f]}(\cdot,h)
|h|^{\gamma-n}\, dh\right]^{\frac{1}{q}}{\bf
1}_{B(\textbf{0},(k+1)R)}\right\|_X\\
&\quad \le \lambda \left\|
\left[\int_{\mathbb{S}^{n-1}}\int_{R_\varepsilon
(\cdot,\xi,\lambda)}^{\infty}\,r^{\gamma-1}dr
\,d\mathcal{H}^{n-1}(\xi)
\right]^{\frac{1}{q}}{\bf 1}_{B(\textbf{0},(k+1)R)} \right\|_X\\
&\quad = (-\gamma)^{-\frac{1}{q}}
\lambda \left\|
\left[\int_{\mathbb{S}^{n-1}}\left[R_\varepsilon
(\cdot,\xi,\lambda)\right]^{\gamma}
\,d\mathcal{H}^{n-1}(\xi)
\right]^{\frac{1}{q}}{\bf 1}_{B(\textbf{0},(k+1)R)} \right\|_X\\
&\quad \le(1+\eta)
(-\gamma)^{-\frac{1}{q}}\left\|\left[ \int_{\mathbb{S}^{n-1}}
\left|\sum_{\alpha\in\mathbb{Z}_{+}^n,\,|\alpha|=k}\partial^{\alpha}
f(\cdot)\xi^{\alpha}\right|^q\,d\mathcal{H}^{n-1}(\xi)
\right]^{\frac{1}{q}}{\bf 1}_{B(\textbf{0},(k+1)R)}\right\|_{X}\\
&\qquad+C_{(\eta)}
(-\gamma)^{-\frac{1}{q}}B_{(k,f)}\varepsilon
\left[ \mathcal{H}^{n-1}
\left(\mathbb{S}^{n-1}\right)\right]^{\frac{1}{q}}\left\|{\bf
1}_{B(\textbf{0},(k+1)R)}\right\|_{X}\\
&\qquad+
(-\gamma)^{-\frac{1}{q}}
\lambda\varepsilon^{\frac\gamma q}\left[ \mathcal{H}^{n-1}
\left(\mathbb{S}^{n-1}\right)\right]^{\frac{1}{q}}
\left\|{\bf 1}_{B({\bf 0},(k+1)R)}\right\|_X.
\end{align*}
Letting $\lambda\to0^{+}$, $\varepsilon\to 0^{+}$, and then
$\eta\to0^+$
and applying Definition \ref{def-X}(iv), we obtain
\begin{align*}
&\limsup_{\lambda\to 0^+}\lambda \left\|\left[
\int_{{\mathbb{R}^n}}{\bf
1}_{E_{\lambda,\frac{\gamma}{q},k}[f]}(\cdot,h)
|h|^{\gamma-n}\, dh\right]^{\frac{1}{q}}{\bf
1}_{B(\textbf{0},(k+1)R)}\right\|_X\\
&\quad \le
|\gamma|^{-\frac{1}{q}}\left\|\left[ \int_{\mathbb{S}^{n-1}}
\left|\sum_{\alpha\in\mathbb{Z}_{+}^n,\,|\alpha|=k}\partial^{\alpha}f(\cdot)\xi^{\alpha}\right|^q\,d\mathcal{H}^{n-1}(\xi)
\right]^{\frac{1}{q}}\right\|_X.
\end{align*}
To show (i), it remains to prove that
\begin{equation}\label{eq-upj}
\limsup_{\lambda\to 0^+}\lambda \left\|\left[
\int_{{\mathbb{R}^n}}{\bf
1}_{E_{\lambda,\frac{\gamma}{q},k}[f]}(\cdot,h)
|h|^{\gamma-n}\, dh\right]^{\frac{1}{q}}{\bf
1}_{B(\textbf{0},(k+1)R)^{\complement}}\right\|_X = 0.
\end{equation}
Notice that, for any given $x\in B({\bf 0},(k+1)R)^{\complement}$
and $h\in \mathbb{R}^n\setminus \{{\bf 0}\}$, if $x+ih\in B({\bf
0},R)^{\complement}$ for any $i\in \mathbb{N}\cap [1,k]$,
then $\Delta^k_h f(x)=0$. Thus,
\begin{align*}
&E_{\lambda,\frac{\gamma}{q},k}[f]
\cap \left[B\left({\bf 0},(k+1)R\right)^\complement \times
\mathbb{R}^n\right]\\
&\quad\subset
\bigcup_{i=1}^{k}
\left\{(x,h)\in \left[B\left({\bf 0},(k+1)R\right)^\complement \times
\mathbb{R}^n\right]:\ x+ih \in B({\bf 0},R)\right\}
=: \bigcup_{i=1}^{k} D_i.\notag
\end{align*}
Also observe that, for any $i \in \mathbb{N}$ and
$(x,h)\in D_i$,
\begin{align}\label{e-ih}
i|h|\ge |x|-|x+ih|>\frac{k}{k+1}|x|.
\end{align}
Therefore, from these and a change of variables,
we deduce that
\begin{align}\label{e-kBR}
&\left\|\left[
\int_{{\mathbb{R}^n}}{\bf
1}_{E_{\lambda,\frac{\gamma}{q},k}[f]}(\cdot,h)
|h|^{\gamma-n}\, dh\right]^{\frac{1}{q}}{\bf
1}_{B(\textbf{0},(k+1)R)^{\complement}}\right\|_X\notag\\
&\quad \le
\sum_{i=1}^k \left\|\left[
\int_{{\mathbb{R}^n}}{\bf 1}_{D_i}(\cdot,h)
|h|^{\gamma-n}\, dh\right]^{\frac{1}{q}}{\bf
1}_{B(\textbf{0},(k+1)R)^{\complement}}\right\|_X\notag\\
&\quad \le
\sum_{i=1}^k \left\|\left[
\int_{{\mathbb{R}^n}}{\bf 1}_{D_i}(\cdot,h)
\,
dh\right]^{\frac{1}{q}}\left[\frac{k}{i(k+1)}|\cdot|\right]^{\frac{\gamma-n}{q}}{\bf
1}_{B(\textbf{0},(k+1)R)^{\complement}}\right\|_X\notag\\
&\quad =
\sum_{i=1}^k \left\|\left[i^{-n}
\int_{B({\bf 0},R)}
\,
dy\right]^{\frac{1}{q}}\left[\frac{k}{i(k+1)}|\cdot|\right]^{\frac{\gamma-n}{q}}{\bf
1}_{B(\textbf{0},(k+1)R)^{\complement}}\right\|_X\notag\\
&\quad \lesssim
\left\||\cdot|^{\frac{\gamma-n}{q}}{\bf
1}_{B(\textbf{0},(k+1)R)^{\complement}}\right\|_X.
\end{align}
By a) of the present proposition and
Lemmas \ref{lem-rxp}, \ref{lem-apwight}(ii),
and \ref{lem-rxgmm}, we find that
\begin{align}\label{e-BX}
& \left\||\cdot|^{\frac{\gamma-n}{q}}{\bf
1}_{(B(\textbf{0},(k+1)R))^{\complement}}\right\|_X\notag\\
&\quad \le
\sup_{\|g\|_{(X^{\frac{1}{p}})'}\le 1}
\left[\int_{(B(\textbf{0},(k+1)R))^{\complement}}|x|^
{\frac{p(\gamma-n)}{q}}R_{(X^{\frac{1}{p}})'}g(x)\,dx\right]^{\frac{1}{p}}\notag\\
&\quad =
\sup_{\|g\|_{(X^{\frac{1}{p}})'}\le 1}
\left[\sum_{j=1}^{\infty}\int_{ B(\textbf{0},2^j(k+1)R) \setminus
B(\textbf{0},2^{j-1}(k+1)R)
}|x|^{\frac{p(\gamma-n)}{q}}R_{(X^{\frac{1}{p}})'}g(x)\,dx\right]^{\frac{1}{p}}\notag\\
&\quad \lesssim
\sup_{\|g\|_{(X^{\frac{1}{p}})'}\le 1}
\left[\sum_{j=1}^{\infty}\left(2^{j-1}R\right)^{\frac{p(\gamma-n)}{q}}\int_{
B(\textbf{0},2^j(k+1)R) }
R_{(X^{\frac{1}{p}})'}g(x)\,dx\right]^{\frac{1}{p}}\notag\\
&\quad \le
\sup_{\|g\|_{(X^{\frac{1}{p}})'}\le 1}
\left[\sum_{j=1}^{\infty}\left(2^{j-1}R\right)^{\frac{p(\gamma-n)}{q}}2^{jn}{[R_{(X^{\frac{1}{p}})'}g]_{A_1
}}
\int_{ B(\textbf{0},(k+1)R) }
R_{(X^{\frac{1}{p}})'}g(x)\,dx\right]^{\frac{1}{p}}\notag\\
&\quad \lesssim
\left\|\mathcal{M}\right\|^{\frac{1}{p}}_{(X^{\frac{1}{p}})' \to
(X^{\frac{1}{p}})'}R^{\frac{\gamma-n}{q}}
\left(\sum_{j=1}^{\infty}2^{j[\frac{p(\gamma-n)}{q}+n]}\right)^{\frac{1}{p}}\left\|{\bf
1}_{B(\textbf{0},(k+1)R)}\right\|_X.
\end{align}
Using the assumption $n(\frac{1}{p}-\frac{1}{q})<k$, we obtain, when
$\gamma\in (-\infty,-kq]$,
\begin{align*}	\frac{p(\gamma-n)}{q}+n=\frac{p\gamma}{q}
+n\left(1-\frac{p}{q}\right)<p\left(\frac{\gamma}{q}+k\right)\le 0,
\end{align*}
which, combined with \eqref{e-kBR} and \eqref{e-BX}, further implies
that \eqref{eq-upj} holds for $\gamma\in (-\infty,-kq]$.

Next, we only need to prove \eqref{eq-upj} holds in the case where
$n\in\mathbb{N}\cap [2,\infty)$ and $\gamma\in (-kq,0)$.
In this case, observe that the set $B(\textbf{0},R)^\complement$ is
connected. Thus, we rewrite $f = g_{f} + P_f$, where
$g_{f}\in C^k_{\rm c}$
with ${\mathop\mathrm{\,supp\,}}(g_f) \subset B(\textbf{0},R)$ and
where $P_f \in \mathcal{P}_{k-1}$.
Clearly,
for any $x,h\in \mathbb{R}^n$,
\begin{align*}
\left|\Delta_h^k f(x)\right|
\le 2^k\left\|g_{f}\right\|_{L^\infty}=:C_{(f)}.
\end{align*}
From this and \eqref{e-EE}, it follows that,
for any $\lambda\in (0,\infty)$ and $(x,h)\in
E_{\lambda,\frac{\gamma}{q},k}[f]$,
we have $|h|\in(0,[\lambda^{-1}
C_{(f)}]^{\frac{1}{k+\frac{\gamma}{q}}}).$
Applying this and \eqref{e-ih},
we conclude that, for any $\lambda\in (0,\infty)$ and $(x,h)\in
E_{\lambda,\frac{\gamma}{q},k}[f]
\cap[B\left({\bf 0},(k+1)R\right)^\complement \times \mathbb{R}^n]$,
$$|x|<(k+1)|h|<(k+1) \left[\lambda^{-1}
C_{(f)}\right]^{\frac{1}{k+\frac{\gamma}{q}}}$$
and hence
\begin{align*}
E_{\lambda,\frac{\gamma}{q},k}[f]
\cap \left[B\left({\bf 0},(k+1)R\right)^\complement \times
\mathbb{R}^n\right]
\subset
E_{\lambda,\frac{\gamma}{q},k}[f]
\cap \left\{A_{(\lambda)}\times \mathbb{R}^n\right\},
\end{align*}
where
\begin{align*}
A_{(\lambda)}:=B\left({\bf 0}, (k+1) \left[\lambda^{-1}
C_{(f)}\right]^{\frac{1}{k+\frac{\gamma}{q}}}\right)\setminus
B\left({\bf 0},(k+1)R\right).
\end{align*}
For any $\lambda \in (0,\infty)$, let
$$J_{\lambda}: = \left\lceil\log_2\left[R^{-1}\left\{\lambda^{-1}
C_{(f)} \right\}^{\frac{1}{k+\frac{\gamma}{q}}}
\right]\right\rceil.$$
Repeating the proofs of \eqref{e-kBR} and \eqref{e-BX} with
$B({\bf 0},(k+1)R)^\complement$ replaced by
$A_{(\lambda)}$,
we find that, for any $\lambda\in (0,\infty)$,
\begin{align*}
\lambda\left\|\left[
\int_{{\mathbb{R}^n}}{\bf
1}_{E_{\lambda,\frac{\gamma}{q},k}[f]}(\cdot,h)
|h|^{\gamma-n}\, dh\right]^{\frac{1}{q}}{\bf
1}_{A_{(\lambda)}}\right\|_X
\lesssim \lambda
\left\{\sum_{j=0}^{J_\lambda}2^{j[\frac{p(\gamma-n)}{q}+n]}\right\}^{\frac{1}{p}}
\left\|{\bf 1}_{B(\textbf{0},(k+1)R)}\right\|_{X}.
\end{align*}
Thus, to obtain \eqref{eq-upj}, we only need to show that
\begin{align}\label{eq-uj}
\limsup_{\lambda\to 0^+}\lambda^p\sum_{j=0}^{J_\lambda}
2^{j[\frac{p(\gamma-n)}{q}+n]} = 0.
\end{align}
Indeed, from the assumptions $\gamma\in (- kq,0)$ and
$n(\frac{1}{p}-\frac{1}{q})<k$,
we deduce that
\begin{align*}
\limsup_{\lambda\to 0^+}\lambda^p\sum_{j=0}^{J_\lambda}
2^{j[\frac{p(\gamma-n)}{q}+n]}
&\lesssim
\limsup_{\lambda\to 0^+}
\left\{\lambda^p+
\lambda^{\frac{p(k+\frac{\gamma}{q})-[\frac{p(\gamma-n)}{q}+n]}{k+\frac{\gamma}{q}}}\right\}\\
&= 	\limsup_{\lambda\to 0^+}
\left\{\lambda^p+\lambda^{\frac{p[k-n(\frac{1}{p}-\frac{1}{q})]}{k+\frac{\gamma}{q}}}\right\}
= 0.
\end{align*}
Therefore, \eqref{eq-uj} holds. This proves \eqref{eq-upj}
and hence (i).

Finally, we show (ii).
Repeating the proof of Case 2) of (i), we easily obtain that
(ii) holds,
which then completes the proof of Proposition \ref{pro-limsup}.
\end{proof}

Finally, we give the following auxiliary conclusion.

\begin{lemma}\label{pro-lim}
Let $X$ be a {\rm BQBF} space, $k\in{\mathbb{N}}$,
$q\in (0,\infty)$, and $\gamma\in\mathbb{R}\setminus\{0\}$.
Then there exist two positive
constants $\kappa_{(n,k,q)}$ and
$C_{(n,k,q)}$, depending only on $n$, $k$,
and $q$, such that, for any
$f\in \dot{W}^{k,1}_{\rm loc}$,
\begin{align}\label{eq-kappa}
\kappa_{(n,k,q)}\left\|\nabla^k f \right\|_X
\leq
\left\|\left[ \int_{\mathbb{S}^{n-1}}
\left|\sum_{\alpha\in\mathbb{Z}_{+}^n,\,|\alpha|=k}\partial^{\alpha}f(\cdot)\xi^{\alpha}\right|^q\,d\mathcal{H}^{n-1}(\xi)
\right]^{\frac{1}{q}}\right\|_X\le C_{(n,k,q)}\left\|\nabla^k f
\right\|_X.
\end{align}
\end{lemma}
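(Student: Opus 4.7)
The plan is to reduce the lemma to a pointwise equivalence, which in turn is a standard fact about finite-dimensional normed spaces, and then to extract the claimed $X$-quasi-norm inequalities from this pointwise equivalence by the lattice property (Definition \ref{def-X}(ii)) of a BQBF space.

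First, I would establish the pointwise equivalence: for almost every $x\in\mathbb{R}^n$,
\begin{align*}
\left[\int_{\mathbb{S}^{n-1}}\left|\sum_{\alpha\in\mathbb{Z}_+^n,\,|\alpha|=k}\partial^\alpha f(x)\xi^\alpha\right|^q d\mathcal{H}^{n-1}(\xi)\right]^{1/q}\sim \left|\nabla^k f(x)\right|,
\end{align*}
with positive equivalence constants depending only on $n$, $k$, and $q$. This is a purely finite-dimensional statement. Let $\mathcal{H}_k$ denote the (finite-dimensional) space of homogeneous polynomials of degree $k$ in $n$ variables and let $N:=\#\{\alpha\in\mathbb{Z}_+^n:\ |\alpha|=k\}$. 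The linear map $T:\mathbb{C}^N\to\mathcal{H}_k$ sending $(a_\alpha)_{|\alpha|=k}$ to $\sum_{|\alpha|=k}a_\alpha\xi^\alpha$ is an isomorphism, since monomials are linearly independent. Define two positively homogeneous, non-degenerate functionals on $\mathcal{H}_k$:
\begin{align*}
N_1(P):=\left[\sum_{|\alpha|=k}|a_\alpha|^2\right]^{1/2},\qquad N_2(P):=\left[\int_{\mathbb{S}^{n-1}}|P(\xi)|^q\,d\mathcal{H}^{n-1}(\xi)\right]^{1/q},
\end{align*}
where $a_\alpha$ are the coefficients of $P$. Non-degeneracy of $N_2$ follows from the continuity and homogeneity of polynomials: if $P$ vanishes $\mathcal{H}^{n-1}$-almost everywhere on $\mathbb{S}^{n-1}$, then $P\equiv 0$ on $\mathbb{R}^n$. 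Since $N_1$ and $N_2$ are both continuous, positively homogeneous, and non-degenerate on the finite-dimensional space $\mathcal{H}_k$, compactness of the $N_1$-unit sphere forces $N_1\sim N_2$ with constants depending only on $n$, $k$, and $q$. Applying this to $P:=\sum_{|\alpha|=k}\partial^\alpha f(x)\xi^\alpha$ and recalling \eqref{eq-Nk}, which gives $N_1(P)=|\nabla^k f(x)|$, yields the claimed pointwise equivalence.

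Once the pointwise equivalence is in hand, the lemma is immediate: applying $\|\cdot\|_X$ to both sides and using Definition \ref{def-X}(ii) (the monotonicity of the BQBF quasi-norm) transfers the two-sided bound to \eqref{eq-kappa}, with $\kappa_{(n,k,q)}$ and $C_{(n,k,q)}$ being the pointwise equivalence constants. The only mild subtlety is the case $q\in(0,1)$, where $N_2$ is merely a quasi-norm; however, the compactness argument on the $N_1$-unit sphere is unaffected because $N_2$ remains a continuous, strictly positive function there (uniform boundedness of $P$ on $\mathbb{S}^{n-1}$ in terms of $N_1(P)$ gives continuity of $N_2$ via dominated convergence), so no genuine obstacle arises. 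In short, there is no hard step in this lemma: the whole content is the finite-dimensional equivalence of $N_1$ and $N_2$, together with the lattice property of $X$.
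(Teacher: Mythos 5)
Your proof is correct and follows essentially the same route as the paper: both reduce the lemma to a pointwise two-sided bound between $|\nabla^k f(x)|$ and the spherical $L^q$-quasi-norm of $\xi\mapsto\sum_{|\alpha|=k}\partial^\alpha f(x)\xi^\alpha$, and then invoke Definition \ref{def-X}(ii). The only difference is that the paper obtains the pointwise lower bound by citing \cite[Remark 5.1]{FKR15} and the upper bound by Cauchy--Schwarz on $\mathbb{S}^{n-1}$, whereas you derive both directions at once, self-containedly, from the equivalence of the two continuous, positively homogeneous, non-degenerate functionals on the finite-dimensional space of homogeneous degree-$k$ polynomials; your handling of the case $q\in(0,1)$ is also correct.
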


\begin{proof}
Let $f\in \dot{W}^{k,1}_{\rm loc}$.
From \cite[Remark 5.1]{FKR15}, we infer that there exists a
positive constant $\kappa_{(n,k,q)}\in (0,1]$,
depending only on $n$, $k$,
and $q$, such that, for any $x\in \mathbb{R}^n$,
\begin{align*}
\kappa_{(n,k,q)}\left|\nabla^k f(x)\right|
\leq
\left[ \int_{\mathbb{S}^{n-1}}
\left|\sum_{\alpha\in\mathbb{Z}_{+}^n,\,|\alpha|=k}\partial^{\alpha}f(x)\xi^{\alpha}\right|^q\,d\mathcal{H}^{n-1}(\xi)
\right]^{\frac{1}{q}}.
\end{align*}
This further implies that the first inequality in \eqref{eq-kappa}
holds.
On the other hand,
applying the compactness of
$\mathbb{S}^{n-1}$,
we find that,
for any $\xi\in\mathbb{S}^{n-1} $ and $\alpha\in\mathbb{Z}_{+}^n$ with
$|\alpha|=k$, $|\xi^{\alpha}|^2 \lesssim 1$. Thus, from this and the
H\"{o}lder inequality, it follows that
\begin{align*}
&\left\|\left[ \int_{\mathbb{S}^{n-1}}
\left|\sum_{\alpha\in\mathbb{Z}_{+}^n,\,|\alpha|=k}\partial^{\alpha}f(\cdot)\xi^{\alpha}\right|^q\,d\mathcal{H}^{n-1}(\xi)
\right]^{\frac{1}{q}}\right\|_X\\
&\quad \le
\left\|\,\left|\nabla^k f(\cdot) \right|\left[ \int_{\mathbb{S}^{n-1}}
\left\{\sum_{\alpha\in\mathbb{Z}_{+}^n,\,|\alpha|=k}\left|\xi^{\alpha}\right|^2\right\}^{\frac{q}{2}}\,d\mathcal{H}^{n-1}(\xi)
\right]^{\frac{1}{q}}\right\|_X
\lesssim
\left\|\nabla^k f \right\|_X,
\end{align*}
which further implies \eqref{eq-kappa} holds. This finishes the proof of
Lemma \ref{pro-lim}.
\end{proof}

We now turn to prove Theorem \ref{thm-main}.

\begin{proof}[Proof of Theorem \ref{thm-main}]
We only show (I) of the present theorem
because (II) can be obtained by repeating
the proof of \cite[Theorem 4.10]{dlyyz-bvy} with
$E_f (\lambda ,q)$, $|\nabla f|$, and Theorem 4.5 therein
replaced, respectively, by $E_{\lambda,\frac{\gamma}{q},k}[f]$,
$|\nabla^k f|$, and (I) here.
Let $\upsilon\in A_1$.
We first claim that, for
any $f\in \dot{W}^{k,p}_{\upsilon}$,
\eqref{eq-main-01} with $X:=L^{p}_{\upsilon}$ holds.
Indeed, from the proof of \cite[Theorem 5.15]{dlyyz-bvy}
and Lemma \ref{lem-apwight}(i),
we infer that $X:=L^{p}_{\upsilon}$ has an absolutely continuous
norm and
${[L^{p}_{\upsilon}]}^{\frac{1}{p}}$
is a {\rm BBF} space
and $\mathcal{M}$ is bounded on
$({[L^{p}_{\upsilon}]}^{\frac{1}{p}})'$.
By this, Proposition \ref{pro-linf}(i),
and Lemma \ref{pro-lim}, we find that \eqref{eq-main-01} with
$X:=L^{p}_{\upsilon}$ holds for any
$f\in C^{\infty}$ with $|\nabla^k f|\in C_{\rm
c}$, which, together with Theorem \ref{thm-e} and a
density argument used in
the proof of \cite[(3.55)]{ZYY23},
further implies the lower estimate of \eqref{eq-main-01} with
$X:=L^{p}_{\upsilon}$.
From this and Proposition \ref{pro-up-g} with $\ell:=k$,
we deduce that the above claim holds.

Next, we show (i).
Fix $f\in \dot{W}^{k,X}$ and
let $Y:=(X^{\frac{1}{p}})'$. By the assumptions of the present
theorem that
$X^{\frac{1}{p}}$ is a {\rm BBF} space
and $\mathcal{M}$ is bounded on $(X^{\frac{1}{p}})'$
 and by Lemmas
\ref{lem-rxgmm} and \ref{lem-rxp},
we find that, for any $g\in Y$ with
$\|g\|_{Y} =1$, $R_{Y}g\in A_1$ and
\begin{align*}
\sup_{\|g\|_{Y} =1}
\left[\int_{\mathbb{R}^n}\left|\nabla^k f(x) \right|^p
R_{Y}g(x)\,dx \right]^{\frac{1}{p}}
\sim
\left\|\nabla^k f\right\|_{X},
\end{align*}
which implies $f\in \dot{W}^{k,p}_{R_{Y}g}$.
From this, the above claim, and Lemma \ref{lem-rxp} again, it follows
that
\begin{align*}
\left\|\nabla^k f\right\|_{X}
&\sim
\sup_{\|g\|_Y = 1}
\sup_{\lambda\in (0,\infty)}\lambda
\left\|\left[
\int_{{\mathbb{R}^n}}{\bf
1}_{E_{\lambda,\frac{\gamma}{q},k}[f]}(\cdot,h)
|h|^{\gamma-n}\,
dh\right]^{\frac{1}{q}}\right\|_{L^p_{R_Yg}}\\
&\sim
\sup_{\lambda\in (0,\infty)}\lambda
\left\|\left[
\int_{{\mathbb{R}^n}}{\bf
1}_{E_{\lambda,\frac{\gamma}{q},k}[f]}(\cdot,h)
|h|^{\gamma-n}\, dh\right]^{\frac{1}{q}}\right\|_{X}.
\end{align*}
Thus, we obtain (i).

Finally, we prove (ii).
To this end,
we assume that $X$ has an absolutely continuous norm.
From this, Lemma \ref{rem-weak}, and
the assumptions that $X^{\frac{1}{p}}$ is a {\rm BBF} space
and $\mathcal{M}$ is bounded on $(X^{\frac{1}{p}})'$,
and Theorem \ref{thm-e}(i), we find that
there exists a sequence $\{f_m\}_{m\in {\mathbb{N}}}\subset
C^{\infty}$
with $|\nabla^k f_m|\in C_{\rm c}$
such that \eqref{eq-extension} holds.
Using this and a slight modification of the proof of
\cite[(4.12)]{dlyyz-bvy}, we easily obtain the lower estimate of
\eqref{eq-main-02}.
Therefore, it remains to show the upper estimate of
\eqref{eq-main-02}. To do this, we consider the following two cases on
$p$ and $n$.

\emph{Case 1)} $p=n=1$.
In this case, by the assumptions $\gamma\in (-\infty,-q)$ and
$n(\frac{1}{p}-\frac{1}{q})<k$, Proposition \ref{pro-limsup} for
$f_m$, Lemma \ref{lem-thet},
and the upper estimate of \eqref{eq-main-01},
we find that, for any $m\in\mathbb{N}$
and $\delta,\eta\in(0,1)$, there exists $C_{(\eta)}\in(0,\infty)$ such
that
\begin{align*}
&\limsup_{\lambda\to L_{\gamma}}
\lambda\left\|\left[\int_{\mathbb{R}^n}
\mathbf{1}_{E_{\lambda,\frac{\gamma}{q},k}[f]}(\cdot,h)
\left|h\right|^{\gamma-n}\,dh\right]^\frac{1}{q}\right\|_X\\
&\quad\leq (1+\eta)\limsup_{\lambda\to L_{\gamma}}
\lambda\left\|\left[\int_{\mathbb{R}^n}
\mathbf{1}_{E_{\delta\lambda,\frac{\gamma}{q},k}[f_m]}(\cdot,h)
\left|h\right|^{\gamma-n}\,dh\right]^\frac{1}{q}\right\|_X\\
&\qquad+
C_{(\eta)}\limsup_{\lambda\to L_{\gamma}}
\lambda\left\|\left[\int_{\mathbb{R}^n}
\mathbf{1}_{E_{(1-\delta)\lambda,\frac{\gamma}{q},k}[f-f_m]}(\cdot,h)
\left|h\right|^{\gamma-n}\,dh\right]^\frac{1}{q}\right\|_X
\\
&\quad\leq
(1+\eta)\delta^{-1}
|\gamma|^{-\frac{1}{q}}
\left\|\left[ \int_{\mathbb{S}^{n-1}}
\left|\sum_{\alpha\in\mathbb{Z}_{+}^n,\,|\alpha|=k}\partial^{\alpha}f_m
(\cdot)\xi^{\alpha}\right|^q\,d\mathcal{H}^{n-1}(\xi)
\right]^{\frac{1}{q}}\right\|_X\\
&\quad\quad
+
C(1-\delta)^{-1}C_{(\eta)}
\left\|\nabla^k(f-f_m)\right\|_{X}
,
\end{align*}
where $C$ is the implicit positive constant in the upper estimate of
\eqref{eq-main-01}.
This, combined with \eqref{eq-kappa} and \eqref{eq-extension},
via first letting $m\to\infty$ and then
letting $\eta\to0$ and $\delta\to1$,
further implies the upper estimate of \eqref{eq-main-02} in this case.

\emph{Case 2)} $p\in (1,\infty)$ or $n\in \mathbb{N}\cap [2,\infty)$.
In this case,
from the assumptions that $X^{\frac{1}{p}}$ is a {\rm BBF} space
and $\mathcal{M}$ is bounded on $(X^{\frac{1}{p}})'$,
Lemma \ref{rem-weak}, the assumption the
$X$ has an absolutely continuous norm, and Theorem
\ref{thm-e}(ii), we
infer that there exists
a sequence $\{g_m\}_{m\in\mathbb{N}}\subset C^{\infty}_{\rm
c}$
such that
\begin{align*}
\lim_{m\to \infty}\left\|\nabla^k(f-g_m)\right\|_X=0.
\end{align*}
Using this and repeating the proof of Case 1) with $f_m$ replaced by
$g_m$, we find that
the upper estimate of \eqref{eq-main-02} holds in this case.
This finishes the proof of (ii) and hence Theorem \ref{thm-main}.
\end{proof}

\subsection{Proof of Theorem \ref{t-chaWkX}}\label{ss-cha}

To show Theorem \ref{t-chaWkX}, we need
an interpolation argument.
To state this, we first introduce a new function space.
For any $p,q\in [1,\infty]$, any $\gamma\in \mathbb{R}$, and
any nonnegative
locally integrable function $\upsilon$, the space
$T^{p,q}_{\gamma,\upsilon}$ is defined to
be the set of all $f\in \mathscr{M}(\mathbb{R}^{n}\times
\mathbb{R}^{n})$
such that
\begin{align*}
\|f\|_{T^{p,q}_{\gamma,\upsilon}(\mathbb{R}^{n}\times \mathbb{R}^{n})}
:=
\left\{\int_{\mathbb{R}^n}\left[\int_{\mathbb{R}^n}
\left|f(x,h)\right|^q|h|^{\gamma-n}\,dh\right]^{\frac{p}{q}}\upsilon(x)\,dx\right\}^{\frac{1}{p}},
\end{align*}
with the usual modifications made when $p=\infty$ or $q=\infty$, is finite.
Then we have the following Marcinkiewicz-type interpolation theorem.
Since its proof is a slight
modification of the proof of \cite[Theorem 7.24]{zwyy2021} with the
mixed-norm Lebesgue space $L^{\vec{p}}$ (see Subsection
\ref{ssec-mxlp}
for its precise definition) replaced by
$T^{p,q}_{\gamma,\upsilon}(\mathbb{R}^{n}\times \mathbb{R}^{n})$, we
omit the details.

\begin{lemma}\label{lem-sw-ww}
Let $p,q\in (1,\infty)$, $\gamma\in \mathbb{R}$, $\upsilon$
be a nonnegative locally integrable function,
$r_1\in (\frac{1}{\min\{p,q\}},1)$, and $r_2\in (1,\infty)$.
Assume that $A$ is a sublinear operator defined on
$T^{r_1 p,r_1 q}_{\gamma,\upsilon}(\mathbb{R}^{n}\times
\mathbb{R}^{n})+T^{r_2 p,r_2 q}_{\gamma,\upsilon}(\mathbb{R}^{n}\times
\mathbb{R}^{n})$
satisfying that there exist positive constants $C_1$ and $C_2$ such
that, for any $i\in \{1,2\}$
and $f\in T^{r_i p,r_i q}_{\gamma,\upsilon}(\mathbb{R}^{n}\times
\mathbb{R}^{n})$,
\begin{align*}
\sup_{\lambda\in (0,\infty)}
\lambda\left\|{\bf 1}_{\{(x,h)\in \mathbb{R}^n\times\mathbb{R}^n:\
|A(f(x,h))|>\lambda\}}\right\|_{T^{p,q}_{\gamma,\upsilon}(\mathbb{R}^{n}\times\mathbb{R}^n)}
\le
C_i \left\|f\right\|_{T^{p,q}_{\gamma,\upsilon}
(\mathbb{R}^{n}\times \mathbb{R}^{n})}.
\end{align*}
Then there exists a positive constant $C$ such that, for any
$f\in \mathscr{M}(\mathbb{R}^{n}\times \mathbb{R}^{n})$,
\begin{align*}
&	\sup_{\lambda\in (0,\infty)}
\lambda\left\|{\bf 1}_{\{(x,h)\in \mathbb{R}^n\times\mathbb{R}^n:\
|A(f(x,h))|>\lambda\}}\right\|_{T^{p,q}_{\gamma,\upsilon}(\mathbb{R}^{n}\times\mathbb{R}^n)}\\
&\quad	\le
C\sup_{\lambda\in (0,\infty)}
\lambda\left\|{\bf 1}_{\{(x,h)\in \mathbb{R}^n\times\mathbb{R}^n:\
|f(x,h)|>\lambda\}}\right\|_{T^{p,q}_{\gamma,\upsilon}(\mathbb{R}^{n}\times\mathbb{R}^n)}.\notag
\end{align*}
\end{lemma}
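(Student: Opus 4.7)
The plan is to implement a Marcinkiewicz-style real interpolation argument, adapted to the weighted mixed quasi-norm $\|\cdot\|_{T^{p,q}_{\gamma,\upsilon}}$, closely following the scheme of \cite[Theorem 7.24]{zwyy2021} for mixed-norm Lebesgue spaces. For convenience let $[f]_{\ast}:=\sup_{\mu\in(0,\infty)}\mu\,\|\mathbf{1}_{\{|f|>\mu\}}\|_{T^{p,q}_{\gamma,\upsilon}}$ denote the quantity on the right-hand side of the desired inequality. I read the hypothesis in its natural Marcinkiewicz form, namely with $\|f\|_{T^{r_ip,r_iq}_{\gamma,\upsilon}}$ in place of $\|f\|_{T^{p,q}_{\gamma,\upsilon}}$ on the right.

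For each auxiliary truncation level $M\in(0,\infty)$, I would split $f=f^{0}_{M}+f^{\infty}_{M}$, with $f^{0}_{M}:=f\mathbf{1}_{\{|f|\le M\}}$ to be paired with $r_{2}>1$ and $f^{\infty}_{M}:=f-f^{0}_{M}$ with $r_{1}<1$; the constraint $r_{1}>1/\min\{p,q\}$ ensures that $r_{1}p,r_{1}q>1$, so both $T^{r_{i}p,r_{i}q}_{\gamma,\upsilon}$ are genuine Banach spaces. For each $\lambda>0$, the sublinearity of $A$ gives $\{|Af|>\lambda\}\subset\{|Af^{0}_{M}|>\lambda/2\}\cup\{|Af^{\infty}_{M}|>\lambda/2\}$, and combining this with the triangle inequality for indicators in $\|\cdot\|_{T^{p,q}_{\gamma,\upsilon}}$ (available because $p,q\in(1,\infty)$) and with the two weak-type hypotheses yields
\begin{align*}
\lambda\,\|\mathbf{1}_{\{|Af|>\lambda\}}\|_{T^{p,q}_{\gamma,\upsilon}}
\lesssim \|f^{0}_{M}\|_{T^{r_{2}p,r_{2}q}_{\gamma,\upsilon}}+\|f^{\infty}_{M}\|_{T^{r_{1}p,r_{1}q}_{\gamma,\upsilon}},
\end{align*}
with implicit constant depending only on $C_{1}$ and $C_{2}$. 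The crucial feature is that the right-hand side is \emph{independent} of $\lambda$.

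Next, I would control the two truncated norms via a layer-cake argument. Using the identity $\|g\|_{T^{rp,rq}_{\gamma,\upsilon}}^{r}=\||g|^{r}\|_{T^{p,q}_{\gamma,\upsilon}}$ (which exhibits $T^{rp,rq}_{\gamma,\upsilon}$ as the $r$-convexification of $T^{p,q}_{\gamma,\upsilon}$), representing $|f^{0}_{M}|^{r_{2}}$ and $|f^{\infty}_{M}|^{r_{1}}$ by the layer-cake formula, and applying Minkowski's integral inequality inside $\|\cdot\|_{T^{p,q}_{\gamma,\upsilon}}$ (valid for $p,q\in[1,\infty)$ by iterating the scalar Minkowski inequality in the inner $L^{q}(|h|^{\gamma-n}\,dh)$ and the outer $L^{p}(\upsilon\,dx)$) reduces the bounds to the one-dimensional integrals $\int_{0}^{M}t^{r_{2}-2}\,dt$ and $\int_{M}^{\infty}t^{r_{1}-2}\,dt$, both of which converge precisely because $r_{2}>1$ and $r_{1}<1$. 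This produces
\begin{align*}
\|f^{0}_{M}\|_{T^{r_{2}p,r_{2}q}_{\gamma,\upsilon}}\lesssim [f]_{\ast}^{1/r_{2}}\,M^{1-1/r_{2}},\qquad
\|f^{\infty}_{M}\|_{T^{r_{1}p,r_{1}q}_{\gamma,\upsilon}}\lesssim [f]_{\ast}^{1/r_{1}}\,M^{1-1/r_{1}}.
\end{align*}

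Optimizing the free parameter $M$ by the natural choice $M:=[f]_{\ast}$ makes both right-hand sides comparable to $[f]_{\ast}$, so $\lambda\,\|\mathbf{1}_{\{|Af|>\lambda\}}\|_{T^{p,q}_{\gamma,\upsilon}}\lesssim [f]_{\ast}$ uniformly in $\lambda>0$; taking the supremum over $\lambda$ completes the proof. The hard part in implementing this plan is the technical verification of the iterated Minkowski integral inequality in the weighted mixed quasi-norm and the bookkeeping of the exponent condition $r_{1}>1/\min\{p,q\}$, which is precisely what makes the layer-cake estimate available in $T^{r_{1}p,r_{1}q}_{\gamma,\upsilon}$. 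Apart from this, the scheme is a direct transcription of the mixed-norm Lebesgue argument, and \cite[Theorem 7.24]{zwyy2021} can essentially be followed line by line with the appropriate replacements.
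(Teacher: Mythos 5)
Your overall scheme --- truncate $f$ at a level $M$, apply the endpoint weak-type bounds to the two pieces, control the truncated strong norms by a layer-cake argument via the convexification identity $\|g\|_{T^{rp,rq}_{\gamma,\upsilon}}^{r}=\||g|^{r}\|_{T^{p,q}_{\gamma,\upsilon}}$ and Minkowski's integral inequality, then optimize in $M$ --- is exactly the intended one; the paper gives no details and simply invokes the mixed-norm Marcinkiewicz theorem \cite[Theorem 7.24]{zwyy2021}. You are also right that the hypothesis must be read with the endpoint norms (the statement contains a typo), and your layer-cake estimates for $\|f^{0}_{M}\|_{T^{r_2p,r_2q}_{\gamma,\upsilon}}$ and $\|f^{\infty}_{M}\|_{T^{r_1p,r_1q}_{\gamma,\upsilon}}$ are correct, with the exponent conditions $r_2>1$ and $r_1<1$ entering exactly as you say.

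There is, however, a genuine error in the step where you apply the weak-type hypotheses. The hypothesis actually verified in Lemma \ref{lem-ss} and used in Proposition \ref{p-young} is $\sup_{\mu}\mu\|\mathbf{1}_{\{|Ag|>\mu\}}\|_{T^{r_ip,r_iq}_{\gamma,\upsilon}}\le C_i\|g\|_{T^{r_ip,r_iq}_{\gamma,\upsilon}}$, with \emph{both} norms at the endpoint level; converting the left-hand side to the $T^{p,q}_{\gamma,\upsilon}$ norm via the indicator identity $\|\mathbf{1}_E\|_{T^{p,q}_{\gamma,\upsilon}}=\|\mathbf{1}_E\|_{T^{r_ip,r_iq}_{\gamma,\upsilon}}^{r_i}$ yields
\begin{align*}
\lambda\left\|\mathbf{1}_{\{|Af^{0}_{M}|>\lambda/2\}}\right\|_{T^{p,q}_{\gamma,\upsilon}}
\lesssim \lambda^{1-r_2}\left\|f^{0}_{M}\right\|_{T^{r_2p,r_2q}_{\gamma,\upsilon}}^{r_2},
\end{align*}
and similarly with $r_1$ for $f^{\infty}_{M}$. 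Your displayed inequality $\lambda\|\mathbf{1}_{\{|Af|>\lambda\}}\|_{T^{p,q}_{\gamma,\upsilon}}\lesssim\|f^{0}_{M}\|_{T^{r_2p,r_2q}_{\gamma,\upsilon}}+\|f^{\infty}_{M}\|_{T^{r_1p,r_1q}_{\gamma,\upsilon}}$ drops the powers $r_i$ and the factors $\lambda^{1-r_i}$ and is false even for $A=\mathrm{id}$: test it on $f=M\mathbf{1}_E$ with $\lambda=M/2$, where the two sides scale as $\|\mathbf{1}_E\|_{T^{p,q}_{\gamma,\upsilon}}$ and $\|\mathbf{1}_E\|_{T^{p,q}_{\gamma,\upsilon}}^{1/r_2}$ respectively. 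Consequently the ``crucial feature'' that the bound is independent of $\lambda$ fails, and the choice $M:=[f]_{*}$ does not close the argument: inserting the correct bounds gives $[f]_{*}[(M/\lambda)^{r_2-1}+(M/\lambda)^{r_1-1}]$, which blows up as $\lambda\to0^{+}$ (first term) or $\lambda\to\infty$ (second term) when $M$ is fixed. The standard remedy is the $\lambda$-dependent truncation $M:=\lambda$, for which both terms become comparable to $[f]_{*}$ uniformly in $\lambda$. With these two corrections your proof goes through.
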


Furthermore, we need the boundedness of
the convolution operator on $T^{p,q}_{\gamma,\upsilon}
(\mathbb{R}^{n}\times \mathbb{R}^{n})$.
To be precise,
for any $\varphi\in C_{\rm c}^{\infty}$,
$f\in \mathscr{M}(\mathbb{R}^{n}\times \mathbb{R}^{n})$,
and $x,h\in
\mathbb{R}^n$, let
\begin{align*}
\mathcal{A}_{\varphi}(f)(x,h):=\int_{\mathbb{R}^n}f(x-y,h)\varphi(y)\,dy.
\end{align*}
Clearly, for any given $\varphi\in C_{\rm c}^{\infty}$,
$\mathcal{A}_{\varphi}$ is
linear.
Moreover,
the \emph{partial
centered maximal operator} $\mathcal{M}_{\mathbb{R}^n}$ is
defined by setting, for any $x,h\in \mathbb{R}^n$,
\begin{align*}
\mathcal{M}_{\mathbb{R}^n}(f)(x,h)
:=
\sup_{R\in (0,\infty)}
\fint_{B(x,R)}|f(y,h)|\,dy.
\end{align*}
Now, we prove the following boundedness result.

\begin{lemma}\label{lem-ss}
Let $p\in (1,\infty)$, $q\in [1,\infty)$, $\gamma\in \mathbb{R}$, and
$\upsilon\in A_1$.
Then there exists a positive
constant $C$ such that, for any $f\in \mathscr{M}(\mathbb{R}^{n}\times
\mathbb{R}^{n})$ and any nonnegative function $\varphi\in C_{\rm
c}^{\infty}$ with
$\int_{\mathbb{R}^n}\varphi(x)\,dx=1$,
\begin{align}\label{e-young}
\left\|\mathcal{A}_{\varphi}(f)\right\|_{T^{p,q}_{\gamma,\upsilon}(\mathbb{R}^{n}\times
\mathbb{R}^{n})}
\le
C
\left\|f\right\|_{T^{p,q}_{\gamma,\upsilon}(\mathbb{R}^{n}\times
\mathbb{R}^{n})}.
\end{align}
\end{lemma}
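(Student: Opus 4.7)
The plan is to decouple the $x$-convolution in $\mathcal{A}_\varphi$ from the inner $L^q(|h|^{\gamma-n}\,dh)$-norm by applying Minkowski's integral inequality, thereby reducing the weighted estimate to a standard convolution bound on $L^p_\upsilon$ which is, in turn, handled by the Muckenhoupt theory. Since the only role the $h$-integration plays is through its $L^q$-norm, it will act essentially as an inert parameter once the outer convolution is unpacked.

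In detail, I would set
\begin{align*}
g(x):=\left[\int_{\mathbb{R}^n}|f(x,h)|^q|h|^{\gamma-n}\,dh\right]^{\frac{1}{q}},
\end{align*}
so that $\|g\|_{L^p_\upsilon}=\|f\|_{T^{p,q}_{\gamma,\upsilon}(\mathbb{R}^n\times\mathbb{R}^n)}$. Since $q\in[1,\infty)$, Minkowski's integral inequality applied in the $h$-variable with respect to $\varphi(y)\,dy$ yields, pointwise in $x$,
\begin{align*}
\left[\int_{\mathbb{R}^n}|\mathcal{A}_\varphi(f)(x,h)|^q|h|^{\gamma-n}\,dh\right]^{\frac{1}{q}}
&\le\int_{\mathbb{R}^n}\varphi(y)\left[\int_{\mathbb{R}^n}|f(x-y,h)|^q|h|^{\gamma-n}\,dh\right]^{\frac{1}{q}}dy\\
&=(\varphi\ast g)(x).
\end{align*}
Thus the claim reduces to $\|\varphi\ast g\|_{L^p_\upsilon}\lesssim\|g\|_{L^p_\upsilon}$. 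Since $\varphi\ge0$ lies in $C_{\rm c}^\infty$, there exist $R_\varphi,M_\varphi\in(0,\infty)$ with $\operatorname{supp}\varphi\subset B({\bf 0},R_\varphi)$ and $\|\varphi\|_{L^\infty}\le M_\varphi$, whence the pointwise bound $(\varphi\ast g)(x)\le M_\varphi|B({\bf 0},R_\varphi)|\mathcal{M}(g)(x)$ is immediate. Finally, since $p\in(1,\infty)$ and $\upsilon\in A_1\subset A_p$ by Lemma~\ref{lem-apwight}(iv), Lemma~\ref{lem-apwight}(vi) provides the boundedness of $\mathcal{M}$ on $L^p_\upsilon$, and chaining these estimates delivers \eqref{e-young}.

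The proof encounters no substantial obstacle, as all ingredients are classical once the Minkowski step strips away the $h$-integration. The only mildly delicate point is that the constant produced by the pointwise domination $\varphi\ast g\lesssim\mathcal{M}(g)$ depends on $\|\varphi\|_{L^\infty}$ and the radius of $\operatorname{supp}\varphi$; this dependence is harmless for the intended use (with a fixed mollifier family) inside the proof of Proposition~\ref{p-young}, which then feeds into the Marcinkiewicz-type interpolation in Lemma~\ref{lem-sw-ww}.
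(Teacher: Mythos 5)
Your proof is correct, and it takes a genuinely different and more elementary route than the paper's. You exploit the fact that $\mathcal{A}_\varphi$ acts only in the $x$-variable: Minkowski's integral inequality in $h$ (valid precisely because $q\ge1$) collapses the mixed-norm estimate to the scalar bound $\|\varphi\ast g\|_{L^p_\upsilon}\lesssim\|g\|_{L^p_\upsilon}$ with $g$ the inner $L^q(|h|^{\gamma-n}\,dh)$-norm of $f$, and this follows from $\varphi\ast g\lesssim_{\varphi}\mathcal{M}(g)$ together with the boundedness of $\mathcal{M}$ on $L^p_\upsilon$ for $\upsilon\in A_1\subset A_p$, $p>1$. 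The paper instead dominates $\mathcal{A}_\varphi(f)$ pointwise by the partial maximal function $\mathcal{M}_{\mathbb{R}^n}(f)$ and proves the stronger statement that this (nonlinear) operator is bounded on $T^{p,q}_{\gamma,\upsilon}$, which forces a case split: duality plus the Fefferman--Stein inequality $\int(\mathcal{M}_{\mathbb{R}^n}f)^qg\lesssim\int|f|^q\mathcal{M}g$ for $q\in(1,p]$, a trivial argument for $q=\infty$, and Riesz--Thorin interpolation for $q\in(p,\infty)$. Your argument buys simplicity and uniformity in $q$: in particular it covers $q=1$ seamlessly, whereas the paper's case analysis (which genuinely needs $q>1$ for the Fefferman--Stein step) leaves the endpoint $q=1$ of the stated range untreated. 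What it gives up is the maximal-function version of the estimate, but that is not needed for the lemma as stated nor for its use in Proposition~\ref{p-young}, where only the linear convolution bound enters. Your caveat about the constant depending on $\|\varphi\|_{L^\infty}$ and $\mathrm{supp\,}(\varphi)$ is well taken; note that the paper's own proof carries the analogous dependence (through the least radially decreasing majorant of $\varphi$ implicit in its appeal to \cite[Proposition 2.7]{D2001}), and both are harmless for the dilation-invariant family $\varphi_t$ used downstream.
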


\begin{proof}
Let $f\in \mathscr{M}(\mathbb{R}^{n}\times \mathbb{R}^{n})$ and
nonnegative function $\varphi\in C_{\rm c}^{\infty}$
with
$\int_{\mathbb{R}^n}\varphi(x)\,dx=1$.
From \cite[Proposition 2.7]{D2001}, we infer that, for any $x,h\in
\mathbb{R}^n$,
\begin{align}\label{e-am}
\left|
\mathcal{A}_{\varphi}(f)(x,h)
\right|
\le
\mathcal{M}_{\mathbb{R}^n}(f)(x,h).
\end{align}
Fix $p\in (1,\infty)$.
We consider the following three cases on $q$.

\emph{Case 1)} $q\in (1,p]$. In this case, let $r:=\frac{p}{q}\in
[1,\infty)$. Take $g\in L^{r'}_{\upsilon^{1-r'}}$
with $\|g\|_{L^{r'}_{\upsilon^{1-r'}}}=1$. By the
assumption $\upsilon\in A_1$
and Lemma \ref{lem-apwight}(v), we conclude that $\upsilon^{1-r'}\in
A_{r'}$,
which, together with Lemma \ref{lem-apwight}(vi), further implies that
\begin{align}\label{e-mg}
\|\mathcal{M}g\|_{L^{r'}_{\upsilon^{1-r'}}}
\lesssim
1.
\end{align}
Let
\begin{align*}
F(\cdot):
=\int_{\mathbb{R}^n}
\mathcal{M}_{\mathbb{R}^n}(f)(\cdot,h)^q|h|^{\gamma-n}\,dh.
\end{align*}
From this, the Tonelli Theorem, \cite[Theorem 2.16]{D2001}, the
assumption $q>1$, the H\"{o}lder inequality,
and \eqref{e-mg}, we deduce that
\begin{align*}
\left|\int_{\mathbb{R}^n}F(x)
g(x)\,dx\right|
&=
\int_{\mathbb{R}^n}\left(\int_{\mathbb{R}^n}
\mathcal{M}_{\mathbb{R}^n}(f)(x,h)^q
g(x)\,dx \right)|h|^{\gamma-n}\,dh\\
&\lesssim
\int_{\mathbb{R}^n}\left(\int_{\mathbb{R}^n}
\left|f(x,h)\right|^q
\mathcal{M}g(x)\,dx \right)|h|^{\gamma-n}\,dh\\
&=
\int_{\mathbb{R}^n}\left(\int_{\mathbb{R}^n}
\left|f(x,h)\right|^q
|h|^{\gamma-n}\,dh\,\upsilon(x)^{\frac{1}{r}}\right)
\left(\mathcal{M}g(x)\upsilon(x)^{-\frac{1}{r}}\right)
\,dx \\
&\le
\left\|f\right\|_{T^{p,q}_{\gamma,\upsilon}(\mathbb{R}^{n}\times
\mathbb{R}^{n})}^{q}
\left\|\mathcal{M}g\right\|_{L^{r'}_{\upsilon^{1-r'}}}
\lesssim
\left\|f\right\|_{T^{p,q}_{\gamma,\upsilon}(\mathbb{R}^{n}\times
\mathbb{R}^{n})}^{q}.
\end{align*}
Using this, \eqref{e-am}, and Lemma \ref{lem-apwight}(v), we find that
\begin{align*}
\left\|\mathcal{A}_{\varphi}(f)\right\|^q_{T^{p,q}_{\gamma,\upsilon}(\mathbb{R}^{n}\times
\mathbb{R}^{n})}
&\le
\left\|\mathcal{M}_{\mathbb{R}^n}(f)\right\|^q_{T^{p,q}_{\gamma,\upsilon}(\mathbb{R}^{n}\times
\mathbb{R}^{n})}
=
\left\|F\right\|_{L^r_{\upsilon}}\\
&=\sup\left\{\left|\int_{\mathbb{R}^n}F(x)g(x)\, dx\right|:\
\left\|g\right\|_{L^{r'}_{\upsilon^{1-r'}}}=1\right\}
\lesssim
\left\|f\right\|_{T^{p,q}_{\gamma,\upsilon}(\mathbb{R}^{n}\times
\mathbb{R}^{n})}^{q}.
\end{align*}
Thus, we obtain \eqref{e-young} in this case.

\emph{Case 2)} $q=\infty$. In this case,
by \eqref{e-am} and the boundedness
of $\mathcal{M}_{\mathbb{R}^n}$ again, we find that \eqref{e-young}
also holds in this case.

\emph{Case 3)} $q\in (p,\infty)$. In this case,
from the Riesz--Thorin interpolation
theorem
of mixed norm Lebesgue spaces (see \cite[Theorem 8.2]{bp1961}) and the
above two cases, we deduce that
\eqref{e-young} holds in this case.
This then finishes the proof of Lemma \ref{lem-ss}.
\end{proof}

Via Lemmas \ref{lem-sw-ww} and \ref{lem-ss},
we then obtain the following Young inequality
of convolutions in {\rm BBF} spaces, which plays
a key role in the proof of Theorem \ref{t-chaWkX}.

\begin{proposition}\label{p-young}
Let $X$ be a {\rm BBF} space.
Assume that there exists some $p\in (1,\infty)$
such that $X^{\frac{1}{p}}$ is a ball Banach function space
and the Hardy--Littlewood maximal operator
$\mathcal{M}$ is bounded on $(X^{\frac{1}{p}})'$.
Let $k\in \mathbb{N} $, $q\in (1,\infty)$, and $\gamma\in \mathbb{R}$.
Then there exists a positive constant $C$ such that, for
any $f\in X$ and
any nonnegative function
$\varphi\in C_{\rm c}^{\infty}$ with
$\int_{\mathbb{R}^n}\varphi(x)\,dx=1$,
\begin{align*}
&\sup_{\lambda\in (0,\infty)}\lambda\left\|\left[
\int_{{\mathbb{R}^n}}{\bf
1}_{E_{\lambda,\frac{\gamma}{q},k}[\varphi\ast f]}(\cdot,h)
|h|^{\gamma-n}\, dh\right]^{\frac{1}{q}}\right\|_X \\
&\quad\le
C
\sup_{\lambda\in (0,\infty)}\lambda\left\|\left[
\int_{{\mathbb{R}^n}}{\bf
1}_{E_{\lambda,\frac{\gamma}{q},k}[f]}(\cdot,h)
|h|^{\gamma-n}\, dh\right]^{\frac{1}{q}}\right\|_X.
\end{align*}
\end{proposition}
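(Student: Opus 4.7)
The plan is to reduce the statement to a weighted weak-type estimate for the convolution-like operator $\mathcal{A}_\varphi$ acting on the mixed-norm tent space $T^{p,q}_{\gamma,\upsilon}$, and then extract it by a Marcinkiewicz-type interpolation together with the extrapolation machinery of Lemmas \ref{lem-rxp} and \ref{lem-rxgmm}. The crucial algebraic observation is that the $k$th-order difference commutes with convolution: for any $x,h\in\mathbb{R}^n$,
\begin{align*}
\Delta_h^k(\varphi*f)(x)
=\int_{\mathbb{R}^n}\varphi(y)\,\Delta_h^k f(x-y)\,dy
=\mathcal{A}_\varphi\bigl(\Delta_h^k f(\cdot,h)\bigr)(x,h),
\end{align*}
where $\mathcal{A}_\varphi$ acts on the $x$-variable with $h$ treated as a parameter. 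Dividing by $|h|^{k+\frac{\gamma}{q}}$ and letting $F(x,h):=\Delta_h^k f(x)/|h|^{k+\frac{\gamma}{q}}$, the level sets become
\begin{align*}
\mathbf{1}_{E_{\lambda,\frac{\gamma}{q},k}[\varphi*f]}(x,h)
=\mathbf{1}_{\{|\mathcal{A}_\varphi F|>\lambda\}}(x,h),
\qquad
\mathbf{1}_{E_{\lambda,\frac{\gamma}{q},k}[f]}(x,h)
=\mathbf{1}_{\{|F|>\lambda\}}(x,h).
\end{align*}

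Next I would apply Lemma \ref{lem-rxp} with $Y:=(X^{1/p})'$, which yields
\begin{align*}
\left\|\left[\int_{\mathbb{R}^n}\mathbf{1}_{E_{\lambda,\frac{\gamma}{q},k}[\varphi*f]}(\cdot,h)|h|^{\gamma-n}\,dh\right]^{\frac{1}{q}}\right\|_X
\sim
\sup_{\|g\|_Y\le 1}
\left\|\mathbf{1}_{\{|\mathcal{A}_\varphi F|>\lambda\}}\right\|_{T^{p,q}_{\gamma,R_Y g}},
\end{align*}
and similarly for the right-hand side. By Lemma \ref{lem-rxgmm}, $R_Y g\in A_1$ with $[R_Y g]_{A_1}\le 2\|\mathcal{M}\|_{Y\to Y}$, a bound independent of $g$. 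Exchanging the order of the two suprema, the proposition reduces to the uniform-in-$\upsilon$ weighted estimate: for every $\upsilon\in A_1$ with $[\upsilon]_{A_1}\le 2\|\mathcal{M}\|_{Y\to Y}$ and every measurable $F$ on $\mathbb{R}^n\times\mathbb{R}^n$,
\begin{align}\label{eq-young-red}
\sup_{\lambda\in(0,\infty)}\lambda
\left\|\mathbf{1}_{\{|\mathcal{A}_\varphi F|>\lambda\}}\right\|_{T^{p,q}_{\gamma,\upsilon}}
\lesssim
\sup_{\lambda\in(0,\infty)}\lambda
\left\|\mathbf{1}_{\{|F|>\lambda\}}\right\|_{T^{p,q}_{\gamma,\upsilon}}.
\end{align}

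To establish \eqref{eq-young-red} I invoke the Marcinkiewicz-type interpolation Lemma \ref{lem-sw-ww}. Since $p,q\in(1,\infty)$, I can select exponents $r_1,r_2$ with $\max\{1/p,1/q\}<r_1<1<r_2$ so that $r_1 p,r_1 q>1$ and $r_2 p,r_2 q>1$; in particular $r_1>1/\min\{p,q\}$. By Lemma \ref{lem-ss} applied at the parameters $(r_i p,r_i q,\gamma,\upsilon)$ (noting $\upsilon\in A_1$ and $r_i p>1$, $r_i q\ge 1$), the linear operator $\mathcal{A}_\varphi$ is bounded on $T^{r_i p,r_i q}_{\gamma,\upsilon}$ with constants depending only on the allowed parameters. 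Feeding the strong-type, hence weak-type, endpoints at $i=1,2$ into Lemma \ref{lem-sw-ww} produces exactly \eqref{eq-young-red}. Combining this with the extrapolation-type equivalence above, and using that the implicit constant from Lemma \ref{lem-ss} depends only on $[\upsilon]_{A_1}\lesssim\|\mathcal{M}\|_{Y\to Y}$ and on $(n,k,p,q,\gamma,\varphi)$, yields the desired inequality.

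The main obstacle I anticipate is verifying that Lemma \ref{lem-ss} indeed applies with the $A_1$-constant that arises from $R_Y g$, so that the implicit constant is truly independent of $f$ and $g$. A secondary subtlety is checking the admissibility range of $r_1,r_2$ for Lemma \ref{lem-sw-ww}, which forces the hypothesis $q>1$ (alongside $p>1$ already assumed via the convexification); both $p=1$ and $q=1$ are genuinely excluded by the interpolation step, matching the statement of the proposition. The remaining manipulations (commutation of $\Delta_h^k$ and $\varphi*$, change of variables in the $|h|^{\gamma-n}\,dh$ integral, and exchange of suprema) are routine and cause no difficulty.
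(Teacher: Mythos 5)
Your proposal is correct and follows essentially the same route as the paper: reduce via Lemmas \ref{lem-rxp} and \ref{lem-rxgmm} to a uniform weighted weak-type estimate for $\mathcal{A}_\varphi$ on $T^{p,q}_{\gamma,\upsilon}$ with $\upsilon\in A_1$, obtain the strong-type endpoints from Lemma \ref{lem-ss} at exponents $r_1p,r_1q$ and $r_2p,r_2q$ with $r_1\in(\frac{1}{\min\{p,q\}},1)$ and $r_2\in(1,\infty)$, and conclude by the Marcinkiewicz-type interpolation Lemma \ref{lem-sw-ww}. The only cosmetic difference is that you define $F$ without the absolute value (yielding an exact identity of level sets), whereas the paper takes $F:=|h|^{-(k+\gamma/q)}|\Delta_h^k f|$ and uses the pointwise inequality $|h|^{-(k+\gamma/q)}|\Delta_h^k(\varphi*f)|\le\mathcal{A}_\varphi(F)$; both are equally valid.
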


\begin{proof}
Let $\upsilon \in A_1$.
To show the present proposition, we
only need to prove that, for any $f\in L^{1}_{\rm loc}$
and
any nonnegative function $\varphi\in C_{\rm c}^{\infty}$
with
$\int_{\mathbb{R}^n}\varphi(x)\,dx=1$,
\begin{align}\label{p-younge1}
\sup_{\lambda\in (0,\infty)}
\lambda\left\|{\bf 1}_{E_{\lambda,\frac{\gamma}{q},k}[\varphi\ast
f]}\right\|_{T^{p,q}_{\gamma,\upsilon}(\mathbb{R}^{n}\times
\mathbb{R}^{n})}
\lesssim
\sup_{\lambda\in (0,\infty)}
\lambda\left\|{\bf 1}_{E_{\lambda,\frac{\gamma}{q},k}[
f]}\right\|_{T^{p,q}_{\gamma,\upsilon}(\mathbb{R}^{n}\times
\mathbb{R}^{n})}.
\end{align}
Indeed, if we assume \eqref{p-younge1} holds for a momoent,
then, applying the present assumption of $X$ and repreating
the argument that used in the proof of \cite[(4.10)]{dlyyz-bvy},
we conclude that \eqref{p-younge1} also holds with
$L^p_\upsilon$ replaced by $X$.

Now, we turn to show \eqref{p-younge1}.
Fix $f\in L^{1}_{\rm loc}$ and a nonnegative function
$\varphi\in C_{\rm c}^{\infty}$ with
$\int_{\mathbb{R}^n}\varphi(x)\,dx=1$
and, for any $x\in\mathbb{R}^n$ and $h\in \mathbb{R}^n\setminus \{{\bf
0}\}$,
let
\begin{align*}
F(x,h):=|h|^{-(k+\frac{\gamma}{q})}\left|\Delta^k_h f(x)\right|.
\end{align*}
By Lemma \ref{lem-ss}, we have,
for any given $r_1 \in (\frac{1}{\min\{p,q\}},1)$
and $r_2\in (1,\infty)$ and
for any $i\in\{1,2\}$,
\begin{align*}
\sup_{\lambda\in (0,\infty)}
\lambda\left\|{\bf 1}_{\{(x,h)\in \mathbb{R}^n\times\mathbb{R}^n:\
|\mathcal{
A}_{\varphi}(F)(x,h)|>\lambda\}}\right\|_{T^{r_i p,r_i
q}_{\gamma,\upsilon}(\mathbb{R}^{n}\times\mathbb{R}^n)}
\lesssim
\left\|F\right\|_{T^{r_i p,r_i
q}_{\gamma,\upsilon}(\mathbb{R}^{n}\times\mathbb{R}^n)},
\end{align*}
which, combined with Lemma \ref{lem-sw-ww}, further
implies that
\begin{align}\label{e-wwF}
&\sup_{\lambda\in (0,\infty)}
\lambda\left\|{\bf 1}_{\{(x,h)\in \mathbb{R}^n\times\mathbb{R}^n:\
|\mathcal{
A}_{\varphi}(F)(x,h)|>\lambda\}}\right\|_{T^{p,q}_{\gamma,\upsilon}(\mathbb{R}^{n}\times\mathbb{R}^n)}
\notag\\
&\quad\lesssim
\sup_{\lambda\in (0,\infty)}
\lambda\left\|{\bf 1}_{\{(x,h)\in \mathbb{R}^n\times\mathbb{R}^n:\
|F(x,h)|>\lambda\}}\right\|_{T^{p,q}_{\gamma,\upsilon}(\mathbb{R}^{n}\times\mathbb{R}^n)}.
\end{align}
Observe that, for any $x\in\mathbb{R}^n$ and $h\in
\mathbb{R}^n\setminus \{{\bf 0}\}$,
\begin{align*}
|h|^{-(k+\frac{\gamma}{q})}\left|\Delta^k_h (\varphi\ast f)(x)\right|	
\le
\left|\mathcal{A}_{\varphi}(F)(x,h)\right|.
\end{align*}
From this and \eqref{e-wwF}, it follows that
\begin{align*}
\sup_{\lambda\in (0,\infty)}
\lambda\left\|{\bf 1}_{E_{\lambda,\frac{\gamma}{q},k}[\varphi\ast
f]}\right\|_{T^{p,q}_{\gamma,\upsilon}(\mathbb{R}^{n}\times
\mathbb{R}^{n})}
&\le
\sup_{\lambda\in (0,\infty)}
\lambda\left\|{\bf 1}_{\{(x,h)\in \mathbb{R}^n\times\mathbb{R}^n:\
|\mathcal{		
A}_{\varphi}(F)(x,h)|>\lambda\}}\right\|_{T^{p,q}_{\gamma,\upsilon}(\mathbb{R}^{n}\times\mathbb{R}^n)}\\
&\lesssim
\sup_{\lambda\in (0,\infty)}
\lambda\left\|{\bf 1}_{\{(x,h)\in \mathbb{R}^n\times\mathbb{R}^n:\
|F(x,h)|>\lambda\}}\right\|_{T^{p,q}_{\gamma,\upsilon}(\mathbb{R}^{n}\times\mathbb{R}^n)}\\
&=\sup_{\lambda\in (0,\infty)}
\lambda\left\|{\bf 1}_{E_{\lambda,\frac{\gamma}{q},k}[
f]}\right\|_{T^{p,q}_{\gamma,\upsilon}(\mathbb{R}^{n}\times
\mathbb{R}^{n})}.
\end{align*}
This then finishes the proof of \eqref{p-younge1} and
hence Proposition \ref{p-young}.
\end{proof}

\begin{proof}[Proof of Theorem \ref{t-chaWkX}]
By Proposition \ref{pro-wklo} and Theorem \ref{thm-main}(i), we
immediately obtain
the necessity.
Next, we prove the sufficiency. To this end, let
$f\in L^{1}_{\rm loc}$ satisfy \eqref{e-c-wkx} and let
$\varphi\in C_{\rm c}^{\infty}$ be a nonnegative
function with
$\int_{\mathbb{R}^n}\varphi(x)\,dx=1$ and, for any $t\in (0,\infty)$,
$\varphi_t := t^{-n}\varphi(\frac{\cdot}{t})$.
Notice that, for any $t\in (0,\infty)$, $\varphi_t \ast f\in
C^{\infty}$ and, for any
$\alpha\in\mathbb{Z}_{+}^n$ with $|\alpha|=k$,
$\partial^{\alpha}(\varphi_t \ast f)=(\partial^{\alpha}\varphi_t) \ast
f\in L^{\infty}_{\rm loc}$, which further implies that
$\partial^{\alpha}(\varphi_t \ast f)$ is locally Lipschitz on
$\mathbb{R}^n$.
From this, Lemma \ref{pro-lim}, and Propositions \ref{pro-linf}(ii)
and \ref{p-young},
it follows that, for any bounded open set
$\Omega$ and any $t\in (0,\infty)$,
\begin{align*}
&\left\|\nabla^k(\varphi_t \ast f){\bf 1}_{\Omega}\right\|_X\\
&\quad\lesssim
\liminf_{\lambda\to L_{\gamma}}\lambda\left\|\left[
\int_{{\mathbb{R}^n}}{\bf 1}_{E_{\lambda,\frac{\gamma}{q},k}[\varphi_t
\ast f]}(\cdot,h)
|h|^{\gamma-n}\, dh\right]^{\frac{1}{q}}{\bf 1}_{\Omega}\right\|_X\\
&\quad\leq
\sup_{\lambda
\in (0,\infty)}\lambda\left\|\left[
\int_{{\mathbb{R}^n}}{\bf 1}_{E_{\lambda,\frac{\gamma}{q},k}[\varphi_t
\ast f]}(\cdot,h)
|h|^{\gamma-n}\, dh\right]^{\frac{1}{q}}\right\|_X\\
&\quad\lesssim
\sup_{\lambda
\in (0,\infty)}\lambda\left\|\left[
\int_{{\mathbb{R}^n}}{\bf 1}_{E_{\lambda,\frac{\gamma}{q},k}[
f]}(\cdot,h)
|h|^{\gamma-n}\, dh\right]^{\frac{1}{q}}\right\|_X=:{\rm I}<\infty.
\end{align*}
Using this and Definition \ref{def-X}(iii), we obtain, for any $t\in
(0,\infty)$,
\begin{align}\label{e-vtf}
\left\|\nabla^k(\varphi_t \ast f)\right\|_X
\lesssim
{\rm I}.
\end{align}
Since both $X$ and $X'$ have absolutely continuous norms,
from \cite[Lemma 3.6]{zyy2023bbm0}, we deduce that
$X$ is reflexive.
Using this and Remarks \ref{rem-x-a}(iii) and \ref{r-dual}, we find
that
$X$ is separable,
$X=X^{**}$, and $X^{*}=X'$.
These, together with \eqref{e-vtf} and the Banach--Alaoglu theorem
(see, for instance, \cite[Theorem 3.17]{rudin}), further imply that,
for any given $\alpha\in \mathbb{Z}_{+}^n$ with $|\alpha|=k$, there
exist
$\{t_j\}_{j\in \mathbb{N}}$ and $g_{\alpha}\in X$
such that $t_j\to 0^{+}$ as $j\to \infty$ and, for any $\Phi\in X'$,
\begin{align}\label{e-p-integral}
\int_{\mathbb{R}^n}\partial^{\alpha}(\varphi_{t_j} \ast
f)(x)\Phi(x)\,dx
\to
\int_{\mathbb{R}^n}g_{\alpha}(x)\Phi(x)\,dx
\end{align}
as $j\to \infty$.
By this, Definition \ref{def-X'}, and Remark \ref{rem-x-a},
we conclude that
\begin{align}\label{e-piX}
\left\|g_{\alpha}\right\|_X
&=
\sup_{\|\Phi\|_{X'}=1}
\left|\int_{\mathbb{R}^n}g_{\alpha}(x)\Phi(x)\,dx \right|\notag\\
&=
\sup_{\|\Phi\|_{X'}=1}
\lim_{j\to \infty}
\left|\int_{\mathbb{R}^n}\partial^{\alpha}(\varphi_{t_j} \ast
f)(x)\Phi(x)\,dx \right|\notag\\
&\le
\sup_{\|\Phi\|_{X'}=1}
\sup_{j\in \mathbb{N}}
\left|\int_{\mathbb{R}^n}\partial^{\alpha}(\varphi_{t_j} \ast
f)(x)\Phi(x)\,dx \right|\le
\sup_{j\in \mathbb{N}}	\left\|\nabla^k(\varphi_{t_j} \ast f)\right\|_X
\lesssim
{\rm I}.
\end{align}

On the other hand, from Remark \ref{rem-x-a} and Definition
\ref{def-X}(iv), it follows that $C_{\rm
c}^{\infty}\subset X'$.
By \cite[p.\,27, Theorem 1.2.19]{g2014},
we find that $\varphi_t \ast f\to f$ in $L^{1}_{\rm
loc}$ as $t\to 0^{+}$.
Applying this and
\eqref{e-p-integral}, we conclude that, for any $\phi\in C_{\rm
c}^{\infty}$,
\begin{align*}
\int_{\mathbb{R}^n}f(x)\partial^{\alpha}\phi(x)\,dx
&=
\lim_{j\to \infty}
\int_{\mathbb{R}^n}(\varphi_{t_j} \ast
f)(x)\partial^{\alpha}\phi(x)\,dx \\
&=
(-1)^k
\lim_{j\to \infty}
\int_{\mathbb{R}^n}\partial^{\alpha}(\varphi_{t_j} \ast
f)(x)\phi(x)\,dx
=(-1)^k
\int_{\mathbb{R}^n}g_{\alpha}(x)\phi(x)\,dx ,
\end{align*}
This, combined with the definition and the uniqueness of weak
derivatives (see, for instance, \cite[pp.\,143--144]{eg2015}), further
implies that $\partial^{\alpha}f$ exists and
$\partial^{\alpha}f=g_{\alpha}$.
From this
and \eqref{e-piX}, we deduce that
\begin{align*}
\left\|\nabla^k f\right\|_{X}
\sim
\sum_{\alpha\in
\mathbb{Z}_{+}^n,\,|\alpha|=k}\left\|g_{\alpha}\right\|
\lesssim
{\rm I},
\end{align*}
which further implies that $f\in \dot{W}^{k,X}$
and hence completes the proof of Theorem \ref{t-chaWkX}.
\end{proof}

\subsection{Proof of Theorem \ref{thm-fsGN}}\label{ssec-fract}

\begin{proof}[Proof of Theorem \ref{thm-fsGN}]
Applying an argument similar to that used in the proof of Theorem
\ref{thm-e}(i), we find that, to show the present theorem, it suffices
to prove that,
for any $f\in C^{\infty}$
with $|\nabla^{k}f|\in C_{\rm c}$, \eqref{eq-in-kks},
\eqref{eq-in-kks1}, and
\eqref{eq-app-dd} hold
with $p\in [1,\infty)$, $\upsilon\in A_1$,
and $X:=L^p_{\upsilon}$.
Thus, from now on, let $p\in [1,\infty)$ and $\upsilon\in A_1$
and fix $f\in C^{\infty}$
with $|\nabla^{k}f|\in C_{\rm c}$.

We first show \eqref{eq-in-kks}.
Assume ${q_{0}}\in [1,\infty)$.
By the assumption
$\frac{1}{q}=\frac{1-s}{{q_{0}}}+s$, we obtain, for any $x,h\in
{\mathbb{R}^n}$ with $|h|\neq 0$,
\begin{align}\label{e-ESS}
\frac{|\Delta_{h}^{k}f (x)|}{|h|^{\frac{\gamma}{q}+k-1+s}}
=
\left[\frac{|\Delta_{h}^{k}f (x)|}{
|h|^{\frac{\gamma}{{q_{0}}}+k-1}}\right]^{1-s}
\left[\frac{|\Delta_{h}^{k}f (x)|}{
|h|^{\gamma+k}}\right]^{s}.
\end{align}
From this, it then follows that, for any $\lambda\in (0,\infty)$,
\begin{align*}
E_{\lambda,\frac{\gamma}{q}+s-1,k}[f]
&\subset
E_{A^{-s}\lambda,\frac{\gamma}{q_0}-1,k}[f]
\cup
E_{A^{1-s}\lambda, \gamma,k} [f],
\end{align*}
where $A\in (0,\infty)$ is a constant specified later.
By this, we conclude that,
for any $\lambda\in(0,\infty)$,
\begin{align}\label{1514}
&\lambda\left\|\int_{\mathbb{R}^n}
\mathbf{1}_{
E_{\lambda,\frac{\gamma}{q}+s-1,k}[f]}(\cdot,h)
\left|h\right|^{\gamma-n}\,dh\right\|_{{L^p_{\upsilon}}}^{\frac{1}{q}}\notag\\
&\quad\leq\lambda
\left\|\int_{\mathbb{R}^n}
\mathbf{1}_{E_{A^{-s}\lambda,\frac{\gamma}{q_0}-1,k}[f]}(\cdot,h)
\left|h\right|^{\gamma-n}\,dh\right\|_{{L^p_{\upsilon}}}^{\frac{1}{q}}\notag\\
&\qquad+\lambda\left\|\int_{\mathbb{R}^n}
\mathbf{1}_{E_{A^{1-s}\lambda,\gamma,k}[f]}(\cdot,h)
\left|h\right|^{\gamma-n}\,dh\right\|_{{L^p_{\upsilon}}}^{\frac{1}{q}}\notag\\
&\quad\leq\lambda^{1-\frac{{q_{0}}}{q}}
\left(A^s\mathrm{G}\right)^{\frac{{q_{0}}}{q}}
+\lambda^{1-\frac{1}{q}}
\left(A^{s-1}\mathrm{H}\right)^{\frac{1}{q}},
\end{align}
where
$$
\mathrm{G}:=\sup_{\lambda\in(0,\infty)}\lambda
\left\|\int_{\mathbb{R}^n}
\mathbf{1}_{E_{\lambda,\frac{\gamma}{q_0}-1,k}[f]}(\cdot,h)
\left|h\right|^{\gamma-n}\,dh\right\|_{{L^p_{\upsilon}}}^{\frac{1}{{q_{0}}}}
$$
and
\begin{align}\label{eq-HH}
\mathrm{H}:=\sup_{\lambda\in(0,\infty)}\lambda
\left\|\int_{\mathbb{R}^n}
\mathbf{1}_{E_{\lambda,\gamma,k}[f]}(\cdot,h)
\left|h\right|^{\gamma-n}\,dh\right\|_{{L^p_{\upsilon}}}.
\end{align}
Choose $A\in(0,\infty)$ such that
$$
\lambda^{1-\frac{{q_{0}}}{q}}
\left(A^s\mathrm{G}\right)^{\frac{{q_{0}}}{q}}
=\lambda^{1-\frac{1}{q}}
\left(A^{s-1}\mathrm{H}\right)^{\frac{1}{q}}.
$$
From this, \eqref{1514}, and
the assumption
$\frac{1}{q}=\frac{1-s}{{q_{0}}}+s$,
we infer that
\begin{align}\label{eq-app-01}
&\lambda\left\|\int_{\mathbb{R}^n}
\mathbf{1}_{
E_{\lambda,\frac{\gamma}{q}+s-1,k}[f]}(\cdot,h)
\left|h\right|^{\gamma-n}\,dh\right\|_{{L^p_{\upsilon}}}^{\frac{1}{q}}
\lesssim\mathrm{G}^{1-s}\mathrm{H}^s.
\end{align}
Applying the assumption $\gamma\in \Gamma_{p,1}$ and Theorem
\ref{thm-up-ap} with $\ell:=k$ and $q:=1$,
we obtain
\begin{align}\label{e-HHH}
{\rm H}	\lesssim\left\|\nabla^k
f\right\|_{L^p_{\upsilon}}.
\end{align}
We then claim that
\begin{align*}
\mathrm{G}\lesssim \left\|\nabla^{k-1}
f\right\|_{{L^{pq_0}_{\upsilon}}}.
\end{align*}
Indeed,
if $k\in \mathbb{N}\cap [2,\infty)$, then,
by the assumption $\gamma\in \Gamma_{p,1}$ and Theorem \ref{thm-up-ap}
with $\ell:=k-1$, $p:=p q_0$, and $q:=q_0$, we find that the above
claim holds in this case; if $k=1$, then,
from a slight modification of the proof of \cite[(4.7)]{ZYY23},
we also deduce that the above claim holds in this case.
Combining \eqref{eq-app-01}, \eqref{e-HHH}, and the above claim,
we find that \eqref{eq-in-kks}
holds.

We next prove \eqref{eq-in-kks1}. To this end, we assume $q_0=\infty$.
Without loss of generality, we may assume
$\|\nabla^{k-1}f\|_{L^\infty}
\in(0,\infty)$.
By \eqref{e-hdo}, we find that,
for any $x,h\in \mathbb{R}^n$ with $h\neq 0$,
\begin{align*}
\frac{|\Delta_h^{k}f(x)|}{|h|^{k-1}}
\lesssim
\left\|\nabla^{k-1}f\right\|_{L^\infty}.
\end{align*}
Thus, there exists a positive
constant $C$, depending
only on $n$ and $k$, such that,
for any $\lambda\in (0,\infty)$ and
$(x,h)\in E_{\lambda,\gamma s+s-1,k}[f]$,
\begin{align*}
\left[\frac{|\Delta^{k}_h f|}{|h|^{\gamma+k}}\right]^{s}
\ge
\frac{|\Delta^{k}_h f|}{|h|^{\gamma s+k-1+s}}
\left[C\left\|\nabla^{k-1}f\right\|_{L^\infty}\right]^{s-1}
>\lambda\left[C\left\|\nabla^{k-1}f\right\|_{L^\infty}\right]^{s-1},
\end{align*}
which further implies that
\begin{align}\label{eq-app-Ds}
E_{\lambda,\gamma s+s-1,k}[f]
\subset E_{{\lambda^\frac{1}{s}}
{[C\|\nabla^{k-1}f
\|_{L^\infty}]^\frac{s-1}{s}},\gamma,k}[f].
\end{align}
Clearly, ${{q_{0}}}=\infty$ implies $qs=1$.
Using this and \eqref{eq-app-Ds},
we find that
\begin{align*}
&\sup_{\lambda\in(0,\infty)}\lambda
\left\|\left[\int_{\mathbb{R}^n}
\mathbf{1}_{E_{\lambda,\frac{\gamma}{q}+s-1,k}[f]}(\cdot,h)
\left|h\right|^{\gamma-n}\,dh
\right]^\frac{1}{q}\right\|_{X^q}\\
&\quad\leq
\left[C\left\|\nabla^{k-1}f\right\|_{L^\infty}\right]^{1-s}
\left[\sup_{\lambda\in(0,\infty)}\lambda
\left\|\int_{\mathbb{R}^n}
\mathbf{1}_{E_{\lambda,\gamma,k}[f]}(\cdot,h)
\left|h\right|^{\gamma-n}\,dh\right\|_{X}\right]^s.\notag
\end{align*}
This, together with Theorem \ref{thm-up-ap}
with $\ell:=k$, implies that \eqref{eq-in-kks1} holds.

Finally, we show
\eqref{eq-app-dd}. By \eqref{eq-app-ss}, we find that,
for any $x,h\in \mathbb{R}^n$ with $|h|\neq 0$,
\begin{align}\label{e-EEGamma}
\frac{|\Delta_h^k f(x)|}{|h|^{k-1+s+\frac{\gamma}{q}}}
=
\left[\frac{|\Delta_h^k f(x)|}{|h|^{k-1+s_0 +\frac{\gamma}{q_0
}}}\right]^{1-\eta}
\left[\frac{|\Delta_h^k f(x)|}{|h|^{k+\gamma}}\right]^{\eta}.
\end{align}
Repeating the proof of \eqref{eq-in-kks} with \eqref{e-ESS}, $s$, and
$E_{A^{-s}\lambda,\frac{\gamma}{q_0}-1,k}[f]$ therein replaced,
respectively,
by \eqref{e-EEGamma}, $\eta$, and
$E_{A^{-s}\lambda,\frac{\gamma}{q_0}+s_0-1,k}[f]$ here, we obtain
\eqref{eq-app-dd}.
This finishes the proof of Theorem \ref{thm-fsGN}.
\end{proof}

\section{Applications to Specific Function Spaces}\label{ssec-fract-s}

In this section, we aim to apply Theorems \ref{thm-main},
\ref{t-chaWkX}, and
\ref{thm-fsGN}, respectively,
to Lebesgue spaces in Subsection \ref{ss-ls},
weighted Lebesgue spaces in Subsection \ref{ss-wls},
(Bourgain--)Morrey type spaces in Subsection \ref{ss-bmts},
local and global generalized Herz spaces in Subsection \ref{ss-lgghs},
mixed-norm Lebesgue spaces in Subsection \ref{ssec-mxlp},
variable Lebesgue spaces in Subsection \ref{ssec-vari-lp},
Lorentz spaces in Subsection \ref{ss-Lorentz},
Orlicz spaces in Subsection \ref{ss-ORL},
and
Orlicz-slice spaces in Subsection \ref{ss-OSS},

\subsection{Lebesgue Spaces}\label{ss-ls}
Let $X:=L^p$ with $p\in[1,\infty)$.
In this case,
it is clear that
all the assumptions of Theorems \ref{thm-main}, \ref{t-chaWkX},
and
\ref{thm-fsGN} hold.
Applying this and Theorems \ref{thm-main}, \ref{t-chaWkX},
and
\ref{thm-fsGN}, we immediately obtain the following results.

\begin{theorem}\label{thm-lp-m}
Let $k\in {\mathbb{N}}$ and $p\in [1,\infty)$.
\begin{enumerate}[{\rm (i)}]
\item Let $q\in (0,\infty)$ satisfy $n(\frac{1}{p}-\frac{1}{q})<k$
and $\gamma\in \Gamma_{p,q}$.
Then, for any $f\in \dot{W}^{k,p}$,
\begin{align}\label{eq-lp}
\sup_{\lambda\in (0,\infty)}
\lambda
\left\{\int_{\mathbb{R}^n}\left[
\int_{{\mathbb{R}^n}}{\bf 1}_{E_{\lambda,\frac{\gamma}{q},k}[f]}(x,h)
|h|^{\gamma-n}\, dh\right]^{\frac{p}{q}}
\,dx\right\}^{\frac{1}{p}}
\sim
\left(\int_{\mathbb{R}^n}\left|\nabla^k f(x)\right|^p
\,dx\right)^{\frac{1}{p}}
\end{align}
with the positive equivalence constants independent of $f$ and
\begin{align*}
&\lim_{\lambda\to L_\gamma}\lambda
\left\{\int_{\mathbb{R}^n}\left[
\int_{{\mathbb{R}^n}}{\bf 1}_{E_{\lambda,\frac{\gamma}{q},k}[f]}(x,h)
|h|^{\gamma-n}\, dh\right]^{\frac{p}{q}}
\,dx\right\}^{\frac{1}{p}}\\
&\quad=
|\gamma|^{-\frac{1}{q}}\left\{
\int_{\mathbb{R}^n}\left[ \int_{\mathbb{S}^{n-1}}
\left|\sum_{\alpha\in\mathbb{Z}_{+}^n,|\alpha|=k}\partial^{\alpha}f(x)\xi^{\alpha}\right|^q\,d\mathcal{H}^{n-1}(\xi)
\right]^{\frac{p}{q}}\,dx\right\}^{\frac{1}{p}}.
\end{align*}

\item Let $q\in (1,\infty)$ satisfy $n(\frac{1}{p}-\frac{1}{q})<k$,
$\gamma\in \mathbb{R}\setminus \{0\}$,
and $\upsilon\in A_p$.
If $p\in(1,\infty)$,
then $f\in \dot{W}^{k,p}$ if and only if
$f\in L^{1}_{\rm loc}$ and
\begin{align*}
\sup_{\lambda\in (0,\infty)}
\lambda
\left\{\int_{\mathbb{R}^n}\left[
\int_{{\mathbb{R}^n}}{\bf 1}_{E_{\lambda,\frac{\gamma}{q},k}[f]}(x,h)
|h|^{\gamma-n}\, dh\right]^{\frac{p}{q}}
\,dx\right\}^{\frac{1}{p}}
<\infty;
\end{align*}
moreover, \eqref{eq-lp} holds for any $f\in L^{1}_{\rm
loc}$.

\item 	Let $q_{0}\in[1,\infty]$,
$s\in(0,1)$,
$q\in[1,q_{0}]$ satisfy $\frac{1}{q}=\frac{1-s}{q_{0}}+s$,
and
$\gamma\in \Gamma_{p,1}$.
If $q_0\in [1,\infty)$,
then, for any
$f\in\dot{W}^{k,p}$,
\begin{align*}
&\sup_{\lambda\in (0,\infty)}
\lambda
\left\{\int_{\mathbb{R}^n}\left[
\int_{{\mathbb{R}^n}}{\bf 1}_{E_{\lambda,\frac{\gamma}{q},k}[f]}(x,h)
|h|^{\gamma-n}\, dh\right]^{p}
\upsilon(x)\,dx\right\}^{\frac{1}{pq}}\\
&\quad\lesssim
\left(\int_{\mathbb{R}^n}\left|\nabla^{k-1} f(x)\right|^{pq_0}
\,dx\right)^{\frac{1-s}{pq_0}}
\left(\int_{\mathbb{R}^n}\left|\nabla^k f(x)\right|^p
\,dx\right)^{\frac{s}{p}}
\end{align*}
with the implicit positive constant independent of $f$.
If $q_0=\infty$, then, for any $f\in\dot{W}^{k,p}$,
\begin{align*}
&\sup_{\lambda\in (0,\infty)}
\lambda
\left\{\int_{\mathbb{R}^n}\left[
\int_{{\mathbb{R}^n}}{\bf 1}_{E_{\lambda,\frac{\gamma}{q},k}[f]}(x,h)
|h|^{\gamma-n}\, dh\right]^{p}
\,dx\right\}^{\frac{1}{pq}}\\
&\quad \lesssim
\left\|\nabla^{k-1}f\right\|^{1-s}_{L^\infty}
\left(\int_{\mathbb{R}^n}\left|\nabla^k f(x)\right|^p
\,dx\right)^{\frac{s}{p}}
\end{align*}
with the implicit positive constant independent of $f$.

\item Let
$\eta\in(0,1)$,
$
0\leq s_0<s<1<q<q_0<\infty
$
satisfy \eqref{eq-app-ss},
and
$\gamma\in \Gamma_{p,1}$.
Then, for any
$f\in \dot{W}^{k,p}$,
\begin{align*}
&\sup_{\lambda\in (0,\infty)}
\lambda
\left\{\int_{\mathbb{R}^n}\left[
\int_{{\mathbb{R}^n}}{\bf 1}_{E_{\lambda,\frac{\gamma}{q},k}[f]}(x,h)
|h|^{\gamma-n}\, dh\right]^{p}
\,dx\right\}^{\frac{1}{pq}}\\
&\quad\lesssim
\sup_{\lambda\in(0,\infty)}\lambda
\left\{
\int_{\mathbb{R}^n}
\left[\int_{\mathbb{R}^n}
\mathbf{1}_{E_{\lambda,\frac{\gamma}{q_0}+s_0-1,k}[f]}(x,h)
\left|h\right|^{\gamma-n}\,dh\right]^p
\,dx\right\}^{\frac{1-\eta}{pq_0}}
\left(\int_{\mathbb{R}^n}\left|\nabla^k f(x)\right|^p
\,dx\right)^{\frac{\eta}{p}}
\end{align*}
with the implicit positive constant independent of $f$.
\end{enumerate}
\end{theorem}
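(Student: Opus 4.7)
The plan is to verify that $X := L^p$ with $p \in [1,\infty)$ fits into the framework of Theorems \ref{thm-main}, \ref{t-chaWkX}, and \ref{thm-fsGN}, and then invoke those theorems directly; the verification of the hypotheses is essentially routine, so the work reduces to correctly identifying convexifications and associate spaces.

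First, I would observe that $L^p$ is a {\rm BBF} space with absolutely continuous norm, that its $(1/p)$-convexification is $(L^p)^{1/p} = L^1$, which is again a {\rm BBF} space, and that its associate space satisfies $((L^p)^{1/p})' = (L^1)' = L^\infty$. Since $\mathcal{M}$ is bounded on $L^\infty$ (with operator norm $1$), all the hypotheses of Theorem \ref{thm-main}(I) are met. Applying Theorem \ref{thm-main}(I) and unfolding the definition of $\|\cdot\|_{L^p}$ inside \eqref{eq-main-01} and \eqref{eq-main-02}, together with a use of Lemma \ref{pro-lim} to pass between $|\nabla^k f|$ and the spherical average on the right-hand side of \eqref{eq-main-02}, yields part (i) at once.

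For part (ii), the additional assumption $p \in (1,\infty)$ ensures that the associate space $(L^p)' = L^{p'}$ also has absolutely continuous norm, so all hypotheses of Theorem \ref{t-chaWkX} (with the same choice of $p$) are satisfied. A direct application of that theorem then delivers the BSVY characterization of $\dot{W}^{k,p}$ stated in (ii); the displayed equivalence for $f\in L^1_{\rm loc}$ is just the specialization of \eqref{eq-main-01}.

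For parts (iii) and (iv), I would invoke Theorem \ref{thm-fsGN} with $X := L^p$, after noting that $(L^p)^{q_0} = L^{pq_0}$ is a {\rm BBF} space whenever $q_0 \in [1,\infty)$ (and that $\| \nabla^{k-1} f\|_{X^{q_0}} = \|\nabla^{k-1} f\|_{L^{pq_0}}$). The hypothesis $\gamma\in\Gamma_{p,1}$ is passed along verbatim. Inequalities (iii) and (iv) then follow immediately upon substituting $X := L^p$ in \eqref{eq-in-kks}, \eqref{eq-in-kks1}, and \eqref{eq-app-dd}, and rewriting the resulting norms as explicit $L^p$ integrals. No genuine obstacle arises here, because $L^p$ is precisely the model space for this framework; the only points requiring a moment of care are the treatment of the endpoint $q_0=\infty$ in (iii), which is covered by \eqref{eq-in-kks1}, and the restriction $p>1$ in (ii), needed so that $(L^p)'$ has absolutely continuous norm.
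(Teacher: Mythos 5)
Your proposal is correct and follows exactly the paper's route: Section \ref{ss-ls} simply notes that $L^p$ satisfies all the hypotheses of Theorems \ref{thm-main}, \ref{t-chaWkX}, and \ref{thm-fsGN} and then applies them, and your identifications $(L^p)^{1/p}=L^1$, $((L^p)^{1/p})'=L^\infty$, $(L^p)'=L^{p'}$, and $(L^p)^{q_0}=L^{pq_0}$ are precisely the routine verifications the paper leaves implicit.
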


\begin{remark}
\begin{enumerate}[{\rm (i)}]
\item
Theorem \ref{thm-lp-m}(i) with $X:=L^p$
when $k=1$ improves
\cite[Theorem 4.3]{lyyzz24}
because Theorem \ref{thm-lp-m}(i) removes some extra
assumptions on $\gamma$ in \cite[Theorem 4.3]{lyyzz24}.

\item Theorem \ref{thm-lp-m}(ii) when $k=1$ and $p=q$
exactly coincides with \cite[Theorem 1.3]{bsvy24}
and, in the other cases, is new.

\item Both (iii) and (iv) of Theorem \ref{thm-lp-m} with
$X:=L^p$ when
$k=1$ coincide with \cite[Corollaries 4.5 and 4.6]{lyyzz24}
and when $k\in \mathbb{N}\cap [2,\infty)$ are
new.
\end{enumerate}
\end{remark}

\subsection{Weighted Lebesgue Spaces}\label{ss-wls}

Let $p\in (0,\infty)$ and $\upsilon$ be a nonnegative locally
integrable function on $\mathbb{R}^n$. Recall that
$L^p_{\upsilon}$
denotes the weighted Lebesgue space; see also Definition
\ref{def-Ap}(ii).
When $X:=L^p_\upsilon$, we simply write
$\dot{W}^{k,p}_{\upsilon}:=\dot{W}^{k,X}$.
If $p\in[1,\infty)$ and $\upsilon\in A_p$,
then we define
\begin{align}\label{df-pv}
p_\upsilon:=
\inf\left\{r\in[1,\infty):\
\upsilon\in A_r\right\}.
\end{align}

\begin{theorem}\label{thm-wls}
Let $k\in {\mathbb{N}}$, $p\in [1,\infty)$, and $\upsilon\in
A_{p}$ with $p_{\upsilon}$ defined as in \eqref{df-pv}.
\begin{enumerate}[{\rm (i)}]
\item Let $q\in (0,\infty)$ satisfy
$n(\frac{p_{\upsilon}}{p}-\frac{1}{q})<k$
and $\gamma\in \Gamma_{p,q}$.
Then, for any $f\in \dot{W}^{k,p}_{\upsilon}$,
\begin{align}\label{eq-LP}
\sup_{\lambda\in (0,\infty)}
\lambda
\left\{\int_{\mathbb{R}^n}\left[
\int_{{\mathbb{R}^n}}{\bf 1}_{E_{\lambda,\frac{\gamma}{q},k}[f]}(x,h)
|h|^{\gamma-n}\, dh\right]^{\frac{p}{q}}
\upsilon(x)\,dx\right\}^{\frac{1}{p}}
\sim
\left(\int_{\mathbb{R}^n}\left|\nabla^k f(x)\right|^p
\upsilon(x)\,dx\right)^{\frac{1}{p}}
\end{align}
with the positive equivalence constants independent of $f$ and
\begin{align*}
&\lim_{\lambda\to L_\gamma}\lambda
\left\{\int_{\mathbb{R}^n}\left[
\int_{{\mathbb{R}^n}}{\bf 1}_{E_{\lambda,\frac{\gamma}{q},k}[f]}(x,h)
|h|^{\gamma-n}\, dh\right]^{\frac{p}{q}}
\upsilon(x)\,dx\right\}^{\frac{1}{p}}\\
&\quad=
|\gamma|^{-\frac{1}{q}}\left\{
\int_{\mathbb{R}^n}\left[ \int_{\mathbb{S}^{n-1}}
\left|\sum_{\alpha\in\mathbb{Z}_{+}^n,|\alpha|=k}\partial^{\alpha}f(x)\xi^{\alpha}\right|^q\,d\mathcal{H}^{n-1}(\xi)
\right]^{\frac{p}{q}}\upsilon(x)\,dx\right\}^{\frac{1}{p}}.
\end{align*}

\item Let $q\in (1,\infty)$ satisfy
$n(\frac{p_{\upsilon}}{p}-\frac{1}{q})<k$,
$\gamma\in \mathbb{R}\setminus \{0\}$,
and $\upsilon\in A_p $.
If $p\in(1,\infty)$,
then $f\in \dot{W}^{k,p}_{\upsilon}$ if and only if
$f\in L^{1}_{\rm loc}$ and
\begin{align*}
\sup_{\lambda\in (0,\infty)}
\lambda
\left\{\int_{\mathbb{R}^n}\left[
\int_{{\mathbb{R}^n}}{\bf 1}_{E_{\lambda,\frac{\gamma}{q},k}[f]}(x,h)
|h|^{\gamma-n}\, dh\right]^{\frac{p}{q}}
\upsilon(x)\,dx\right\}^{\frac{1}{p}}
<\infty;
\end{align*}
moreover, \eqref{eq-LP} holds for any $f\in L^{1}_{\rm
loc}$.

\item 	Let $q_{0}\in[1,\infty]$,
$s\in(0,1)$,
$q\in[1,q_{0}]$ satisfy $\frac{1}{q}=\frac{1-s}{q_{0}}+s$,
and
$\gamma\in \Gamma_{p,1}$.
If $q_0\in [1,\infty)$,
then, for any
$f\in\dot{W}^{k,p}_{\upsilon}$,
\begin{align*}
&\sup_{\lambda\in (0,\infty)}
\lambda
\left\{\int_{\mathbb{R}^n}\left[
\int_{{\mathbb{R}^n}}{\bf 1}_{E_{\lambda,\frac{\gamma}{q},k}[f]}(x,h)
|h|^{\gamma-n}\, dh\right]^{p}
\upsilon(x)\,dx\right\}^{\frac{1}{pq}}\\
&\quad\lesssim
\left(\int_{\mathbb{R}^n}\left|\nabla^{k-1} f(x)\right|^{pq_0}
\upsilon(x)\,dx\right)^{\frac{1-s}{pq_0}}
\left(\int_{\mathbb{R}^n}\left|\nabla^k f(x)\right|^p
\upsilon(x)\,dx\right)^{\frac{s}{p}}
\end{align*}
with the implicit positive constant independent of $f$.
If $q_0=\infty$, then, for any
$f\in\dot{W}^{k,p}_{\upsilon}$,
\begin{align*}
&\sup_{\lambda\in (0,\infty)}
\lambda
\left\{\int_{\mathbb{R}^n}\left[
\int_{{\mathbb{R}^n}}{\bf 1}_{E_{\lambda,\frac{\gamma}{q},k}[f]}(x,h)
|h|^{\gamma-n}\, dh\right]^{p}
\upsilon(x)\,dx\right\}^{\frac{1}{pq}}\\
&\quad \lesssim
\left\|\nabla^{k-1}f\right\|^{1-s}_{L^\infty}
\left(\int_{\mathbb{R}^n}\left|\nabla^k f(x)\right|^p
\upsilon(x)\,dx\right)^{\frac{s}{p}}
\end{align*}
with the implicit positive constant independent of $f$.
\item Let
$\eta\in(0,1)$,
$
0\leq s_0<s<1<q<q_0<\infty
$
satisfy \eqref{eq-app-ss},
and
$\gamma\in \Gamma_{p,1}$.
Then, for any
$f\in \dot{W}^{k,p}_{\upsilon}$,
\begin{align*}
&\sup_{\lambda\in (0,\infty)}
\lambda
\left\{\int_{\mathbb{R}^n}\left[
\int_{{\mathbb{R}^n}}{\bf 1}_{E_{\lambda,\frac{\gamma}{q},k}[f]}(x,h)
|h|^{\gamma-n}\, dh\right]^{p}
\upsilon(x)\,dx\right\}^{\frac{1}{pq}}\\
&\quad\lesssim
\sup_{\lambda\in(0,\infty)}\lambda
\left\{
\int_{\mathbb{R}^n}
\left[\int_{\mathbb{R}^n}
\mathbf{1}_{E_{\lambda,\frac{\gamma}{q_0}+s_0-1,k}[f]}(x,h)
\left|h\right|^{\gamma-n}\,dh\right]^p
\upsilon(x)\,dx\right\}^{\frac{1-\eta}{pq_0}}
\left(\int_{\mathbb{R}^n}\left|\nabla^k f(x)\right|^p
\upsilon(x)\,dx\right)^{\frac{\eta}{p}}
\end{align*}
with the implicit positive constant independent of $f$.
\end{enumerate}
\end{theorem}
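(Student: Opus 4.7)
My plan is to deduce Theorem \ref{thm-wls} directly from Theorems \ref{thm-main}, \ref{t-chaWkX}, and \ref{thm-fsGN} by verifying that $X := L^p_\upsilon$ meets the structural hypotheses imposed on the abstract ball Banach function space, with the matching exponent (call it $p_0$) chosen via the critical Muckenhoupt exponent $p_\upsilon$. The central mechanism is to replace the abstract parameter $p$ appearing in Theorem \ref{thm-main} with some $p_0$ slightly smaller than $p/p_\upsilon$, so that $X^{1/p_0} = L^{p/p_0}_\upsilon$ sits in a Muckenhoupt class.

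For part (i), I will split into two cases. When $p > 1$, the self-improvement (openness) property of Muckenhoupt classes gives $p_\upsilon < p$, and so I can select $p_0 \in (1, p/p_\upsilon)$; the hypothesis $n(p_\upsilon/p - 1/q) < k$ then permits picking $p_0$ close enough to $p/p_\upsilon$ to ensure $n(1/p_0 - 1/q) < k$. Since $p_0 > 1$, we have $\Gamma_{p_0,q} = \mathbb{R} \setminus \{0\} \supseteq \Gamma_{p,q}$, so any admissible $\gamma$ lies in the required range. With $X^{1/p_0} = L^{p/p_0}_\upsilon$ a BBF space and, by Lemma \ref{lem-apwight}(v)--(vi), $\mathcal{M}$ bounded on its associate space $L^{(p/p_0)'}_{\upsilon^{1-(p/p_0)'}}$, Theorem \ref{thm-main}(I) applies. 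When $p = 1$, necessarily $\upsilon \in A_1$ and $p_\upsilon = 1$, so I will instead verify the endpoint hypothesis of Theorem \ref{thm-main}(II): taking $\theta_m := 1 - 1/m$, the bound $\|\mathcal{M}\|_{(L^{1/\theta_m}_\upsilon)' \to (L^{1/\theta_m}_\upsilon)'} \lesssim [\upsilon]_{A_1}^{(1/\theta_m)' - 1}$ stays bounded as $m \to \infty$. The uniform boundedness of ball averages and the absolute continuity of the norm on $L^p_\upsilon$ both come from Lemma \ref{lem-apwight}(i); both the equivalence \eqref{eq-LP} and the limiting identity follow.

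For part (ii), I additionally need $X' = L^{p'}_{\upsilon^{1-p'}}$ to have an absolutely continuous norm. Since $p \in (1,\infty)$ and $\upsilon \in A_p$, Lemma \ref{lem-apwight}(v) yields $\upsilon^{1-p'} \in A_{p'}$, whence Lemma \ref{lem-apwight}(i) supplies the absolute continuity. Theorem \ref{t-chaWkX} then yields the BSVY characterization. For parts (iii) and (iv), the only extra check is that, when $q_0 < \infty$, the convexification $X^{q_0} = L^{pq_0}_\upsilon$ is a BBF space; since $\upsilon \in A_p \subset A_{pq_0}$ by Lemma \ref{lem-apwight}(iv), this is immediate from Lemma \ref{lem-apwight}(i). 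The condition $\gamma \in \Gamma_{p,1}$ matches exactly what is required in Theorem \ref{thm-fsGN}, and the abstract gap condition with $q := 1$ becomes $n(1/p_0 - 1) < k$, which is automatic for any $p_0 \ge 1$. Applying Theorem \ref{thm-fsGN}(i)--(ii) then delivers the two displayed inequalities.

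The chief delicate point will be the self-improvement step in the $p > 1$ case of part (i): one must be sure that $\upsilon \in A_p$ genuinely admits a $p_0 \in (1, p/p_\upsilon)$ that simultaneously exceeds $1$ and is close enough to $p/p_\upsilon$ to inherit $n(1/p_0 - 1/q) < k$ from the hypothesis $n(p_\upsilon/p - 1/q) < k$. This is essentially the only non-cosmetic part of the argument; everything else is a direct pass-through from Lemma \ref{lem-apwight} to the abstract main theorems.
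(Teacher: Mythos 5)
Your proposal is correct and follows essentially the same route as the paper, which simply cites Lemma \ref{lem-apwight} (and the verification in \cite{ZYY23}) to check that $X:=L^p_\upsilon$ satisfies the hypotheses of Theorems \ref{thm-main}, \ref{t-chaWkX}, and \ref{thm-fsGN}; your explicit choice of $p_0\in(1,p/p_\upsilon)$ via the openness of Muckenhoupt classes, so that $n(1/p_0-1/q)<k$ and $\Gamma_{p_0,q}=\mathbb{R}\setminus\{0\}\supseteq\Gamma_{p,q}$, is exactly the intended verification. One small correction in the $p=1$ endpoint case: the operator norm bound should be computed via Lemma \ref{lem-apwight}(v) and (vi) as $\|\mathcal{M}\|_{(X^{1/\theta_m})'\to(X^{1/\theta_m})'}\lesssim[\upsilon^{1-(1/\theta_m)'}]_{A_{(1/\theta_m)'}}^{(1/\theta_m)-1}=[\upsilon]_{A_{1/\theta_m}}\le[\upsilon]_{A_1}$, whereas the exponent $(1/\theta_m)'-1$ you wrote tends to infinity and would not stay bounded; alternatively, for $p=1$ one can avoid Theorem \ref{thm-main}(II) entirely, since $|f|\le\|f\|_{(L^1_\upsilon)'}\upsilon$ and $\mathcal{M}\upsilon\le[\upsilon]_{A_1}\upsilon$ show directly that $\mathcal{M}$ is bounded on $(L^1_\upsilon)'$, so Theorem \ref{thm-main}(I) with $p=1$ already applies.
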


\begin{proof}
We first show (i). Using
Lemma \ref{lem-apwight}(i),
we conclude that
$L^p_\upsilon$
under consideration satisfies all the assumptions of
Theorem \ref{thm-main}.
From this and Theorem
\ref{thm-main}, we deduce that, for any $f\in
\dot{W}^{k,p}_{\upsilon}$, \eqref{eq-main-01} and
\eqref{eq-main-02} with $X:=L^p_\upsilon$ hold.
This then finishes the proof of (i).

Next, we prove (ii) through (iv). By the proof of \cite[Theorem 5.14]{ZYY23},
we find
that all the assumptions of Theorems \ref{t-chaWkX} and
\ref{thm-fsGN} with $X:=L^p_\upsilon$ are satisfied.
Therefore, by
this and Theorems \ref{t-chaWkX} and
\ref{thm-fsGN} with $X:=L^p_\upsilon$,
we conclude that (ii) through (iv) hold. This then finishes the proof of
Theorem
\ref{thm-wls}.
\end{proof}

\begin{remark}
\begin{enumerate}[{\rm (i)}]
\item Theorem \ref{thm-wls}(i) when $k=1$
improves \cite[Theorem 5.1]{lyyzz24}
via removing some extra assumptions on $\gamma$ in
\cite[Theorem 5.1]{lyyzz24}. Moreover,
Theorem \ref{thm-wls}(i) when $k\in \mathbb{N} \cap [2,\infty)$ is
completely new.

\item To the best of our knowledge, Theorem \ref{thm-wls}(ii) is new.

\item
Both (iii) and (iv) of
Theorem \ref{thm-lp-m} when
$k=1$ coincide with \cite[Theorem 5.2]{lyyzz24} and when $k\in
\mathbb{N}\cap [2,\infty)$ are new.
\end{enumerate}
\end{remark}

\subsection{(Bourgain--)Morrey Type Spaces}\label{ss-bmts}
To study the regularity of the solution of
partial differential equations, Morrey \cite{m1938} introduced
the Morrey
space [see Definition
\ref{df-43}(i)]. We refer to \cite{a2015,
cf1987,hms2017,hms2020,hss2018,hs2017,jw2009,tyy2019,sdh20201,sdh20202,ysy10}
for its developments
and applications.
In 1991, Bourgain \cite{b91} introduced a new function space
which is exactly a special case of Bourgain--Morrey spaces to study
the Bochner--Riesz multiplier
problem in ${\mathbb R}^3$,
After that, to explore some problems on
nonlinear Schr\"{o}dinger equations, Masaki \cite{m16-09234}
introduced the Bourgain--Morrey space for
the full range of exponents [see Definition
\ref{df-43}(ii)]. Later on,
Bourgain--Morrey spaces play important roles
in the study of some linear and nonlinear
partial differential equations (see,
for instance, \cite{BV07,b98,kpv00,ms18,ms18-2,mvv99})
and their several fundamental real-variable
properties were recently investigated by Hatano
et al. \cite{hnsh22}.
Very recently, via combining the structure
of both Besov spaces (or Triebel--Lizorkin spaces)
and Bourgain--Morrey spaces,
Zhao et al. \cite{zstyy23} and Hu et al. \cite{hly23}
introduced the Besov--Bourgain--Morrey space
[see Definition
\ref{df-43}(iii)] (or the
Triebel--Lizorkin--Bourgain--Morrey space
[see Definition
\ref{df-43}(iv)]), respectively.

For any $j\in{\mathbb Z}$ and
$m:=(m_1,\ldots,m_n) \in {\mathbb Z}^n$,
let \begin{equation*}
Q_{j, m}
:=
2^j\left(m+[0,1)^n\right).
\end{equation*}
be the \emph{dyadic cube} of ${\mathbb{R}^n}$.
We now recall the definitions of aforementioned
Bourgain--Morrey-type spaces
as follows (see, for instance, \cite{hnsh22,hly23,zstyy23}).

\begin{definition}\label{df-43}
Let $0<p\le u\le r\le\infty$ and $\tau\in(0,\infty]$.
\begin{enumerate}
\item[{\rm(i)}] The \emph{Morrey space}
$M^u_p$
is defined to be the
set of all $f\in L^p_{\rm loc}$ such that
\begin{equation*}
\|f\|_{M^u_p}:=
\sup_{j\in\mathbb{Z},\,m\in\mathbb{Z}^n}
\left|Q_{j,m}\right|^{\frac 1u-\frac 1p}
\left\|f{\mathbf{1}}_{Q_{j,m}}
\right\|_{L^{p}}<\infty.
\end{equation*}
\item[{\rm(ii)}] The \emph{Bourgain--Morrey space}
$M^u_{p,r}$ is defined to be the
set of all $f\in L^p_{\rm loc}$ such that
\begin{equation*}
\|f\|_{M^u_{p,r}}
:=\left\{\sum_{j\in{\mathbb Z},\,m\in{\mathbb Z}^n}
\left[\left|Q_{j,m}\right|^{\frac{1}{u}-\frac{1}{p}}
\left\|f{\mathbf{1}}_{Q_{j,m}}
\right\|_{L^{p}}\right]^r\right\}^{\frac{1}{r}},
\end{equation*}
with the usual modification made when $r=\infty$, is finite.
\item[{\rm(iii)}] The \emph{Besov--Bourgain--Morrey space
$M\dot{B}_{p,r}^{u,\tau}$} is defined to be the set
of all $f\in L^p_{\rm loc}$ such that
\begin{align*}
\|f\|_{M\dot{B}_{p,r}^{u,\tau}}:=
\left\{\sum_{j\in{\mathbb Z}}
\left[\sum_{m\in{\mathbb Z}^n}
\left\{\left|Q_{j, m}\right|^{\frac{1}{u}-\frac{1}{p}}
\left\|f{\mathbf{1}}_{Q_{j,m}}\right\|_{L^{p}}
\right\}^r \right]^\frac{\tau}{r}\right\}^\frac{1}{\tau},
\end{align*}
with the usual modifications made when $r=\infty$ or $\tau=\infty$,
is finite.
\item[{\rm(iv)}] The \emph{Triebel--Lizorkin--Bourgain--Morrey space
$M\dot{F}_{p,r}^{u,\tau}$} is defined to be the set
of all $f\in L^p_{\rm loc}$ such that
\begin{align*}
\|f\|_{M\dot{F}_{p,r}^{u,\tau}}
:=\left(\int_{\mathbb{R}^n}
\left\{\int_{0}^{\infty}\left[t^{n(\frac1u
-\frac1p-\frac1r)}\left\|f{\bf 1}_{B(y,t)}
\right\|_{L^p}\right]^\tau\,
\frac{dt}{t}\right\}^{\frac r\tau}\,dy\right)^{\frac1r},
\end{align*}
with the usual modifications made when $r=\infty$ or $\tau=\infty$,
is finite.
\end{enumerate}
\end{definition}

\begin{remark}
It is obvious that
$M^{u}_{p,\infty}=M^u_p$
and $M\dot{B}^{u,r}_{p,r}
=M^u_{p,r}$.
Moreover, from \cite[Proposition 3.6(iii)]{hly23},
we deduce that $M\dot{F}^{u,r}_{p,r}
=M^u_{p,r}$.
\end{remark}

When $X:=M\dot{A}_{p,r}^{u,\tau}$
with $A\in\{B,F\}$,
we simply write $\dot{W}^{k}M\dot{A}_{p,r}^{u,\tau}
:=\dot{W}^{k,X}$.
We have the following result for Morrey-type spaces.

\begin{theorem}\label{1524}
Let $k\in \mathbb{N}$, $1\le p<u< r\le\infty$,
$\tau\in(0,\infty]$, and
$A\in\{B,F\}$.
\begin{enumerate}[{\rm (i)}]
\item Let $q\in(0,\infty)$ satisfy $n(\frac{1}{p}-\frac{1}{q})<k$ and
$\gamma\in \Gamma_{p,q}$. Then, for any $f\in \dot{W}^k
M\dot{A}^{u,\tau}_{p,r}$,
\eqref{eq-main-01} holds with
$X:=M\dot{A}_{p,r}^{u,\tau}$.

\item Let $q\in (1,\infty)$ satisfy $n(\frac{1}{p}-\frac{1}{q})<k$ and
$\gamma\in \mathbb{R}\setminus \{0\}$.
If $p,\tau \in (1,\infty)$, then Theorem \ref{t-chaWkX} holds with
$X:=M\dot{A}_{p,r}^{u,\tau}$.

\item 	Let $s\in(0,1)$,
$q_{0}\in[1,\infty]$,
$q\in[1,q_{0}]$ satisfy $\frac{1}{q}=\frac{1-s}{q_{0}}+s$,
and
$\gamma\in \Gamma_{p,1}$.
If $q_0\in [1,\infty)$,
then, for any $f\in\dot{W}^k M\dot{A}^{u,\tau}_{p,r}$,
\eqref{eq-in-kks} holds with
$X:=M\dot{A}_{p,r}^{u,\tau}$.
If $q_0=\infty$, then, for any $f\in \dot{W}^k
M\dot{A}^{u,\tau}_{p,r}$, \eqref{eq-in-kks1}
holds with $X:=M\dot{A}_{p,r}^{u,\tau}$.

\item Let
$\eta\in(0,1)$,
$
0\leq s_0<s<1<q<q_0<\infty
$
satisfy \eqref{eq-app-ss},
and $\gamma\in \Gamma_{p,1}$.
Then
\eqref{eq-app-dd} holds with
$X:=M\dot{A}_{p,r}^{u,\tau}$.
\end{enumerate}
\end{theorem}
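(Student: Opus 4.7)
The plan is to deduce Theorem \ref{1524} from Theorems \ref{thm-main}, \ref{t-chaWkX}, and \ref{thm-fsGN} by verifying, for $X:=M\dot{A}_{p,r}^{u,\tau}$ with $A\in\{B,F\}$, the list of structural hypotheses that these abstract results require. First I would record the basic fact that $M\dot{A}_{p,r}^{u,\tau}$ is a BBF space under $1\le p\le u\le r\le\infty$ and $\tau\in(0,\infty]$; this is known from \cite{hnsh22,hly23,zstyy23}. The natural choice of exponent in Theorem \ref{thm-main}(I) is the given $p$ itself: from the convexification identity (deducible from Definition \ref{df-43}) one has
\begin{align*}
\left(M\dot{A}_{p,r}^{u,\tau}\right)^{1/p}
=M\dot{A}_{1,r/p}^{u/p,\tau/p},
\end{align*}
which, under the restriction $p<u$ (so $u/p>1$), is again a BBF space.

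Next I would identify the associate space $(M\dot{A}_{1,r/p}^{u/p,\tau/p})'$ using the known block-type dualities for Bourgain--Morrey and Besov/Triebel--Lizorkin--Bourgain--Morrey spaces obtained in \cite{hnsh22,hly23,zstyy23}. The strict inequalities $p<u<r$ ensure that the dual indices fall in the parameter region where the Hardy--Littlewood maximal operator $\mathcal{M}$ is bounded on that associate space; this boundedness is precisely what is proved, via block decomposition and a standard Fefferman--Stein vector-valued argument, in the aforementioned references. Once this verification is in hand, Theorem \ref{thm-main}(I) immediately yields part (i) of Theorem \ref{1524}, with the sharp range condition $n(\frac{1}{p}-\frac{1}{q})<k$ and $\gamma\in\Gamma_{p,q}$ inherited verbatim.

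For (ii), the additional hypotheses in Theorem \ref{t-chaWkX} are that both $X$ and $X'$ have absolutely continuous norms and that $p\in(1,\infty)$. Under $p,\tau\in(1,\infty)$ and $p<u<r<\infty$, the space $M\dot{A}_{p,r}^{u,\tau}$ has an absolutely continuous norm since its defining $\ell^\tau$-sum (or $L^\tau$-integral in the $F$-case) and its inner $\ell^r$-sum are over finite indices and the innermost norm is $L^p$ with $p<\infty$; a similar argument applies to $X'$ via the block representation of its predual. Applying Theorem \ref{t-chaWkX} then yields the BSVY characterization. For (iii) and (iv), the additional requirement is that $X^{q_0}$ be a BBF space when $q_0<\infty$, which follows from the convexification identity above applied with exponent $q_0$ (under the mild restriction $q_0 u/p\le q_0 r/p$ already forced by $u\le r$); plugging this into Theorem \ref{thm-fsGN} gives the fractional Gagliardo--Nirenberg and Sobolev type inequalities.

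The main obstacle I anticipate is the clean identification of the associate space of $M\dot{A}_{1,r/p}^{u/p,\tau/p}$ and the verification that $\mathcal{M}$ is bounded on it; the $F$-case is particularly delicate because the Triebel--Lizorkin structure forces one to handle a vector-valued maximal inequality in the associate (block) space rather than the plainer scalar one available for Morrey or Bourgain--Morrey spaces. Once this step is settled by citing \cite{hly23,zstyy23}, the rest of the proof is a routine translation of hypotheses into the abstract framework already developed in Sections \ref{sec-AP-es} and \ref{sec-Formulae}, and so no further calculation is required.
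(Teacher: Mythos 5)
Your plan is to verify the hypotheses of Theorems \ref{thm-main}, \ref{t-chaWkX}, and \ref{thm-fsGN} directly for $X:=M\dot{A}_{p,r}^{u,\tau}$, but this is not what the paper does, and the step you yourself flag as ``the main obstacle'' is a genuine gap rather than a citation away from being settled. The boundedness of $\mathcal{M}$ on the associate space $\bigl((M\dot{A}_{p,r}^{u,\tau})^{1/p}\bigr)'$ is \emph{not} proved in \cite{hnsh22,hly23,zstyy23} (those references identify duals/preduals and prove boundedness of operators on the spaces themselves, not a maximal inequality on the K\"othe dual of the convexification), and it is precisely because this hypothesis is unavailable for Morrey-type spaces that the paper avoids the abstract route altogether: its proof of Theorem \ref{1524} repeats the argument of \cite[Theorem 5.6]{lyyzz24}, which exploits the block structure of the Bourgain--Morrey quasi-norm to reduce everything to the weighted Lebesgue space estimates of Theorem \ref{thm-wls} and then reassembles. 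Your convexification identity $(M\dot{A}_{p,r}^{u,\tau})^{1/p}=M\dot{A}_{1,r/p}^{u/p,\tau/p}$ is fine, but it only relocates the problem; it does not solve it.

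A second concrete error occurs in your treatment of part (ii): you assert that $M\dot{A}_{p,r}^{u,\tau}$ has an absolutely continuous norm because ``its defining $\ell^\tau$-sum \dots and its inner $\ell^r$-sum are over finite indices.'' The index sets are $j\in\mathbb{Z}$ and $m\in\mathbb{Z}^n$, which are infinite, and the hypotheses of Theorem \ref{1524} allow $r=\infty$, in which case $M^{u}_{p,\infty}=M^u_p$ is the classical Morrey space --- the standard example of a {\rm BBF} space \emph{without} an absolutely continuous norm. This failure is also visible in the statement of part (i) itself: only \eqref{eq-main-01} is asserted there, not the limiting identity \eqref{eq-main-02}, whereas a direct application of Theorem \ref{thm-main}(I) along the lines you propose would have yielded both. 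To repair the argument you would need to follow the paper's actual strategy: represent the Morrey-type quasi-norm through localized (weighted) $L^p$ quantities, apply Theorem \ref{thm-wls} (equivalently, Theorems \ref{esti-omega} and \ref{thm-up-ap} with $A_1$ weights produced by the Rubio de Francia algorithm) on each block, and take the outer $\ell^r$/$\ell^\tau$ norms at the end, rather than attempting to feed $M\dot{A}_{p,r}^{u,\tau}$ wholesale into the abstract framework.
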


\begin{proof}
Repeating the proof of
\cite[Theorem 5.6]{lyyzz24}
with Proposition 3.10 therein replaced by Theorem
\ref{thm-wls} here,
we obtain the desired results.
This finishes the proof of Theorem \ref{1524}.
\end{proof}

\begin{remark}
\begin{enumerate}[{\rm (i)}]
\item Theorem \ref{1524}(i) when $k=1$ coincides with
\cite[Theorems 5.6]{lyyzz24} and when $k\in \mathbb{N}\cap [2,\infty)$
is new.

\item To the best of our knowledge, Theorem \ref{1524}(ii) is new.

\item Both (iii) and (iv) of Theorem \ref{1524} when
$k=1$ coincide with \cite[Theorem 5.9]{lyyzz24}
and when $k\in \mathbb{N}\cap [2,\infty)$ are new.
\end{enumerate}
\end{remark}

\subsection{Local and Global Generalized Herz Spaces}\label{ss-lgghs}

Recall that the classical Herz space was originally
introduced by Herz \cite{herz} to
study the Bernstein theorem on absolutely
convergent Fourier transforms.
Recently, Rafeiro and Samko \cite{rs2020}
introduced the local and the global generalized Herz spaces
(see Definition \ref{gh})
which generalize the classical
Herz spaces and generalized Morrey type spaces.
For more studies on Herz spaces,
we refer to \cite{gly1998,hy1999,hwyy2023,ly1996,
lyh2320,rs2020,zyz2022}.

Let $\mathbb{R}_+:=(0,\infty)$
and $\omega$ be a nonnegative function on $\mathbb{R}_+$.
Then the function $\omega$ is said to be
\emph{almost increasing}
(resp. \emph{almost decreasing})
on $\mathbb{R}_+$ if there exists
a constant $C\in[1,\infty)$ such that,
for any $t,\tau\in\mathbb{R}_+$ satisfying
$t\leq\tau$ (resp. $t\geq\tau$),
$$\omega(t)\leq C\omega(\tau).$$

\begin{definition}
The \emph{function class} $M(\mathbb{R}_+)$
is defined to be the set of all positive functions
$\omega$ on $\mathbb{R}_+$ such that, for any $0<\delta<N<\infty$,
$$0<\inf_{t\in(\delta,N)}\omega(t)\leq\sup_
{t\in(\delta,N)}\omega(t)<\infty$$ and
there exist four constants $\alpha_{0},
\beta_{0},\alpha_{\infty},\beta_{\infty}\in\mathbb{R}$ such that
\begin{enumerate}
\item[(i)] for any $t\in(0,1]$,
$\omega(t)t^{-\alpha_{0}}$ is almost increasing and
$\omega(t)t^{-\beta_{0}}$ is almost decreasing;
\item[(ii)] for any $t\in[1,\infty)$,
$\omega(t)t^{-\alpha_{\infty}}$ is
almost increasing and $\omega(t)t^{-\beta_{\infty}}$ is
almost decreasing.
\end{enumerate}
\end{definition}

We now present the Matuszewska--Orlicz indices
as follows, which were introduced in
\cite{MO,mo65} and characterize the properties of functions at
origin and infinity (see also \cite{lyh2320}).

\begin{definition}
Let $\omega$ be a positive function on $\mathbb{R}_+$. Then
the \emph{Matuszewska--Orlicz indices}
$m_0(\omega)$, $M_0(\omega)$,
$m_\infty(\omega)$, and $M_\infty(\omega)$ of
$\omega$ are defined, respectively, by setting, for
any $h\in(0,\infty)$,
$$m_0(\omega):=\sup_{t\in(0,1)}
\frac{\log\left[\limsup\limits_{h\to0^+}\frac{\omega(ht)}
{\omega(h)}\right]}{\log t},\ M_0(\omega):=\inf_{t\in(0,1)}\frac{\log
\left[
\liminf\limits_{h\to0^+}
\frac{\omega(ht)}{\omega(h)}\right]}{\log t},$$
$$
m_{\infty}(\omega):=\sup_{t\in(1,\infty)}
\frac{\log\left[\liminf\limits_{h\to\infty}
\frac{\omega(ht)}{\omega(h)}\right]}{\log t},\ \mathrm{and}\ M_\infty(\omega)
:=\inf_{t\in(1,\infty)}\frac{\log\left[\limsup\limits_
{h\to\infty}\frac{\omega(ht)}{\omega(h)}\right]}{\log t}.
$$
\end{definition}

The following concept of generalized Herz spaces
was originally introduced by
Rafeiro and Samko in \cite[Definition 2.2]{rs2020}
(see also \cite{lyh2320}).

\begin{definition}\label{gh}
Let $p,r\in(0,\infty]$ and $\omega\in M(\mathbb{R}_+)$.
\begin{enumerate}
\item[{\rm(i)}] Let $\xi\in\mathbb{R}^n$.
The \emph{local generalized Herz space}
$\dot{\mathcal{K}}^{p,r}_{\omega,\xi}$
is defined to be the set of all
$f\in L^p_{\rm loc}(\mathbb{R}^n\setminus\{\xi\})$ such that
\begin{equation*}
\|f\|_{\dot{\mathcal{K}}^{p,r}_{\omega,\xi}
}:=\left\{\sum_{k\in\mathbb{Z}}
\left[\omega(2^{k})\right]^{r}
\left\|f{\bf 1}_{B({\bf 0},2^{k})\setminus B({\bf 0},2^{k-1})}
\right\|_{L^{p}}^{r}\right\}^{\frac{1}{r}}<\infty.
\end{equation*}
\item[{\rm(ii)}] The
\emph{global generalized
Herz space} $\dot{\mathcal{K}}^{p,r}_{\omega}$
is defined to be the set of all
$f\in L^p_{\rm loc}$ such that
$\|f\|_{\dot{\mathcal{K}}^{p,r}_{\omega}}
:=\sup_{\xi\in\mathbb{R}^n}
\|f\|_{\dot{\mathcal{K}}^{p,r}_{\omega,\xi}}<\infty$.
\end{enumerate}
\end{definition}

Let $k\in\mathbb{N}$. When
$X:=\dot{\mathcal{K}}^{p,r}_{\omega,\xi}$
or $X:=\dot{\mathcal{K}}^{p,r}_{\omega}$,
we simply write, respectively,
$$\dot{W}^{k}\dot{\mathcal{K}}^{p,r}_{\omega,\xi}
:=\dot{W}^{k,X}\ \text{or}\
\dot{W}^{k}\dot{\mathcal{K}}^{p,r}_{\omega}
:=\dot{W}^{k,X}.$$

\begin{theorem}\label{herz}
Let $k\in \mathbb{N}$, $p,r\in[1,\infty)$, $q\in(0,\infty)$ satisfy
$n(\frac1p-\frac1q)<k$,
$\gamma\in\Gamma_{p,q}$,
and $\omega\in M(\mathbb{R}_+)$
satisfy
\begin{align}\label{e-MR}
-\frac np<m_0(\omega)\le M_0(\omega)<\frac n{p'}
\ and \
-\frac np<m_\infty(\omega)\le M_\infty(\omega)<\frac n{p'}.
\end{align}
\begin{enumerate}[{\rm(i)}]
\item For any $\xi\in\mathbb{R}^n$
and $f\in \dot{W}^k\dot{\mathcal{K}}
^{p,r}_{\omega,\xi}$,
\eqref{eq-main-01} and \eqref{eq-main-02} hold with
$X:=\dot{\mathcal{K}}^{p,r}_{\omega,\xi}$.

\item For any $f\in \dot{W}^k\dot{\mathcal{K}}
^{p,r}_{\omega}$,
\begin{align*}
\sup_{\lambda\in(0,\infty)}\lambda
\left\|\left[\int_{\mathbb{R}^n}
{\bf 1}_{E_{\lambda,\frac{\gamma}{q},k}[f]}(\cdot,y)
|\cdot-y|^{\gamma-n}\,dy\right]^{\frac1q}\right\|_
{\dot{\mathcal{K}}^{p,r}_{\omega}}
\sim\left\|\nabla^k f\right\|_
{\dot{\mathcal{K}}^{p,r}_{\omega}},
\end{align*}
where
the positive equivalence constants are independent of $f$.
\end{enumerate}
\end{theorem}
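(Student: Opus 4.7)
The plan is to deduce Theorem \ref{herz} directly from Theorem \ref{thm-main}(I) by verifying, under the Matuszewska--Orlicz index conditions \eqref{e-MR}, that both $X:=\dot{\mathcal{K}}^{p,r}_{\omega,\xi}$ and $X:=\dot{\mathcal{K}}^{p,r}_{\omega}$ satisfy all the structural assumptions required there, together with the absolute continuity of the norm for the limiting formula \eqref{eq-main-02}. Thus the theorem is essentially a translation result, and there are no essentially new estimates on higher-order differences to establish.

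First, I would recall from the real-variable theory of generalized Herz spaces developed by Rafeiro and Samko \cite{rs2020} and systematized in \cite{lyh2320} that, under \eqref{e-MR}, both $\dot{\mathcal{K}}^{p,r}_{\omega,\xi}$ and $\dot{\mathcal{K}}^{p,r}_{\omega}$ are ball Banach function spaces. Next, I would choose an auxiliary exponent $p_0\in(1,p)$ with $p_0$ sufficiently close to $p$ so that the shifted Matuszewska--Orlicz indices of $\omega^{p_0}$ still satisfy the analogues of \eqref{e-MR}, so that the convexifications $[\dot{\mathcal{K}}^{p,r}_{\omega,\xi}]^{1/p_0}$ and $[\dot{\mathcal{K}}^{p,r}_{\omega}]^{1/p_0}$ are again BBF spaces. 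The key technical input, which is available in the Herz-space theory (see, e.g., the relevant chapters of \cite{lyh2320}), is that the associate spaces $([\dot{\mathcal{K}}^{p,r}_{\omega,\xi}]^{1/p_0})'$ and $([\dot{\mathcal{K}}^{p,r}_{\omega}]^{1/p_0})'$ are themselves (local or global) generalized Herz spaces whose parameters still fall within the range guaranteeing the boundedness of the Hardy--Littlewood maximal operator $\mathcal{M}$. This verifies the extrapolation hypothesis of Theorem \ref{thm-main}(I).

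For part (i), I additionally need absolute continuity of the norm on $\dot{\mathcal{K}}^{p,r}_{\omega,\xi}$ in order to invoke the limiting identity \eqref{eq-main-02}. Since $p,r\in[1,\infty)$ and the defining quasi-norm is a weighted $\ell^r$ sum of $L^p$-norms over annuli $B({\bf 0},2^k)\setminus B({\bf 0},2^{k-1})$, the classical argument for $L^p$ (dominated convergence combined with $\sigma$-finiteness) carries over directly to $\dot{\mathcal{K}}^{p,r}_{\omega,\xi}$, yielding the required absolute continuity. With all hypotheses of Theorem \ref{thm-main}(I) thus checked, both \eqref{eq-main-01} and \eqref{eq-main-02} with $X:=\dot{\mathcal{K}}^{p,r}_{\omega,\xi}$ follow at once.

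For part (ii), absolute continuity of the norm may fail on $\dot{\mathcal{K}}^{p,r}_{\omega}$ because the supremum over $\xi\in\mathbb{R}^n$ destroys $\sigma$-finiteness, so the limiting formula \eqref{eq-main-02} is not asserted. Fortunately, only the equivalence \eqref{eq-main-01} is claimed, and this follows from Theorem \ref{thm-main}(I)(i), which does not require absolute continuity. Thus the only delicate step is the verification of the maximal-operator bound on $([\dot{\mathcal{K}}^{p,r}_{\omega}]^{1/p_0})'$, and I expect this to be the main technical obstacle, handled by invoking the associate-space description and mapping properties of $\mathcal{M}$ on global generalized Herz spaces established in \cite{lyh2320}. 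Once this is in place, Theorem \ref{thm-main}(I)(i) with $X:=\dot{\mathcal{K}}^{p,r}_{\omega}$ immediately yields the desired equivalence in (ii).
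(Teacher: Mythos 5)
Your treatment of part (i) matches the paper's route: one verifies that the local space $\dot{\mathcal{K}}^{p,r}_{\omega,\xi}$ satisfies the hypotheses of Theorem \ref{thm-main} (the paper simply cites the verification carried out in the proof of \cite[Theorem 4.15]{zyy2023bbm0}) and then invokes Theorem \ref{thm-main} directly. Two caveats: your convexification exponent $p_0\in(1,p)$ is unavailable when $p=1$, which the theorem allows; in that endpoint case one must instead check the hypotheses of Theorem \ref{thm-main}(II) (endpoint boundedness of $\mathcal{M}$ on $X'$ via a sequence $\theta_m\uparrow 1$), not part (I).

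The genuine gap is in part (ii). You propose to verify the extrapolation hypothesis of Theorem \ref{thm-main}(I) \emph{directly} for the global space $\dot{\mathcal{K}}^{p,r}_{\omega}$, asserting that $([\dot{\mathcal{K}}^{p,r}_{\omega}]^{1/p_0})'$ is again a generalized Herz space on which $\mathcal{M}$ is bounded. This is not available: the associate space of the global generalized Herz space (and of its convexifications) is a \emph{block space}, not a Herz space, and the boundedness of the Hardy--Littlewood maximal operator on these block spaces is, to the best of current knowledge, open; this is precisely why the global case cannot be fed into Theorem \ref{thm-main} as a black box. The paper circumvents this by exploiting the structure $\|f\|_{\dot{\mathcal{K}}^{p,r}_{\omega}}=\sup_{\xi\in\mathbb{R}^n}\|f\|_{\dot{\mathcal{K}}^{p,r}_{\omega,\xi}}$: one applies the already-established local equivalence \eqref{eq-main-01} for each $\xi$, checks that the implicit constants are independent of $\xi$ (they are, since $\dot{\mathcal{K}}^{p,r}_{\omega,\xi}$ is a translate of $\dot{\mathcal{K}}^{p,r}_{\omega,{\bf 0}}$ and the relevant operator norms are translation-invariant), and then takes the supremum over $\xi$ on both sides; this is what ``repeating the proof of \cite[Theorem 5.15]{lyyzz24}'' amounts to. Note also that this explains why only the sup-equivalence, and not the limiting formula \eqref{eq-main-02}, is claimed for the global space: the supremum over $\xi$ does not commute with the limit $\lambda\to L_\gamma$. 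Your proof of part (ii) should therefore be replaced by this reduction to part (i).
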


\begin{proof}
From the proof of \cite[Theorem 4.15]{zyy2023bbm0},
we infer that $X:=\dot{\mathcal{K}}^{p,r}_{\omega,\xi}$
satisfies all the assumptions of Theorem \ref{thm-main}.
Applying this and Theorem \ref{thm-main}, we obtain (i)
and (ii).
Repeating the proof of \cite[Theorem 5.15]{lyyzz24}
with (1.6) therein replaced by \eqref{eq-main-01} here, we find
that (iii) holds, which then completes the proof of
Theorem \ref{herz}.
\end{proof}

\begin{theorem}\label{tm-c-hrez}
Let $k\in \mathbb{N}$, $p,r\in(1,\infty)$, $q\in (1,\infty)$
satisfy
$n(\frac1p-\frac1q)<k$,
$\gamma\in\mathbb{R}\setminus \{0\}$,
and $\omega\in M(\mathbb{R}_+)$
satisfy \eqref{e-MR}. Then Theorem \ref{t-chaWkX} holds
with $X:=\dot{\mathcal{K}}^{p,r}_{\omega,\xi}$
or $X:=\dot{\mathcal{K}}^{p,r}_{\omega}$.
\end{theorem}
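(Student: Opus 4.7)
The plan is to apply Theorem~\ref{t-chaWkX} directly, with $X$ taken to be either $\dot{\mathcal{K}}^{p,r}_{\omega,\xi}$ or $\dot{\mathcal{K}}^{p,r}_{\omega}$. This reduces the proof to checking the hypotheses of Theorem~\ref{t-chaWkX}: that $X$ is a {\rm BBF} space; that there exists $p_0\in(1,\infty)$ with $X^{1/p_0}$ a {\rm BBF} space, $\mathcal{M}$ bounded on $(X^{1/p_0})'$, and $n(1/p_0-1/q)<k$; and that both $X$ and $X'$ have absolutely continuous norms.

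First I would invoke the same argument used in the proof of Theorem~\ref{herz}, which relies on \cite[Theorem 4.15]{zyy2023bbm0}, to confirm that $X$ is a {\rm BBF} space and to produce a suitable exponent $p_0\in(1,\infty)$ for which $X^{1/p_0}$ is a {\rm BBF} space and $\mathcal{M}$ is bounded on $(X^{1/p_0})'$. Under the Matuszewska--Orlicz index conditions \eqref{e-MR}, the admissible range of $p_0$ forms an open interval, so $p_0$ can be chosen close enough to $p$ so that $n(1/p_0-1/q)<k$ remains valid; this is exactly the transfer of hypotheses from Theorem~\ref{thm-main} to Theorem~\ref{t-chaWkX}, and requires no new ingredient beyond what has already been assembled in the proof of Theorem~\ref{herz}.

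Next, for the absolute continuity requirements, I would appeal to the real-variable theory of generalized Herz spaces developed in \cite{lyh2320}. For the local space $\dot{\mathcal{K}}^{p,r}_{\omega,\xi}$ with $p,r\in(1,\infty)$ subject to \eqref{e-MR}, the norm is absolutely continuous because both the $L^p$ integral over each dyadic annulus around $\xi$ and the $\ell^r$-summation in the annular index are governed by exponents strictly inside $(1,\infty)$, which forces $\|f\mathbf{1}_{E_j}\|_X\to0$ whenever $\mathbf{1}_{E_j}\to0$ almost everywhere. The same property passes to the associate space $(\dot{\mathcal{K}}^{p,r}_{\omega,\xi})'$, which itself has the structure of a generalized Herz space with dual parameters $p'$, $r'$ and reciprocal weight. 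For the global space $\dot{\mathcal{K}}^{p,r}_{\omega}$, one uses the description of its norm and of its associate norm established in \cite{lyh2320} under \eqref{e-MR}, which reveal that absolute continuity does hold in this framework despite the Morrey-type supremum over $\xi\in\mathbb{R}^n$.

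The main obstacle I foresee is precisely the absolute continuity for the global generalized Herz space $\dot{\mathcal{K}}^{p,r}_{\omega}$, since the $\sup_{\xi\in\mathbb{R}^n}$ in Definition~\ref{gh}(ii) gives the norm a Morrey-type flavour, and for plain Morrey spaces absolute continuity of the norm typically fails. The crucial point is to exploit the index restrictions \eqref{e-MR} together with the $\ell^r$-structure in the definition of each $\dot{\mathcal{K}}^{p,r}_{\omega,\xi}$ to circumvent this obstruction, using the identification of the associate space and the density results already proved in \cite{lyh2320}. Once absolute continuity for both $X$ and $X'$ is confirmed, all hypotheses of Theorem~\ref{t-chaWkX} are met and the desired characterization of $\dot{W}^{k}\dot{\mathcal{K}}^{p,r}_{\omega,\xi}$ and $\dot{W}^{k}\dot{\mathcal{K}}^{p,r}_{\omega}$ follows immediately from that theorem.
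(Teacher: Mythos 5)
Your treatment of the local space $\dot{\mathcal{K}}^{p,r}_{\omega,\xi}$ is sound and matches the intended route: under \eqref{e-MR} with $p,r\in(1,\infty)$ one can produce $p_0\in(1,\infty)$ close to $p$ with $X^{1/p_0}$ a {\rm BBF} space and $\mathcal{M}$ bounded on $(X^{1/p_0})'$, still satisfying $n(\frac{1}{p_0}-\frac1q)<k$, and both the local space and its associate space (again a local generalized Herz space with conjugate parameters) have absolutely continuous norms, so Theorem \ref{t-chaWkX} applies directly.

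The gap is in the global case. Your claim that absolute continuity ``does hold in this framework despite the Morrey-type supremum'' is exactly the point that fails: under \eqref{e-MR} the global space $\dot{\mathcal{K}}^{p,r}_{\omega}$ is of Morrey type (this is the content of Rafeiro--Samko's ``Herz spaces meet Morrey type spaces''), its norm is \emph{not} absolutely continuous in general (for $\mathbf{1}_{E_j}\to 0$ a.e.\ the convergence $\|f\mathbf{1}_{E_j}\|_{\dot{\mathcal{K}}^{p,r}_{\omega,\xi}}\to 0$ is not uniform in $\xi$), and its associate space is a block-type space rather than a Herz space with dual parameters. Hence Theorem \ref{t-chaWkX} cannot be invoked with $X:=\dot{\mathcal{K}}^{p,r}_{\omega}$. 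The paper's proof (via \cite[Theorem 5.15]{lyyzz24}) instead runs the characterization through the family of local spaces $\{\dot{\mathcal{K}}^{p,r}_{\omega,\xi}\}_{\xi\in\mathbb{R}^n}$ with constants independent of $\xi$ (which the abstract machinery provides, since the relevant operator norms depend only on the indices of $\omega$ and not on $\xi$) and then takes the supremum over $\xi$ on both sides; both the necessity and the sufficiency in the global statement follow from the uniform local statement in this way. You should replace your direct verification for the global space by this reduction; otherwise the argument as written does not establish the global half of the theorem.
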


\begin{proof}
Repeating the proof of \cite[Theorem 5.15]{lyyzz24}
with (1.6) therein replaced by Theorem \ref{t-chaWkX} here, we obtain
Theorem \ref{tm-c-hrez}.
\end{proof}

\begin{theorem}\label{SGN-herz}
Let $k\in \mathbb{N}$, $p,r\in[1,\infty)$, $\gamma\in\Gamma_{p,1}$,
$\omega\in M(\mathbb{R}_+)$
satisfy \eqref{e-MR},
$\xi\in\mathbb{R}^n$,
$X\in\{\dot{\mathcal{K}}^{p,r}_{\omega,\xi},
\dot{\mathcal{K}}^{p,r}_{\omega}\}$,
and $f\in\dot{W}^{k,X}$.
\begin{enumerate}
\item[{\rm(i)}] Let $q_0\in[1,\infty]$,
$s\in(0,1)$, and $q\in[1,q_0]$ satisfy
$\frac 1q=\frac{1-s}{q_0}+s$.
If $q_0\in [1,\infty)$, then
\eqref{eq-in-kks} holds.
If $q_0=\infty$, then \eqref{eq-in-kks1}
holds.

\item[{\rm(ii)}] Let $\eta\in(0,1)$ and
$0\leq s_0<s<1<q<q_0<\infty$
satisfy \eqref{eq-app-ss}. Then \eqref{eq-app-dd} holds.
\end{enumerate}
\end{theorem}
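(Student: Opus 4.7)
The plan is to deduce Theorem \ref{SGN-herz} as a direct consequence of Theorem \ref{thm-fsGN} by verifying its hypotheses for the two concrete choices $X=\dot{\mathcal{K}}^{p,r}_{\omega,\xi}$ and $X=\dot{\mathcal{K}}^{p,r}_{\omega}$. Concretely, I need to produce an exponent $\widetilde p\in[1,\infty)$ such that $X^{1/\widetilde p}$ is a {\rm BBF} space and the Hardy--Littlewood maximal operator $\mathcal M$ is bounded on $(X^{1/\widetilde p})'$, and in addition, for part (i) with $q_0\in[1,\infty)$, I need to confirm that $X^{q_0}$ is itself a {\rm BBF} space so that the inequality \eqref{eq-in-kks} is available.

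First, I would note that the first set of hypotheses on $X$ has effectively been verified already in the course of proving Theorems \ref{herz} and \ref{tm-c-hrez}: under \eqref{e-MR}, the argument in \cite[Theorem 4.15]{zyy2023bbm0} produces an admissible $\widetilde p\in [1,\infty)$ for both the local and the global generalized Herz spaces, so the assumption ``there exists $p\in[1,\infty)$ such that $X^{1/p}$ is a {\rm BBF} space and $\mathcal M$ is bounded on $(X^{1/p})'$'' from Theorem \ref{thm-fsGN} is satisfied. In particular, we do \emph{not} need to route the argument through the $p=1$ endpoint case of Theorem \ref{thm-main}(II). Second, for the convexification condition, I would use the identification
\begin{align*}
\left(\dot{\mathcal{K}}^{p,r}_{\omega,\xi}\right)^{q_0}
=\dot{\mathcal{K}}^{pq_0,rq_0}_{\omega^{q_0},\xi}
\quad\text{and}\quad
\left(\dot{\mathcal{K}}^{p,r}_{\omega}\right)^{q_0}
=\dot{\mathcal{K}}^{pq_0,rq_0}_{\omega^{q_0}},
\end{align*}
together with the scaling of Matuszewska--Orlicz indices $m_0(\omega^{q_0})=q_0m_0(\omega)$ (and analogously for $M_0$, $m_\infty$, $M_\infty$), to reduce the {\rm BBF} property of $X^{q_0}$ to showing $\omega^{q_0}\in M(\mathbb R_+)$ and that characteristic functions of balls lie in the space, which follow from the definition of $M(\mathbb R_+)$ and basic properties of generalized Herz spaces in \cite{lyh2320,rs2020}.

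With these verifications in hand, part (i) when $q_0\in[1,\infty)$ is immediate from \eqref{eq-in-kks} of Theorem \ref{thm-fsGN}, part (i) when $q_0=\infty$ is immediate from \eqref{eq-in-kks1}, and part (ii) is immediate from \eqref{eq-app-dd}. For the global case $X=\dot{\mathcal{K}}^{p,r}_{\omega}$, I would additionally invoke the ``supremum over $\xi$'' structure to transfer these inequalities from the local spaces to the global ones, in the same spirit as the final step in the proof of Theorem \ref{herz}(ii); since $\|\cdot\|_{\dot{\mathcal{K}}^{p,r}_{\omega}}=\sup_{\xi\in\mathbb{R}^n}\|\cdot\|_{\dot{\mathcal{K}}^{p,r}_{\omega,\xi}}$, one takes the supremum over $\xi$ on both sides of the local inequalities, using that all implicit constants produced by Theorem \ref{thm-fsGN} depend only on the abstract {\rm BBF} structure and not on $\xi$.

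The main obstacle is the convexification step: strictly speaking, one must verify that the index conditions required to invoke \cite[Theorem 4.15]{zyy2023bbm0} (or the equivalent statement used in the proofs of Theorems \ref{herz} and \ref{tm-c-hrez}) still apply to $\dot{\mathcal{K}}^{pq_0,rq_0}_{\omega^{q_0}}$ and its associate space, rather than merely to $\dot{\mathcal{K}}^{p,r}_{\omega}$. The scaling relations for the Matuszewska--Orlicz indices reduce this to an arithmetic check on the constraints in \eqref{e-MR}, and the remaining routine density/approximation issues are absorbed into Theorem \ref{thm-fsGN} itself; hence no new analytic input beyond the higher-order BSVY machinery developed in Sections \ref{sec-AP-es} and \ref{sec-Formulae} is required.
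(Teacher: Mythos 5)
Your strategy coincides with the paper's: Theorem \ref{SGN-herz} is obtained by feeding the local generalized Herz spaces into Theorem \ref{thm-fsGN} after checking its hypotheses (the paper compresses this into ``repeat the proof of \cite[Theorem 5.15]{lyyzz24} with Theorem 1.4 therein replaced by Theorem \ref{thm-fsGN}''), and the global case is handled by taking the supremum over $\xi$ of the uniform local estimates, exactly as you describe. Two details in your verification need repair. First, the convexification identity is
\begin{align*}
\left(\dot{\mathcal{K}}^{p,r}_{\omega,\xi}\right)^{q_0}
=\dot{\mathcal{K}}^{pq_0,\,rq_0}_{\omega^{1/q_0},\,\xi},
\end{align*}
not with weight $\omega^{q_0}$: since $\||f|^{q_0}\mathbf{1}_{E}\|_{L^{p}}=\|f\mathbf{1}_{E}\|_{L^{pq_0}}^{q_0}$ for any measurable set $E$, the weight that survives after taking the $(1/q_0)$th power of $\||f|^{q_0}\|_X$ is $\omega^{1/q_0}$, whose Matuszewska--Orlicz indices are those of $\omega$ \emph{divided} by $q_0$ rather than multiplied; the arithmetic check against \eqref{e-MR} still goes through, but with these exponents. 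Second, your parenthetical claim that one never needs the $p=1$ alternative in the hypotheses of Theorem \ref{thm-fsGN} (namely the assumptions of Theorem \ref{thm-main}(II)) is not justified when the Herz integrability exponent $p$ equals $1$: in that case $X^{1/\widetilde p}$ is a {\rm BBF} space essentially only for $\widetilde p=1$, and, as the paper notes just before Definition \ref{ass-endpoint}, the boundedness of $\mathcal{M}$ on $X'$ itself is unknown for such endpoint spaces; one must instead invoke the endpoint-boundedness route, whose hypotheses are precisely what the citation to \cite[Theorem 4.15]{zyy2023bbm0} in the proof of Theorem \ref{herz} supplies. With these two corrections your argument is complete and is the same as the paper's.
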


\begin{proof}
Repeating the proof of \cite[Theorem 5.15]{lyyzz24} with Theorem
1.4 therein replaced by Theorem \ref{thm-fsGN} here, we conclude
the desired conclusions,
which completes the proof of Theorem \ref{SGN-herz}.
\end{proof}

\begin{remark}
\begin{enumerate}[{\rm (i)}]
\item
Theorem \ref{herz} when $k=1$ improves \cite[Theorem 5.15]{lyyzz24}
via removing some extra assumptions on $\gamma$
and when $k\in \mathbb{N}\cap [2,\infty)$ is new.

\item To the best of our knowledge, Theorem \ref{tm-c-hrez} is
completely new.

\item Theorem \ref{SGN-herz} when $k=1$ coincides with \cite[Theorem
5.16]{lyyzz24}
and
when $k\in \mathbb{N}\cap [2,\infty)$
is new.
\end{enumerate}
\end{remark}

\subsection{Mixed-Norm Lebesgue Spaces}\label{ssec-mxlp}

For a given vector $\vec{r}:=(r_1,\ldots,r_n)
\in(0,\infty]^n$, the \emph{mixed-norm Lebesgue
space $L^{\vec{r}}$} is defined to be the
set of all $f\in\mathscr{M}$ with
the following finite quasi-norm
\begin{equation*}
\|f\|_{L^{\vec{r}}}:=\left\{\int_{\mathbb{R}}
\cdots\left[\int_{\mathbb{R}}\left|f(x_1,\ldots,
x_n)\right|^{r_1}\,dx_1\right]^{\frac{r_2}{r_1}}
\cdots\,dx_n\right\}^{\frac{1}{r_n}},
\end{equation*}
where the usual modifications are made when $r_i=
\infty$ for some $i\in\{1,\ldots,n\}$.
Throughout this subsection, we always let
$r_-:=\min\{r_1, \ldots, r_n\}$ for any vector
$\vec{r}:=(r_1,\ldots,r_n)
\in(0,\infty]^n$.
The study of mixed-norm Lebesgue spaces
can be traced back to H\"ormander \cite{h1960}
and Benedek and Panzone \cite{bp1961}.
For more studies on mixed-norm Lebesgue spaces,
we refer to \cite{cgn2017,hy2021}.
Moreover,
when $\vec{r}\in(0,\infty)^n$,
from the definition of $L^{\vec{r}}$,
we easily deduce that
$L^{\vec{r}}$
is a {\rm BQBF} space.
But $L^{\vec{r}}$ may not be a quasi-Banach function
space
(see, for instance, \cite[Remark 7.20]{zwyy2021}).
When $X:=L^{\vec{r}}$, we simply write
$\dot{W}^{k,\vec{r}}:=\dot{W}^{k,X}$.

\begin{theorem}\label{thm-mnls}
Let $k\in{\mathbb{N}}$, $\vec{r}:=(r_1,\ldots,r_n)\in(1,\infty)^n$,
and $\gamma\in \mathbb{R}\setminus \{0\}$.
\begin{enumerate}[{\rm (i)}]
\item Let $q\in (0,\infty)$ satisfy
$n(\frac{1}{r_{-}}-\frac{1}{q})<k$. Then, for any $f\in
\dot{W}^{k,\vec{r}}$, \eqref{eq-main-01} and
\eqref{eq-main-02} hold with $X:=L^{\vec{r}}$.

\item
Let $q\in (1,\infty)$ satisfy $n(\frac{1}{r_{-}}-\frac{1}{q})<k$. Then
Theorem \ref{t-chaWkX} holds with
$X:=L^{\vec{r}}$.

\item
Let $q_{0}\in[1,\infty]$,
$s\in(0,1)$, and $q\in[1,q_{0}]$ satisfy
$\frac{1}{q}=\frac{1-s}{q_{0}}+s$.
If $q_0\in [1,\infty)$,
then, for any $f\in \dot{W}^{k,\vec{r}}$,
\eqref{eq-in-kks} holds
with $X:=L^{\vec{r}}$.
If $q_0=\infty$, then, for any $f\in
\dot{W}^{k,\vec{r}}$,
\eqref{eq-in-kks1} holds with $X:=L^{\vec{r}}$.

\item Let $\eta\in(0,1)$ and
$0\leq s_0<s<1<q<q_0<\infty$ satisfy \eqref{eq-app-ss}.
Then, for any $f\in \dot{W}^{k,\vec{r}}$,
\eqref{eq-app-dd} holds with $X:=L^{\vec{r}}$.
\end{enumerate}
\end{theorem}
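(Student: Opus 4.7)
The plan is to verify that the mixed-norm Lebesgue space $X:=L^{\vec{r}}$ satisfies the abstract hypotheses of Theorems \ref{thm-main}, \ref{t-chaWkX}, and \ref{thm-fsGN} for a suitably chosen auxiliary exponent $p$, after which the four conclusions follow as immediate specializations of those general theorems.

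The first step will be to select $p$. Since the assumption $\gamma\in\mathbb{R}\setminus\{0\}$ forces me into Case (I) of Theorem \ref{thm-main} with $p\in(1,\infty)$ (so that $\Gamma_{p,q}=\mathbb{R}\setminus\{0\}$), and since the condition $n(\tfrac1p-\tfrac1q)<k$ is open in $p$, I will use $r_->1$ together with the hypothesis $n(\tfrac1{r_-}-\tfrac1q)<k$ to fix some $p\in(1,r_-)$ for which $n(\tfrac1p-\tfrac1q)<k$ still holds. With this choice, $\vec{r}/p$ lies coordinatewise in $(1,\infty)^n$, and so does its conjugate $(\vec{r}/p)'$.

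Next I will check the structural hypotheses. The fact that $L^{\vec r}$ is a ball Banach function space, together with the identification $X^{1/p}=L^{\vec{r}/p}$ (again a ball Banach function space because $\vec{r}/p\in(1,\infty)^n$) and the associate-space identity $(L^{\vec{r}/p})'=L^{(\vec{r}/p)'}$, is standard from Benedek--Panzone \cite{bp1961}; the boundedness of $\mathcal{M}$ on $L^{(\vec{r}/p)'}$ follows by iterating the one-dimensional Hardy--Littlewood inequality in each coordinate, since every component of $(\vec{r}/p)'$ lies in $(1,\infty)$. These facts place us inside the hypotheses of Theorem \ref{thm-main}(I)(i), which immediately gives \eqref{eq-main-01} with $X=L^{\vec{r}}$ and hence the first half of (i). To upgrade to the limiting identity \eqref{eq-main-02} and to the characterization in (ii), I will additionally note that $L^{\vec r}$ and $(L^{\vec r})'=L^{\vec{r}'}$ both have absolutely continuous norms, which is a direct consequence of the dominated convergence theorem since all entries of $\vec r$ and $\vec{r}'$ are finite; these then supply the remaining hypotheses of Theorem \ref{thm-main}(I)(ii) and Theorem \ref{t-chaWkX}, yielding (i) and (ii). For (iii) and (iv), the sole extra requirement from Theorem \ref{thm-fsGN} is that $X^{q_0}=L^{q_0\vec{r}}$ be a ball Banach function space when $q_0\in[1,\infty)$, which is obvious from $q_0\vec r\in[1,\infty)^n$.

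There is no genuine obstacle in this argument; it is a verification exercise. The only subtlety worth flagging is the need to choose $p$ \emph{strictly} less than $r_-$: if one set $p=r_-$, some component of $(\vec{r}/p)'$ would equal $\infty$ and the boundedness of $\mathcal{M}$ on $(X^{1/p})'$ would break down. The openness of the condition $n(\tfrac1p-\tfrac1q)<k$ in $p$, combined with the hypothesis $n(\tfrac1{r_-}-\tfrac1q)<k$, is precisely what makes such a strict choice possible, and this is the mechanism by which the sharp-range assumption on $\vec r$ and $q$ in the statement feeds into the abstract framework.
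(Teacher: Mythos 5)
Your proposal is correct and follows essentially the same route as the paper: the paper's proof simply cites the verification of the hypotheses of Theorems \ref{thm-main}, \ref{t-chaWkX}, and \ref{thm-fsGN} from the proof of \cite[Theorem 5.5]{ZYY23} and then applies those theorems, while you carry out that verification explicitly (choice of $p\in(1,r_-)$ with $n(\frac1p-\frac1q)<k$, the identifications $X^{1/p}=L^{\vec r/p}$ and $(L^{\vec r/p})'=L^{(\vec r/p)'}$, boundedness of $\mathcal{M}$ via iterated one-dimensional maximal inequalities, and absolute continuity of the norms of $L^{\vec r}$ and $L^{\vec r\,'}$). Your remark on why $p$ must be strictly less than $r_-$ is exactly the right subtlety.
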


\begin{proof}
From the proof of \cite[Theorem 5.5]{ZYY23}, it follows that
all the assumptions of Theorems \ref{thm-main}, \ref{t-chaWkX}, and
\ref{thm-fsGN} are satisfied. Applying
this and Theorems \ref{thm-main}, \ref{t-chaWkX}, and
\ref{thm-fsGN}, we obtain the desired
conclusions. This finishes the proof of Theorem \ref{thm-mnls}.
\end{proof}

\begin{remark}
\begin{enumerate}[{\rm (i)}]
\item Theorem \ref{thm-mnls}(i) when $k=1$ improves
\cite[Theorem 5.5]{ZYY23} because Theorem \ref{thm-mnls}(i)
removes some extra assumptions on $\gamma$
in \cite[Theorem 5.5]{ZYY23}. Furthermore,
Theorem \ref{thm-mnls}(i) when $k\in \mathbb{N} \cap [2,\infty)$ is
completely new.

\item To the best of our knowledge,
Theorem \ref{thm-mnls}(ii) is new.

\item Both (iii) and (iv) of Theorem \ref{thm-mnls}
when $k=1$ coincide with \cite[Theorems 5.6 and 5.7]{ZYY23} and when
$k\in \mathbb{N} \cap [2,\infty)$
are new.
\end{enumerate}
\end{remark}

\subsection{Variable Lebesgue Spaces}\label{ssec-vari-lp}

Let $r:\ \mathbb{R}^n\to(0,\infty)$ be a nonnegative
measurable function,
\begin{equation*}
\widetilde{r}_-:=\underset{x\in\mathbb{R}^n}{
\mathop\mathrm{\,ess\,inf\,}}\,r(x),
\ \ \text{and}\ \
\widetilde{r}_+:=\underset{x\in\mathbb{R}^n}{
\mathop\mathrm{\,ess\,sup\,}}\,r(x).
\end{equation*}
Recall that the \emph{variable Lebesgue space
$L^{r(\cdot)}$} associated with the function
$r:\ \mathbb{R}^n\to(0,\infty)$ is defined to be the set
of all $f\in\mathscr{M}$ with the following finite
quasi-norm
\begin{equation*}
\|f\|_{L^{r(\cdot)}}:=\inf\left\{\lambda
\in(0,\infty):\ \int_{\mathbb{R}^n}\left[\frac{|f(x)|}
{\lambda}\right]^{r(x)}\,dx\le1\right\}.
\end{equation*}
By the definition of $L^{r(\cdot)}$,
we can show that $L^{r(\cdot)}$
is a {\rm BQBF} space
(see, for instance, \cite[Section 7.8]{shyy2017}).
In particular,
when $1\leq\widetilde r_-\le \widetilde r_+<\infty$,
$L^{r(\cdot)}$ is
a Banach function space
in the terminology of Bennett and Sharpley \cite{bs1988}
and hence also a {\rm BBF} space.
For more related results on variable Lebesgue spaces,
we refer to
\cite{cf2013,dhr2009,kr1991,ns2012}.
When $X:=L^{{r}(\cdot)}$,
we simply write
$\dot{W}^{k,r(\cdot)}:=\dot{W}^{k,X}$.

A function $r:\ \mathbb{R}^n\to(0,\infty)$ is said to be
\emph{globally
{\rm log}-H\"older continuous} if there exists
$r_{\infty}\in\mathbb{R}$ and a positive constant $C$ such that, for
any
$x,y\in\mathbb{R}^n$,
\begin{equation*}
|r(x)-r(y)|\le \frac{C}{\log(e+\frac{1}{|x-y|})}
\ \ \text{and}\ \
|r(x)-r_\infty|\le \frac{C}{\log(e+|x|)}.
\end{equation*}

\begin{theorem}\label{thm-vls}
Let $k\in{\mathbb{N}}$, $r$ be globally
log-H\"older continuous, and $1\leq\widetilde{r}_-
\leq\widetilde{r}_+<\infty$.
\begin{enumerate}[{\rm (i)}]
\item Let
$q\in (0,\infty)$ satisfy $n(\frac{1}{\widetilde{r}_-}-\frac{1}{q})<k$
and $\gamma\in \Gamma_{\widetilde{r}_-,q}$.
Then, for any $f\in \dot{W}^{k,r(\cdot)}$,
\eqref{eq-main-01} and \eqref{eq-main-02} hold with
$X:=L^{{r}(\cdot)}$.

\item Let $q\in (1,\infty)$ satisfy
$n(\frac{1}{\widetilde{r}_-}-\frac{1}{q})<k$ and $\gamma\in
\mathbb{R}\setminus \{0\}$. If $\widetilde{r}_- >1$, then Theorem
\ref{t-chaWkX} holds with
$X:=L^{r(\cdot)}$.

\item Let
$q_{0}\in[1,\infty]$,
$s\in(0,1)$,
$q\in[1,q_{0}]$ satisfy $\frac{1}{q}=\frac{1-s}{q_{0}}+s$,
and
$\gamma\in \Gamma_{\widetilde{r}_-,1}$.
If $q_0\in [1,\infty)$, then,
for any $f\in \dot{W}^{k,r(\cdot)}$,
\eqref{eq-in-kks} holds with $X:=L^{{r}(\cdot)}$.
If $q_0=\infty$, then, for any $f\in
\dot{W}^{k,r(\cdot)}$, \eqref{eq-in-kks1}
holds with $X:=L^{{r}(\cdot)}$.

\item Let $\eta\in(0,1)$,
$
0\leq s_0<s<1<q<q_0<\infty
$
satisfy \eqref{eq-app-ss},
and $\gamma\in \Gamma_{\widetilde{r}_-,1}$.
Then, for any $f\in \dot{W}^{k,r(\cdot)}$,
\eqref{eq-app-dd} holds with $X:=L^{{r}(\cdot)}$.
\end{enumerate}
\end{theorem}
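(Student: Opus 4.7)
The plan is to reduce the four statements directly to Theorems \ref{thm-main}, \ref{t-chaWkX}, and \ref{thm-fsGN} by verifying that the variable Lebesgue space $X:=L^{r(\cdot)}$ satisfies all the relevant structural hypotheses under the standing assumptions that $r$ is globally log-H\"older continuous and $1\le \widetilde r_-\le \widetilde r_+<\infty$. First, I would collect the following basic facts from the real-variable theory of $L^{r(\cdot)}$: under these assumptions, $L^{r(\cdot)}$ is a {\rm BBF} space; the identity $(L^{r(\cdot)})^{1/p}=L^{r(\cdot)/p}$ holds for any $p\in [1,\infty)$; the associate space of $L^{r(\cdot)}$ coincides (up to equivalent norms) with $L^{r'(\cdot)}$, where $\frac{1}{r(\cdot)}+\frac{1}{r'(\cdot)}=1$; the condition $\widetilde r_+<\infty$ implies that $L^{r(\cdot)}$ has an absolutely continuous norm; and the celebrated Cruz-Uribe--Diening--Fiorenza--Neugebauer theorem guarantees that $\mathcal{M}$ is bounded on $L^{s(\cdot)}$ whenever $s$ is globally log-H\"older continuous with $1<\widetilde s_-\le \widetilde s_+<\infty$.

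Next, I would verify the convexification hypothesis used throughout. Taking $p:=\widetilde r_-$, the exponent function $r(\cdot)/p$ satisfies $1\le \widetilde{(r/p)}_-\le \widetilde{(r/p)}_+<\infty$ and inherits the global log-H\"older continuity, so $L^{r(\cdot)/p}=X^{1/p}$ is a {\rm BBF} space. For the boundedness of $\mathcal M$ on $(X^{1/p})'=L^{(r(\cdot)/p)'}$ we split into two cases. If $\widetilde r_->1$, then $(r(\cdot)/p)'$ has exponent strictly bounded away from both $1$ and $\infty$ and is log-H\"older continuous, so the above maximal inequality applies directly and $\mathcal M$ is bounded on $(X^{1/p})'$; in particular all the assumptions of Theorem \ref{thm-main}(I) are satisfied. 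If instead $\widetilde r_-=1$ (so $p=1$), I would instead verify the endpoint boundedness assumption of Theorem \ref{thm-main}(II): choose the sequence $\theta_m:=1-1/m\in(0,1)$ and consider $X^{1/\theta_m}=L^{r(\cdot)/\theta_m}$, whose lower exponent is $\widetilde r_-/\theta_m>1$ and is still log-H\"older continuous, so $\mathcal M$ is bounded on $(X^{1/\theta_m})'$ with operator norms that stay uniformly controlled as $m\to\infty$ via the standard Diening-type estimate; the uniform boundedness of the centered ball averages on $X$ follows from Lemma \ref{rem-weak} applied to each $\theta_m$ and passing to the limit.

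With this verification in hand, statement (i) is an immediate consequence of Theorem \ref{thm-main}, since the range condition $n(\frac{1}{\widetilde r_-}-\frac{1}{q})<k$ with $\gamma\in\Gamma_{\widetilde r_-,q}$ matches the hypotheses of Theorem \ref{thm-main}(I) (when $\widetilde r_->1$) or Theorem \ref{thm-main}(II) (when $\widetilde r_-=1$). For statement (ii), note that under the extra assumption $\widetilde r_->1$ the associate exponent function satisfies $1<\widetilde{(r')}_-\le \widetilde{(r')}_+<\infty$, so $X'=L^{r'(\cdot)}$ also has an absolutely continuous norm; hence both $X$ and $X'$ meet the density assumptions of Theorem \ref{t-chaWkX}, which then yields the desired characterization. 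For (iii) and (iv), one also has to check that $X^{q_0}=L^{q_0 r(\cdot)}$ is a {\rm BBF} space, which is immediate because $q_0\widetilde r_-\ge 1$ and the convexification preserves the log-H\"older structure; plugging into Theorem \ref{thm-fsGN} gives (iii) and (iv).

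The main obstacle is the endpoint case $\widetilde r_-=1$, where (as already observed in \cite[Remark 4.11(ii)]{dlyyz-bvy}) the boundedness of $\mathcal M$ on $X'=L^{r'(\cdot)}$ is not generally available; here the entire argument must be routed through the endpoint boundedness mechanism of Definition \ref{ass-endpoint}, and the only subtle point is to produce the sequence $\{\theta_m\}_{m\in\mathbb N}$ and to exhibit uniform control of $\|\mathcal M\|_{(X^{1/\theta_m})'\to (X^{1/\theta_m})'}$ as $m\to\infty$, which follows from the quantitative dependence of the Diening-type estimate on the lower bound of the exponent function. Once this is in place, the rest of the proof is a direct invocation of the master theorems.
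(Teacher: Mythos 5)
Your overall strategy is exactly the paper's: verify that $L^{r(\cdot)}$ satisfies the hypotheses of Theorems \ref{thm-main}, \ref{t-chaWkX}, and \ref{thm-fsGN} and then invoke those theorems (the paper simply delegates this verification to the proof of \cite[Theorem 5.10]{ZYY23}). However, your verification contains a concrete error in the non-endpoint case. You set $p:=\widetilde r_-$ and claim that, when $\widetilde r_->1$, the conjugate exponent $(r(\cdot)/p)'$ is ``strictly bounded away from both $1$ and $\infty$.'' This is false: with $p=\widetilde r_-$ the exponent $r(\cdot)/p$ has essential infimum exactly $1$, so its conjugate has essential supremum $\infty$ (indeed $(r(x)/p)'\to\infty$ on sets where $r(x)/p$ approaches $1$). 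Consequently the form of the maximal theorem you invoke, which you yourself state under the hypothesis $1<\widetilde s_-\le\widetilde s_+<\infty$, does not apply to $(X^{1/p})'=L^{(r(\cdot)/\widetilde r_-)'}$, and this is precisely the kind of degeneracy the paper flags in the discussion preceding Definition \ref{ass-endpoint}.

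The standard repair, which you should make explicit, is to choose $p\in(1,\widetilde r_-)$ strictly below $\widetilde r_-$ but close enough to it. This is legitimate because the hypotheses of Theorem \ref{thm-vls} are stable under such a perturbation: $n(\frac1p-\frac1q)<k$ follows from $n(\frac{1}{\widetilde r_-}-\frac1q)<k$ by continuity, and $\Gamma_{\widetilde r_-,q}\subset\Gamma_{p,q}=\mathbb{R}\setminus\{0\}$ since $p>1$. With this choice $\widetilde{(r/p)}_-=\widetilde r_-/p>1$, so $(r(\cdot)/p)'$ genuinely satisfies $1<\widetilde{((r/p)')}_-\le\widetilde{((r/p)')}_+=(\widetilde r_-/p)'<\infty$ and the maximal theorem applies. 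Your treatment of the endpoint case $\widetilde r_-=1$ via Definition \ref{ass-endpoint} with $\theta_m:=1-1/m$ is the right mechanism and matches the paper's framework, although the asserted uniform control of $\|\mathcal{M}\|_{(X^{1/\theta_m})'\to(X^{1/\theta_m})'}$ as $m\to\infty$ is stated without justification; it does hold because the lower exponents $(\widetilde r_+/\theta_m)'$ stay bounded away from $1$ and the log-H\"older constants are uniform in $m$, but this quantitative point is exactly what needs to be checked (and is what the cited proof in \cite{ZYY23} supplies). The remaining reductions in (ii)--(iv) are correct modulo the same choice of $p$.
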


\begin{proof}
Applying the proof of \cite[Theorem 5.10]{ZYY23}, we conclude that all
the assumptions of Theorems \ref{thm-main}, \ref{t-chaWkX}, and
\ref{thm-fsGN} hold for the variable Lebesgue space
$L^{{r}(\cdot)}$ under
consideration. By this and Theorems \ref{thm-main}, \ref{t-chaWkX},
and \ref{thm-fsGN}
with $X:=L^{{r}(\cdot)}$,
we obtain the desired results. This then finishes the proof of Theorem
\ref{thm-vls}.
\end{proof}

\begin{remark}
\begin{enumerate}[{\rm (i)}]
\item Theorem \ref{thm-vls}(i) when $k=1$ improves \cite[Theorem
5.18(i)]{lyyzz24}
via removing some extra assumptions on $\gamma$ in \cite[Theorem
5.18(i)]{lyyzz24}	
and when $k\in \mathbb{N} \cap [2,\infty)$ is new.

\item To the best of our knowledge, Theorem \ref{thm-vls}(ii) is
completely new.

\item Both (iii) and (iv)
of Theorem \ref{thm-vls} when $k=1$ coincide with,
respectively,
\cite[(iii) and (ii) of Theorem 5.18]{lyyzz24}
and
when $k\in \mathbb{N} \cap [2,\infty)$ are new.
\end{enumerate}
\end{remark}

\subsection{Lorentz Spaces}\label{ss-Lorentz}

Recall that, for any $r,\tau\in(0,\infty)$,
the \emph{Lorentz space $L^{r,\tau}$}
is defined to be the set of all
$f\in\mathscr{M}$ such that
\begin{equation*}
\|f\|_{L^{r,\tau}}
:=\left\{\int_0^{\infty}
\left[t^{\frac{1}{r}}f^*(t)\right]^\tau
\frac{\,dt}{t}\right\}^{\frac{1}{\tau}}
<\infty,
\end{equation*}
where $f^*$ denotes the \emph{decreasing rearrangement of $f$},
defined by setting, for any $t\in[0,\infty)$,
\begin{equation*}
f^*(t):=\inf\left\{s\in(0,\infty):\ \left|
\left\{x\in\mathbb{R}^n:\ |f(x)|>s\right\}\right|\leq t\right\}.
\end{equation*}
We adopt the convention $\inf \emptyset = \infty$.
When $r,\tau\in(0,\infty)$,
$L^{r,\tau}$ is a quasi-Banach function space
and hence a {\rm BQBF} space
(see, for instance, \cite[Theorem 1.4.11]{g2014});
when $r,\tau\in(1,\infty)$,
the Lorentz space $L^{r,\tau}$ is a Banach function
space
and hence a {\rm BBF} space
(see, for instance, \cite[p.\,87]{shyy2017} and \cite[p.\,74]{g2014}).
When $X:=L^{r,\tau}$,
we denote $\dot{W}^{k,X}$
simply by $\dot{W}^{k,L^{r,\tau}}$.
\begin{theorem}\label{thm-lorentzs}
Let $k\in {\mathbb{N}}$,
$r,\tau\in(1,\infty)$, and $\gamma\in\mathbb{R}\setminus\{0\}$.
\begin{enumerate}[{\rm (i)}]
\item Let $q\in (0,\infty)$ satisfy
$n(\frac{1}{\min\{r,\tau\}}-\frac{1}{q})<k$. Then,
for any $f\in \dot{W}^{k,L^{r,\tau}}$,
\eqref{eq-main-01} and \eqref{eq-main-02} hold with
$X:=L^{r,\tau}$.

\item Let $q\in (1,\infty)$ satisfy
$n(\frac{1}{\min\{r,\tau\}}-\frac{1}{q})<k$. Then Theorem
\ref{t-chaWkX} holds with
$X:=L^{r,\tau}$.

\item Let $s\in(0,1)$,
$q_0 \in[1,\infty]$, and
$q\in[1,q_0 ]$ satisfy $\frac{1}{q}=\frac{1-s}{q_0 }+s$.
If $q_0\in [1,\infty)$,
then, for any $f\in \dot{W}^{k,L^{r,\tau}}$,
\eqref{eq-in-kks} holds with $X:=L^{r,\tau}$.
If $q_0=\infty$, then, for any $f\in
\dot{W}^{k,L^{r,\tau}}$, \eqref{eq-in-kks1}
holds with $X:=L^{r,\tau}$.

\item Let $\eta\in(0,1)$ and
$
0\leq s_0<s<1<q<q_0<\infty
$
satisfy \eqref{eq-app-ss}.
Then, for any $f\in \dot{W}^{k,L^{r,\tau}}$,
\eqref{eq-app-dd} with $X:=L^{r,\tau}$.
\end{enumerate}
\end{theorem}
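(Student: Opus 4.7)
The plan is to reduce Theorem \ref{thm-lorentzs} entirely to the already-established master theorems by verifying that the Lorentz space $L^{r,\tau}$ with $r,\tau\in(1,\infty)$ satisfies every structural hypothesis needed. The key identities I will rely on are: (a) the $p$-convexification satisfies $(L^{r,\tau})^{1/p}=L^{r/p,\tau/p}$ and this is a {\rm BBF} space whenever $r/p,\tau/p\in(1,\infty)$; (b) by duality of Lorentz spaces with indices in $(1,\infty)$, $((L^{r,\tau})^{1/p})'=L^{(r/p)',(\tau/p)'}$; (c) the $q_0$-convexification satisfies $(L^{r,\tau})^{q_0}=L^{rq_0,\tau q_0}$, which is a {\rm BBF} space when $rq_0,\tau q_0\in(1,\infty)$; (d) both $L^{r,\tau}$ and $L^{r',\tau'}$ have absolutely continuous norms when all four indices lie in $(1,\infty)$; and (e) the Hardy--Littlewood maximal operator $\mathcal{M}$ is bounded on $L^{a,b}$ for $a\in(1,\infty)$ and $b\in(1,\infty)$.

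For (i), given $q\in(0,\infty)$ with $n(\frac{1}{\min\{r,\tau\}}-\frac{1}{q})<k$, I would choose $p\in(1,\min\{r,\tau\})$ close enough to $\min\{r,\tau\}$ so that the strict inequality $n(\frac{1}{p}-\frac{1}{q})<k$ persists. Properties (a), (b), and (e) then verify that $(L^{r,\tau})^{1/p}$ is a {\rm BBF} space and $\mathcal{M}$ is bounded on $((L^{r,\tau})^{1/p})'$, so Theorem \ref{thm-main}(I)(i) delivers \eqref{eq-main-01}. Adding (d) for $L^{r,\tau}$ itself, Theorem \ref{thm-main}(I)(ii) yields the limiting formula \eqref{eq-main-02}.

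For (ii), the same choice of $p\in(1,\min\{r,\tau\})$ works; the additional requirement that $(L^{r,\tau})'=L^{r',\tau'}$ also has absolutely continuous norm is supplied by (d) since $r',\tau'\in(1,\infty)$. Theorem \ref{t-chaWkX} then applies with $X:=L^{r,\tau}$. For (iii) and (iv), I would again pick $p\in(1,\min\{r,\tau\})$ close to $\min\{r,\tau\}$ and invoke Theorem \ref{thm-fsGN}; the only extra verification is (c), needed when $q_0\in[1,\infty)$, which is automatic because $rq_0,\tau q_0>1$.

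The entire proof is thus a routine checking of hypotheses, and there is no genuine analytic obstacle. The one point that merits attention is confirming that the sharp range $n(\frac{1}{\min\{r,\tau\}}-\frac{1}{q})<k$ in (i) and (ii) is actually reached; this is handled, as above, by letting $p\to\min\{r,\tau\}^-$ and exploiting that the condition on $q$ is a strict inequality, exactly as in the analogous treatment of mixed-norm Lebesgue spaces in Theorem \ref{thm-mnls} and of variable Lebesgue spaces in Theorem \ref{thm-vls}.
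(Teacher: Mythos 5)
Your proposal is correct and follows essentially the same route as the paper, which likewise reduces the theorem to Theorems \ref{thm-main}, \ref{t-chaWkX}, and \ref{thm-fsGN} after verifying the hypotheses for $L^{r,\tau}$ (the paper simply cites the verification from the proof of \cite[Theorem 5.18]{ZYY23} rather than writing it out). Your explicit checks — the convexification identities $(L^{r,\tau})^{1/p}=L^{r/p,\tau/p}$ and $(L^{r,\tau})^{q_0}=L^{rq_0,\tau q_0}$, Lorentz duality, absolute continuity, boundedness of $\mathcal{M}$, and the choice of $p\in(1,\min\{r,\tau\})$ close to $\min\{r,\tau\}$ to exploit the strict inequality — are exactly the content of that citation.
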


\begin{proof}
Using the proof of \cite[Theorem 5.18]{ZYY23}, we easily find that all
the assumptions of Theorems \ref{thm-main}, \ref{t-chaWkX}, and
\ref{thm-fsGN} with $X:=L^{r,\tau}$ are satisfied. This,
combined with Theorems \ref{thm-main}, \ref{t-chaWkX}, and
\ref{thm-fsGN}, further implies the desired
conclusions, which completes the proof of Theorem \ref{thm-lorentzs}.
\end{proof}

\begin{remark}
\begin{enumerate}[{\rm (i)}]
\item Theorem \ref{thm-lorentzs}(i) when $k=1$ improves \cite[Theorem
5.18]{ZYY23}
via removing some extra assumptions on $\gamma$
and when $k\in \mathbb{N} \cap [2,\infty)$ is new.

\item To the best of our knowledge, Theorem \ref{thm-lorentzs}(ii) is
new.

\item Both
(iii) and (iv) of Theorem \ref{thm-lorentzs} when $k=1$ coincide with,
respectively,
\cite[Theorems 5.19 and 5.20]{ZYY23}
and when $k\in \mathbb{N} \cap [2,\infty)$ are new.
\end{enumerate}
\end{remark}

\subsection{Orlicz Spaces}\label{ss-ORL}

Recall that
a non-decreasing function $\Phi:\ [0,\infty)
\ \to\ [0,\infty)$ is called an \emph{Orlicz function}
if $\Phi$ satisfies that
$\Phi(0)= 0$,
$\Phi(t)\in(0,\infty)$ for any $t\in(0,\infty)$,
and
$\lim_{t\to\infty}\Phi(t)=\infty$.
An Orlicz function $\Phi$ is said to be of \emph{lower}
(resp. \emph{upper}) \emph{type} $r$ for some
$r\in\mathbb{R}$ if there exists a positive constant
$C_{(r)}$ such that,
for any $t\in[0,\infty)$ and
$s\in(0,1)$ [resp. $s\in[1,\infty)$],
\begin{equation*}
\Phi(st)\le C_{(r)} s^r\Phi(t).
\end{equation*}
In the remainder of this subsection, we always assume that
$\Phi$
is an Orlicz function with both positive lower
type $r_{\Phi}^-$ and positive upper type $r_{\Phi}^+$.
The \emph{Orlicz space $L^\Phi$}
is defined to be the set of all $f\in\mathscr{M}$
with the following finite quasi-norm
\begin{equation*}
\|f\|_{L^\Phi}:=\inf\left\{\lambda\in
(0,\infty):\ \int_{\mathbb{R}^n}\Phi\left(\frac{|f(x)|}
{\lambda}\right)\,dx\le1\right\}.
\end{equation*}
It is easy to show that $L^\Phi$
is a quasi-Banach function space
(see \cite[Section 7.6]{shyy2017}).
For more related results on Orlicz spaces,
we refer to \cite{dfmn2021,ns2014,rr2002}.
When $X:=L^{\Phi}$,
we denote $\dot{W}^{k,\Phi}$
simply by
$\dot{W}^{k,X}$. Moreover,
for any Orlicz function $\Phi$ and $q,t\in(0,\infty)$,
let
\begin{align}\label{oq}
\Phi_q(t):=\Phi\left(t^q\right).
\end{align}

\begin{theorem}\label{thm-orlicz}
Let $\Phi$ be an Orlicz function with both
positive lower type $r^-_{\Phi}$
and positive upper type $r^+_\Phi$, $1\leq r^-_{\Phi}\leq
r^+_{\Phi}<\infty$, and
$k\in{\mathbb{N}}$.
\begin{enumerate}[{\rm (i)}]
\item Let
$q\in (0,\infty)$ satisfy $n(\frac{1}{r^-_{\Phi}}-\frac{1}{q})<k$
and $\gamma\in \Gamma_{r^-_{\Phi},q}$.
Then, for any $f\in \dot{W}^{k,\Phi}$,
\eqref{eq-main-01} and \eqref{eq-main-02} hold with
$X:=L^\Phi$.

\item Let
$q\in (1,\infty)$ satisfy $n(\frac{1}{r^-_{\Phi}}-\frac{1}{q})<k$
and $\gamma\in \mathbb{R}\setminus \{0\}$.
If $r^-_{\Phi}>1$, then Theorem
\ref{t-chaWkX} holds with $X:=L^{\Phi}$.

\item Let $s\in(0,1)$,
$q_0 \in[1,\infty]$,
$q\in[1,q_0 ]$ satisfy $\frac{1}{q}=\frac{1-s}{q_0}+s$,
and
$\gamma\in \Gamma_{r^-_{\Phi},1}$.
If $q_0\in [1,\infty)$,
then, for any $f\in \dot{W}^{k,\Phi}$,
\eqref{eq-in-kks} holds with $X:=L^{\Phi}$.
If $q_0=\infty$, then, for any $f\in \dot{W}^{k,\Phi}$,
\eqref{eq-in-kks1} holds with $X:=L^{\Phi}$.

\item Let $\eta\in(0,1)$,
$
0\leq s_0<s<1<q<q_0<\infty
$
satisfy \eqref{eq-app-ss},
and $\gamma\in \Gamma_{r^-_{\Phi},1}$.
Then, for any $f\in \dot{W}^{k,\Phi}$,
\eqref{eq-app-dd} holds with $X:=L^{\Phi}$.
\end{enumerate}
\end{theorem}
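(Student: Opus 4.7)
\emph{Proposal.} The plan is to deduce each of the four conclusions as a direct specialization of Theorems \ref{thm-main}, \ref{t-chaWkX}, and \ref{thm-fsGN} to $X:=L^{\Phi}$. The strategy is therefore to verify, once and for all, the structural hypotheses of those master theorems for the Orlicz space under consideration. Concretely, I would check that, with the choice $p:=r_\Phi^-$, the convexification $(L^\Phi)^{1/p}$ is a {\rm BBF} space and that the Hardy--Littlewood maximal operator $\mathcal{M}$ is bounded on its associate space; that $L^\Phi$ has an absolutely continuous norm; that $(L^\Phi)'$ has an absolutely continuous norm when $r_\Phi^->1$; and that $(L^\Phi)^{q_0}$ is again a {\rm BBF} space when $q_0\in[1,\infty)$.

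First I would compute the relevant convexifications as concrete Orlicz spaces. Using the definition of the $\alpha$-convexification and the notation \eqref{oq}, one identifies $(L^\Phi)^{\alpha}=L^{\Phi_{1/\alpha}}$ up to equivalent quasi-norms, and a direct substitution shows that if $\Phi$ has positive lower type $r_\Phi^-$ and positive upper type $r_\Phi^+$, then $\Phi_{1/\alpha}$ has positive lower type $\alpha r_\Phi^-$ and positive upper type $\alpha r_\Phi^+$. Taking $p:=r_\Phi^-$ yields $(L^\Phi)^{1/p}=L^{\Phi_{p}}$, whose defining Orlicz function has lower type $1$ and finite upper type $r_\Phi^+/r_\Phi^-$; hence $(L^\Phi)^{1/p}$ is a {\rm BBF} space. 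The classical boundedness of $\mathcal{M}$ on the associate space of a reflexive-type Orlicz space (see, for instance, \cite[Section 7.6]{shyy2017} and \cite{dfmn2021}) then supplies the required maximal-function hypothesis on $((L^\Phi)^{1/p})'$. Analogously, the finiteness of $r_\Phi^+$ forces $L^\Phi$ to have an absolutely continuous norm, and when $r_\Phi^->1$ the conjugate Young function $\widetilde{\Phi}$ inherits finite positive types as well, so $(L^\Phi)'=L^{\widetilde{\Phi}}$ is likewise absolutely continuous.

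With these preparatory verifications in hand, part (i) follows from Theorem \ref{thm-main}(I) applied with this $p$; part (ii) follows from Theorem \ref{t-chaWkX}, noting that the extra absolute-continuity requirement on $X'$ is precisely what the assumption $r_\Phi^->1$ provides; and parts (iii) and (iv) follow from the two cases of Theorem \ref{thm-fsGN}(i) and Theorem \ref{thm-fsGN}(ii), where one additionally uses that $(L^\Phi)^{q_0}=L^{\Phi_{1/q_0}}$ has lower type $q_0 r_\Phi^-\ge 1$ and is therefore a {\rm BBF} space. The main obstacle I anticipate is the careful but essentially routine bookkeeping of how the lower and upper types of $\Phi$ transform under convexification and under passage to the associate/conjugate Young function, since every invocation of the master theorems hinges on locating the correct exponent $p$ for which both $(L^\Phi)^{1/p}$ is a {\rm BBF} space and $\mathcal{M}$ is bounded on $((L^\Phi)^{1/p})'$; no new ideas beyond those already used in the Lorentz and variable Lebesgue subsections (Subsections \ref{ss-Lorentz} and \ref{ssec-vari-lp}) should be required.
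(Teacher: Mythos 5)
Your proposal is correct and follows essentially the same route as the paper: the paper's proof simply notes that the hypotheses of Theorems \ref{thm-main}, \ref{t-chaWkX}, and \ref{thm-fsGN} are verified for $L^{\Phi}$ (citing the proof of \cite[Theorem 5.23]{ZYY23} for this verification) and then specializes those theorems, which is exactly what you do, except that you sketch the verification yourself. One small caution: with the paper's convention \eqref{oq} one has $(L^{\Phi})^{\alpha}=L^{\Phi_{\alpha}}$ rather than $L^{\Phi_{1/\alpha}}$, so your subscripts are swapped relative to the paper, but your stated conclusions about the lower and upper types of the relevant convexifications are the correct ones and the argument is unaffected.
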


\begin{proof}
From the proof of \cite[Theorem 5.23]{ZYY23}, it follows that the
Orlicz space $L^{\Phi}$ under con-
sideration satisfies all the assumptions of Theorems \ref{thm-main},
\ref{t-chaWkX}, and \ref{thm-fsGN}. By this and Theorems
\ref{thm-main}, \ref{t-chaWkX}, and \ref{thm-fsGN},
we obtain the
desired conclusions, which completes the proof of Theorem
\ref{thm-orlicz}.
\end{proof}

\begin{remark}
\begin{enumerate}[{\rm (i)}]
\item Theorem \ref{thm-orlicz}(i) when $k=1$ improves \cite[Theorem
5.20(i)]{lyyzz24}
because Theorem \ref{thm-orlicz}(i) removes some extra assumptions
on $\gamma$
in \cite[Theorem 5.20(i)]{lyyzz24}. Moreover,
Theorem \ref{thm-orlicz}(i) when $k\in \mathbb{N} \cap [2,\infty)$ is
completely new.

\item To the best of our knowledge, Theorem \ref{thm-orlicz}(ii) is
new.

\item Both
(iii) and (iv) of Theorem \ref{thm-orlicz} when $k=1$ coincide with,
respectively,
\cite[Theorem 5.20(iii) and (ii)]{lyyzz24}
and when $k\in \mathbb{N} \cap [2,\infty)$ are new.
\end{enumerate}
\end{remark}

\subsection{Orlicz-Slice Spaces}\label{ss-OSS}
Recall that, for any given $t,r\in(0,\infty)$,
the \emph{Orlicz-slice space}
$(E_\Phi^r)_t$ is defined to be
the set of all $f\in\mathscr{M}$ with the
following finite quasi-norm
\begin{equation*}
\|f\|_{(E_\Phi^r)_t}:=\left\{\int_{\mathbb{R}^n}
\left[\frac{\|f\mathbf{1}_{B(x,t)}\|_{L^\Phi}}
{\|\mathbf{1}_{B(x,t)}\|_{L^\Phi}}\right]
^r\,dx\right\}^{\frac{1}{r}},
\end{equation*}
where $\Phi$
is an Orlicz function with both positive
lower type $r_{\Phi}^-$ and positive upper
type $r_{\Phi}^+$.
The Orlicz-slice space was introduced in
\cite{zyyw2019} as a generalization of both
the slice space of Auscher and Mourgoglou
\cite{am2019,ap2017} and the Wiener amalgam space
in \cite{h2019,h1975,kntyy2007}. From
\cite[Lemma 2.28]{zyyw2019} and \cite[Remark 7.41(i)]{zwyy2021},
it follows that
the Orlicz-slice space $(E_\Phi^r)_t$ is a
{\rm BBF} space, but in general is not a
Banach function space.
When $X:=(E_\Phi^r)_t$,
we simply write $\dot{W}^{k,(E_\Phi^r)_t}
:=\dot{W}^{k,X}$.
\begin{theorem}\label{thm-oss}
Let $\Phi$ be an Orlicz function with both
positive lower type $r^-_{\Phi}$
and positive upper type $r^+_\Phi$, $1\leq r^-_{\Phi}\leq
r^+_{\Phi}<\infty$,
$k\in \mathbb{N}$, $t\in(0,\infty)$, and $r\in[1,\infty)$.
\begin{enumerate}[{\rm (i)}]
\item 	Let
$q\in (0,\infty)$ satisfy $n(\frac{1}{r^-_{\Phi}}-\frac{1}{q})<k$
and $\gamma\in \Gamma_{r^-_{\Phi},q}$.
Then, for any $f\in \dot{W}^{k,(E_\Phi^r)_t}
$, \eqref{eq-main-01}
and \eqref{eq-main-02} hold with $X:=(E_\Phi^r)_t$.

\item Let $q\in(1,\infty)$ satisfy
$n(\frac{1}{r^-_{\Phi}}-\frac{1}{q})<k$
and $\gamma\in \mathbb{R}\setminus \{0\}$.
If $r^-_{\Phi}>1$,
then
Theorem \ref{t-chaWkX} holds with $X:=L^{\Phi}$.

\item Let $s\in(0,1)$,
$q_0 \in[1,\infty]$,
$q\in[1,q_0 ]$ satisfy $\frac{1}{q}=\frac{1-s}{q_0}+s$,
and
$\gamma\in \Gamma_{r^-_{\Phi},1}$.
If $q_0\in [1,\infty)$,
then, for any $f\in \dot{W}^{k,(E_\Phi^r)_t}
$,
\eqref{eq-in-kks} holds with $X:=(E_\Phi^r)_t$.
If $q_0=\infty$, then, for any $f\in \dot{W}^{k,(E_\Phi^r)_t}
$,
\eqref{eq-in-kks1} holds with $X:=(E_\Phi^r)_t$.
\item Let $\eta\in(0,1)$,
$
0\leq s_0<s<1<q<q_0<\infty
$
satisfy \eqref{eq-app-ss},
and $\gamma\in \Gamma_{r^-_{\Phi},1}$.
Then, for any $f\in \dot{W}^{k,(E_\Phi^r)_t}
$,
\eqref{eq-app-dd} holds with $X:=(E_\Phi^r)_t$.
\end{enumerate}
\end{theorem}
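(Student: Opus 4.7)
The plan is to reduce the four conclusions to direct applications of Theorems \ref{thm-main}, \ref{t-chaWkX}, and \ref{thm-fsGN} by verifying that the Orlicz-slice space $X:=(E_\Phi^r)_t$ meets all the required structural hypotheses. As already noted in Subsection \ref{ss-OSS}, $(E_\Phi^r)_t$ is a {\rm BBF} space under the standing assumption $1\le r_\Phi^-\le r_\Phi^+<\infty$ and $r\in [1,\infty)$. The bulk of the proof will therefore consist of assembling, for every choice of auxiliary parameters $p\in [1,\infty)$ appearing in the hypotheses of the three master theorems, the following three facts: (a) the convexification $(E_\Phi^r)_t^{1/p}$ is again a {\rm BBF} space; (b) the Hardy--Littlewood maximal operator $\mathcal{M}$ is bounded on $((E_\Phi^r)_t^{1/p})'$; and (c) when the limiting identity or the BSVY characterization is sought, $(E_\Phi^r)_t$ and, if $r_\Phi^->1$, also $((E_\Phi^r)_t)^*$ have absolutely continuous norms. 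These facts are exactly the Orlicz-slice analogues of the $L^\Phi$-counterparts verified in the proof of Theorem \ref{thm-orlicz}.

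The verification of (a) through (c) proceeds in the same way as for the Orlicz case: using the identification $((E_\Phi^r)_t)^{1/p}=(E_{\Phi_p}^{rp})_t$ (with $\Phi_p$ as in \eqref{oq}) one reduces (a) to checking that $\Phi_p$ still has a positive lower and upper type, which follows from $n(\frac{1}{r_\Phi^-}-\frac 1q)<k$ and the corresponding choice $p\in [1,r_\Phi^-]$; the boundedness in (b) is a special case of the extrapolation--type boundedness of $\mathcal{M}$ on the associate of an Orlicz-slice space established in \cite[Lemma 2.31]{zyyw2019} (see also the analogous statement employed in \cite[Theorem 5.22]{ZYY23}); and the absolute continuity of the norm of $(E_\Phi^r)_t$ and of its dual is available from the same references under the assumption $r_\Phi^->1$.

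Once (a)--(c) are in hand, conclusion (i) follows from Theorem \ref{thm-main} with $X:=(E_\Phi^r)_t$ and $p:=r_\Phi^-$, conclusion (ii) follows from Theorem \ref{t-chaWkX} with the same choice of $X$ and $p$ (the extra absolute continuity of the norm of $((E_\Phi^r)_t)^*$ requires $r_\Phi^->1$, explaining that standing hypothesis), and conclusions (iii) and (iv) follow from (i) and (ii) of Theorem \ref{thm-fsGN}, respectively, together with the additional observation that $((E_\Phi^r)_t)^{q_0}=(E_{\Phi_{q_0}}^{r q_0})_t$ is again a {\rm BBF} space whenever $q_0\in [1,\infty)$.

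The only genuine obstacle is the bookkeeping in step (b): because $(E_\Phi^r)_t$ is not itself a Banach function space in the sense of Bennett--Sharpley (only a {\rm BBF} space), one cannot directly quote the classical Rubio de Francia extrapolation. I plan to circumvent this exactly as in \cite[Theorem 5.23]{ZYY23}, namely by invoking the endpoint-type extrapolation lemma recorded in Lemma \ref{rem-weak} and the explicit description of the associate space $((E_\Phi^r)_t)'$ from \cite[Theorem 2.26]{zyyw2019}. After this reduction, all quantitative steps are routine, the proof closes in the same pattern as the Orlicz case, and the details will be omitted.
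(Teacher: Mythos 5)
Your proposal is correct and follows essentially the same route as the paper: the paper's proof is a one-line reduction that cites the verification (in the proof of \cite[Theorem 5.28]{ZYY23}) that $(E_\Phi^r)_t$ satisfies all the hypotheses of Theorems \ref{thm-main}, \ref{t-chaWkX}, and \ref{thm-fsGN}, and then applies those theorems, exactly as you do. Your version merely spells out the checklist (convexification, boundedness of $\mathcal{M}$ on the associate space, absolute continuity of the norms) that the cited reference supplies; the only cosmetic slips are that the positive type of $\Phi_p$ follows from $p\le r_\Phi^-$ alone (the condition $n(\frac{1}{r_\Phi^-}-\frac1q)<k$ is instead what lets you take $p:=r_\Phi^-$ in Theorem \ref{thm-main}), and that Theorem \ref{t-chaWkX} asks for absolute continuity of the associate space $X'$ rather than of the dual $X^*$.
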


\begin{proof}
From the proof of \cite[Theorem 5.28]{ZYY23}, we deduce that the
Orlicz-slice space $(E_\Phi^r)_t$ under consideration
satisfies all the assumptions of
Theorems \ref{thm-main}, \ref{t-chaWkX}, and \ref{thm-fsGN}. By this
and Theorems \ref{thm-main}, \ref{t-chaWkX}, and \ref{thm-fsGN},
we obtain the
desired results. This finishes the proof of Theorem \ref{thm-oss}.
\end{proof}

\begin{remark}
\begin{enumerate}[{\rm (i)}]
\item Theorem \ref{thm-oss}(i) when $k=1$ improves \cite[Theorem
5.22(i)]{lyyzz24}
via removing some extra assumptions on $\gamma$
and when $k\in \mathbb{N} \cap [2,\infty)$ is new.

\item Theorem \ref{thm-oss}(ii) is new.

\item Both
(iii) and (iv) of Theorem \ref{thm-oss} when $k=1$ coincide with,
respectively,
\cite[Theorem 5.22(iii) and (ii)]{lyyzz24}
and when $k\in \mathbb{N} \cap [2,\infty)$ are new.
\end{enumerate}	
\end{remark}

\section*{Declaration of competing interest}

There is no competing interest.

\section*{Data availability}

No data was used for the research described in the article.

\bigskip

\noindent Pingxu Hu, Yinqin Li (Corresponding author), Dachun Yang,
Wen Yuan and Yangyang Zhang.

\medskip

\noindent Laboratory of Mathematics and Complex Systems
(Ministry of Education of China),
School of Mathematical Sciences, Beijing Normal University,
Beijing 100875, The People's Republic of China

\smallskip

\noindent {\it E-mails}: \texttt{pingxuhu@mail.bnu.edu.cn} (P. Hu)

\noindent\phantom{\it E-mails }
\texttt{yinqli@mail.bnu.edu.cn} (Y. Li)

\noindent\phantom{\it E-mails }
\texttt{dcyang@bnu.edu.cn} (D. Yang)

\noindent\phantom {\it E-mails }
\texttt{wenyuan@bnu.edu.cn} (W. Yuan)

\noindent\phantom{\it E-mails:} \texttt{yangyzhang@bnu.edu.cn} (Y.
Zhang)

\end{document}